\let\origsection=\section \def\section{\@ifstar{\origsection*}{\mysection}} 
\def\mysection{\@startsection{section}{1}\z@{.7\linespacing\@plus\linespacing}{.5\linespacing}{\normalfont\scshape\centering\S}}
\colorlet{darkishRed}{red!80!black}
\colorlet{darkishBlue}{blue!60!black}
\colorlet{darkishGreen}{green!60!black}
\crefname{mainresult}{Theorem}{Theorems}
\renewcommand{\PrintDOI}[1]{\doi{#1}}
\let\setminus=\smallsetminus
\renewcommand{\subset}{\subseteq}
\renewcommand{\leq}{\leqslant}
\renewcommand{\geq}{\geqslant}
\renewcommand{\ge}{\geq}
\renewcommand{\le}{\leq}
\newtheorem{theorem}{Theorem}[section] 
\newtheorem{proposition}[theorem]{Proposition}
\newtheorem{corollary}[theorem]{Corollary}
\newtheorem{lemma}[theorem]{Lemma}
\newtheorem{observation}[theorem]{Observation}
\newtheorem{conjecture}[theorem]{Conjecture}
\newtheorem{mainresult}{Theorem} 
\newtheorem{maincorollary}[mainresult]{Corollary}
\newenvironment{customthm}[1]
  {\innercustomthm}
  {\endinnercustomthm}
\newenvironment{customcor}[1]
  {\innercustomcor}
  {\endinnercustomcor}
\newcounter{claimcounter}[theorem]
\newtheorem*{claim*}{Claim}
\newcounter{subclaimcounter}[claimcounter]
\newtheorem*{subclaim*}{Subclaim}
\theoremstyle{definition}
\crefname{mainexample}{Example}{Examples}
\newtheorem{example}[theorem]{Example}
\crefname{example}{Example}{Examples}
\newtheorem{construction}[theorem]{Construction}
\newtheorem{algo}[theorem]{Algorithm}
\crefname{routine}{Routine}{Routines}
\crefname{subroutine}{Subroutine}{Subroutines}
\crefname{subsubroutine}{Subsubroutine}{Subsubroutines}
\crefname{step}{Step}{Steps}
\theoremstyle{remark}
\newcommand{\COMMENT}[1]{{}}
\let\eps=\varepsilon
\let\epsilon=\varepsilon
\let\theta=\vartheta
\let\rho=\varrho
\let\phi=\varphi
\def\N{\mathbb N}
\def\calCommandfactory#1{%
  \expandafter\def\csname c#1\endcsname{\mathcal{#1}}}
\def\frakCommandfactory#1{%
  \expandafter\def\csname frak#1\endcsname{\mathfrak{#1}}}
\newcounter{ctr}
  \edef\X{\@Alph\c@ctr}
\newcommand{\arXivOrNot}[2]{\ifbool{arXiv}{{#1}}{{#2}}}
\newcommand{\lk}{({<}\,k)}
\newcommand{\lell}{({<}\,\ell)}
\newcommand{\lek}{({\le}\,k)}
\newcommand{\leell}{({\le}\,\ell)}
\newcommand{\gek}{({\ge}\,k)}
\newcommand{\lA}{({<}\,\aleph_0)}
\newcommand{\td}{tree-decom\-posi\-tion}
\newcommand{\down}{{\downarrow}}
\newcommand{\up}{{\uparrow}}
\newcommand{\strictdown}{ \mathring{\downarrow} }
\newcommand{\strictup}{ \mathring{\uparrow} }
\DeclareMathOperator{\Dom}{Dom}
\DeclareMathOperator{\rt}{root}
\DeclareMathOperator{\dom}{dom}
\DeclareMathOperator{\interior}{int}
\newcommand{\size}{|\!\cdot\!|}
\DeclareMathOperator{\torsostar}{torso}
\DeclareMathOperator{\crit}{crit}
\DeclareMathOperator{\tw}{tw}
\newcounter{mylabelcounter}
\newcommand{\labelText}[2]{%
#1\refstepcounter{mylabelcounter}%
\immediate\write\@auxout{%
  \string\newlabel{#2}{{1}{\thepage}{{\unexpanded{#1}}}{mylabelcounter.\number\value{mylabelcounter}}{}}%
}%
}
\newcommand\footnoteref[1]{\protected@xdef\@thefnmark{\ref{#1}}\@footnotemark}
\newcommand{\exampleNoStronglyLinkedTD}{Example~2}
\newcommand{\exampleNoLeanTD}{Example~1}
\newcommand{\exampleNoUpwardsDisjointAdhesionSets}{Example~5.2}
\newcommand{\exampleNoTDEffDistAllEnds}{Lemma~3.2}
\newcommand{\constructionExample}{Construction~3.1}
\definecolor{cMaroon}{HTML}{93152a}
\newcommand{\defn}[1]{{\color{cMaroon}{\emph{#1}}}}
\title{Linked tree-decompositions into finite parts}
\author{Sandra Albrechtsen}
\author{Raphael~W.\ Jacobs}
\author{Paul Knappe}
\author{Max Pitz}
\address{Universität Hamburg, Department of Mathematics, Bundesstraße 55 (Geomatikum), 20146 Hamburg, Germany}
\email{\{sandra.albrechtsen, raphael.jacobs, paul.knappe, max.pitz\}@uni-hamburg.de}
\keywords{Tree-decomposition, linked, lean, ends}
\subjclass[2020]{05C63, 05C05, 05C83, 05C40}
\begin{document}

\begin{abstract}
We prove that every graph which admits a \td\ into finite parts has a rooted \td\ into finite parts that is linked, tight and componental. 

As an application, we obtain that every graph without half-grid minor has a lean \td\ into finite parts, strengthening the corresponding result by K{\v r}{\'i}{\v z} and Thomas for graphs of finitely bounded tree-width. In particular, it follows that every graph without half-grid minor has a \td\ which efficiently distinguishes all ends and critical vertex sets, strengthening results by Carmesin and by Elm and Kurkofka for this graph class.

As a second application of our main result, it follows that every graph which admits a \td\ into finite parts has a \td\ into finite parts that displays all the ends of $G$ and their combined degrees, resolving a question of Halin from 1977. This latter \td\ yields short, unified proofs of the characterisations due to Robertson, Seymour and Thomas of graphs without half-grid minor, and of graphs without binary tree subdivision.
\end{abstract}

\maketitle

\section{Introduction}\label{sec:Introduction}

\subsection{The main result}
Our point of departure is K{\v r}{\'i}{\v z} and Thomas's result on linked tree-decompositions, which forms a cornerstone both in Robertson and Seymour's work \cite{GMIV} on well-quasi-ordering finite graphs, and in Thomas's result \cite{thomas1989wqo} that the class of infinite graphs of tree-width~$< k$ is well-quasi-ordered under the minor relation for all~$k \in \N$. 

\begin{theorem}[{Thomas 1990 \cite{LeanTreeDecompThomas}, K{\v r}{\'i}{\v z} and Thomas 1991  \cite{kriz1991mengerlikepropertytreewidth}}]
\label{thm_intro_krizthomas}
    Every (finite or infinite) graph  of tree-width  $< k$ has a linked rooted tree-decomposition\footnote{There is also an unrooted version of this theorem 
    but this is not needed for the well-quasi-ordering applications.} of width $< k$.
\end{theorem}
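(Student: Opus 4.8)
The plan is to run the classical minimal-object argument --- Thomas's for finite graphs, with the Kří{\v z}--Thomas refinement handling infinite ones. Fix $k$ and call a rooted \td\ of $G$ \emph{feasible} if it has width $<k$; by hypothesis there is one. Every bag of a feasible \td\ has size in $\{0,1,\dots,k-1\}$, so to $(T,\mathcal V)$ I attach the vector $\mathbf n(T,\mathcal V)=(n_{k-1},n_{k-2},\dots,n_0)$, where $n_i$ is the cardinality of $\{t\in V(T):|V_t|=i\}$. Restricting to \emph{reduced} feasible \td s (no bag contained in an adjacent one, after which the decomposition tree has at most $\max(|V(G)|,\aleph_0)$ nodes) leaves only set-many candidates, and $\mathbf n$ takes values in the lexicographically ordered product of finitely many copies of the (well-ordered) class of cardinals, hence in a well-founded order. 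So I may pick a reduced feasible \td\ $(T,\mathcal V)$ with $\mathbf n(T,\mathcal V)$ lexicographically minimal; after choosing any root (adding an empty root bag if the convention demands it, which changes nothing about linkedness) it remains to show $(T,\mathcal V)$ is linked.

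Suppose it is not. Then there are nodes $t_1,t_2$ and an integer $m\ge 1$ such that $G$ has no $m$ disjoint $V_{t_1}$--$V_{t_2}$ paths, while every bag on the $t_1$--$t_2$ path $P=s_0\dots s_n$ in $T$ has at least $m$ vertices. I choose such a witness with $n=d_T(t_1,t_2)$ minimum, and then, among the separations of order $<m$ supplied by the finitary Menger theorem (valid in every graph for a bounded number of paths), one with smallest $A$-side; this pins the separator $S:=A\cap B$ --- of size $\ell<m\le|V_{s_i}|$ for all $i$, with $V_{t_1}\subseteq A$ and $V_{t_2}\subseteq B$ --- right next to $t_1$, which is what keeps the subsequent surgery local.

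The heart of the argument is a surgery producing a feasible \td\ $(T',\mathcal V')$ with $\mathbf n(T',\mathcal V')$ lexicographically strictly smaller, contradicting minimality. The idea is to cut $T$ along $P$ and re-assemble the pieces: the piece containing $t_1$ keeps its bags intersected with $A$, the remaining pieces keep their bags intersected with $B$, and the pieces are spliced back together along a short chain of bags derived from $S$. Since $|S|<m$ while every old bag on $P$ had $\ge m$ vertices, the bags along the splice are strictly smaller than the bags they replace, and every other bag only ever shrinks, from $V_t$ to $V_t\cap A$ or $V_t\cap B$. One checks that $(T',\mathcal V')$ is again a \td\ of $G$ --- the only delicate case of the connectivity condition is a vertex $v\in S$, which one threads through the splice chain, a vertex outside $S$ lying wholly on one side of $(A,B)$ --- that no bag has grown, and that at least one bag of size $\ge m$ has vanished with no new bag of size $\ge m$ created; re-reducing if necessary, $\mathbf n$ has strictly dropped, giving the contradiction.

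The step I expect to be the main obstacle is precisely this surgery: one has to set up the extremal choices of $(t_1,t_2)$, of $m$, and of the separation carefully enough that the rerouted bags \emph{provably} stay of size $<k$ and that the fatness vector \emph{provably} falls in the lexicographic order --- a careless reroute can inflate a bag by up to $|S|$ and so increase a coordinate of $\mathbf n$. A secondary point, and the only place where the infinite case of K{\v r}{\'i}{\v z} and Thomas departs from Thomas's finite argument, is the observation made in the first paragraph: fatness, read as a tuple of bag-size cardinalities ordered from the largest size downward, remains a well-founded order when those cardinalities are infinite, so that a minimal feasible \td\ exists at all; the surgery itself is purely combinatorial and, using only the finitary Menger theorem, transfers verbatim to arbitrary graphs.
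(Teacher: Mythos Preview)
The paper does not give its own proof of this theorem; it is quoted as a known result of Thomas and of K\v{r}\'{\i}\v{z}--Thomas, and the finite case is later used as a black box (in the proof of \cref{thm:LeanTDTechnical}, where Thomas's theorem is applied to the finite torsos). So there is no paper-proof to compare against directly. Your sketch is the standard Thomas minimisation argument, and for \emph{finite} graphs it is correct.

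The gap is in the passage to infinite graphs. You are right that the lexicographic order on $k$-tuples of cardinals is well-founded, so a fatness-minimal reduced feasible decomposition exists. What fails is the claim that the surgery strictly decreases the cardinal-valued fatness vector. The surgery shrinks the bags that meet the ``wrong'' side of the Menger separation and adds finitely many splice bags of size $<m$; but it may shrink only finitely many bags. If $n_j=\aleph_0$ for the top index $j$, removing finitely many bags of that size leaves $n_j=\aleph_0$ unchanged, while a bag that drops from size $j$ to some $j'<j$ can push a previously finite $n_{j'}$ strictly up --- so the fatness vector can even \emph{increase} lexicographically. Hence minimality no longer forces linkedness. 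This is exactly why the infinite case is a separate paper: K\v{r}\'{\i}\v{z} and Thomas do not simply rerun Thomas's finite argument with cardinals in place of integers, and your final sentence (``the only place where the infinite case departs\dots is the well-foundedness observation; the surgery transfers verbatim'') is where the argument breaks.
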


To make this result precise, recall that a \defn{(rooted) \td} $(T, \cV)$ of a possibly infinite graph $G$ is given by a (rooted) \defn{decomposition tree} $T$ whose nodes $t$ are assigned \defn{bags} $V_t \subseteq V(G)$ or \defn{parts}~$G[V_t]$ of the underlying graph $G$ such that $\cV = (V_t)_{t \in T}$ covers $G$ in a way that reflects the separation properties of $T$: 
Similarly as the deletion of an edge $e=st$ from $T$ separates it into components $T_s \ni s$ and $T_t \ni t$, the corresponding sets $A^e_s = \bigcup_{x \in T_s} V_x$ and $A^e_t = \bigcup_{x \in T_t} V_x$ in the underlying graph $G$ are separated by the \defn{adhesion set} $V_e = V_{s} \cap V_{t}$. A graph $G$ has 
\begin{itemize}
    \item \defn{tree-width $< k$} if it admits a (rooted) \td\ into parts of size $\leq k$,
    \item \defn{finitely bounded tree-width} if it has tree-width $< k$ for some $k \in \N$, and
    \item \defn{finite tree-width} if it admits a (rooted) \td\ into finite parts (of possibly unbounded size).\footnote{Note that for a graph to have finite tree-width we do not require that the \td\ into finite parts also satisfies that $\liminf_{e \in E(R)} V_e$ is finite for all rays $R$ in $T$, as it is sometimes \cites{bibel,robertson1995excluding} done.}
\end{itemize}
Given a tree $T$ rooted at a node $r$, its \defn{tree-order} is given by $x \leq y$ for $x, y \in V(T) \cup E(T)$ if~$x$ lies on the (unique) $\subseteq$-minimal path \defn{$rTy$} from $r$ to $y$. 
For an edge $e = st$ of $T$ with $s < t$, the \defn{part above $e$} is $G \up e = G[A^e_t]$ and the \defn{part strictly above $e$} is $G \strictup e = G \up e - V_e$.
A rooted \td\ $(T, \cV)$ of a graph $G$ is
\begin{itemize}
    \item \defn{linked} if for every two comparable nodes $s < t$ of $T$ there are $\min\{|V_e| \colon e \in E(sTt)\}$ pairwise disjoint $V_s$--$V_{t}$ paths in $G$ \cite{thomas1989wqo},
    \item \defn{tight} if for every edge $e$ of $T$ some component $C$ of $G \strictup e$ satisfies $N(C) = V_e$, and
    \item \defn{componental} if for every edge $e$ of $T$ the part $G \strictup e$ strictly above $e$ is connected. 
\end{itemize} 
Our main result extends \cref{thm_intro_krizthomas} to graphs of (possibly unbounded) finite tree-width: 

\begin{mainresult}\label{main:LinkedTightCompTreeDecompnew}
    Every graph of finite tree-width admits a rooted \td\ into finite parts that is linked, tight, and componental.
\end{mainresult}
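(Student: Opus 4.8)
The plan is to start from an arbitrary rooted \td\ of $G$ into finite parts --- which exists by hypothesis --- and to improve it in two stages: a routine normalisation producing a componental and tight \td\ into finite parts, followed by the substantial stage in which linkedness is achieved. Throughout we may assume $G$ is connected, dealing with the components of $G$ separately.

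\textbf{Stage 1 (normalisation).} Given a rooted \td\ $(T,\cV)$ of $G$ into finite parts, I would first make it componental by the standard tree-surgery: working downwards from the root, replace the subtree of $T$ above each edge $e=st$ (with $s<t$) by one copy of its induced sub-\td\ per component $C$ of $G\strictup e$, with all bags intersected with $V_e\cup V(C)$. Since the $(V_e\cup V(C))$-trace of a finite bag is finite, the parts stay finite, and every $G\strictup e$ becomes connected. To additionally get tightness, delete --- again downwards --- the redundant vertices of $V_e\setminus N_G(G\strictup e)$ from every bag above $e$; using componentality one checks this uncovers no edge and disconnects no bag's subtree, so it yields a componental, tight \td\ of $G$ into finite parts.

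\textbf{Stage 2 (linkedness).} Suppose the current \td\ is not linked, witnessed by comparable nodes $s<t$ admitting fewer than $m:=\min\{|V_e|:e\in E(sTt)\}$ disjoint $V_s$--$V_t$ paths; by Menger's theorem fix a separator $X$ with $|X|<m$ between $V_s$ and $V_t$, and fix an edge $e_0\in E(sTt)$ with $|V_{e_0}|=m$. As $|X|<|V_{e_0}|$, some vertex of $V_{e_0}$ lies on no $V_s$--$V_t$ path, so the decomposition can be ``tightened'' around $e_0$: following Thomas's uncrossing argument one rebuilds the \td\ in the region delimited by $V_{e_0}$ and $X$ to shrink the adhesion across $e_0$, keeping all parts finite and restoring componentality and tightness. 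The point is to organise these repairs so that the process terminates or converges. For \td s into \emph{bounded}-width parts this is Thomas's lexicographic minimisation of the decreasingly sorted vector of bag sizes; in our setting bag sizes are unbounded, so no such global well-order exists. The route I would take is to confine each repair to the finite-tree-width subgraph $G\up e_0'$ below a suitable edge $e_0'$ on which all relevant adhesion sets have size $\le m$ --- turning each step into a bounded-adhesion instance where a Thomas-style minimisation does apply --- and then to thread the local repairs together by a transfinite recursion down the decomposition tree, using the finiteness of the individual parts to control limit stages and the behaviour along rays, so that the resulting \td\ is again into finite parts and is now linked, tight and componental.

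\textbf{Main obstacle.} The genuine difficulty, and the point of departure from \cref{thm_intro_krizthomas}, is exactly this unboundedness of the parts: the naive global minimality argument has no well-founded target, and linkedness references arbitrarily long paths in the decomposition tree along which adhesion sizes may tend to infinity. Localising the repairs to bounded-adhesion regions and controlling the transfinite limit via the finiteness of the parts is where I expect the bulk of the work to sit; by contrast, Stage 1 and the finite-graph uncrossing step inside each repair are essentially known technology.
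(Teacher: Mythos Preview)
Your Stage~1 is standard and correct. Stage~2, however, is not a proof but a hope, and the gap you yourself flag is real and unresolved. You propose to repair a linkedness failure at an edge~$e_0$ by a Thomas-style uncrossing ``confined to a bounded-adhesion region'', and then to thread such repairs together transfinitely. But a single linkedness failure concerns an entire path $sTt$ along which adhesion sizes may be unbounded, so there is no natural bounded-adhesion region in which to work; and even if each repair were local, you give no argument that the transfinite process converges to a \td\ into finite parts rather than blowing up bags at limit stages, nor that repairs do not regenerate failures elsewhere indefinitely. Thomas's argument rests on a well-founded minimisation target; you correctly observe that none is available here, but supply no substitute.

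The paper does not attempt to adapt Thomas's argument. Instead it sidesteps the minimisation problem via a two-step structural decomposition. First (\cref{thm:critVtxIntro}) one constructs a rooted \td\ $(T^1,\cV^1)$ whose adhesion sets are the critical vertex sets of~$G$ and whose torsos are tough; this is obtained by contracting the non-critical adhesion edges of a normal-spanning-tree \td. Second (\cref{thm:RaylessThmIntro}), each torso is refined by a linked, tight, componental \td\ of finite adhesion with rayless torsos, built by a transfinite recursion (\cref{recursion:buildingpackage}) that at each node greedily cuts off ends by nested end-linked regions of minimal boundary --- linkedness is produced directly by this choice, not by repairing failures after the fact. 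Gluing the refinements along $(T^1,\cV^1)$ yields the final \td. Linkedness across the glue edges comes for free because critical vertex sets are infinitely connected (\cref{lemma:LinkingPathsAlongCritVertexSet}), and the parts are finite because tough rayless graphs are finite (\cref{prop:RaylessToughGraphsAreFinite}). The missing idea in your proposal is precisely this use of critical vertex sets as ``free'' linking hubs, which decouples the construction of linkedness from any global optimisation.
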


Note that achieving just a subset of the properties in \cref{main:LinkedTightCompTreeDecompnew} may be significantly easier. Recall that a \defn{normal spanning tree} of a graph $G$ is a rooted spanning tree $T$ such that the endvertices of every edge of $G$ are comparable in its tree-order. It is well known that the connected graphs of finite tree-width are precisely the graphs with normal spanning trees (see \cref{thm:FiniteTWyieldsNST} below). 
Given a normal spanning tree $T$ with root $r$, by assigning to each of its nodes~$t$ the bag $V_t := V(rTt)$ we obtain a rooted \td\ $(T,\cV_\textnormal{NT})$ into finite parts that is componental and linked, the latter albeit for the trivial reason that~$s < t \in T$ implies~$V_s \subseteq V_t$. However, this \td\ clearly fails to be tight.
Following an observation by Diestel \cite{diestel1994depth}, one can restore tightness by taking as bags the subsets $V'_t \subseteq V_t$ that consist only of those vertices in $V_t$ that send a $G$-edge to a vertex above $t$ in $T$. The new \td~$(T,\cV'_\textnormal{NT})$ is then tight and componental, but in general no longer linked. 

Having all three properties simultaneously is more challenging to achieve. 
In what follows, we hope to convince the reader of the  usefulness of \cref{main:LinkedTightCompTreeDecompnew} by demonstrating the surprisingly powerful interplay of the properties of being linked, tight, and componental.
We do so by presenting several applications in the following sections.

In \cite{ExamplesLinkedTDInfGraphs} we provide examples showing that \cref{main:LinkedTightCompTreeDecompnew} appears to lie at the frontier of what is still true. For example, \emph{linked} cannot be strengthened to its `unrooted' version that requires the \emph{linked} property between every pair of nodes and not just comparable ones \cite{ExamplesLinkedTDInfGraphs}*{\exampleNoStronglyLinkedTD}.

\subsection{Displaying end structure and excluding infinite minors}

As our first application, we show that every \td\ as in  \cref{main:LinkedTightCompTreeDecompnew}  displays the combinatorial and topological end structure of the underlying graph as follows, resolving a question by Halin from 1977 \cite{halin1977systeme}*{\S6}. (Halin's original work yields proofs of \cref{main:LinkedTightCompTreeDecompnew,main:LinkedTightCompTreeDecompnew2} for locally finite connected graphs with at most two ends.) 

\begin{mainresult}
\label{main:LinkedTightCompTreeDecompnew2}
Every graph $G$ of finite tree-width admits a rooted \td\ into finite parts that homeomorphically displays all the ends of $G$, their dominating vertices, and their combined degrees.
\end{mainresult}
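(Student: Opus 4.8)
The plan is to derive \cref{main:LinkedTightCompTreeDecompnew2} from \cref{main:LinkedTightCompTreeDecompnew} by taking a rooted \td\ $(T,\cV)$ into finite parts that is linked, tight, and componental, and showing that any such \td\ automatically displays the end structure. First I would set up the relevant correspondence: for a ray $R$ in the decomposition tree $T$, the union of the adhesion sets $V_e$ along $R$ together with $\liminf_{e\in E(R)} V_e$ governs which ends of $G$ "live along" $R$; ends not captured this way must be contained in a single part, but parts are finite, so every end of $G$ is captured by some end (i.e.\ equivalence class of rays) of $T$. The componental property ensures that $G\strictup e$ is connected for every edge $e$, so tails of rays in $T$ genuinely correspond to connected subgraphs of $G$, which is what makes the assignment of ends of $G$ to ends of $T$ well defined and the display map continuous; tightness ensures the adhesion sets are not wasteful, so distinct ends of $T$ receive distinct ends of $G$ (no two tree-ends get "merged").

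The core of the argument is then to upgrade this bijection into a \emph{homeomorphic display}: the inverse limit $\partial T$ of the decomposition tree (equivalently $|T|\setminus T$ with the usual topology) should map homeomorphically onto $\Omega(G)$, the end space of $G$, via the natural map $\eta$. Continuity of $\eta$ and of $\eta^{-1}$ reduces, by the standard basis of open sets on both sides, to the following: for every finite $S\subseteq V(G)$ there is an edge $e\in E(T)$ (or a finite set of such edges) so that the ends in a given basic neighbourhood in $\partial T$ all lie in one component of $G-S$, and conversely. Here is where \emph{linkedness} does the real work. Given two rays in $T$ that do not share a tail, pick an edge $e$ on the "first" ray below the split and an edge $f$ on the "second" ray below the split with $e,f$ incomparable; linkedness between a node far up on one ray and a node far up on the other — more precisely between the two tails — forces $\min\{|V_g|\}$ disjoint paths, and combined with tightness and componentality this pins down exactly the combined degree $\deg(\omega)$ of the displayed end $\omega$ as $\liminf_{e\in E(R)}|V_e|$, and shows this liminf is attained, so the display is degree-preserving. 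The dominating vertices of $\omega$ are then precisely the vertices lying in $\liminf_{e\in E(R)} V_e$ that are not themselves pushed off to infinity — again componentality guarantees they actually dominate.

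Concretely I would organise the proof as follows. \textbf{Step 1:} Recall/establish the notion of a \td\ \emph{displaying} a set of ends homeomorphically, together with the combined degree $\deg(\omega)$ and the set $\mathrm{dom}(\omega)$ of dominating vertices; state the basic facts relating rays in $T$, rays in $G$, and the sequences $(V_e)$. \textbf{Step 2:} Using componentality, define the map $\eta\colon \partial T \to \Omega(G)$ sending a tree-end to the unique end of $G$ all of whose rays eventually lie in $G\strictup e$ for every $e$ on the corresponding ray of $T$; show it is well defined and surjective (surjectivity because parts are finite). \textbf{Step 3:} Using linkedness (and tightness), show $\eta$ is injective and that it is a homeomorphism onto its image, by checking the basic open sets match up; this is the main obstacle, as it requires carefully turning the Menger-type path systems guaranteed by linkedness into genuine separators in $G$ that realise the tree-separations. \textbf{Step 4:} Deduce the statement about dominating vertices (a vertex $v$ dominates $\omega$ iff $v\in\liminf V_e$ along the ray, using tightness to rule out spurious candidates and componentality to produce the dominating fan) and about combined degrees ($\deg(\omega)=\liminf|V_e|$, attained, via linkedness). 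I expect Step 3 — proving that linkedness plus tightness plus componentality yields a genuine homeomorphism rather than just a continuous bijection — to be the crux; everything else is bookkeeping with the definitions of $|T|$ and $\Omega(G)$.
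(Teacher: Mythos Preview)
Your overall strategy---take the \td\ from \cref{main:LinkedTightCompTreeDecompnew} and verify that it displays the end structure---is exactly what the paper does. But you have misattributed which of the three properties does which job, and your Step~3 as written rests on a confusion that would lead you in circles.

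The homeomorphic display of the ends does \emph{not} require linkedness. As the paper records in \cref{lem:TreeDecompDisplayingEnds} (essentially \cite{koloschin2023end}*{Lemma~3.1}), any cofinally componental rooted \td\ of finite adhesion whose torsos are rayless already displays the ends homeomorphically; here the torsos are rayless simply because the parts are finite. So componentality alone dispatches your Steps~2 and~3. Both injectivity and continuity of $\eta^{-1}$ come from the fact that each $G\strictup e$ is connected, so the finite adhesion set $V_e$ genuinely separates the ends living above $e$ from everything else. No Menger-type argument is needed, and your phrase ``turning the path systems guaranteed by linkedness into genuine separators'' has it backwards: the adhesion sets \emph{are} the separators by virtue of being a \td; linkedness produces paths, not separators.

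The actual division of labour in the paper is: componentality (plus finite parts) gives the homeomorphic display (\cref{lem:TreeDecompDisplayingEnds}); tightness together with componentality gives $\liminf_{e\in R}V_e=\Dom(\eps)$ via an explicit fan construction (\cref{lem:TreeDecompDisplayingDominatingVertices}); and only then does linkedness enter, to show $\liminf_{e\in R}|V_e|=\Delta(\eps)$ by concatenating the disjoint path families along a suitably thinned subsequence of adhesion sets (\cref{lem:linkedandliminfareonlydominatingverticesimpliesdisplayenddegrees}). Your Step~4 has the right shape for this last part; move linkedness there and drop it from Step~3.
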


Recall that an \defn{end} $\eps$ of a graph $G$ is an equivalence class of rays in $G$ where two rays are equivalent if for every finite set $X$ they have a tail in the same component of~$G-X$. We refer to this component of $G-X$ as $C_G(X, \eps)$. Write $\Omega(G)$ for the \defn{set of all ends} of $G$.
A vertex $v$ of $G$ \defn{dominates} an end $\eps$ of $G$ if it lies in $C_G(X, \eps)$ for every finite set $X$ of vertices other than $v$.
We denote the set of all vertices of $G$ which dominate an end $\eps$ by $\Dom(\eps)$.
The \defn{degree} $\deg(\eps)$ of an end $\eps$ of $G$ is the supremum over all cardinals $\kappa$ such that there exists a set of $\kappa$ pairwise disjoint rays in $\eps$, and its \defn{combined degree} is~$\Delta(\eps) := \deg(\eps) +|\Dom(\eps)|$.

In a rooted \td\ $(T,\cV)$ of a graph $G$ into finite parts, every end $\eps$ of $G$ \defn{gives rise} to a single rooted ray $R_\eps$ in $T$ that starts at the root and then always continues upwards along the unique edge $e \in T$ with $C_G(V_t, \eps) \subseteq G \strictup e$. 
This yields a map~$\phi \colon \Omega(G) \to \Omega(T),\; \eps \mapsto R_\eps$. 
A rooted \td~$(T, \cV)$
\begin{itemize}
\item  \defn{displays} the ends of $G$ if $\phi$ is a bijection  \cite{carmesin2019displayingtopends},
\item \defn{displays the ends homeomorphically} if $\phi$ is a homeomorphism \cites{koloschin2023end},
\item \defn{displays all dominating vertices} if $\liminf_{e \in E(R_\eps)} V_e = \Dom(\eps)$\footnote{The set-theoretic $\liminf_{n \in \N} A_n$ consists of all points that are contained in all but finitely many $A_n$. For a ray $R= v_0e_0v_1e_1v_1 \dots$ in $T$, one gets $\liminf_{e \in E(R_\eps)} V_e = \bigcup_{n \in \N} \bigcap_{i \geq n} V_{e_i}$.} for all $\eps \in \Omega(G)$, and
\item \defn{displays all combined degrees} if $\liminf_{e \in E(R_\eps)} |V_e| = \Delta(\eps)$ for all $\eps \in \Omega(G)$. 
\end{itemize}
Roughly, every componental, rooted \td\ homeomorphically displays all the ends \cite{koloschin2023end}*{Lemma~3.1}, every tight such \td\ displays all dominating vertices (\cref{lem:TreeDecompDisplayingDominatingVertices}), and every such linked tight \td\ displays all combined degrees (\cref{lem:linkedandliminfareonlydominatingverticesimpliesdisplayenddegrees}).

Diestel's \td\ $(T,\cV'_\textnormal{NT})$ mentioned above was not linked, but still displays all the ends and their dominating vertices. This observation allowed Diestel \cite{diestel1994depth} to obtain a short proof of Robertson, Seymour and Thomas's characterisation that graphs without subdivided infinite cliques are precisely the graphs that admit a  \td\ $(T,\cV)$ into finite parts such that for every ray $R$ of $T$, the set $\liminf_{e \in E(R)}{V_e}$ is finite.
Extending this idea, we now use \cref{main:LinkedTightCompTreeDecompnew2} to provide short, unified proofs for two other related results by Robertson, Seymour and Thomas.

\begin{corollary}[Robertson, Seymour \& Thomas 1995 \cite{robertson1995excluding}*{(2.6)}] \label{cor:HalfGridMinor}
    A graph contains no half-grid minor if and only if it admits a \td\ $(T,\cV)$ into finite parts such that for every ray~$R$ of~$T$ we have $\liminf_{e \in E(R)}{|V_e|} < \infty$. 
\end{corollary}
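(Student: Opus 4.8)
The plan is to derive both directions from the machinery already set up in the paper, principally Theorem~\ref{main:LinkedTightCompTreeDecompnew2} together with the definitions of displaying combined degrees. Let me think about the structure.

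First, the easy direction: if $G$ admits a \td\ $(T,\cV)$ into finite parts with $\liminf_{e\in E(R)}|V_e|<\infty$ for every ray $R$ of $T$, then $G$ has no half-grid minor. The half-grid (the $\mathbb{N}\times\mathbb{N}$ grid, or the hexagonal/quarter-grid depending on convention) has a single end $\eps$ whose combined degree $\Delta(\eps)$ is infinite — indeed its degree is already $\aleph_0$. Minors only decrease combined degrees in the appropriate sense, and having a \td\ into finite parts with all ray-liminfs finite is inherited by minors (one can push a \td\ through a minor operation: contracting branch sets and deleting vertices/edges keeps bags finite and does not increase $\liminf|V_e|$ along rays, since the new adhesion sets are images of old ones). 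So if $G$ had a half-grid minor, $G$ would contain a subgraph $H$ with a model of the half-grid, and restricting the \td\ to $H$ and then contracting would yield a \td\ of the half-grid into finite parts with finite ray-liminfs — but the half-grid's unique end $\eps$ forces every such \td\ to have $\liminf_{e\in E(R_\eps)}|V_e|\geq\deg(\eps)=\infty$, a contradiction. This uses the standard fact (provable directly, or citable) that for the end $\eps$ of any \td\ into finite parts, $\liminf_{e\in E(R_\eps)}|V_e|\geq\deg(\eps)$, since $\deg(\eps)$ many disjoint rays must eventually all pass through every adhesion set $V_{e_i}$ along the ray $R_\eps$ that $\eps$ gives rise to.

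For the hard (and more interesting) direction: suppose $G$ contains no half-grid minor; we must produce a \td\ into finite parts with finite ray-liminfs. The first step is to observe that $G$ has finite tree-width. This should follow because a graph of infinite tree-width — more precisely, a connected graph without a normal spanning tree — contains (by Halin-type / Robertson–Seymour–Thomas structure theory, or by Theorem~\ref{thm:FiniteTWyieldsNST} referenced in the excerpt contrapositively) a "highly connected" infinite substructure that yields a half-grid minor; alternatively, one argues: if $G$ has a half-grid minor we are done by the easy direction of the excluded-minor theorem, so assume not, and use that the absence of a half-grid minor is known to imply finite tree-width. (This step is essentially the content of one implication of \cite{robertson1995excluding}*{(2.6)} and may need to be quoted rather than reproved — I expect this to be the main obstacle, as reproving "no half-grid minor $\Rightarrow$ finite tree-width" from scratch is substantial; the clean move is to cite it and let our contribution be the strengthening of the \emph{other} implication together with the end-structure refinements.) Granting finite tree-width, apply Theorem~\ref{main:LinkedTightCompTreeDecompnew2} to get a rooted \td\ $(T,\cV)$ into finite parts that homeomorphically displays all ends and all combined degrees. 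Now take any ray $R$ of $T$. Either $R=R_\eps$ for some end $\eps$ of $G$ (since $\phi$ is a bijection, hence every ray of $T$ arises from some end), or... in fact since $\phi$ is a bijection onto $\Omega(T)$ and every ray of $T$ tail-determines an element of $\Omega(T)$, every ray $R$ has a tail equal to a tail of some $R_\eps$, so $\liminf_{e\in E(R)}|V_e|=\liminf_{e\in E(R_\eps)}|V_e|=\Delta(\eps)$. It remains to see $\Delta(\eps)<\infty$ for every end $\eps$ of $G$: an end of infinite combined degree would either have infinitely many disjoint rays (giving, together with finitely many separating vertex sets, a half-grid minor directly — this is again a classical construction) or infinitely many dominating vertices (likewise yielding a half-grid, or rather a subdivided infinite star fan structure that contains a half-grid minor). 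Hence $\Delta(\eps)<\infty$ for all $\eps$, so $\liminf_{e\in E(R)}|V_e|<\infty$ for every ray $R$ of $T$, as required.

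So the write-up would be: (1) state and use "$\deg(\eps)\le\liminf|V_e|$ along $R_\eps$" and minor-monotonicity for the easy direction; (2) for the converse, cite that no-half-grid-minor implies finite tree-width, invoke Theorem~\ref{main:LinkedTightCompTreeDecompnew2}, and use that a half-grid minor is forced by any end of infinite combined degree. The main obstacle is making sure the "infinite combined degree $\Rightarrow$ half-grid minor" step and the "infinite tree-width $\Rightarrow$ half-grid minor" step are either cited cleanly from \cite{robertson1995excluding} or given a short self-contained argument; I would aim for the former to keep the proof short, since the paper's own novelty here is the \emph{unification} afforded by the strong \td, not the excluded-minor theorem itself.
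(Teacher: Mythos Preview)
Your overall plan matches the paper's: skip or cite the easy direction, then for the hard direction establish finite tree-width, apply \cref{main:LinkedTightCompTreeDecompnew2}, and argue that every end has finite combined degree so that the displayed $\liminf$'s are finite. The one genuine soft spot you flag yourself is the step ``no half-grid minor $\Rightarrow$ finite tree-width'': you propose to cite \cite{robertson1995excluding}*{(2.6)} for it, but that is precisely the corollary being reproved, so this would be circular. The paper avoids this cleanly via normal spanning trees: no half-grid minor trivially implies no subdivided $K^{\aleph_0}$, and Halin's theorem \cites{halin78,pitz2020unified} then gives a normal spanning tree in each component, which by \cref{thm:FiniteTWyieldsNST} is equivalent to finite tree-width. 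For the combined-degree step, the paper likewise appeals to named results rather than sketching constructions: Halin's grid theorem \cites{halin65,kurkofka2022strengthening} gives a half-grid minor from any end of infinite degree, and an end with infinitely many dominating vertices yields a subdivided $K^{\aleph_0}$ (stated as a routine exercise). With these two citations in place your argument goes through exactly as the paper's does.
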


\begin{proof}
    We prove here only the hard implication\footnote{\label{note:Intro} See \cites{robertson1995excluding,seymour1993binarytree} for details about the `easy' implications.}.  So assume that $G$ is a graph without half-grid minor. By a result of Halin \cites{halin78,pitz2020unified}, every connected graph without a subdivided $K^{\aleph_0}$ has a normal spanning tree. Applying this to every component of $G$ we see that $G$ has finite tree-width. So we may apply \cref{main:LinkedTightCompTreeDecompnew2}.
    
    By Halin's grid theorem \cites{halin65,kurkofka2022strengthening}, every end of infinite degree contains a half-grid minor. And by a routine exercise, every end dominated by infinitely many vertices contains a subdivided $K^{\aleph_0}$. So every end of $G$ has finite combined degree. 
    Let $(T,\cV)$ be the \td\ into finite parts from \cref{main:LinkedTightCompTreeDecompnew2}. Then $\liminf_{e \in R}{|V_e|}$ equals the combined degree of the end of~$G$ giving rise to $R$, and hence is finite, as desired.
\end{proof}

\begin{corollary}[Seymour \& Thomas 1993 \cite{seymour1993binarytree}*{(1.5)}] \label{cor:T2Subdivision}
    %See Theorem 15 in Seymour, P.D., Thomas, R.: Excluding infinite trees. Trans. Am. Math. Soc. 1993
    A graph contains no subdivision of $T_2$ if and only if it admits a \td\ $(T, \cV)$ into finite parts such that for every ray $R$ in $T$ we have $\liminf_{e \in E(R)}{|V_e|} < \infty$ 
    and $T$ contains no subdivision of the binary tree $T_2$.
\end{corollary}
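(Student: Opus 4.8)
The strategy mirrors the proof of Corollary~\ref{cor:HalfGridMinor}: prove only the hard implication, so assume $G$ contains no subdivision of $T_2$. First I would observe that excluding a $T_2$-subdivision in particular excludes a subdivided $K^{\aleph_0}$ (since $K^{\aleph_0}$ contains $T_2$ as a subgraph, hence any subdivision of $K^{\aleph_0}$ contains one of $T_2$), so by Halin's theorem \cite{halin78,pitz2020unified} every component of $G$ has a normal spanning tree and thus $G$ has finite tree-width. Then apply \cref{main:LinkedTightCompTreeDecompnew2} to obtain a rooted \td\ $(T,\cV)$ into finite parts that homeomorphically displays all ends of $G$, their dominating vertices, and their combined degrees.

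It remains to check the two asserted properties of this $(T,\cV)$. For the $\liminf$ condition: as in the previous proof, $T_2 \subseteq K^{\aleph_0}$ forces every end of $G$ to have finite degree and finitely many dominating vertices, hence finite combined degree; since $(T,\cV)$ displays all combined degrees and every ray of $T$ arises from a (unique) end of $G$ via the homeomorphism $\phi$, we get $\liminf_{e\in E(R)}|V_e| = \Delta(\phi^{-1}(R)) < \infty$ for every ray $R$ of $T$. For the claim that $T$ contains no $T_2$-subdivision: since $\phi\colon\Omega(G)\to\Omega(T)$ is a homeomorphism, in particular a bijection, $T$ and $G$ have "the same" ends; the key point is that a subdivision of $T_2$ inside the tree $T$ would correspond, via the parts of the \td, to a subdivision of $T_2$ inside $G$. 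Concretely, each branch vertex of a putative $T_2$-subdivision in $T$ is a node $t$, each subdivided edge is a path in $T$, and one lifts this to $G$ by choosing, for consecutive tree-nodes along such a path, a vertex in the (nonempty, since the \td\ is into the whole of $G$ and bags along an edge meet in the adhesion set) shared bag and joining consecutive choices by paths inside the parts $G[V_t]$; componentality of the \td\ (and connectedness of the relevant parts) is what makes these connecting paths available, and distinctness of the tree-branches keeps the resulting paths in $G$ internally disjoint.

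I expect the main obstacle to be this last lifting argument: turning a $T_2$-subdivision in $T$ into one in $G$ cleanly. The subtlety is that bags $V_t$ are finite but a priori the branching structure of $T$ could "collapse" when passing to $G$ — e.g. two tree-edges out of a node $t$ might be separated in $T$ only by the single bag $V_t$, and one must ensure the three (or more) paths leaving the branch-vertex in $G$ can be chosen to start at genuinely different vertices or at least to be internally disjoint after rerouting. Here tightness and componentality of $(T,\cV)$ should do the work: tightness guarantees that for each tree-edge $e$ above $t$ the part $G\strictup e$ has a component with neighbourhood exactly $V_e$, so each downward direction in $T$ is "witnessed" by genuine graph structure, and componentality makes $G\strictup e$ connected so that paths can be routed through it without leaving the intended branch. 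One then argues that since $T_2$ has a vertex of degree $3$ (resp. arbitrarily large finite degrees along the binary-tree structure), a $T_2$-subdivision in $T$ produces, after this lifting, a $T_2$-subdivision in $G$, contradicting our hypothesis; hence $T$ itself is $T_2$-subdivision-free. It is plausible that this lifting lemma is already recorded in the literature on \td{}s displaying ends (e.g.\ in the spirit of \cite{seymour1993binarytree} or \cite{robertson1995excluding}), in which case one simply cites it; otherwise it is a short but careful induction on the structure of the $T_2$-subdivision, and that induction is the crux of the proof.
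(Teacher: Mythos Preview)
Your setup and the $\liminf$ argument are essentially the paper's (though note: what forces ends to have finite degree is that the half-grid contains a $T_2$-subdivision, so ``no $T_2$-subdivision $\Rightarrow$ no half-grid minor'' and then Halin's grid theorem applies; your phrasing via $K^{\aleph_0}$ is slightly garbled here, but the intended argument is fine).

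The substantive divergence is in showing that $T$ has no $T_2$-subdivision. Your proposed direct lifting --- choosing vertices in bags and routing internally disjoint paths through parts --- has a real gap. The parts $G[V_t]$ need not be connected, and even using componentality, the natural candidate branch sets (e.g.\ $G\strictup e_u \setminus (G\strictup e_{u_1} \cup G\strictup e_{u_2})$ for a branch node with children $u_1,u_2$) need not be connected, nor need there be a $G$-edge between consecutive such sets. More concretely, at a branch vertex $t$ with up-edges $e_1,e_2$, a previously fixed $v_t$ may be separated from both $G\strictup e_1$ and $G\strictup e_2$ by a single cutvertex of $G\strictup e_t$, killing internal disjointness; fixing this requires choosing $v_t$ with foresight about all later levels, and it is not clear this can be done by a ``short careful induction''. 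I do not know a place in the literature where this lifting lemma is recorded, and making it work directly is genuinely delicate.

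The paper sidesteps all of this by using the \emph{homeomorphic} (not merely bijective) displaying of ends together with Jung's theorem: a graph contains an end-injective subdivided $T_2$ if and only if its end space is not scattered. Since $G$ has no subdivided $T_2$, $\Omega(G)$ is scattered; since $\phi\colon\Omega(G)\to\Omega(T)$ is a homeomorphism and scatteredness is a topological property, $\Omega(T)$ is scattered; hence $T$ has no end-injective subdivided $T_2$; and since every subtree of a tree is end-injective, $T$ has no subdivided $T_2$ at all. This is the step your proposal is missing: you invoke the bijection but never use the topology, and the topology is precisely what lets one avoid the combinatorial lifting.
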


\begin{proof}
    Again, we only prove the hard implication. 
    Assume that $G$ is a graph without a subdivision of $T_2$. In particular, $G$ contains no half-grid minor. 
    As above, $G$ has finite tree-width and every end of $G$ has finite combined degree.
    Let $(T,\cV)$ be the \td\ into finite parts from \cref{main:LinkedTightCompTreeDecompnew2}. 
    Then $\liminf_{e \in E(R)}{|V_e|}$ equals the combined degree of the end of~$G$ giving rise to $R$, and hence is finite.

    We complete the proof by showing that $T$ contains no subdivided binary tree. 
    Recall Jung's characterisation that a graph $G$ contains no end-injective\footnote{A rooted subtree $T$ of a graph $G$ is \defn{end-injective} if distinct rooted rays in $T$ belong to distinct ends of $G$.} subdivided $T_2$ if and only if the end space of $G$ is scattered\footnote{The precise definition of `scattered' is not relevant here, it is only important that `scattered' is a property of topological spaces and hence preserved under homeomorphisms.} 
    \cite{jung1969wurzelbaume}*{\S3}. 
    Hence, if $G$ contains no subdivided binary tree, then its end space is scattered. Since $(T,\cV)$ displays the ends of $G$ homeomorphically, we conclude that the end space of $T$ is scattered, so $T$ contains no end-injective subdivided $T_2$, by the converse of Jung's result.
    As all subtrees of trees are end-injective, this yields the desired result.
\end{proof}

Note that \cref{main:LinkedTightCompTreeDecompnew} shows that one can always require the witnessing  \td s for \cref{cor:HalfGridMinor,cor:T2Subdivision} to be linked, tight, and componental.

\subsection{Lean \td s} 
\label{sec_leanIntro}

We already mentioned that there exist even stronger versions of the K{\v r}{\'i}{\v z}-Thomas \cref{thm_intro_krizthomas}. See the articles by Bellenbaum and Diestel \cite{bellenbaum2002two} and by Erde~\cite{erde2018unified} for a modern treatment of the finite case.

\begin{theorem}[{Thomas 1990 \cite{LeanTreeDecompThomas}, K{\v r}{\'i}{\v z} and Thomas 1991  \cite{kriz1991mengerlikepropertytreewidth}}]
\label{thm_intro_krizthomas2}
    Every (finite or infinite) graph  of tree-width  $< k$ has a lean \td\ of width $< k$.
\end{theorem}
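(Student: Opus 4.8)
The plan is to begin with an arbitrary \td\ of $G$ of width $< k$ — one exists by the very definition of tree-width $< k$ — and then to repair the places where leanness fails, one at a time, never raising the width above $k$ and strictly decreasing a suitable complexity measure, so that in the finite case the procedure terminates. Recall that a \td\ $(T,\cV)$ of $G$ of width $< k$ is \emph{lean} if for all nodes $s,t$ of $T$ and all subsets $Z_1 \subseteq V_s$ and $Z_2 \subseteq V_t$ with $|Z_1|=|Z_2|=:\ell$, either $G$ contains $\ell$ disjoint $Z_1$--$Z_2$ paths or some edge $e$ of $sTt$ has $|V_e| < \ell$; call a tuple $(s,t,Z_1,Z_2)$ witnessing the failure of this a \emph{defect}.

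For a finite graph $G$, following Bellenbaum and Diestel \cite{bellenbaum2002two}, I would choose among all \td s of $G$ of width $< k$ one, $(T,\cV)$, minimising the ``fatness'' $\sum_{t\in V(T)} 3^{|V_t|}$ (first minimising the width, then the fatness), and claim it is lean. If not, pick a defect $(s,t,Z_1,Z_2)$ with, say, $sTt$ of minimum length. Since $G$ has no $\ell$ disjoint $Z_1$--$Z_2$ paths, Menger's theorem yields a separation $(A,B)$ of $G$ with $Z_1 \subseteq A$, $Z_2 \subseteq B$ and $|A\cap B| < \ell$; take it with $A$ inclusion-minimal. Then comes the key surgery: \emph{push $(A,B)$ into $(T,\cV)$ along $sTt$}, replacing each bag $V_p$ at a node $p$ of $sTt$ by the two bags $(V_p\cap A)\cup(A\cap B)$ and $(V_p\cap B)\cup(A\cap B)$, arranging these along a subdivided copy of $sTt$ so that the two sides meet through the small set $A\cap B$, and re-attaching the branches of $T$ hanging off $sTt$ to whichever side their vertices lie on. Since $|A\cap B| < \ell \le |V_e|$ for every edge $e$ of $sTt$ — the latter inequality being part of the defect — no new bag is larger than the bag on $sTt$ it came from, so the width stays $< k$; and the minimality of the defect and of $A$ force at least one bag to shrink strictly, which by a short computation with the weights $3^{|\cdot|}$ makes the fatness drop strictly, contradicting the choice of $(T,\cV)$.

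I expect this surgery to be the main obstacle, and it is where essentially all the care in the finite case goes. One must pin down exactly how the two sides and the dangling branches are re-attached so that the outcome is again a \td\ — in particular that it covers $V(G)$ and $E(G)$ and that each part $\{t : v \in V_t\}$ stays connected — and one must run the weight accounting carefully: since one old bag on the path becomes two new bags, the exponential base $3$ (or any sufficiently large base) is precisely what ensures that a strict shrink in one place outweighs such a duplication, which is exactly why the minimality of the defect and of $A$ are imposed.

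For an infinite graph $G$ of tree-width $< k$ the fatness may be infinite, so one cannot simply minimise it. Here I would instead iterate the repair step transfinitely: fix a cofinal enumeration of the potential defects, apply the surgery above at successor steps, and at limit steps pass to a suitable limit of the \td s constructed so far — the points needing work being that each surgery is \emph{local} enough for such limits to exist and again be \td s of width $< k$, and that a repaired defect stays repaired. An alternative route, closer to Kříž and Thomas's original argument \cite{kriz1991mengerlikepropertytreewidth}, is to reduce to the finite case by compactness (using that a graph has tree-width $< k$ iff all its finite subgraphs do) and take a limit of lean \td s of the finite subgraphs, or else to build the lean \td\ directly from the (tree-like) poset of all separations of $G$ of order $< k$. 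In every case the finite surgery is the same; the real difficulty in the infinite case is controlling the limit. Note finally that a lean \td\ is in particular linked in the unrooted sense, so this theorem is strictly stronger than the existence of a linked \td\ and is not just a restatement of \cref{thm_intro_krizthomas}.
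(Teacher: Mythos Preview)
The paper does not give a self-contained proof of \cref{thm_intro_krizthomas2}; it is stated as a cited result of Thomas and of K{\v r}{\'i}{\v z}--Thomas. For the finite case the paper simply refers the reader to Bellenbaum--Diestel \cite{bellenbaum2002two} and Erde \cite{erde2018unified}, and your sketch is precisely the Bellenbaum--Diestel argument: minimise a fatness functional over all width-$<k$ decompositions and use Menger plus a local surgery to derive a contradiction from any defect. So for the finite case your proposal agrees with the modern proof the paper points to.

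For the infinite case, however, the paper's route is genuinely different from any of the three you suggest. Rather than iterating the finite surgery transfinitely, taking a compactness limit of finite lean decompositions, or working directly with the poset of small separations (all of which are in the spirit of the original K{\v r}{\'i}{\v z}--Thomas proof), the paper \emph{reobtains} the infinite case as the `moreover'-part of \cref{main:LeanTD}. Concretely, in the proof of \cref{thm:LeanTDTechnical} one starts from the linked, tight, cofinally componental \td\ into finite parts of \cref{maincor:TreeDecompDisplayingInfsTechnical}, contracts edges that fail a certain robustness condition (\cref{lem:MakeTDrobust}) so that, at each surviving node, the star of incident separations is left-well-linked and the torso has tree-width at most $\tw(G)$ (\cref{lem:TorsosHaveSameTreeWidth}), and then applies the \emph{finite} case of \cref{thm_intro_krizthomas2} as a black box to each finite torso. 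Gluing these finite lean decompositions along the adhesion sets and using left-well-linkedness (\cref{lem:RobustImpliesWellLinked}, \cref{lem:DisjointPathsExtendedByWellLinked}) to lift disjoint path families from torsos to $G$ then yields leanness globally. So the paper trades the delicate limit/compactness arguments you describe for its structural machinery about displaying infinities, at the cost of relying on the finite result rather than reproving it; your approach, by contrast, would give a direct, uniform proof for all cardinalities but requires controlling the transfinite iteration or limit, which is exactly the hard part you flag.
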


Here, a \td\ of a graph $G$ is \defn{lean} if for every two (not necessarily distinct) nodes~$t_1,t_2 \in T$ and vertex sets $Z_1 \subseteq V_{t_1}$ and $Z_2 \subseteq V_{t_2}$ with $|Z_1| = |Z_2| =: \ell \in \N$, either $G$ contains~$\ell$ pairwise disjoint $Z_1$--$Z_2$ paths or there exists an edge $e \in t_1Tt_2$ with $|V_e| < \ell$.
There are two ways in which `lean' is stronger than `linked': First, every lean  \td~$(T,\cV)$ satisfies that for \emph{every} two nodes $s  \neq t \in T$ there are~$\min\{|V_e| \colon e \in E(sTt)\}$ pairwise disjoint $V_s$--$V_{t}$ paths in $G$. In a way, this is the unrooted version of the property `linked' as introduced above which only required this property when $s$ and $t$ are comparable.
The other difference between `linked' and `lean' is that a bag of a lean \td\ is only large if it is highly connected, which follows by considering the case $t_1 = t_2$ in the definition of `lean'. 

Now it is natural to ask whether \cref{thm_intro_krizthomas2} extends to graphs of finite tree-width. Given that the \td\ $(T,\cV_\textnormal{NT})$ from above is linked but not necessarily lean, the following question appears to be non-trivial:

\begin{center}
    \mbox{\textit{Does every graph of finite tree-width admit a lean \td\ into finite parts?}}
\end{center}

Unfortunately, the answer to this question is in the negative. 
In \cite{ExamplesLinkedTDInfGraphs}*{\exampleNoLeanTD} we construct a locally finite and planar graph which does not admit any lean \td.
In particular, not even graphs without $K^{\aleph_0}$ minor admit lean \td s.

Yet, we can extend \cref{thm_intro_krizthomas2} in the optimal way to graphs without half-grid minor by post-processing the \td\ from \cref{main:LinkedTightCompTreeDecompnew} using the finite case of \cref{thm_intro_krizthomas2}.

\begin{mainresult} \label{main:LeanTD}
    Every graph~$G$ without half-grid minor admits a lean \td\ into finite parts.
    Moreover, if the tree-width of $G$ is finitely bounded, then the lean \td\ can be chosen to have width $\tw(G)$.
\end{mainresult}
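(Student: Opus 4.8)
The plan splits into an easy ``moreover'' part and a substantial main part. For the moreover part, suppose $G$ has finitely bounded tree-width and put $k:=\tw(G)+1$; then $G$ has tree-width $<k$, so \cref{thm_intro_krizthomas2} directly provides a lean \td\ of $G$ of width $<k$. Such a \td\ has parts of size at most $\tw(G)+1$, hence is into finite parts, and its width is necessarily exactly $\tw(G)$. So it remains to prove the first assertion, where $\tw(G)$ may be infinite. There I first record two consequences of $G$ having no half-grid minor, both extracted from the proof of \cref{cor:HalfGridMinor}: since every subdivided $K^{\aleph_0}$ contains a half-grid minor, no connected subgraph of $G$ contains a subdivided $K^{\aleph_0}$, so each has a normal spanning tree by Halin's theorem, and hence $G$ has finite tree-width; and, by Halin's grid theorem together with the routine fact that an end dominated by infinitely many vertices contains a subdivided $K^{\aleph_0}$, every end of $G$ has finite combined degree.

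Now apply \cref{main:LinkedTightCompTreeDecompnew} to obtain a rooted \td\ $(T,\cV)$ of $G$ into finite parts that is linked, tight and componental. As recalled in the introduction, a linked tight \td\ displays all combined degrees, so $\liminf_{e\in E(R)}|V_e|$ equals the combined degree of the end of $G$ giving rise to the ray $R$, and is therefore finite, for every ray $R$ of $T$; I shall refer to this fact as $(\star)$. This is exactly where the absence of a half-grid minor enters essentially: it cannot be dropped, as \cite{ExamplesLinkedTDInfGraphs}*{\exampleNoLeanTD} exhibits a graph of finite tree-width that admits no lean \td\ whatsoever.

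The idea is to refine $(T,\cV)$ locally using the finite case of \cref{thm_intro_krizthomas2}. For each node $t$ of $T$ the torso $\torso{V_t}$, i.e.\ the finite graph obtained from $G[V_t]$ by completing every adhesion set $V_e$ with $e$ incident to $t$ into a clique, has a lean \td\ $(S_t,\cW_t)$ of width $\tw(\torso{V_t})$ by \cref{thm_intro_krizthomas2}; and each such $V_e$, being a clique, lies in some bag $W_t(e)$ of $(S_t,\cW_t)$. Build a tree $T'$ from the disjoint union of the trees $S_t$ by adding, for every edge $e=t_1t_2$ of $T$, a new edge joining $W_{t_1}(e)$ to $W_{t_2}(e)$ (one may instead subdivide this new edge by a fresh node carrying the bag $V_e$, so that all old adhesion sets reappear verbatim), and keep the bags of the $S_t$ unchanged. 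A routine check shows that the resulting pair $(T',\cV')$ is a \td\ of $G$: every edge of $G$ lies in a part $G[V_t]$ and hence in a bag of $(S_t,\cW_t)$, and for each vertex $v$ the bags containing $v$ form a subtree of each $S_t$ with $v\in V_t$, and these subtrees are joined up along exactly the new edges sitting over the edges $e$ of $T$ with $v\in V_e$. Every part of $(T',\cV')$ is a sub-part of a finite part of $(T,\cV)$, so this \td\ is into finite parts.

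It remains to verify that $(T',\cV')$ is lean, and \textbf{this is the main obstacle}. Let $a_i$ be a node of $S_{t_i}$ for $i=1,2$, let $Z_i\subseteq V'_{a_i}$ with $|Z_1|=|Z_2|=:\ell$, and suppose no edge on the path $a_1T'a_2$ has an adhesion set of size $<\ell$; one must produce $\ell$ pairwise disjoint $Z_1$--$Z_2$ paths in $G$. The adhesion sets along $a_1T'a_2$ are either adhesion sets of the lean \td s $(S_t,\cW_t)$, or the sets $V_e$ for the edges $e$ of the $T$-path $t_1Tt_2$; in particular each such $V_e$ has size $\ge\ell$. Applying leanness of the relevant $(S_t,\cW_t)$ in segments along the path --- between $Z_1$ and a prescribed $\ell$-subset of the first of the sets $V_e$, between prescribed $\ell$-subsets of consecutive sets $V_e$ along $t_1Tt_2$, and between the last such $\ell$-subset and $Z_2$ --- and pasting the resulting linkings, one either meets a bag of size $<\ell$ on the path (and is done) or obtains $\ell$ disjoint $Z_1$--$Z_2$ paths in the graph $H$ obtained from $G$ by completing all adhesion sets $V_e$ into cliques. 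One then rewrites these $H$-paths as $G$-paths by replacing each virtual edge $uv$ used on an adhesion set $V_e$ with a genuine $u$--$v$ path through $G \strictup e$, which is legitimate for a single virtual edge because, by componentality and tightness, $G \strictup e$ is connected and every vertex of $V_e$ has a neighbour in it. The genuinely hard point is to perform the pasting and the rewriting \emph{simultaneously and disjointly} --- in particular to route all virtual edges used on a common adhesion set $V_e$ by pairwise disjoint paths through $G \strictup e$ --- and this is where the global linkedness of $(T,\cV)$ and the bound $(\star)$ are really needed; I expect this to be organised most cleanly as a minimal-counterexample ``local improvement'' argument in the spirit of Bellenbaum and Diestel \cite{bellenbaum2002two} and Erde \cite{erde2018unified}, with the remaining steps being routine bookkeeping.
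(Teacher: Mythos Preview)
Your overall strategy---refine the finite torsos of a \td\ from \cref{main:LinkedTightCompTreeDecompnew} by lean \td s via the finite case of \cref{thm_intro_krizthomas2} and glue---matches the paper's. Your handling of the `moreover' part by a direct appeal to \cref{thm_intro_krizthomas2} is correct and actually simpler than the paper's unified treatment (the paper obtains more: its lean \td\ additionally displays the infinities).

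However, the point you flag as ``the main obstacle'' is a genuine gap, and the fix you propose does not work. The Bellenbaum--Diestel/Erde local-improvement technique operates on finite graphs by repeatedly modifying a single \td; it does not tell you how to route several torso edges sitting on one adhesion set $V_e$ through $G\strictup e$ \emph{disjointly}. Connectedness and tightness of $G\strictup e$ let you route one such edge, not several, and linkedness of $(T,\cV)$ only gives disjoint paths \emph{along} a comparable chain of adhesion sets, not \emph{through one side} of a fixed edge between prescribed subsets of $V_e$. Worse, leanness must be verified between bags lying in incomparable subtrees $S_{t_1},S_{t_2}$, and your base \td\ is only rooted-linked---the paper explicitly notes that the unrooted strengthening fails in general---so ``global linkedness'' cannot carry this step either.

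What the paper actually does is different in a crucial way. It starts not from \cref{main:LinkedTightCompTreeDecompnew} but from \cref{maincor:TreeDecompDisplayingInfsTechnical} (which displays the critical vertex sets), and then performs a \emph{pre-processing contraction}: every edge not incident with an infinite-degree node whose induced separation fails an explicit two-sided ``robustness'' condition is contracted. Using $(\star)$ one shows the resulting parts stay finite. The point of robustness is that it forces every separation at a node with non-critical bag to be \emph{left-well-linked on both sides}; this is exactly the hypothesis of \cref{lem:DisjointPathsExtendedByWellLinked}, which then lifts entire disjoint path families from torsos to $G$. At nodes whose bag is a critical vertex set the torso is complete and one uses the infinite connectivity of critical vertex sets (\cref{lemma:LinkingPathsAlongCritVertexSet}) instead. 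The induction establishing leanness then runs over the number of original (contracted-\td) edges on $t_1\tilde T t_2$, combining these two mechanisms with a Menger/submodularity argument at each such edge. None of this is ``routine bookkeeping''; without the pre-processing you simply do not have the well-linkedness needed to make the lifting go through.
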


\noindent The first part of \cref{main:LeanTD} is a new result, while the `moreover'-part reobtains the infinite case in \cref{thm_intro_krizthomas2}. 
We recall that Robertson, Seymour and Thomas's strongest version \cite{robertson1995excluding}*{(12.11)} of \cref{cor:HalfGridMinor} yields witnessing \td s which are additionally linked (even in its unrooted version).
An immediate application of our detailed version of \cref{main:LeanTD} (see \cref{thm:LeanTDTechnical} in \cref{sec:LeanTreeDecs}) strengthens their result by showing that the witnessing \td\ in \cref{cor:HalfGridMinor} may even be chosen to be lean.

\subsection{Displaying all infinities}

A crucial tool for the proof of \cref{main:LinkedTightCompTreeDecompnew} are `critical vertex sets', the second kind of infinities besides ends:
A set $X$ of vertices of $G$ is \defn{critical} if there are infinitely many \defn{tight} components of $G-X$,
that is components $C$ of $G-X$ with $N_G(C) = X$. 
Polat \cite{PolatEME1} already noted that an infinite graph always contains either a ray/end or a critical vertex set.

As an intermediate step in the proof of \cref{main:LinkedTightCompTreeDecompnew} we obtain a similar \td\ which `displays all the infinities' of the underlying graph: 
A rooted \td~$(T, \cV)$
\begin{itemize}
\item \defn{displays the critical vertex sets} if the map $t \mapsto V_t$ restricted to the infinite-degree nodes of~$T$ whose $V_t$ is finite is a bijection to the critical vertex sets. In this context, we denote for every critical vertex set $X$ the unique infinite-degree node of $T$ whose bag is $X$ by $t_X$. 
\item \defn{displays the tight components of every critical vertex set cofinitely} if it displays the critical vertex sets such that for every critical vertex set $X$ every $G \strictup e$ with $e =t_Xt \in T$ with $t_X <_T t$ is a tight component of $G-X$ and cofinitely many tight components of $G-X$ are some such $G \strictup e$, then $(T,\cV)$.
\item \defn{displays the infinities} if it displays the ends homeomorphically, their combined degree, their dominating vertices, the critical vertex sets and their tight components cofinitely.
\end{itemize}
A \td\ that cofinitely displays the tight components of every critical vertex set can no longer in general be componental; % (as possibly witnessed by the downwards edge at some $t_X$); 
but we still ensure that it is \defn{cofinally componental}, i.e.\,along every rooted ray of $T$ there are infinitely many edges $e$ such that $G \strictup e$ is connected.

\begin{mainresult}\label{maincor:TreeDecompDisplayingInfs}
    Every graph of finite tree-width admits a linked, tight, cofinally componental, rooted \td\ into finite parts which displays the infinities.
\end{mainresult}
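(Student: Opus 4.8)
The plan is to obtain the required \td\ as a limit of linked, tight, componental \td s of an exhaustion of $G$ by finite induced subgraphs, and then to read off the display properties from the lemmas quoted above. First I would reduce to the case that $G$ is connected: if $G$ has only finitely many components one joins \td s of the components below a new root with empty bag, whereas if $G$ has infinitely many components then $\emptyset$ is a critical vertex set whose tight components are exactly the components of $G$, and one uses a root of infinite degree with empty bag below which the component-\td s hang. Since every end and every nonempty critical vertex set of $G$ lies inside a single component, both cases reduce to the connected one.

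So assume $G$ is connected. By \cref{thm:FiniteTWyieldsNST} it has a normal spanning tree $T^\ast$ rooted at some $r$; using that the down-closure $\bigcup_{s\in S}V(rT^\ast s)$ of a finite set $S\subseteq V(G)$ is finite and induces a connected subgraph through $r$, I would pick an increasing sequence of such down-closures exhausting $V(G)$ and let $G_0\subseteq G_1\subseteq\cdots$ be the induced finite connected subgraphs. Then I would recursively construct a linked, tight, componental rooted \td\ $(T^n,\cV^n)$ of $G_n$ (for finite graphs this is a standard strengthening of \cref{thm_intro_krizthomas2}) together with a refinement map $T^n\to T^{n+1}$ exhibiting $(T^{n+1},\cV^{n+1})$ as a subdivision-and-extension of $(T^n,\cV^n)$ along which the adhesion sets on the shared part only shrink and bags only gain vertices of $G_{n+1}-G_n$, all the while keeping $(T^n,\cV^n)$ compatible with the normal tree $T^\ast$ so that bag sizes stay controlled. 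Each step rests on the Menger-type local-improvement procedure behind the finite-case proofs \cites{bellenbaum2002two,erde2018unified}, applied to $G_{n+1}$ while freezing the separations already present in $(T^n,\cV^n)$; the classical point that a Menger separator improving a bottleneck is strictly smaller than the least adhesion set on the path between the two nodes is what allows this freezing without ever making a bag infinite.

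Let $(T,\cV)$ be the inverse limit of the $(T^n,\cV^n)$ along the refinement maps: a node is a compatible sequence of nodes, one from each $T^n$, and its bag is the eventual value of the corresponding bags. One checks that $(T,\cV)$ is a rooted \td\ of $G$ into finite parts, finiteness of bags coming from the compatibility with $T^\ast$. It is linked because a failure of linkedness would be witnessed by a finite subconfiguration of $G$, hence already present in some $G_n$, contradicting the linkedness of $(T^n,\cV^n)$ via the refinement structure; tightness and cofinal componentality are maintained through the limit by the design of the refinement maps (between consecutive ``limit'' nodes of a rooted ray the \td\ still comes from the componental $(T^n,\cV^n)$). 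Moreover the infinite-degree nodes of $T$ are exactly the limit nodes at which infinitely many branches split off, and such a node corresponds to the finite set $X$ equal to the eventual common value of the adhesion sets along those branches, with the parts strictly above its upward edges being infinitely many tight components of $G-X$; conversely a short argument shows that each critical vertex set arises exactly once and that cofinitely many of its tight components appear there. Finally $(T,\cV)$ displays all ends homeomorphically by a cofinally-componental variant of \cite{koloschin2023end}*{Lemma~3.1}, all dominating vertices by \cref{lem:TreeDecompDisplayingDominatingVertices} since it is tight, and all combined degrees by \cref{lem:linkedandliminfareonlydominatingverticesimpliesdisplayenddegrees} since it is linked and tight; hence it displays the infinities.

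The main obstacle is the interplay between the recursive construction and the limit. The Menger-type improvements at stage $n+1$ must be chosen so as to be simultaneously compatible with the separations of stage $n$ (so that the refinement map exists), compatible with $T^\ast$ (so that no bag becomes infinite in the limit), and fine enough that the limit genuinely displays each critical vertex set at a single infinite-degree node with cofinitely many of its tight components, rather than smearing such a set along a rooted ray or distributing its tight components over several nodes. Showing that tightness itself --- not merely tightness of the finite stages --- survives the limit, and that the limit tree carries no spurious infinite-degree nodes, is the technical heart of the proof.
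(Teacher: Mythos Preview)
Your approach is genuinely different from the paper's, and it has a real gap.

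The paper does not take a limit of finite approximations. Instead it proceeds in two structural steps. First, \cref{thm:critVtxTechnical} gives a rooted \td\ of $G$ whose adhesion sets are critical vertex sets, whose torsos are tough, and which displays the critical vertex sets and their tight components cofinitely; this is obtained from the normal-spanning-tree \td\ $(T,\cV'_\textnormal{NT})$ by contracting all edges whose adhesion sets are not critical, followed by a local modification (\cref{constr:TreeDecompDisplayingCrit}) to create the required infinite-degree nodes. Second, \cref{thm:RaylessThmTechnical} decomposes each tough torso by a linked \td\ with rayless torsos, built by a transfinite recursion (\cref{recursion:buildingpackage}) that greedily cuts off ends using carefully chosen end-linked regions. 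Gluing the two steps gives torsos that are both tough and rayless, hence finite by \cref{prop:RaylessToughGraphsAreFinite}; linkedness across the gluing edges is recovered from the infinite connectivity of critical vertex sets via \cref{lemma:LinkingPathsAlongCritVertexSet}.

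Your proposal, by contrast, wants to run the finite Bellenbaum--Diestel/Erde improvement procedure on an exhaustion $G_0\subseteq G_1\subseteq\cdots$ and pass to an inverse limit along ``refinement maps'' $T^n\to T^{n+1}$ under which adhesion sets only shrink and bags only grow by new vertices. The gap is precisely here: the Menger-type improvements in those proofs do \emph{not} merely refine a given \td; they replace separations by new ones, they reroute branches, and in general they change the tree in ways that are not monotone with respect to any natural refinement relation. ``Freezing the separations already present in $(T^n,\cV^n)$'' while still obtaining a linked (let alone tight, componental) \td\ of $G_{n+1}$ is exactly what is not known to be possible, and you give no argument for it. Without such monotonicity the inverse limit is undefined, and even if some limit object exists there is no mechanism to guarantee that its infinite-degree nodes correspond bijectively to critical vertex sets with the right tight-component behaviour; this does not follow from compactness-style reasoning, since critical vertex sets are an inherently infinite phenomenon that no single $G_n$ sees. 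You yourself flag these issues in your last paragraph as ``the technical heart of the proof'', but flagging them is not the same as resolving them. As written, the proposal is a plan rather than a proof.
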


Further, a \td\ $(T, \cV)$ \defn{efficiently distinguishes all the ends and critical vertex sets} if for any pair of ends and/or critical vertex sets one of the adhesion sets of $(T, \cV)$ is a size-wise minimal separator for them.
Carmesin \cite{carmesin2019displayingtopends}*{Theorem~5.12} showed that for every graph there is a nested set of separations which efficiently distinguishes all its ends.
Elm and Kurkofka 
\cite{infinitetangles}*{Theorem~1} extended this result by showing that there always exists a nested set of separations
which efficiently distinguishes all ends and all critical vertex sets. If one aims not only for a nested set of separations but for a \td, then the best current result is that every locally finite graph without half-grid minor admits even a \td\ which distinguishes all its ends
%Locally finite graphs have no critical vertex sets. 
efficiently \cite{jacobs2023efficiently}*{Theorem~1}.
In \cite{ExamplesLinkedTDInfGraphs}*{\constructionExample} we present a planar graph witnessing that this result cannot be extended to graphs without $K^{\aleph_0}$ minor \cite{ExamplesLinkedTDInfGraphs}*{\exampleNoTDEffDistAllEnds}, even if they are locally finite.
Yet we extend it from locally finite graphs to arbitrary infinite graphs without half-grid minor:

\begin{maincorollary} \label{main:ToTIntroVersion}
    Every graph~$G$ without half-grid minor admits a \td\ of finite adhesion which distinguishes all its ends and critical vertex sets efficiently. 
\end{maincorollary}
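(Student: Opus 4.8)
The plan is to deduce \cref{main:ToTIntroVersion} from the lean \td\ produced by (the detailed version of) \cref{main:LeanTD}. First I would fix a lean \td\ $(T,\cV)$ of $G$ into finite parts which moreover displays the infinities, as provided by \cref{thm:LeanTDTechnical}. Since all parts are finite, so are all adhesion sets $V_e = V_s\cap V_t$, so $(T,\cV)$ has finite adhesion as required. Because $(T,\cV)$ displays the infinities, every end $\eps$ gives rise to a ray $R_\eps$ in $T$, and every critical vertex set $X$ — which in a graph of finite tree-width is necessarily finite, since it equals $N_G(C)$ for a tight component $C$ of $G-X$ and hence lies inside a finite adhesion set — is the bag $V_{t_X}$ of a unique infinite-degree node $t_X$ of $T$. (Should \cref{thm:LeanTDTechnical} not literally record that it displays the infinities, one can instead run the local leanification argument behind \cref{main:LeanTD} starting from the \td\ of \cref{maincor:TreeDecompDisplayingInfs} rather than that of \cref{main:LinkedTightCompTreeDecompnew}; the surgery there is confined to the finite parts and so preserves all displaying properties.)

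Then I would show that leanness already forces $(T,\cV)$ to distinguish all ends and critical vertex sets efficiently. Fix two of these objects $o_1,o_2$; recall that any two distinct ends and/or critical vertex sets are separated by some finite vertex set, so there is a minimum order of such a separator, and we must exhibit an adhesion set realising it. Inside $T$, single out the \emph{connecting line} $L$ of $o_1$ and $o_2$: the double ray $R_{\eps_1}\cup R_{\eps_2}$ (sharing the initial segment up to their last common node) in the end--end case, the ray obtained from $R_\eps$ by prepending the path in $T$ from $t_X$ to $R_\eps$ in the critical--end case, and the path $t_X T t_{X'}$ in the critical--critical case. A routine verification using the displaying properties — for an end, that $R_\eps$ eventually runs inside $C_G(V_e,\eps)$, and for a critical vertex set $X$, that cofinitely many of its tight components lie on the side of $e$ away from $t_X$ — shows that every adhesion set $V_e$ with $e\in E(L)$ separates $o_1$ from $o_2$. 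Let $e^\ast\in E(L)$ minimise $|V_{e^\ast}|=:\ell$. Then $V_{e^\ast}$ is a separator of $o_1$ and $o_2$, and it remains to see that fewer than $\ell$ vertices cannot separate them.

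For this, take any finite $S$ separating $o_1$ from $o_2$, and pick nodes $t_1,t_2\in V(L)$ lying on the two sides of $e^\ast$ along $L$, far enough out towards $o_1$, resp.\ $o_2$, that $V_{t_i}\setminus S$ lies entirely on the side of the separation induced by $S$ that carries $o_i$. (For an end this uses that $R_\eps$ displays $\Dom(\eps)$, so that for $t_i$ high enough on $R_\eps$ one has $V_{t_i}\setminus S\subseteq C_G(S,\eps)$; for a critical vertex set $X$ one may simply take $t_i=t_X$, since every vertex of $X\setminus S$ is adjacent to every tight component of $G-X$ and hence lies on the side of $S$ carrying cofinitely many of them.) Since $e^\ast\in E(t_1Tt_2)\subseteq E(L)$, the minimum of $|V_e|$ over $e\in E(t_1Tt_2)$ is exactly $\ell$, so the leanness of $(T,\cV)$ — in its ``unrooted linked'' consequence — yields $\ell$ pairwise disjoint $V_{t_1}$--$V_{t_2}$ paths in $G$. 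Each such path joins the $o_1$-side of $S$ to the $o_2$-side of $S$ and therefore meets $S$; as the paths are pairwise disjoint, $|S|\ge\ell$. Hence $V_{e^\ast}$ is a minimum-order separator of $o_1$ and $o_2$, completing the proof.

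The step I expect to be the main obstacle is the first paragraph: ensuring that a lean \td\ into finite parts can be taken to display the infinities simultaneously — so that the objects to be distinguished correspond to concrete rays $R_\eps$ and nodes $t_X$ of $T$ — and, if this is not already part of \cref{thm:LeanTDTechnical}, checking that the leanification procedure of \cref{main:LeanTD} still goes through when applied on top of the \td\ of \cref{maincor:TreeDecompDisplayingInfs}. Once the objects are visible in $T$, the efficiency argument is short; its crux is merely that leanness' unrooted linking across $t_1$ and $t_2$ delivers exactly $\ell=|V_{e^\ast}|$ disjoint paths, each of which must cross any would-be separator.
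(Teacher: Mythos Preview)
Your proposal is correct and takes essentially the same approach as the paper: use the lean \td\ of \cref{thm:LeanTDTechnical} --- which \emph{does} explicitly display the infinities, so your flagged ``main obstacle'' is already taken care of --- and let leanness certify that a minimum adhesion set along the natural connecting line in $T$ is an efficient separator. The only difference is in the packaging of the last step: the paper fixes edges $f_1,f_2$ with $V_{f_i}$ linked to $\eps_i$ (respectively $V_{f_i}=X_i$), applies leanness once on $f_1Tf_2$, and \emph{extends} the resulting paths outward to the two objects, whereas you quantify over separators $S$ and choose $S$-dependent nodes $t_i$. One small imprecision worth flagging: the claim ``for $t_i$ high enough on $R_\eps$ one has $V_{t_i}\setminus S\subseteq C_G(S,\eps)$'' does not follow from displaying $\Dom(\eps)$ alone (the adhesion sets could persistently contain stray vertices outside $C_G(S,\eps)\cup S$ without changing the $\liminf$); you need in addition that cofinally many edges $e\in R_\eps$ have $G\strictup e$ connected and $N_G(G\strictup e)=V_e$, which the \td\ of \cref{thm:LeanTDTechnical} does inherit from \cref{maincor:TreeDecompDisplayingInfsTechnical}.
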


\noindent We obtain \cref{main:ToTIntroVersion} as an application of \cref{main:LeanTD}: Our proof of \cref{main:LeanTD} yields that the obtained \td\ also displays the infinities of $G$ and hence in particular distinguishes them. Since the \td\ is lean, one can conclude that it even does so efficiently, and thus is as desired for \cref{main:ToTIntroVersion}. 
\medskip

An equivalent way of stating \cref{main:ToTIntroVersion} is that every graph $G$ without half-grid minor admits a \td\ of finite adhesion which efficiently distinguishes all its `combinatorially distinguishable infinite tangles'\footnote{See \cref{subsec:ToT} for definitions.}. This fits into a series of results \cites{infinitesplinter,carmesin2022canonical,infinitetangles,jacobs2023efficiently} extending the `tree-of-tangles' theorem from Robertson and Seymour \cite{GMX}*{(10.3)} to infinite graphs.  

Additionally, the first author \cite{SATangleTreeDualityInfGraphs}*{Theorem~3} applies \cref{maincor:TreeDecompDisplayingInfs} to obtain a `tangle-tree duality' theorem for infinite graphs\footnote{Every graph with no principal $k$-tangle not induced by an end admits a \td\ witnessing this.}, which extends Robertson and Seymour's \cite{GMX}*{(4.3) \& (5.1)} other fundamental theorem about tangles to infinite graphs.

\subsection{An open problem}

Thomas famously conjectured that the class of countable graphs is well-quasi-ordered under the minor relation \cite{thomas1989wqo}*{(10.3)}. In light of the observation that all known counterexamples to well-quasi-orderings of infinite graphs \cites{thomas1988counter,komjath1995note,diestel2001normal,pitz2023note} do not have finite tree-width by the characterisation of finite tree-width graphs (i.e.\ graphs with normal spanning trees, \cref{thm:FiniteTWyieldsNST} below) in \cite{pitz2021proof}, it might be interesting to also consider the following, even stronger conjecture:

\begin{conjecture}
    The  graphs of finite tree-width are well-quasi-ordered under the minor relation.
\end{conjecture}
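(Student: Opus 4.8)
Since this last statement is a conjecture rather than a theorem, I can only describe the line of attack I would try and flag where it stalls. The template is the proof of the bounded case \cref{thm_intro_krizthomas} (see \cite{thomas1989wqo}): there one encodes a graph of tree-width $<k$ by a linked \td\ whose decomposition tree is labelled at each node $t$ by the pair consisting of the bag $G[V_t]$ (a graph on $\le k$ vertices) and the adhesion sets $V_e$ at the edges incident with $t$; since there are then only finitely many labels, one invokes Nash-Williams's theorem that the class of rooted trees is better-quasi-ordered under topological minors, in its form for trees labelled by a bqo, and then amalgamates the resulting topological tree embedding with minor models of the (finitely many types of) parts to produce the sought minor. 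For the conjecture, the plan is to run this argument with labels now allowed to be arbitrary finite graphs: given a sequence $(G_n)_{n\in\N}$ of graphs of finite tree-width, apply \cref{main:LinkedTightCompTreeDecompnew} to each component of each $G_n$ to get a linked, tight, componental rooted \td\ $(T_n,\cV_n)$ into finite parts, regard $T_n$ as labelled by its local data $\bigl(G[V_t];\,(V_e)_e\bigr)$ ($e$ ranging over the edges at $t$), and seek $i<j$ together with a label-respecting topological embedding of $T_i$ into $T_j$ from which one can read off a minor model of $G_i$ inside $G_j$ by gluing together minor models of the parts.

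For the amalgamation to work, one would need a well-quasi-ordering (in fact a bqo) of the local data under an ``interface-respecting'' minor relation --- a finite graph with its family of boundary sets being a minor of another such that boundary sets map onto corresponding boundary sets --- fine enough that minor models along $T_i \hookrightarrow T_j$ can be glued; the tree skeleton itself is, as in the bounded case, handled by Nash-Williams's theorem. A perhaps cleaner equivalent reformulation uses normal spanning trees: since the connected graphs of finite tree-width are exactly those with a normal spanning tree, such a graph is determined by a rooted tree together with, for each node $v$, the set of ancestors to which $v$ sends an edge, and the conjecture then says that these ``trees with back-edges'' are well-quasi-ordered under the evident minor-type relation.

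The main obstacle is precisely the ingredient with no analogue in the bounded case. A \td\ into finite parts can have adhesion sets of unbounded finite size and nodes of infinite degree, so the local data at a node of $T_n$ is itself an infinite object --- a finite graph carrying an infinite family of boundary sets of unbounded size --- and not one of finitely many labels from a fixed set. The labelled graph minor theorem of Robertson and Seymour shows that finite graphs with a \emph{bounded} number of terminals are well-quasi-ordered under terminal-respecting minors, but the passage from ``boundedly many'' to ``finitely many'' terminals is open, and this is exactly the gap separating \cref{thm_intro_krizthomas} from \cref{main:LinkedTightCompTreeDecompnew}. Since this difficulty is essentially a restriction of Thomas's countable-graphs conjecture \cite{thomas1989wqo}*{(10.3)}, the decomposition theorems of this paper are unlikely to suffice on their own; a proof would presumably require a genuinely new bqo argument that processes an entire \td\ --- bags and unbounded adhesion sets together --- in a single induction rather than label by label.
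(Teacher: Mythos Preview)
You have correctly identified that this statement is a conjecture, not a theorem, and the paper does not provide a proof either. The paper presents it explicitly as an open problem, motivated by the observation that all known counterexamples to well-quasi-ordering of infinite graphs fail to have finite tree-width, and then remarks that already the simplest case---the class of infinite graphs all of whose components are finite---is open.

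Your discussion of the natural line of attack and where it breaks down is sensible and goes well beyond what the paper itself offers. One small point worth aligning with the paper: you describe the conjecture as ``essentially a restriction of Thomas's countable-graphs conjecture,'' but the paper explicitly frames it as a \emph{strengthening} of Thomas's conjecture, since graphs of finite tree-width need not be countable. Also, the paper's remark about the ``simplest case'' is relevant to your analysis: for graphs whose components are all finite, the decomposition into parts is trivial (each component is a single bag), so the entire difficulty there reduces to a pure bqo question about sequences of finite graphs---precisely the kind of obstacle you identify, stripped of any tree structure. This reinforces your conclusion that the decomposition theorems of the paper are unlikely to suffice on their own.
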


We remark that already the simplest case of the conjecture -- namely for the class of infinite graphs where all components are finite -- is open.

\subsection{How this paper is organised}

In \cref{sec:Preliminaries} we give a short introduction into \td s, ends and critical vertex sets.
We show in \cref{subsec:DisplayingEnds} that the \td\ from \cref{main:LinkedTightCompTreeDecompnew} already satisfies \cref{main:LinkedTightCompTreeDecompnew2}.
\cref{sec:MainResultCounterexamples} consists of three parts. First, we give in \cref{subsec:ATwoStepApproach} a brief overview over the proof of \cref{main:LinkedTightCompTreeDecompnew}, which includes the statement of \cref{thm:critVtxIntro,thm:RaylessThmIntro}, our two main ingredients to the proof of \cref{main:LinkedTightCompTreeDecompnew}. In \cref{subsec:detailedversions}, we state detailed versions of \cref{main:LinkedTightCompTreeDecompnew,maincor:TreeDecompDisplayingInfs} which assert further properties of the respective \td s. In \cref{subsec:ProofOfMainResult}, we then reduce \cref{main:LinkedTightCompTreeDecompnew,maincor:TreeDecompDisplayingInfs} to \cref{thm:critVtxIntro,thm:RaylessThmIntro}.
We prove \cref{thm:critVtxIntro} in \cref{sec:TreeDecompAlongCrit}.
In \cref{sec:liftingspathsandraysfromtorso}, we show some basic behaviour of paths and rays in torsos to prove \cref{thm:RaylessThmIntro} in \cref{sec:DecomposeIntoRaylessParts}.
Finally, in \cref{sec:LeanTreeDecs}, we prove \cref{main:LeanTD} and \cref{main:ToTIntroVersion}.
\arXivOrNot{Furthermore, we compare in \cref{subsec:Counterexamples} our \cref{thm:critVtxIntro} to Elm and Kurkofka's \cite{infinitetangles}*{Theorem~2}.}{}

\section{Preliminaries} \label{sec:Preliminaries}

In this section, we gather all concepts needed for the remainder of the paper. We repeat here concepts already defined in the introduction, in the hope that it may be convenient for future reference to have all definitions gathered in one place.

Throughout the paper, graphs may be infinite.
The notation and definitions follow \cite{bibel} unless otherwise specified; in particular, $\N = \{0,1,2, \dots\}$, and we may speak of a vertex $v \in G$ (rather than $v \in V(G)$), an edge $e \in G$, and so on. 
\medskip

In this paper, a tree $T$ often contains a special vertex $\rt(T)$, its \defn{root}.
A rooted tree~$T$ has a natural partial order~\defn{$\le_T$} on its vertices and edges, which depends on~$r = \rt(T)$:
for two~$x, y \in V(T) \cup E(T)$, we write~$xTy$ for the unique $\subseteq$-minimal path in $T$ which contains~$x$ and~$y$, and we then set~$t \leq_T t'$ if $t \in rTt'$.
In a rooted tree, a \defn{leaf} is any maximal node in the tree-order.
The \defn{down-closure} $\lceil x \rceil $ and \defn{up-closure} $\lfloor x  \rfloor$ of $x$ in $T$ are $\{y \in V(T) \mid y \leq x\}$ and $\{y \in V(T) \mid y \geq x\}$, respectively.
We write $\mathring{\lceil x \rceil}$ and $\mathring{\lfloor x  \rfloor}$ as shorthand for $\lceil x \rceil \setminus x$ and $\lfloor x  \rfloor \setminus x$.

A rooted tree $T$ in a graph $G$ is \defn{normal} if the endvertices of every $T$-path in $G$ are $\leq_T$-comparable.
\medskip

An \defn{(oriented) separation} of a graph~$G$ is a tuple~$(A, B)$ of vertex sets~$A, B$ of~$G$, its \defn{sides}, such that~$A \cup B = V(G)$ and there are no edges in~$G$ joining~$A \setminus B$ and~$B \setminus A$.
Its \defn{separator} is~$A \cap B$, and its \defn{order} is the size of its separator.
Note that if~$(A, B)$ is a separation of~$G$, then so is~$(B, A)$.

A \defn{star} of separations is a set~$\sigma$ of separations of~$G$ such that~$A \subseteq D$ and~$B \subseteq C$ for every two distinct~$(A, B), (C, D) \in \sigma$.
The \defn{interior} of a star~$\sigma$ is~$\interior(\sigma) := \bigcap_{(A,B) \in \sigma} B$.

For a star~$\sigma$ of separations of~$G$, we let~\defn{$\torsostar(\sigma)$} be the graph arising from~$G[\interior(\sigma)]$ by making each separator of a separation in~$\sigma$ complete. We call these added edges that lie in $\torsostar(\sigma)$ but not in $G$ \defn{torso edges}.
We refer to~$\torsostar(\sigma)$ as the \defn{torso at}~$\sigma$.

\subsection{Tree-decompositions}

A \defn{\td} of a graph~$G$ is a pair~$(T, \cV)$ that consists of a tree~$T$ and a family~$\cV = (V_t)_{t \in T}$ of vertex sets of~$G$ indexed by the nodes of~$T$ and satisfies the following two conditions:
\begin{enumerate}[label=(T\arabic*)]
    \item \label{prop:TD1} $G = \bigcup_{t \in T} G[V_t]$,
    \item \label{prop:TD2} for every vertex $v \in G$, the subgraph of $T$ induced by $\{t \in T \mid v \in V_t\}$ is connected.
\end{enumerate}
The sets $V_t$ are the \defn{bags} of the \td, the induced subgraphs $G[V_t]$ on the bags are its \defn{parts}, and $T$ is its \defn{decomposition tree}.
Whenever a \td\ is introduced as $(T, \cV)$ in this paper, we tacitly assume that $\cV = (V_t)_{t \in T}$. A \td\ $(T, \cV)$ is \defn{rooted} if its decomposition tree $T$ is rooted.

If $(T, \cV)$ is a \td\ of a graph $G$, then a tree $T'$ obtained from $T$ by edge-contractions \defn{induces} the \td\ $(T', \cV')$ of $G$ whose bags are $V'_t = \bigcup_{s \in t} V_s$ for every~$t \in T'$, where we denote the vertex set of $T'$ as the set of branch sets, that is the $\subseteq$-maximal subtrees of $T$ consisting of contracted edges.
Whenever $t \in T'$ is a subtree of $T$ on a single vertex~$s$, we may reference to $t$ by $s$, as well.
If $T$ is a rooted tree, then $\rt(T')$ is the node of $T'$ containing $\rt(T)$.

In a \td\ $(T, \cV)$ of $G$ every (oriented) edge~$\ve = (t_0, t_1)$ of the decomposition tree~$T$ induces a separation of~$G$ as follows:
Write~$T_0$ for the component of~$T - e$ containing~$t_0$ and~$T_1$ for the one containing~$t_1$.
Then~$(\bigcup_{t \in T_0} V_t, \bigcup_{t \in T_1} V_t)$ is a separation of~$G$ \cite{bibel}*{Lemma 12.3.1}. We say that~$(\bigcup_{t \in T_0} V_t, \bigcup_{t \in T_1} V_t)$ is \defn{induced} by~$\ve$, or more generally by~$(T, \cV)$.
Its separator is~$V_e := V_{t_0} \cap V_{t_1}$, the \defn{adhesion set} of~$(T, \cV)$ corresponding to~$e$, where~$e$ is the undirected edge of~$T$ underlying~$\ve$.

For every node~$t \in T$, we write~\defn{$\sigma_t$} for the set of separations of~$G$ induced by the oriented edges~$\ve = (s, t)$ for $s \in N_T(t)$.
It is easy to see that such $\sigma_t$ are stars of separations and that their interior is precisely $V_t$.
We refer to~$\torsostar(\sigma_t)$ as the \defn{torso} of~$(T, \cV)$ \defn{at}~$t$.

The \defn{leaf separations} of a \td~$(T, \cV)$ are those separations of~$G$ that are induced by the oriented edges~$(s, t)$ of~$T$ where~$t$ is a leaf of~$T$.
We refer to this separation also as leaf separation \defn{at} $t$.

Let $(T, \cV)$ be a rooted \td.
Given an edge~$e = t_0 t_1$ of~$T$ with~$t_0 <_T t_1$, we abbreviate the sides of its induced separation by~$G \down e := G \left[ \bigcup_{t \in T_0} V_t \right]$ and~$G \up e := G \left[ \bigcup_{t \in T_1} V_t \right]$.
Further, we write~$G \strictdown e := G \down e - V_e$ and~$G \strictup e := G \up e - V_e$.
For a node $t \in T$, we set $G \down t := G \down e$, $G \strictdown t := G \strictdown e$, $G \up t := G \up e$ and $G \strictup t := G \strictup e$ where $e = st$ is the unique edge with $s <_T t$.
It is easy to see that $G \up x \supseteq G \up y$, $G \strictup x \supseteq G \strictup y$, $G \down x \subseteq G \down y$ and $G \strictdown x \subseteq G \strictdown y$ for every two nodes or edges $x, y \in T$ with $x \leq_T y$, and also~$\bigcap_{e \in R} G\strictup e = \emptyset$ for every rooted ray~$R$ in~$T$.

A rooted \td\ $(T, \cV)$ is \defn{componental} if $G \strictup e $ is connected for every edge $e \in T$, and it is \defn{cofinally componental} if $G \strictup e$ is connected for cofinally many edges $e$ of every $\subseteq$-maximal $\leq_T$-chain\footnote{These are the $\subseteq$-maximal rooted paths or rays.} in $T$. It is \defn{tight} if, for every edge $e \in T$, there is a component $C$ of $G \strictup e$ with $N_G(C) = V_e$, and if additionally all components $C$ of $G \strictup e$ satisfy $N_G(C) = V_e$, then $(T, \cV)$ is \defn{fully tight}.
Every tight, componental, rooted \td\ is fully tight.
A rooted \td\ $(T,\cV)$ is \defn{linked} if for every two edges $e \leq_T e'$ of $T$, there is an edge $f \in T$ with $e \leq_T f \leq_T e'$ and a family $\{P_v \mid v \in V_f\}$ of pairwise disjoint $V_{e}$--$V_{e'}$~paths, or equivalently $(G \down e)$ -- $(G \up e')$~paths, in~$G$ such that $v \in P_v$.
In particular, the size of the family of pairwise disjoint paths equals the size of~$V_f$.
Given a set~$X$ of vertices of~$G$, the rooted \td~$(T, \cV)$ of~$G$ is~\defn{$X$-linked} if~$X \subseteq V_{\rt(T)}$ and if for every edge $e \in T$ there exists an edge~$f \leq_T e$ and a family $\{P_v \mid v \in V_f\}$ of pairwise disjoint $X$--$V_{e}$~paths in~$G$ such that~$v \in P_v$.

\begin{lemma}\label{lem:BagsEqualXAreConnectedInT}
    Let $(T, \cV)$ be a rooted \td\ of a graph $G$ and $X \subseteq V(G)$.
    Further let $e,f \in E(T)$, $t \in V(T)$ with $t \leq_T e$ and $f$ is the unique edge in $tTe$ incident with $t$.
    If $V_e = X \subseteq V_t$, $G \strictup e$ is non-empty and $G \strictup f$ is connected, 
    then $V_s = V_t$ for every node $s \in tTe$ other than $t$.
\end{lemma}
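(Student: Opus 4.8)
The plan is to make the path $tTe$ explicit, say $t=s_0,s_1,\dots,s_m$ with $f=s_0s_1$; since $t\le_T e$, the last edge $s_{m-1}s_m$ of this path is $e$, with lower endpoint $s_{m-1}=e^-$, so the nodes of $tTe$ other than $t$ are $s_1,\dots,s_{m-1}$ (and $e^+=s_m$). The target is $V_{s_i}=V_t$ for all relevant $i$. First I would extract from \ref{prop:TD2} that, since $X=V_e\subseteq V_{s_{m-1}}\cap V_{s_m}$ and $X\subseteq V_{s_0}=V_t$, every vertex of $X$ lies in each of $V_{s_0},\dots,V_{s_m}$; hence $X\subseteq V_g$ for every edge $g$ of $tTe$ and $X\subseteq V_{s_i}$ for every $i$. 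In particular $V_e=X\subseteq V_f$.

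The heart of the argument is to show $G\strictup g=G\strictup e$ for every edge $g$ of $tTe$. For $G\strictup e\subseteq G\strictup g$ one combines $G\up e\subseteq G\up g$ with \ref{prop:TD2}: a vertex of $G\up e$ outside $V_e$ lying in $V_g$ would, lying also in a bag above $e$, be forced into $V_{s_{m-1}}\cap V_{s_m}=V_e$. For the reverse inclusion, the previous paragraph gives $V_e\subseteq V_g$, so the separator of the separation of $G$ induced by $e$ is contained in $V_g$; therefore no edge of $G$ joins $V(G\strictup e)$ to $V(G\strictup g)\setminus V(G\strictup e)$, i.e.\ $V(G\strictup e)$ is a union of connected components of $G\strictup g$. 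Applying this with $g=f$: as $G\strictup f$ is connected and $G\strictup e$ is non-empty, $G\strictup f=G\strictup e$; and since $G\strictup e\subseteq G\strictup g\subseteq G\strictup f$ for every edge $g$ of $tTe$, all of these parts coincide. I expect getting this separation/connectivity bookkeeping exactly right — in particular verifying the two inclusions $G\strictup e\subseteq G\strictup g\subseteq G\strictup f$ cleanly from \ref{prop:TD2} — to be the main obstacle.

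With this in hand, the bags along the path are pinned down one edge at a time. For $1\le i\le m-1$ put $g=s_{i-1}s_i$. Since $s_i$ lies in the component of $T-g$ defining $G\up g$, we have $V_{s_i}\subseteq V(G\up g)=V(G\strictup g)\cup V_g=V(G\strictup e)\cup V_g$ by the key step. But $V_{s_i}\cap V(G\strictup e)=\emptyset$: a common vertex would lie in $V_{s_i}$ and in a bag above $e$, hence by \ref{prop:TD2} in $V_{s_{m-1}}\cap V_{s_m}=V_e$, contradicting membership in $V(G\strictup e)=V(G\up e)\setminus V_e$. Thus $V_{s_i}\subseteq V_g=V_{s_{i-1}}\cap V_{s_i}$, so $V_{s_i}\subseteq V_{s_{i-1}}$; iterating gives the chain $V_t=V_{s_0}\supseteq V_{s_1}\supseteq\dots\supseteq V_{s_{m-1}}$.

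Finally, the first paragraph gives $X\subseteq V_{s_{m-1}}$, so every $V_{s_i}$ ($1\le i\le m-1$) is squeezed as $X\subseteq V_{s_i}\subseteq V_t$; since under the hypotheses the bag $V_t$ coincides with the adhesion set $V_e=X$, this forces the chain to be constant and $V_{s_i}=V_t$ for all $i$, as required. (If one only knows $X\subseteq V_t$, the same argument still yields $X\subseteq V_s\subseteq V_t$ for every node $s$ of $tTe$ strictly between $t$ and $e^+$, and equality exactly when $V_t=X$.)
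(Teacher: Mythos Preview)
Your argument is sound through the chain $X\subseteq V_{s_{m-1}}\subseteq\cdots\subseteq V_{s_1}\subseteq V_t$, but the last step has a real gap: you write ``under the hypotheses the bag $V_t$ coincides with the adhesion set $V_e=X$'', whereas the hypothesis only gives $V_e=X\subseteq V_t$. Your parenthetical is exactly right --- from $X\subseteq V_t$ alone one only obtains $X\subseteq V_{s_i}\subseteq V_t$ --- and this cannot be sharpened: on the path $t\,s_1\,s_2$ with bags $V_t=\{a,b\}$, $V_{s_1}=\{a\}$, $V_{s_2}=\{a,c\}$ (in the graph on $\{a,b,c\}$ with edges $ab,ac$) one has $X=V_e=\{a\}$ and $G\strictup e=G\strictup f=\{c\}$, yet $V_{s_1}\neq V_t$. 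Replacing $V_{s_1}$ by $\{a,b\}$ shows that $V_{s_1}=X$ can equally fail. So the lemma is not quite correct as stated; the paper's own proof in fact targets $V_s=X$ rather than $V_s=V_t$, and both places where the lemma is applied have $V_t=X$, so the intended hypothesis is almost certainly $V_e=X=V_t$. Under that reading your proof is complete for the internal nodes $s_1,\dots,s_{m-1}$.

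Your route differs from the paper's: you first establish the structural identity $G\strictup g=G\strictup e$ for every edge $g$ of $tTe$ and then squeeze each bag between $X$ and $V_t$, whereas the paper picks a minimal node $s$ with $V_s\neq X$, a vertex $v\in V_s\setminus X$, argues $v\in G\strictup f\setminus G\strictup e$, and contradicts the connectedness of $G\strictup f$. Your version makes the underlying mechanism more transparent. One further point: neither argument handles the upper endpoint $e^+=s_m$, and indeed the conclusion fails there even when $V_t=X$ (take $V_t=V_{s_1}=\{a\}$, $V_{s_2}=\{a,b\}$); your restriction to $s_1,\dots,s_{m-1}$ is therefore the right one, and ``every node $s\in tTe$ other than $t$'' should be read as excluding $e^+$ as well.
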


\begin{proof}
    Suppose for a contradiction that there exists a node $s \in tTe$ other than $t$ with $V_s \neq X$.
    Let $s$ be a $\leq_T$-minimal such node.
    By \cref{prop:TD2}, $X \subseteq V_s$.
    Thus, there exists some $v \in V_s \setminus X$.
    Then $v \in G \strictup f \setminus G \strictup e$, since $V_e = X$.
    Moreover, $G \strictup e \subseteq G \strictup f$, and $G \strictup f$ is connected by assumption.
    So there is a $v$--$G \strictup e$ path in $G \strictup f$, because $G \strictup f$ is non-empty; in particular, this path avoids $X$, as it avoids $V_f \supseteq X$. So it also avoids $N_G(G \strictup e) \subseteq X$, which is a contradiction as $v \notin G \strictup e$.
\end{proof}

If all adhesion sets of a \td~$(T, \cV)$ are finite, then~$(T, \cV)$ has \defn{finite adhesion}.
A graph $G$ has \defn{tree-width} less than $k$ if it admits a \td\ whose parts all have size at most $k$.
If there exists such a minimal $k \in \N$ we denote it by $\tw(G)$, and say it has \defn{finitely bounded tree-width}.
We say that a graph has \defn{finite tree-width} if it admits a \td\ into finite parts.

The following result shows that the finite tree-width graphs are essentially the graphs with normal spanning trees; the latter have been heavily investigated and are by now well-understood. In particular, we have Jung's \emph{Normal Spanning Tree Criterion} \cite{jung1969wurzelbaume}*{Satz~6'}, that a connected graph $G$ admits a normal spanning tree if its vertex set is a countable union of dispersed sets; here a set of vertices $U$ in $G$ is \defn{dispersed} if for every end $\eps$ of $G$ there is some finite set $X \subseteq V(G)$ such that $U$ is disjoint from $C_G(X,\eps)$ (see the next subsection for a background on ends).

\begin{theorem} \label{thm:FiniteTWyieldsNST}
A graph has finite tree-width if and only if each of its components admits a normal spanning tree.
\end{theorem}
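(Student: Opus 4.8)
The plan is to prove both directions of the equivalence, with the heart of the matter being the backward implication (normal spanning trees yield a \td\ into finite parts) since the forward direction is the genuinely hard classical result.

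\textbf{The backward direction.} Suppose each component of $G$ admits a normal spanning tree; I would first reduce to the connected case, since a disjoint union of \td s into finite parts (joining their roots arbitrarily, or adding a new empty root) is again a \td\ into finite parts. So assume $G$ is connected with a normal spanning tree $T$ rooted at $r$. The standard construction is to take $T$ itself as the decomposition tree and assign to each node $t$ the bag $V_t := V(rTt)$, the vertex set of the path from the root down to $t$. Each such bag is finite because $rTt$ is a finite path. I would verify \ref{prop:TD1} and \ref{prop:TD2}: property \ref{prop:TD1} holds since $T$ is spanning (so every vertex appears in some bag) and since for every edge $uv$ of $G$ the normality of $T$ makes $u,v$ comparable, say $u \le_T v$, whence $u \in V(rTv) = V_v$ and $v \in V_v$, so $G[V_v]$ contains the edge $uv$; property \ref{prop:TD2} holds because $\{t : v \in V_t\} = \{t : v \in V(rTt)\} = \lfloor v \rfloor$ (the up-closure of $v$ in $T$), which is a subtree of $T$. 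This is exactly the \td\ $(T, \cV_{\textnormal{NT}})$ discussed in the introduction, so little new argument is needed.

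\textbf{The forward direction.} Suppose $G$ has a \td\ $(T, \cV)$ into finite parts. Again reduce to $G$ connected: a component $H$ of $G$ inherits a \td\ into finite parts by intersecting all bags with $V(H)$ and passing to a suitable subtree, so it suffices to produce a normal spanning tree of each component. For connected $G$, I would invoke Jung's Normal Spanning Tree Criterion quoted just above the statement: it suffices to exhibit $V(G)$ as a countable union of dispersed sets. The natural candidates are the bags themselves: each $V_t$ is finite, hence trivially dispersed (a finite set meets only finitely many components $C_G(X,\eps)$, and one can enlarge $X$ to avoid it). The obstacle is that $\cV$ may be an uncountable family, so $\bigcup_t V_t$ need not a priori be a \emph{countable} union of dispersed sets. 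To fix this I would note that a \td\ into finite parts forces $T$, and hence the index set, to be countable for a connected graph — or, more robustly, I would thin out the tree: since $G$ is connected one can contract/prune $T$ so that no bag is redundant, and a \td\ of a connected graph into finite parts with finite adhesion has a countable decomposition tree (one can also argue directly that $G$ itself is countable: a connected graph with a \td\ into finite parts is built from countably many finite pieces along a locally-finite-enough tree). Once $V(G)$ is seen to be a countable union of finite, hence dispersed, sets, Jung's criterion delivers a normal spanning tree of $G$, completing the direction.

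\textbf{Main obstacle.} The only real subtlety is the countability bookkeeping in the forward direction: ensuring that a connected graph of finite tree-width is countable (equivalently, that its decomposition tree can be taken countable), so that Jung's criterion applies. I expect this to be handled by a short argument that a connected graph which is the union of countably — or even arbitrarily — many finite parts arranged in a tree is itself countable, because connectivity propagates through the adhesion sets and each step only adds finitely many vertices; alternatively one cites that the relevant literature (e.g.\ \cite{pitz2021proof}) already records this equivalence. Everything else — verifying the \td\ axioms for $(T,\cV_{\textnormal{NT}})$ and checking that finite sets are dispersed — is routine.
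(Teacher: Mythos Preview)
Your backward direction is fine and matches the paper. The forward direction, however, has a genuine gap at exactly the point you flagged as the ``main obstacle'': your countability claim is simply false. Take an uncountable star $K_{1,\kappa}$. It is connected and has an obvious \td\ into two-element parts (one bag per edge), yet both the graph and the decomposition tree are uncountable. No amount of pruning or contracting will make the tree countable, since you must cover uncountably many leaves with finite bags. So ``finite sets are dispersed, and there are only countably many bags'' does not work.

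The paper's fix is elegant and worth internalising: instead of using the bags as your dispersed sets, root $T$ and use the \emph{levels} $U_i := \bigcup\{V_t : t \text{ at height } i\}$. There are only countably many levels regardless of the width of $T$, and $V(G) = \bigcup_i U_i$. Each $U_i$ is dispersed (though possibly infinite): for any end $\eps$, the corresponding rooted ray $R^\eps$ in $T$ has an edge $e_i$ between levels $i$ and $i+1$, and since every node at height $i$ lies in the component of $T - e_i$ containing the root, we get $U_i \subseteq V(G \down e_i)$, which the finite adhesion set $V_{e_i}$ separates from $C_G(V_{e_i},\eps)$. Jung's criterion then applies. The moral is that ``dispersed'' does not require finiteness, only a uniform finite separator per end --- and the tree structure hands you exactly that.
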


\begin{proof}
    Assume first that each component $C$ of a given graph $G$ admits a normal spanning tree~$T_C$. This induces a \td\ $(T_C,\cV_C)$ of $C$ into finite parts, given by assigning each vertex $t \in T_C$ its down-closure $\lceil t \rceil_{T_C}$ as bag. Consider the tree $T = \{r\} \sqcup \bigsqcup_C T_C$ where the new root $r$ is adjacent to each $\rt(T_C)$. If we assign to $r$ the empty bag and keep all other bags, then we get a tree-decomposition of $G$ into finite parts as desired.

    Conversely, let $(T, \cV)$ be a \td\ of a given graph $G$ into finite parts. We may assume that $G$ is connected.
    Since the parts of $(T, \cV)$ are finite, every end $\eps$ of $G$ gives rise to a rooted ray $R^\eps = v_0,e_0,v_1,e_1,v_2, \dots$ in $T$.
    We claim that the union $U_i$ of the bags assigned to nodes of the decomposition on a fixed level $i$ is dispersed: Indeed, for every end~$\eps$ of~$G$, the finite adhesion set $V_{e_i}$ separates $U_i$ from $C_G(V_{e_i},\eps)$.
    As~$V(G) = \bigcup_{i \in \N} U_i$, we are done by Jung's normal spanning tree criterion. 
\end{proof}

\subsection{Ends and tree-decompositions}\label{subsec:endandtd}

An \defn{end} of a graph~$G$ is an equivalence class of rays in~$G$, where two rays are equivalent if they are joined by infinitely many disjoint paths in~$G$ or, equivalently, if for every finite set~$X \subseteq V(G)$ both rays have tails in the same component of~$G - X$.
The set of all ends of a graph~$G$ is denoted by~\defn{$\Omega(G)$}.

For every finite set~$X \subseteq V(G)$ and every end~$\eps$ of~$G$, there is a unique component of~$G - X$ which contains a tail of some, or equivalently every, ray in~$\eps$; we denote this component by~$C_G(X, \eps)$ and say that~$\eps$ \defn{lives} in~$C_G(X, \eps)$.
We denote by $\Omega_G(X, \eps)$ all end of $G$ which live in $C_G(C, \eps)$.
Now the collection of all the $\Omega_G(X, \eps)$ with finite $X \subseteq V(G)$ and $\eps \in \Omega(G)$ form a basis for a topology of $\Omega(G)$.

A vertex $v$ of $G$ \defn{dominates} an end $\eps$ of $G$ if $v \in C_G(X,\eps)$ for every finite $X \subseteq V(G - v)$.
We write~\defn{$\Dom(\eps)$} for the set of all vertices of $G$ which dominate the end $\eps$. 
For a connected subgraph~$C$ of $G$ with finite $N_G(C)$, we write $\Dom(C)$ for all vertices which dominate some end of $G$ that lives in $C$.
Note that $\Dom(C)$ is a subset of $V(C) \cup N_G(C)$.

The \defn{degree} of an end~$\eps$ of~$G$ is defined as
\begin{equation*}
    \deg(\eps) := \sup \{ |\cR| \mid \cR \text{ is a family of disjoint rays in } \eps \}.
\end{equation*}
Halin \cite{halin65}*{Satz~1} showed that this supremum is always attained.
Together with the number~$\dom(\eps) := |\Dom(\eps)|$ of vertices dominating~$\eps$, this sums up to the \defn{combined degree}~$\Delta_G(\eps) := \deg(\eps) + \dom(\eps)$ of~$\eps$.
Every end $\eps$ of a graph with normal spanning tree has countable combined degree: By \cite{bibel}*{Lemma~8.2.3} $\deg(\eps)$ is countable, and by \cite{bibel}*{Lemma~1.5.5}, every vertex in $\Dom(\eps)$ lies on the unique normal ray in $\eps$, so~$\dom(\eps)$ is countable, too.

Every end~$\eta$ of a rooted tree~$T$ contains precisely one \defn{rooted ray}~$R$ in~$T$, i.e.\ a ray in~$T$ that starts in~$\rt(T)$; we will frequently use this one-to-one correspondence between the ends of~$T$ and the rooted rays in~$T$.
Let $(T, \cV)$ be a rooted \td~$(T, \cV)$ of a graph $G$ with finite adhesion, an end $\eps$ of $G$ \defn{lives in} an end $\eta$ of $T$ if some, or equivalently every, ray in $\epsilon$ has a tail in $G \strictup e$ for every edge $e$ of the unique rooted ray~$R$ in~$\eta$; with the above relation between the rooted rays in~$T$ and the ends of~$T$ in mind, we also say that~$\eps$ \defn{gives rise to} $R$ and $R$ \defn{arises from}~$\eps$.
We remark that every end of $G$ gives rise to at most one rooted ray in $T$, since $(T, \cV)$ has finite adhesion.
If every end of $G$ gives rise to some rooted ray of $T$, we encode this correspondence between the ends of~$G$ and~$T$ by a map~$\varphi \colon \Omega(G) \to \Omega(T)$.
In particular, this is the case if every torso of $(T, \cV)$ is rayless.
If~$\varphi$ is a bijection between the ends of~$G$ and the ends of~$T$, then we say that the rooted \td~$(T, \cV)$ \defn{displays all ends} of~$G$.
If $\varphi$ is a homeomorphism from~$\Omega(G)$ to $\Omega(T)$, then $(T, \cV)$ \defn{homeomorphically displays all ends} of $G$.

For a sequence~$(V_i)_{i \in \N}$ of sets, we set~\defn{$\liminf_{i \in \N} V_i$} $:= \bigcap_{i \in \N} \bigcup_{j \ge i} V_i$.
If the sequence of sets~$V_{e_i}$ is indexed by the edges of a ray~$R = v_0 e_0 v_1 e_1 \dots$ in a graph~$G$, then we also write~\defn{$\liminf_{e \in R} V_e$} for~$\liminf_{i \in \N} V_{e_i}$. 
If~$(T, \cV)$ displays the ends of $G$ and additionally has the property that the unique rooted ray~$R$ in~$T$ which arises from the end~$\eps$ of~$G$ satisfies~$\liminf_{e \in R} |V_e| = \Delta_G(\eps)$, then the \td~$(T, \cV)$ of~$G$ \defn{displays the combined degrees of every end}.
If a rooted \td\ $(T, \cV)$ of $G$ displays all ends of $G$ and additionally $\liminf_{e \in R} V_e = \Dom(\eps)$ for every rooted ray $R$ of $T$ and its arising end $\eps$ of $G$, then $(T, \cV)$ \defn{displays the dominating vertices of every end}.

Let $X$ and $Y$ be two sets of vertices in a graph $G$.
Then~$X$ is \defn{linked to}~$Y$ in~$G$ if there is a family $\{R_x \mid x \in X\}$ of pairwise disjoint~$X$--$Y$~paths in~$G$ such that $x \in R_x$.
Let $\eps$ be an end of~$G$.
An \defn{$X$--$\eps$ ray} in $G$ is a ray which starts in $X$ and is contained in $\eps$.
An \defn{$X$--$\eps$ path} in $G$ is an $X$--$\Dom(\eps)$ path in $G$.
A finite set $S \subseteq V(G)$ is an \defn{$X$--$\eps$ separator} in $G$ if $C_G(S,\eps)$ does not meet~$X$.  
The set $X$ is \defn{linked to} an end $\eps$ of~$G$ if there is a family $\{R_x \mid x \in X\}$ of pairwise disjoint $X$--$\eps$ paths and rays such that $x \in R_x$. 
Further, a rooted \td\ $(T,\cV)$ is \defn{end-linked} if for every edge $e$ of $T$ there exists some end $\eps$ of~$G$ which lives in $G \up e$ and to which~$V_e$ is linked.
The following lemma is clear.

\begin{lemma}\label{lem:endlinkedimpliestight}
    Every end-linked, rooted \td\ $(T, \cV)$ is tight. \qed
\end{lemma}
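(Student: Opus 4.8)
The plan is to establish tightness one edge at a time, extracting the required component directly from the definition of end-linkedness. Fix an edge $e$ of $T$. By end-linkedness there is an end $\eps$ of $G$ that lives in $G \up e$ and to which $V_e$ is linked; I take $C := C_G(V_e, \eps)$ as the candidate component. First one checks that $C$ is indeed a component of $G \strictup e$: since $\eps$ lives in $G \up e$ and $V_e$ separates $G \strictup e = G \up e - V_e$ from the rest of $G$, the component $C$ of $G - V_e$ that contains a tail of every ray in $\eps$ is contained in $G \strictup e$, and is therefore one of its components. The inclusion $N_G(C) \subseteq V_e$ is then immediate from $C$ being a component of $G - V_e$, so everything comes down to proving $V_e \subseteq N_G(C)$; once $N_G(C) = V_e$ holds for every edge $e$, the \td\ is tight by definition.

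For $V_e \subseteq N_G(C)$ I would use the family $\{R_v \mid v \in V_e\}$ of pairwise disjoint $V_e$--$\eps$ paths and rays witnessing that $V_e$ is linked to $\eps$, with $v \in R_v$. Fix $v \in V_e$. Since the $R_v$ are pairwise disjoint and indexed by $V_e$, none of them meets $V_e$ outside its own starting vertex, with the sole exception that $R_v$ may be the trivial path $\{v\}$ when $v \in \Dom(\eps) \cap V_e$. If $R_v$ is non-trivial, then $R_v - v$ is connected and avoids $V_e$, hence lies in a single component of $G - V_e$; as $R_v$ is a ray in $\eps$ or a path ending in a vertex of $\Dom(\eps) \setminus V_e$, and ray-tails of $\eps$ as well as vertices of $\Dom(\eps) \setminus V_e$ all lie in $C = C_G(V_e,\eps)$, that component must be $C$. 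So the neighbour of $v$ on $R_v$ lies in $C$ and $v \in N_G(C)$.

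The one point that needs a little care — and the only real obstacle in an otherwise bookkeeping argument — is the degenerate case $R_v = \{v\}$ with $v \in \Dom(\eps)$, where the linking family provides no edge at $v$ to work with. Here I would invoke the definition of domination: $v \in C_G(V_e \setminus v, \eps)$, and this component of $G - (V_e \setminus v)$ also contains $C$ (both contain a tail of every ray of $\eps$, and $C$ is connected and lies in $G - (V_e \setminus v)$); hence there is a $v$--$C$ path $P$ in $G - (V_e \setminus v)$, which is non-trivial since $v \notin C$. Letting $c$ be the first vertex of $P$ in $C$ and $c'$ its predecessor on $P$, $c'$ is adjacent to $c \in C$ but $c' \notin C$, so $c' \in N_G(C) \subseteq V_e$; as $c' \notin V_e \setminus v$ this forces $c' = v$, and again $v \in N_G(C)$. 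Combining the cases yields $V_e \subseteq N_G(C)$, hence $N_G(C) = V_e$, and since $e$ was arbitrary $(T,\cV)$ is tight.
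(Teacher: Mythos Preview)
Your proof is correct. The paper itself gives no proof at all---the lemma statement ends with a \qed\ and is preceded by ``The following lemma is clear.'' Your argument is exactly the natural one: take $C := C_G(V_e,\eps)$ and use the linking family to witness $V_e \subseteq N_G(C)$, with the domination definition handling the degenerate trivial-path case. There is nothing to compare; you have simply written out what the authors left implicit.
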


\subsection{Critical vertex sets and tree-decompositions}\label{subsec:critvertexsets}

Given a set $X$ of vertices of a graph~$G$, a component~$C$ of $G-X$ is \defn{tight at $X$ in $G$} if $N_G(C) = X$. By slight abuse of notation, we will refer to such $C$ as \defn{tight} components of $G-X$.
We write~$\cC_X := \cC(G-X)$ for the set of components of $G - X$ and $\breve{\cC}_X := \breve{\cC}(G-X) \subseteq \cC_X$ for the set of all tight components $C$ of $G - X$.
A \defn{critical vertex set} $X$ of $G$ is a finite set $X \subseteq V(G)$ such that the set~$\breve{\cC}_X$ is infinite.
We denote by \defn{$\crit(G)$} the set of all critical vertex sets of $G$.
A graph is \defn{tough} if it has no critical vertex set, or equivalently, if deleting finitely many vertices never leaves infinitely many components.

As two vertices in a critical vertex set can always be joined by a path which avoids any given finite set of other vertices, a greedy argument yields the following.
\begin{lemma}\label{lemma:LinkingPathsAlongCritVertexSet}
    Assume that for a critical vertex set $X$ of a graph $G$ we have two path families $\cP, \cQ$ of $k \in \N$ disjoint $Y$--$X$ paths and of $k$ disjoint $X$--$Z$ paths, respectively, for some $Y, Z \subseteq V(G)$, such that the paths in $\cP \cup \cQ$ are disjoint outside of $X$.
    Then there exists a family of $k$ disjoint $Y$--$Z$ paths in $G$. \qed
\end{lemma}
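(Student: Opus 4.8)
The plan is a direct greedy construction: match each path of $\cP$ with a path of $\cQ$ at their common endpoint in $X$ when possible, and bridge the remaining, mismatched endpoints by routing paths through tight components of $G-X$. Fix notation: write $\cP=\{P_1,\dots,P_k\}$ with $P_i$ a $Y$--$X$ path ending in $a_i\in X$, and $\cQ=\{Q_1,\dots,Q_k\}$ with $Q_j$ an $X$--$Z$ path starting in $b_j\in X$. By the definition of a $Y$--$X$ (resp.\ $X$--$Z$) path, $P_i$ meets $X$ only in $a_i$ and $Q_j$ meets $X$ only in $b_j$; since each family is internally disjoint, the $a_i$ are pairwise distinct and so are the $b_j$, so $A:=\{a_1,\dots,a_k\}$ and $B:=\{b_1,\dots,b_k\}$ are two $k$-subsets of $X$.

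First I would dispose of the aligned pairs. For each $x\in A\cap B$ there is a unique $P_i$ ending in $x$ and a unique $Q_j$ starting in $x$; since $\cP\cup\cQ$ is disjoint outside $X$ and both paths meet $X$ only in $x$, the concatenation $P_ixQ_j$ is a $Y$--$Z$ path meeting $X$ exactly in $x$. Doing this for every $x\in A\cap B$ consumes $|A\cap B|$ paths from each family and yields $|A\cap B|$ pairwise disjoint $Y$--$Z$ paths, the disjointness being immediate since distinct such $x$ are distinct and the constituent paths are otherwise disjoint.

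It remains to connect the leftover paths $P_{i_1},\dots,P_{i_m}$, whose endpoints form $A\setminus B$, to the leftover paths $Q_{j_1},\dots,Q_{j_m}$, whose endpoints form $B\setminus A$, where $m=k-|A\cap B|$ and, crucially, $(A\setminus B)\cap(B\setminus A)=\emptyset$. I would do this one $\ell$ at a time. At step $\ell$, let $F_\ell$ be the (finite!) set consisting of all of $X$, all vertices lying on some path of $\cP\cup\cQ$, and all vertices of the previously chosen connectors $R_1,\dots,R_{\ell-1}$, minus the two vertices $a_{i_\ell}$ and $b_{j_\ell}$. One checks $a_{i_\ell},b_{j_\ell}\notin F_\ell$: the $a$'s are distinct, the $b$'s are distinct, $A\setminus B$ and $B\setminus A$ are disjoint, and each earlier $R_{\ell'}$ was chosen to avoid everything of $X$ except its own two endpoints, so in particular it avoids $a_{i_\ell}$ and $b_{j_\ell}$. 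Since $X$ is critical and $a_{i_\ell}\neq b_{j_\ell}$ both lie in $X$, the stated property of critical vertex sets (routing through a tight component of $G-X$ disjoint from the finite set in question) provides an $a_{i_\ell}$--$b_{j_\ell}$ path $R_\ell$ in $G$ avoiding $F_\ell$; then $P_{i_\ell}R_\ell Q_{j_\ell}$ is a $Y$--$Z$ path, as $R_\ell$ meets $P_{i_\ell}$ only in $a_{i_\ell}$ and $Q_{j_\ell}$ only in $b_{j_\ell}$ and $P_{i_\ell},Q_{j_\ell}$ are themselves disjoint.

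Finally I would check that all $|A\cap B|+m=k$ constructed $Y$--$Z$ paths are pairwise disjoint, which is the only step that needs genuine care. Outside $X$, all paths in $\cP\cup\cQ$ and all connectors are mutually disjoint — the latter by the choice of the $F_\ell$, which put every previously built path into the set to be avoided. Inside $X$, each constructed path meets $X$ only in its one or two designated vertices from $A\cup B$, these designated vertices are pairwise distinct across all $k$ constructed paths (again by distinctness of the $a$'s, of the $b$'s, and disjointness of $A\setminus B$ from $B\setminus A$), and each connector avoids all of $X$ except its own endpoints. Hence the $k$ paths form the desired family. I expect the disjointness bookkeeping — in particular being careful that a connector $R_\ell$ does not accidentally pass through an endpoint $a$ or $b$ belonging to another constructed path — to be the main obstacle; everything else is routine once the aligned/non-aligned split is in place, and criticality of $X$ is invoked exactly once per connector to obtain a fresh routing component.
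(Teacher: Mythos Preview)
Your proof is correct and follows exactly the approach the paper intends: the paper states the lemma with a \qed after noting that ``two vertices in a critical vertex set can always be joined by a path which avoids any given finite set of other vertices, [so] a greedy argument yields'' the result, and your argument is precisely a careful spelling-out of that greedy construction. The split into aligned and non-aligned pairs is a harmless optimisation; the essential step---routing the remaining endpoint pairs through fresh tight components of $G-X$ one at a time while keeping a growing finite avoidance set---is the same idea, and your disjointness bookkeeping is sound.
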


The following theorem was first proved by Polat \cite{PolatEME1}*{Theorems 3.3 \& 3.8}; we here present a short proof using normal spanning trees.

\begin{theorem} \label{prop:RaylessToughGraphsAreFinite}
    Every tough, rayless graph is finite. 
\end{theorem}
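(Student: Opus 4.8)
The plan is to prove that a tough, rayless graph $G$ must be finite by contradiction, exploiting the interplay between raylessness and toughness through a normal spanning tree. First I would note that we may assume $G$ is connected: if $G$ is rayless and tough, then so is each of its components (deleting vertices in one component cannot create infinitely many components elsewhere), and if $G$ were infinite, some component would either be infinite or there would be infinitely many components --- but infinitely many components would mean $\emptyset$ is a critical vertex set (every finite component is trivially tight at $\emptyset$), contradicting toughness; so some component is infinite, and we may replace $G$ by it.

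Next I would invoke that a connected rayless graph has a normal spanning tree $T$ of \emph{finite height}. Indeed, a connected rayless graph has a normal spanning tree by Jung's criterion (the vertex set of a rayless graph is trivially a countable --- indeed finite in the relevant sense --- union of dispersed sets, since there are no ends to worry about; more directly, rayless connected graphs are well known to admit normal spanning trees, and such a tree can have no ray, hence every chain in $T$ is finite). A rooted tree in which every $\leq_T$-chain is finite but which is infinite must, by König's Lemma, contain a vertex of infinite degree --- or more carefully, must contain a vertex with infinitely many children none of whose up-closures is trivial only if... let me instead argue: if $T$ is infinite and has no infinite chain, then by König's Lemma $T$ has a vertex $t$ with infinitely many children $c_1, c_2, \dots$.

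Now I would extract the critical vertex set. Let $t$ be a vertex of $T$ with infinitely many children, chosen (if possible) with $\mathring{\lfloor t \rfloor}$ minimal, but in fact any such $t$ works with a pigeonhole refinement: among the infinitely many children $c_i$ of $t$, each subtree $\lfloor c_i \rfloor$ is a finite union of... no. Here is the clean route: set $X := \lceil t \rceil$, the (finite, since chains are finite and... wait, down-closures of a single vertex in a tree of finite height are finite) down-closure of $t$. For each child $c_i$ of $t$, the set $D_i := \lfloor c_i \rfloor$ induces a connected subgraph $G[D_i]$ of $G$ (it is a subtree of the spanning tree $T$), and by normality of $T$ all $G$-neighbours of $D_i$ outside $D_i$ lie in $\lceil t \rceil = X$; thus each $G[D_i]$ is a union of components of $G - X$, in particular contains a component $C_i$ of $G-X$ with $t \in N_G(C_i)$ --- actually we need $N_G(C_i) = X$ exactly. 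To get tightness exactly equal to $X$: refine by pigeonhole. Each $C_i$ has $N_G(C_i) \subseteq X$, and $X$ is finite, so there are only finitely many possible values of $N_G(C_i)$; infinitely many $C_i$ share a common neighbourhood $X' \subseteq X$, and then $X'$ is a critical vertex set of $G$ (it has infinitely many tight components among the $C_i$), contradicting toughness.

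The main obstacle I expect is justifying cleanly that the normal spanning tree has finite height, equivalently that an infinite connected rayless graph's normal spanning tree must branch infinitely at some node --- this is where König's Lemma enters and where one must be careful that "no ray in $G$" forces "no ray in $T$" (true since $T \subseteq G$ is a subgraph, so a ray in $T$ would be a ray in $G$) and hence every chain in $T$ is finite, so an infinite $T$ has a vertex of infinite degree. Combined with the pigeonhole refinement of neighbourhoods, this yields the critical vertex set and the desired contradiction. A short alternative avoiding normal spanning trees would use the standard fact that rayless graphs have an ordinal-valued rank and induct on it, but the normal-spanning-tree proof is in the spirit of the surrounding text, which explicitly advertises "a short proof using normal spanning trees".
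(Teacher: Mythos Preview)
Your proposal is correct and takes essentially the same approach as the paper: obtain a normal spanning tree via Jung's criterion (using that rayless graphs have no ends), observe the tree is rayless as a subgraph of $G$, use normality plus toughness to force the tree to be locally finite, and conclude by König's Lemma. The paper streamlines your argument slightly by invoking the equivalent formulation of toughness (deleting finitely many vertices leaves finitely many components) in place of your pigeonhole refinement, and handles the reduction to connected components at the end rather than the beginning, but the argument is otherwise the same.
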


\begin{proof}
    Let $G$ be a tough, rayless graph. Since $G$ is rayless, $V(G)$ is trivially dispersed, so every component $C$ of $G$ admits a normal spanning tree $T_C$ by Jung's normal spanning tree criterion.  By normality, for every node $t \in T_C$ all its successors $s$ are contained in distinct components $\lfloor s \rfloor$ of $C - \lceil t \rceil$. Hence, since $G$ is tough and $\lceil t \rceil$ is finite, $T_C$ has no vertices of infinite degree. As every locally finite, rayless tree, such as the $T_C$, is finite \cite{bibel}*{Proposition~8.2.1}, the components~$C$ of $G$ are finite as well.
    Since $G$ is tough, it has only finitely many components. Hence, $G$ is finite.
\end{proof}

A tree-decomposition~$(T, \cV)$ of a graph $G$ \defn{displays the critical vertex sets} if the map $t \mapsto V_t$ restricted to the infinite-degree nodes of $T$ whose $V_t$ is finite is a bijection to the critical vertex sets of $G$.
For a critical vertex set $X$, we denote by $t_X$ the unique infinite-degree node $t$ with $V_t = X$.
If a rooted \td\ $(T,\cV)$ not only displays the critical vertex sets but also for every critical vertex set $X$ cofinitely many tight components of $G-X$ are $G \strictup e$ for some $e =t_Xt \in T$ with $t_X <_T t$ and every such $G \strictup e$ is a tight component of $G-X$, then $(T,\cV)$ \defn{displays the tight components of every critical vertex set cofinitely}.
We remark that if such a $(T, \cV)$ is tight, then for every finite proper subset $Y$ of any (possibly infinite) bag $V_t$ there are at most finitely many edges $e = ts \in T$ with $t <_T s$ such that $V_e = Y$.
A rooted \td\ $(T, \cV)$ of $G$ into finite parts \defn{displays the infinities} of $G$, if it displays the ends of $G$ homeomorphically, their combined degrees, their dominating vertices, the critical vertex sets and their tight components cofinitely.

\subsection{Critical vertex sets of torsos}

The following lemma, which we will use in the proofs of \cref{maincor:TreeDecompDisplayingInfs,main:LeanTD}, says that the critical vertex sets of a torso are closely related to the critical vertex sets of the underlying graph; in particular, torsos in tough graphs are tough.

\begin{lemma} 
\label{lem:TechnicalCriticalVertexSetsOfTorsos}
    Let $\sigma$ be a star of finite-order separations of a graph $G$ such that for cofinitely many separations $(A,B) \in \sigma$ the side $A$ contains a tight component of $G- (A \cap B)$. 
    Then $\crit(\torsostar(\sigma)) \subseteq \crit(G)$. 
    Moreover, 
    \begin{enumerate}
        \item \label{itm:CriticalVertexSetsOfTorsos:1} 
        If $X \in \crit(\torsostar(\sigma))$, then there are infinitely many tight components of $G-X$ which meet $\torsostar(\sigma)$. 
        \item \label{itm:CriticalVertexSetsOfTorsos:2}
        If $X \subseteq \interior(\sigma)$, then the set $V(C) \cap \interior(\sigma)$ induces a tight component of $\torsostar(\sigma)-X$ for cofinitely many $C \in \breve{\cC}(G-X)$ which meet $\interior(\sigma)$. 
    \end{enumerate}
\end{lemma}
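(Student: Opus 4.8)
The plan is to first establish the ``moreover'' part, since the initial inclusion $\crit(\torsostar(\sigma)) \subseteq \crit(G)$ will fall out of part \ref{itm:CriticalVertexSetsOfTorsos:1}. Throughout, abbreviate $H := \torsostar(\sigma)$ and $I := \interior(\sigma)$, and let $\sigma_0 \subseteq \sigma$ be the cofinite subset consisting of those $(A,B)$ whose side $A$ contains a tight component of $G - (A\cap B)$; note $\sigma \setminus \sigma_0$ is finite and $Y := \bigcup_{(A,B)\in \sigma\setminus\sigma_0}(A\cap B)$ is a finite set of vertices, all of which lie in $I$. The key structural observation I would record at the outset: for any finite $X \subseteq I$ and any component $C$ of $G - X$, the intersection $V(C)\cap I$ is a union of vertex sets of components of $H - X$, because the only edges of $H$ leaving $I$-vertices that are not edges of $G$ are the torso edges inside the finitely many separators $A\cap B$ of separations in $\sigma$, and $N_G(C)\subseteq X$. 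Conversely, a component of $H-X$ meeting $I\setminus\big(\bigcup_{(A,B)\in\sigma}(A\cap B)\big)$ ``far from all separators'' sits inside a single component of $G-X$. The torso edges only ever let us travel within a single separator $A\cap B$, and for all but finitely many of those separators the separator is contained in the neighbourhood of a tight component of $G-X'$ for the relevant small set $X'$; this is what will let us match things up.

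For part \ref{itm:CriticalVertexSetsOfTorsos:2}: assume $X \subseteq I$ is a critical vertex set of $G$ meeting $I$ (automatic here), and consider the infinitely many tight components $C \in \breve{\cC}(G-X)$ that meet $I$. Discard the finitely many such $C$ that meet the finite set $Y$, and also discard the finitely many $C$ that contain a separator $A\cap B$ with $(A,B)\in\sigma$ inside them or meet such a separator (there are only finitely many separators, each finite, hence meeting only finitely many components). For each remaining $C$, the set $V(C)\cap I$ is nonempty, and I claim it induces a single component $C'$ of $H - X$ with $N_H(C') = X$: connectivity of $C' := H[V(C)\cap I]$ follows because any two of its vertices are joined by a path in $C$, and since $C$ avoids all the separators of separations in $\sigma_0$ except possibly using them as endpoints — more carefully, one pushes the path off the separators using that $C$ is tight and the separators are finite, or one simply notes the path in $C$ only leaves $I$ through separator vertices, which are in $N_G(C)\subseteq X$, contradiction — so the $C$-path already lies in $I$ and hence is an $H$-path. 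That $N_H(C') = X$: every vertex of $X$ has a $G$-neighbour in $C$ (tightness), and that neighbour lies in $V(C)\cap I = V(C')$ unless it is a separator vertex, i.e.\ in some $A\cap B$; but a vertex $x\in X$ adjacent in $H$ to $C'$ via a torso edge to a separator would force $x$ itself into that separator, handled by the discard step or by noting torso edges inside $A\cap B$ join two vertices of $A\cap B$, both in $X$ only if... — here one must be slightly careful, but the finitely many bad separators are absorbed into the ``cofinitely many'' clause. Thus cofinitely many tight $C$ meeting $I$ yield distinct tight components $H[V(C)\cap I]$ of $H-X$, which is exactly \ref{itm:CriticalVertexSetsOfTorsos:2}, and in particular shows $X \in \crit(H)$, giving one direction toward sharpness but not yet the inclusion.

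For part \ref{itm:CriticalVertexSetsOfTorsos:1} and the main inclusion: let $X \in \crit(H)$, so $H - X$ has infinitely many tight components $C'$. Each such $C'$ meets $I$ (every vertex of $H$ is in $I$), and I want to produce, for cofinitely many of them, a distinct tight component of $G - X$ meeting $H$. The natural candidate is the component $C$ of $G - X$ containing a fixed vertex of $C'$. Two issues: (a) different $C'$ might land in the same $C$, and (b) $C$ need not be tight at $X$ in $G$ — its $G$-neighbourhood could be a proper subset of $X$, since some vertices of $X$ might only see $C'$ through torso edges. Issue (a): if $C'_1, C'_2$ lie in the same component $C$ of $G-X$, then there is a $C'_1$–$C'_2$ path in $C\subseteq G$ avoiding $X$; intersecting with $I$ this path decomposes into subpaths in $I$ linked by torso edges inside separators of $\sigma$, and since these separators lie outside $X$... — the point is such a path in $H - X$ would connect $C'_1$ and $C'_2$, contradiction, unless it passes through a separator vertex not in $I$; but all vertices are in $I$. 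Actually the subtlety is that a $G$-edge of the path leaving $V(C')$ could go to a separator vertex that is a cut vertex in $H$; one shows only finitely many $C'$ can ``share'' via the finitely many bounded separators, so cofinitely many $C'$ give distinct $C$. Issue (b) is the main obstacle: one uses the hypothesis on $\sigma_0$ — for $(A,B)\in\sigma_0$, $A$ contains a tight component $D_{(A,B)}$ of $G-(A\cap B)$, so every vertex of $A\cap B$ has a neighbour in $D_{(A,B)}\subseteq A\setminus I$; this means the torso edges inside $A\cap B$ are ``justified'' by genuine connections outside $I$, which forces the component $C$ of $G - X$ containing $C'$ to actually absorb those outside-paths and hence to have $N_G(C) = X$ after all, for all but the finitely many separators outside $\sigma_0$. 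Assembling: cofinitely many tight components $C'$ of $H-X$ yield distinct tight components $C$ of $G-X$ meeting $H$, so $G-X$ has infinitely many tight components, i.e.\ $X\in\crit(G)$; this proves both the inclusion and \ref{itm:CriticalVertexSetsOfTorsos:1}.

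\textbf{Main obstacle.} The crux is issue (b) above: showing a component $C$ of $G-X$ that meets the torso is \emph{tight} at $X$ in $G$, i.e.\ that every vertex of $X$ genuinely has a $G$-neighbour in $C$ and not merely an $H$-neighbour via a torso edge. This is precisely where the hypothesis ``for cofinitely many $(A,B)\in\sigma$, $A$ contains a tight component of $G-(A\cap B)$'' is used, and getting the bookkeeping right — quantifying ``cofinitely'' uniformly over the infinitely many components $C'$ while only finitely many separators are ``bad'' — is the delicate part; everything else is a routine translation between paths in $G$ and paths in the torso, along the lines of the path-lifting lemmas of \cref{sec:liftingspathsandraysfromtorso}.
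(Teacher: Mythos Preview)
Your overall architecture matches the paper's: define the finite ``bad'' set $Y$ (the paper calls it $U$) as the union of the separators of the finitely many $(A,B)\in\sigma\setminus\sigma_0$, prove \ref{itm:CriticalVertexSetsOfTorsos:1} and \ref{itm:CriticalVertexSetsOfTorsos:2} separately, and deduce the inclusion $\crit(\torsostar(\sigma))\subseteq\crit(G)$ from \ref{itm:CriticalVertexSetsOfTorsos:1}. Your handling of issue (b) in part \ref{itm:CriticalVertexSetsOfTorsos:1} --- using the tight component $D_{(A,B)}\subseteq A$ to turn torso adjacencies into genuine $G$-adjacencies --- is exactly what the paper does.

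There is, however, a genuine gap in your treatment of \ref{itm:CriticalVertexSetsOfTorsos:2}. You propose to ``discard the finitely many $C$ that contain a separator $A\cap B$ with $(A,B)\in\sigma$ inside them or meet such a separator (there are only finitely many separators, each finite\ldots)''. But $\sigma$ can be an infinite star --- indeed, in the paper's applications it typically is --- so there may be infinitely many separators, and your discard step could throw away infinitely many (possibly all) of the tight components $C$ you need. The same false premise reappears in your issue (a) for part \ref{itm:CriticalVertexSetsOfTorsos:1} (``the finitely many bounded separators'').

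The fix, and the key observation you are missing, is this: since each separator $A\cap B$ is \emph{complete} in the torso, any component of $H-X$ that meets $A\cap B$ must contain all of $(A\cap B)\setminus X$. Dually, if a component $C$ of $G-X$ avoids $Y$ and meets some $A\cap B$, then (using the tight component inside $A$) one gets $(A\cap B)\setminus X\subseteq C$. With this in hand you need only discard the components meeting the finite set $Y$; for each remaining $C'$ (resp.\ $C$) the explicit description $C = C'\cup\bigcup\{G[A]:(A,B)\in\sigma,\ C'\cap A\neq\emptyset\}$ immediately gives connectedness, injectivity of $C'\mapsto C$, and tightness in both directions. This single observation replaces all your separator bookkeeping and renders the cardinality of $\sigma$ irrelevant.
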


\begin{proof}
    We remark that \cref{itm:CriticalVertexSetsOfTorsos:1} immediately yields that $\crit(\torsostar(\sigma)) \subseteq \crit(G)$.
    Let $U$ be the union of the finite separators of those finitely many $(A,B) \in \sigma$ whose side $A$ does not contain a tight component of $G-(A \cap B)$; in particular, $U$ is finite.

    \cref{itm:CriticalVertexSetsOfTorsos:1}: 
    Since $U$ is finite, only finitely many components of $\torsostar(\sigma) - X$ meet $U$.
    For every component~$C'$ of $\torsostar(\sigma) - X$ which avoids $U$, the subgraph 
    $$C := C' \cup  \bigcup \{G[A] \colon (A,B) \in \sigma, \; V(C') \cap A \neq \emptyset\}$$
    is connected by the definition of $U$. Moreover, since the separators $A \cap B$ of separations $(A,B) \in \sigma$ are complete in $\torsostar(\sigma)$, the component~$C'$ contains the entire $(A \cap B)\setminus X$ as soon as it meets $A \cap B$. Hence, $C$ is a component of $G-X$, and by definition it contains no other components of $\torsostar(\sigma)-X$ than $C'$. 
    It thus suffices to show for all infinitely many components $C'$ of $\torsostar(\sigma) - X$ that avoid $U$ that the component $C$ of $G-X$ which contains $C'$ is tight.
    For this, it suffices to prove that whenever there is a torso edge from $u' \in C'$ to $v \in X$, then there also is some edge from $C$ to $v$ in $G$.
    By the definition of torso, there is a separation $(A,B) \in \sigma$ with $u', v \in A \cap B$.
    The side $A$ of $(A,B)$ contains a tight component $K$ of $G -(A \cap B)$ as $C'$ avoids $U$; in particular, $K \subseteq C$ by the definition of $C$, and $K$ sends an edge to $v$ in $G$. 

    \cref{itm:CriticalVertexSetsOfTorsos:2}: 
    Let $X \subseteq \interior(\sigma)$.
    It suffices to show that $C' := C \cap \interior(\sigma)$ is a tight component of $\torsostar(\sigma)-X$ for every tight component $C$ of $G - X$ which meets $\interior(\sigma)$ but avoids the finite set $U$. 
    The definition of torso immediately yields that $C'$ induces a connected subgraph of $\torsostar(\sigma) - X$ with $N_{\torsostar(\sigma)}(C') \supseteq X$ because $X \subseteq \interior(\sigma)$.
    It remains to show that $N_{\torsostar(\sigma)}(C') \subseteq X$. 
    For this it suffices to prove that whenever there is a torso edge from $u' \in C'$ to $v \notin C'$ the vertex $v$ is already in $X$.
    Let $(A,B) \in \sigma$ such that $u', v \in A \cap B$.
    As $C$ avoids $U$, the component $C$, or equivalently $C'$, only meets separators of separations $(A,B) \in \sigma$ whose side $A$ contains a tight component of $G- (A \cap B)$.
    Thus, we have $(A \cap B) \setminus X \subseteq A \setminus X \subseteq C$ as soon as $C$ meets $A \cap B$. 
    Since $v \in (A \cap B)$ but not in $C$, we thus have $v \in X$, as desired.
\end{proof}

\section{Displaying ends} \label{subsec:DisplayingEnds}

In this short section we show that \cref{main:LinkedTightCompTreeDecompnew2} follows from \cref{main:LinkedTightCompTreeDecompnew}, that is, we show that a linked, tight, componental, rooted \td\ into finite parts homeomorphically displays all ends, their dominating vertices and their combined degrees. 

The proof is divided into three lemmas.
The first follows immediately from \cite{koloschin2023end}*{Lemma~3.1} applied to the \td\ induced by contracting all edges which violate componental.

\begin{lemma}\label{lem:TreeDecompDisplayingEnds}
    Let~$(T, \cV)$ be a cofinally componental\footnote{Our definition of \emph{componental} agrees with the definition of \emph{upwards connected} from \cite{koloschin2023end}.}, rooted \td\ of a graph~$G$ which has finite adhesion.
    Then every rooted ray $R$ of $T$ arises from precisely one end of $G$.
    Moreover, if all torsos of $(T,\cV)$ are rayless, then~$(T, \cV)$ displays all ends of~$G$ homeomorphically. \qed
\end{lemma}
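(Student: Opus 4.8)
The plan is to reduce the statement to the componental case, \cite{koloschin2023end}*{Lemma~3.1}, by contracting the edges that obstruct componentality. Set $F := \{e \in E(T) : G \strictup e \text{ is disconnected}\}$, let $T'$ be the tree obtained from $T$ by contracting every edge in $F$, and let $(T', \cV')$ be the induced \td\ of $G$; so the bag of a branch set $t'$ (a $\subseteq$-maximal subtree of $T$ consisting of $F$-edges) is $\bigcup_{s \in t'} V_s$. The first thing I would check is that $(T', \cV')$ is componental and has finite adhesion. Given an edge $e' = t'_0 t'_1$ of $T'$ with $t'_0 <_{T'} t'_1$, let $e = xy$ be the unique edge of $T$ joining $t'_0$ to $t'_1$, say with $x \in t'_0$ and $y \in t'_1$; then $e \notin F$. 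A short check using \cref{prop:TD2} and the fact that distinct branch sets are joined by exactly one edge of $T$ shows that $y$ is the $\leq_T$-minimal node of $t'_1$ and that the separation of $G$ induced by $e'$ in $(T', \cV')$ coincides with the separation induced by $e$ in $(T, \cV)$; in particular $V_{e'} = V_e$ and $G \strictup e' = G \strictup e$, which is connected as $e \notin F$. Since the adhesion sets of $(T', \cV')$ form a subset of those of $(T, \cV)$, they are finite.

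Next I would transfer the end--ray correspondence between $T$ and $T'$. Recall that a ray in a rooted tree has a monotone-increasing tail --- its end contains a rooted ray, and in a tree equivalent rays share a tail --- and that such a monotone tail together with its down-closure is a $\subseteq$-maximal $\leq_T$-chain. Since $(T, \cV)$ is cofinally componental, every $\subseteq$-maximal $\leq_T$-chain meets $E(T) \setminus F$ cofinally; hence no branch set $t'$ contains a ray (all of its edges lie in $F$), i.e.\ every $t'$ is rayless, and in particular no rooted ray of $T$ is collapsed by the contraction. A routine argument now shows that the contraction induces a homeomorphism $\pi \colon \Omega(T) \to \Omega(T')$. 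Moreover, along any rooted ray $R$ of $T$ the edges not in $F$ are cofinal and satisfy $G \strictup e' = G \strictup e$ above them, so an end $\eps$ of $G$ gives rise to $R$ in $(T, \cV)$ if and only if it gives rise to $\pi(R)$ in $(T', \cV')$. Applying \cite{koloschin2023end}*{Lemma~3.1} to the componental, finite-adhesion \td\ $(T', \cV')$ yields that every rooted ray of $T'$ arises from exactly one end of $G$; transporting this along $\pi$ using the displayed equivalence gives the first assertion.

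For the ``moreover'' part, suppose all torsos of $(T, \cV)$ are rayless. By the remark in \cref{subsec:endandtd} this already ensures that every end of $G$ gives rise to a rooted ray of $T$, hence, by the equivalence above, to a rooted ray of $T'$, so the map $\varphi' \colon \Omega(G) \to \Omega(T')$ is defined on all of $\Omega(G)$. To apply \cite{koloschin2023end}*{Lemma~3.1} I would also check that $(T', \cV')$ has rayless torsos: a torso $\torsostar(\sigma_{t'})$ of $(T', \cV')$ carries the \td\ $\bigl( t', (V_s)_{s \in t'} \bigr)$ (viewing $t'$ as a subtree of $T$), whose decomposition tree $t'$ is rayless (by the previous paragraph), whose adhesion sets are adhesion sets of $(T, \cV)$ and hence finite, and whose torsos are torsos of $(T, \cV)$ --- because completing, at a node $s \in t'$, the separators of both the $t'$-internal edges at $s$ and the edges leaving $t'$ at $s$ recovers exactly $\torsostar(\sigma_s)$ --- and hence rayless; and a graph with a finite-adhesion \td\ over a rayless decomposition tree into rayless torsos is itself rayless. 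Then \cite{koloschin2023end}*{Lemma~3.1} shows that $(T', \cV')$ homeomorphically displays all ends of $G$ via $\varphi'$, so $\varphi = \pi^{-1} \circ \varphi'$ is a homeomorphism, completing the proof. The only steps that require real care are the verification that contraction preserves componentality (the separation-matching above) and that it preserves the rayless-torso property (which, via cofinal componentality, comes down to the branch sets being rayless); the auxiliary fact that a finite-adhesion \td\ over a rayless tree into rayless torsos has rayless total graph is proved by induction on the ordinal rank of the tree, using that gluing two rayless graphs along a finite complete subgraph is rayless and that locally finite rayless trees are finite \cite{bibel}*{Proposition~8.2.1}.
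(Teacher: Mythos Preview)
Your proposal is correct and follows exactly the paper's approach: the paper's entire proof is the single sentence ``follows immediately from \cite{koloschin2023end}*{Lemma~3.1} applied to the \td\ induced by contracting all edges which violate componental,'' and you have carefully filled in the details of this reduction (that the contracted decomposition is componental with finite adhesion, that contraction induces a homeomorphism of end spaces because cofinal componentality forces branch sets to be rayless, and that rayless torsos are preserved under the contraction).
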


\begin{lemma} \label{lem:TreeDecompDisplayingDominatingVertices}
    Let~$(T, \cV)$ be a tight, componental, rooted \td\ of a graph~$G$ which has finite adhesion.
    Then $\liminf_{e \in R} V_{e} = \Dom(\eps)$ for every rooted $R$ of $T$ and the unique end $\eps$ of $G$ which gives rise to $R$.  
\end{lemma}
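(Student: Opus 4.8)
The plan is to prove both inclusions $\Dom(\eps) \subseteq \liminf_{e \in R} V_e$ and $\liminf_{e \in R} V_e \subseteq \Dom(\eps)$ separately. Throughout, write $R = v_0 e_0 v_1 e_1 v_2 \cdots$ for the rooted ray in $T$, so that $\liminf_{e \in R} V_e = \bigcup_n \bigcap_{i \ge n} V_{e_i}$, and recall that since $(T,\cV)$ has finite adhesion and is componental (hence cofinally componental), \cref{lem:TreeDecompDisplayingEnds} guarantees that $R$ arises from exactly one end $\eps$ of $G$; by definition this means every ray in $\eps$ has a tail in $G \strictup e_i$ for every $i$, and equivalently $C_G(V_{e_i}, \eps) \subseteq G \strictup e_i$ for all $i$ — indeed $C_G(V_{e_i},\eps)$ is the unique component of $G\strictup e_i$ containing a tail of $\eps$, and by componentality $G \strictup e_i$ is connected, so $C_G(V_{e_i},\eps) = G \strictup e_i$.

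**The inclusion $\liminf_{e \in R} V_e \subseteq \Dom(\eps)$.** Let $v \in \liminf_{e \in R} V_e$, so $v \in V_{e_i}$ for all $i \ge n$ for some $n$. Fix a finite $X \subseteq V(G - v)$; I must show $v \in C_G(X, \eps)$. Since $X$ is finite and $\bigcap_{e \in R} G \strictup e = \emptyset$, there is some $i \ge n$ with $X \cap V(G \up e_i) \subseteq V_{e_i}$ — more carefully, since the separators $V_{e_j}$ along $R$ eventually separate any fixed finite $X$ from $G \strictup e_j$ for $j$ large (as each vertex of $G$ lies in only finitely many bags along $R$ unless it lies in $\liminf$, and $X$ is finite), we may pick $i \ge n$ so large that $X \cap V(G \strictup e_i) = \emptyset$. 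Then $C_G(X,\eps) \supseteq C_G(V_{e_i} \cup X, \eps)$, and the latter equals the component of $(G \strictup e_i) - X = G \strictup e_i$ containing a tail of $\eps$, which is all of $G \strictup e_i$; but $v \in V_{e_i} \subseteq N_G(G\strictup e_i) \cup V(G\strictup e_i)$ and $v \notin X$, so $v$ has a neighbour in $G \strictup e_i \subseteq C_G(X,\eps)$, forcing $v \in C_G(X,\eps)$. Hence $v$ dominates $\eps$.

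**The inclusion $\Dom(\eps) \subseteq \liminf_{e \in R} V_e$.** Let $v \in \Dom(\eps)$. First I claim $v \in V_{e_i}$ for all sufficiently large $i$, which gives the result. Suppose not; then since $\{t : v \in V_t\}$ is a connected subtree of $T$ by \cref{prop:TD2}, there is some $i$ with $v \notin V(G \up e_i)$, hence $v \notin C_G(V_{e_i}, \eps) = G \strictup e_i$ and $v \notin V_{e_i}$. But $V_{e_i}$ is a finite subset of $V(G - v)$ and $C_G(V_{e_i}, \eps) = G \strictup e_i$ does not contain $v$, contradicting that $v$ dominates $\eps$. So $v \in V_{e_i}$ for all $i$ past the point where the subtree $\{t : v \in V_t\}$ leaves the ray $R$; since that subtree is connected and meets $R$ (as $v \in C_G(\emptyset,\eps)$ forces $v$ into some bag that $R$ passes through — or more simply, the subtree contains $v_0$'s bag... actually one argues: if the subtree $\{t : v\in V_t\}$ were disjoint from a tail of $R$, pick $e_i$ separating it off and derive the same contradiction). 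In either case $v \in \bigcap_{i \ge n} V_{e_i}$ for some $n$, so $v \in \liminf_{e \in R} V_e$.

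**Main obstacle.** The delicate point is the "eventually separates" step in the first inclusion: justifying that a fixed finite $X$ eventually satisfies $X \cap V(G \strictup e_i) = \emptyset$, and the companion claim in the second inclusion that the subtree $\{t : v \in V_t\}$, once it fails to contain $v$ in $V_{e_i}$ for one $i$, must fail for a whole tail of $R$. Both rely on $\bigcap_{e \in R} G \strictup e = \emptyset$ (stated in the preliminaries) together with \cref{prop:TD2}: a vertex $w$ lies in $V(G \up e_i)$ for a $\leq_T$-closed (upward) set of indices $i$ along $R$, and if $w \in \liminf_{e\in R} V_e$ then it lies in all of them past some point, while if $w \notin \liminf_{e\in R}V_e$ then $w \notin V(G\strictup e_i)$ for $i$ large — applying this to each of the finitely many $w \in X$ (none in $\liminf$ if we first note $X \subseteq V(G-v)$ and shrink appropriately, or simply handle the finitely many exceptional $w$ by hand) yields a single threshold $i$. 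I would isolate this as a one-line observation and then the rest is bookkeeping.
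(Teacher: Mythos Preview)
Your proof is correct and follows essentially the same approach as the paper: for the inclusion $\liminf \subseteq \Dom(\eps)$, both arguments pick $e$ far enough along $R$ so that $G\strictup e$ avoids the given finite set, then use that $v \in V_e$ is adjacent to the connected set $G\strictup e \subseteq C_G(X,\eps)$. One remark: the step ``$v$ has a neighbour in $G\strictup e_i$'' is precisely where tightness (together with componental) is used, via $N_G(G\strictup e_i) = V_{e_i}$ --- this, rather than the routine ``eventually separates'' observation you flagged, is where the hypotheses actually do their work.
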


\begin{proof}
    Since $(T, \cV)$ has finite adhesion, $\liminf_{e \in R} V_{e}$ contains $\Dom(\eps)$.
    Conversely, let~$v$ be a vertex in~$\liminf_{e \in R} V_{e}$.
    Let $Q \in \eps$ be arbitrary.
    We aim to construct an infinite $v$--$Q$ fan in $G$, which then shows that $v \in \Dom(\eps)$.
    Note that we may recursively find infinitely many pairwise internally disjoint~$v$--$Q$~paths in~$G$ 
    if for each finite set~$X \subseteq V(G) \setminus \{ v \}$ there is a~$v$--$Q$~path in~$G$ avoiding~$X$.
    As $(T, \cV)$ is a \td, there is an edge~$e \in R$ for which~$G \strictup e$ avoids any given finite set~$X \subseteq V(G)$.
    Since~$\eps$ gives rise to~$R$, the ray $Q$ has a tail in $G \strictup e$.
    Thus, we find the desired~$v$--$Q$~path in the connected subgraph $G \up e$, as $(T, \cV)$ is tight and componental.
\end{proof}

\begin{lemma} \label{lem:linkedandliminfareonlydominatingverticesimpliesdisplayenddegrees}
    Let $(T, \cV)$ be a linked, rooted \td\ of a graph~$G$ which has finite adhesion. Suppose that an end $\eps$ of $G$ gives rise to a ray $R$ in $T$ which arises from no other end of $G$ and that $\liminf_{e \in R} V_e = \Dom(\eps)$.
	Then $\liminf_{e \in R} |V_e| = \Delta(\eps)$.

    In particular, if $(T, \cV)$ displays all ends of $G$ and their dominating vertices, then $(T, \cV)$ also displays the combined degree of each end of $G$.
\end{lemma}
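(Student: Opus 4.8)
The plan is to fix a linked, rooted tree-decomposition $(T,\cV)$ of finite adhesion, an end $\eps$ giving rise to a rooted ray $R = v_0e_0v_1e_1\dots$ that arises from no other end, and to assume $\liminf_{e\in R}V_e = \Dom(\eps)$. Since $(T,\cV)$ has finite adhesion, each $|V_{e_i}|$ is finite, and the sequence $|V_{e_i}|$ is bounded below by $\Delta(\eps)$ up to finitely many terms once we establish the "$\geq$" direction; the genuine content is the inequality $\liminf_{e\in R}|V_e| \leq \Delta(\eps)$ together with $\liminf_{e\in R}|V_e| \geq \Delta(\eps)$. I expect the lower bound to be the easy half: for each $n$ one can pick, below the tail of $R$, disjoint rays for $\deg(\eps)$ many disjoint rays in $\eps$ and disjoint paths to the $\dom(\eps)$ many dominating vertices, all of which must cross $V_{e_i}$ for large $i$, forcing $|V_{e_i}| \geq \Delta(\eps)$ eventually; one has to be slightly careful to combine rays and dominating-vertex paths into a single family of $\Delta(\eps)$ disjoint subgraphs meeting the tail, but this is standard (a fan/ray argument as in \cite{halin65}).

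The main work is the upper bound. Set $k := \Delta(\eps)$ if this is finite (the case $k = \infty$ needs separate, easier treatment via the lower bound alone). The idea is: since $(T,\cV)$ is linked, for every edge $e \in R$ there is an edge $f$ with $e \leq_T f$ and a family $\{P_v : v \in V_f\}$ of $|V_f|$ pairwise disjoint $V_e$–$V_{f}$ paths, hence in particular $|V_f|$ pairwise disjoint $(G\down e)$–$(G\up f)$ paths. Because $R$ arises from $\eps$ and from no other end, and because $\liminf_{e\in R}V_e = \Dom(\eps)$, I would argue that for suitably large $e$ the edge $f$ can be taken on $R$ as well — or rather, one should push $e$ far enough along $R$ and use that the linkage edge $f$ lies weakly above $e$; since $\bigcap_{e\in R}G\strictup e = \emptyset$, any such $f$ also lies on a bounded initial segment, so $f \in R$. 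Then $|V_f|$ disjoint $V_e$–$V_f$ paths with $e,f \in R$ yield $|V_f|$ disjoint connected subgraphs each meeting both $V_e$ and $V_f$; extending these towards $\eps$ (using tightness/linkedness is not available here, only linkedness, so one extends only as far as the structure allows) one wants to produce $|V_f|$ disjoint rays/paths realising part of $\Delta(\eps)$. The cleanest route: show that $\limsup_{e\in R}|V_e| \le \Delta(\eps)$ fails would contradict linkedness, by taking an $e$ with $|V_e| > k$, finding the linkage edge $f \geq_T e$, and observing that the $|V_f| \geq |V_e|$ (wait—linkedness gives paths of cardinality $|V_f|$, and $\min$ over the segment is what matters) disjoint $V_e$–$V_f$ paths, being disjoint $(G\down e)$–$(G\up f)$ paths, can be combined with the structure of $\eps$ living above $f$ to exhibit more than $k$ disjoint rays-or-dominating-paths, contradicting $\Delta(\eps) = k$.

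More precisely, here is the key step I would isolate as a claim: if $e \leq_T e'$ are edges of $R$ and $\{P_v : v \in V_f\}$ is a linkage family for some $e \leq_T f \leq_T e'$, then the number of $P_v$ that can be prolonged within $G\up f$ either to a ray in $\eps$ or to a vertex in $\Dom(\eps)$, pairwise disjointly, is at least $|V_f| - (\text{number of }V_{e'}\text{-vertices not in }\liminf)$; letting $e' \to \infty$ along $R$ and using $\liminf_{e\in R}V_e = \Dom(\eps)$ one gets that essentially all of $V_f$ contributes, so $|V_f| \lesssim \deg(\eps) + \dom(\eps) = \Delta(\eps)$. Combined with $|V_{e_i}|$ being eventually $\geq$ the number $|V_f|$ for the linkage $f$ sitting above $e_i$ — no, one needs $\liminf|V_{e_i}| \le |V_f|$ for infinitely many choices, which follows since $f \in R$ lies above $e_i$ and $|V_{\cdot}|$ along $R$ need not be monotone, so one takes $f$ itself as one of the $e_j$. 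Putting the two inequalities together gives $\liminf_{e\in R}|V_e| = \Delta(\eps)$.

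The hard part will be the bookkeeping in the upper-bound claim: turning the abstract linkage paths into genuinely disjoint rays and dominating-vertex paths living in $\eps$, while accounting exactly for which vertices of $V_f$ fail to reach $\liminf$. I would handle this by a limiting argument over an exhaustion $e_0 \leq_T e_1 \leq_T \cdots$ of $R$: for each $m$ apply linkedness between $e_m$ and a much higher $e_{n(m)}$, extract a linkage edge $f_m \in R$, prolong the linkage paths as far up $R$ as possible, and take a diagonal/compactness limit (using that the relevant vertex sets are finite) to obtain $|V_{f_m}|$ disjoint rays-or-paths; since $f_m \to \infty$ along $R$ we may assume $|V_{f_m}| \to \liminf_{e\in R}|V_e|$, and the limit family witnesses $\Delta(\eps) \geq \liminf_{e\in R}|V_e|$. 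The "in particular" clause is then immediate: if $(T,\cV)$ additionally displays all ends and all dominating vertices, then every rooted ray of $T$ arises from a unique end and satisfies $\liminf_{e\in R}V_e = \Dom(\eps)$, so the first part applies to every rooted ray, giving that $(T,\cV)$ displays the combined degree of each end.
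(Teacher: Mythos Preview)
Your lower-bound direction ($\liminf_e |V_e| \geq \Delta(\eps)$) is fine and in fact more self-contained than the paper's treatment, which passes to a subsequence forming an end-defining sequence and then cites \cite{enddefiningsequences}*{Corollary~5.7}.

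The upper bound is where your proposal has a real gap. Applying linkedness once between $e_m$ and some far $e_{n(m)}$ yields a family of $|V_{f_m}|$ disjoint \emph{finite} paths; the families for different $m$ are not compatible with one another, and your ``diagonal/compactness limit'' is neither explained nor routine --- your own mid-argument hesitations (``wait\ldots'', ``no, one needs\ldots'') reflect exactly this difficulty. What you are missing is the paper's subsequence device: choose $i_0$ minimising $|V_{e_j}|$, then recursively $i_{n+1}$ minimising $|V_{e_j}|$ over $j>i_n$, and set $S_n := V_{e_{i_n}}$. Then $(|S_n|)_n$ is non-decreasing with $\liminf_n|S_n| = \liminf_e|V_e|$, and linkedness applied between \emph{consecutive} $S_n$ and $S_{n+1}$ gives exactly $|S_n|$ disjoint $S_n$--$S_{n+1}$ paths, which therefore cover all of $S_n$. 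Hence the families for successive $n$ match up at $S_n$ and concatenate directly --- no compactness needed --- into $\liminf_e|V_e|$ pairwise disjoint walks; after passing to a further subsequence with $S_n \cap S_m \subseteq \Dom(\eps)$ for $m>n$, each such walk is either a ray in $\eps$ (the only end giving rise to $R$) or eventually constant at a vertex of $\Dom(\eps)$. This witnesses $\liminf_e|V_e| \le \Delta(\eps)$.
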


\begin{proof}
    Let~$R = v_0 e_0 v_1 e_1 \dots$ be the unique rooted ray in~$T$ which arises from the end~$\eps$.
    If~$\dom(\eps)$ is infinite, then~$(T, \cV)$ displays the combined degree of~$\eps$ by assumption.
    Thus, we may assume~$\dom(\eps)$ to be finite.
   Moving to a tail of~$R$, we may assume that~$\Dom(\eps) \subseteq V_e$ for every~$e \in R$.
    
    From the sequence~$(V_{e_i})_{i \in \N}$ of adhesion sets along~$R$, we extract a subsequence by letting~$i_0 \in \N$ with~$|V_{e_{i_0}}| = \min_{e \in R} |V_e|$ and recursively choosing~$i_{n+1} \in \N$ with~$i_{n+1} > i_{n}$ and~$|V_{e_{i_{n+1}}}| = \min_{j > i_{n}} |V_{e_j}|$ for $n \in \N$.
    Write $S_n := V_{e_{i_n}}$.
    Then~$(|S_n|)_{n \in \N}$ is non-decreasing, satisfies~$\liminf_{e \in R} |V_e| = \liminf_{n \in \N} |S_n|$ and $\liminf_{n \in \N} S_{n} = \liminf_{e \in R} V_{e} = \Dom(\eps)$ by definition.
    
    Since $\liminf_{e \in R} S_{n} = \Dom(\eps)$, we may assume that $S_n \cap S_m \subseteq \Dom(\eps)$ for all~$m \ge n$ by passing to a further subsequence.
    We also have $C_G(S_m, \eps) \subseteq C_G(S_n, \eps)$ since~$\eps$ gives rise to~$R$.
    Moreover, $\bigcap_{n \in \N} C_G(S_n, \eps) = \emptyset$, since~$\eps$ gives rise to~$R$ and~$(T, \cV)$ is a \td.
    This implies by~\cite{enddefiningsequences}*{Corollary~5.7} that~$\liminf_{e \in R} |V_e| = \liminf_{n \in \N} |S_n| \ge \Delta_G(\eps)$.

    For~$\liminf_{e \in R} |V_e| = \liminf_{n \in \N} |S_n| \le \Delta_G(\eps)$, we use that~$(T, \cV)$ is linked:
    By the construction of the sequence~$(S_n)_{n \in \N}$, the linkedness yields~$|S_n|$ disjoint~$S_n$--$S_{n+1}$~paths in~$G$ for every~$n \in \N$.
    The resulting rays in~$G$ are disjoint and contained in~$\eps$ since~$\eps$ is the unique end of~$G$ that gives rise to $R$, and the resulting trivial paths are by assumption precisely the ones given by~$\Dom(\eps)$.
    Moreover, there are~$\liminf_{n \in \N} |S_n|$ many such disjoint rays in $\eps$ and vertices dominating $\eps$, which implies that~$\liminf_{n \in \N} |S_n| \le \Delta_G(\eps)$, as desired.
\end{proof}

\begin{proof}[Proof of \cref{main:LinkedTightCompTreeDecompnew2} given \cref{main:LinkedTightCompTreeDecompnew}]
    By \cref{lem:TreeDecompDisplayingEnds,lem:TreeDecompDisplayingDominatingVertices,lem:linkedandliminfareonlydominatingverticesimpliesdisplayenddegrees} the \td\ from \cref{main:LinkedTightCompTreeDecompnew} is as desired.
\end{proof}

\section{A high-level proof of the main result} \label{sec:MainResultCounterexamples}

\subsection{A two-step approach to \texorpdfstring{\cref{main:LinkedTightCompTreeDecompnew}}{Theorem 1}} \label{subsec:ATwoStepApproach}

By \cref{prop:RaylessToughGraphsAreFinite}, every tough, rayless graph is finite. Hence, to arrive at a \td\ into finite parts, we first construct a \td~$(T, \cV)$ whose torsos are tough. Next, for each torso of $(T,\cV)$ corresponding to a node~$t \in T$ we construct another \td\ $(T^t, \cV^t)$ with rayless torsos. By refining $(T, \cV)$ with all the~$(T^t, \cV^t)$, we obtain a \td\ $(T', \cV')$ into tough and rayless parts, which then must be finite by \cref{prop:RaylessToughGraphsAreFinite}.

But how to guarantee that the resulting \td\ is linked? \cref{lemma:LinkingPathsAlongCritVertexSet}  ensures that if we begin in the above two-step approach with a \td\ $(T, \cV)$ whose adhesion sets are all critical vertex sets, and then refine by \td s $(T^t, \cV^t)$ that are each linked when considered individually, then the arising combined \td\ is again linked.\footnote{This is not completely true: In order to ensure that the arising \td\ is linked we need a `refinement version' of \cref{thm:RaylessThmIntro} (see \cref{thm:RaylessThmTechnical}) that ensures that the adhesion sets of $(T, \cV)$ corresponding to edges incident with a given node $t \in T$ appear as adhesion sets in $(T^t, \cV^t)$. But this is only a technical issue which does not add much complexity to the proof of \cref{thm:RaylessThmIntro}.}

A little more formally, the first step in our two-step approach is the following theorem:

\begin{mainresult} \label{thm:critVtxIntro}
    Every graph of finite tree-width admits a tight, cofinally componental, rooted \td\ whose adhesion sets are critical vertex sets, whose torsos are tough and which displays the critical vertex sets and their tight components cofinitely.
\end{mainresult}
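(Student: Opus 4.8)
The plan is to build the desired tree-decomposition by a transfinite recursion that ``peels off'' critical vertex sets one at a time, always working relative to the current ``remaining piece'' of the graph. Concretely, I would maintain a rooted tree-decomposition $(T,\cV)$ built so far, whose already-finished nodes have torsos that will be tough, together with a frontier of ``unfinished'' leaves, each carrying a bag $V_t$ that is a critical vertex set of the subgraph $G\strictup t$ attached there, and such that the part $G\up t$ still needs to be decomposed. At a successor step, I take an unfinished leaf $t$ with bag $X=V_t$, look at the graph $H:=G\up t$, and split it along $X$: the torso $\torso{\sigma_t}$ should ``absorb'' $X$ together with all the \emph{non-tight} components of $H-X$ (and one representative of each similarity class, as needed for toughness), while the infinitely many tight components of $H-X$ become new children $s_C$ of $t$, each with bag $N_H(C)=X$ again if we set things up so that $X$ stays an adhesion set. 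Wait --- more carefully: for each tight component $C$ we recurse on $C$ itself, but first we must record $X$ as (a subset of) the bag at the child so that $X$ reappears as an adhesion set; then inside $C$ we again find, by Polat's lemma (the dichotomy ``ray or critical vertex set'', used via \cref{prop:RaylessToughGraphsAreFinite}), the ``next'' critical vertex set, and iterate.

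The key steps, in order, would be: (1) Set up the recursion so that the invariant maintained is exactly that every bag is a critical vertex set of the graph below it and every unfinished part $G\up t$ is connected with $N(G\strictup t)=V_t$ --- this gives tightness and the ``adhesion sets are critical vertex sets'' conclusion more or less for free. (2) At each node, choose the critical vertex set cleverly so that the torso left behind is tough; here one wants an ``innermost'' or ``$\subseteq$-minimal with infinitely many tight components'' choice, or one mimics the standard construction of a tree-decomposition displaying critical vertex sets (cf.\ the Elm--Kurkofka-type arguments the paper alludes to) --- the torso at $t_X$ is $G[X\cup(\text{union of the finitely many leftover, non-tight components})]$ made complete on $X$, and one argues using \cref{lem:TechnicalCriticalVertexSetsOfTorsos} that this torso has strictly fewer (in a suitable well-founded sense) critical vertex sets, so the recursion on torsos terminates and torsos end up tough. (3) Handle limit stages: along a rooted ray in $T$ the adhesion sets $V_e$ should shrink (or at least the parts $G\strictup e$ should have empty intersection), so a ray of $T$ corresponds to an end of $G$ and there is nothing to ``continue'' --- the construction is naturally continuous, and at a limit we simply take unions of the trees and bags built so far. (4) Read off the conclusions: ``displays the critical vertex sets'' because each critical vertex set of $G$ arises exactly once as the bag of an infinite-degree node $t_X$ (injectivity from the nestedness of the construction, surjectivity from the fact that every critical vertex set is eventually ``seen''); ``displays the tight components cofinitely'' because at $t_X$ all but finitely many tight components of $G-X$ were turned into children $G\strictup e$; ``tight'' by the invariant; and ``cofinally componental'' because along any rooted ray, infinitely often we pass through a node whose part strictly above is a single tight component, hence connected.

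The main obstacle, I expect, is step (2) together with the bookkeeping that makes the recursion well-founded: one must choose, at each stage and inside each tight component, the critical vertex set in a way that (a) leaves a tough torso and (b) ensures the process does not run forever at a single ``level'' without making progress --- i.e.\ one needs a potential/rank argument showing that after removing a critical vertex set the torso is ``simpler'', using that $\crit(\torso{\sigma})\subseteq\crit(G)$ and that we strip off at least one critical vertex set each time. A secondary subtlety is arranging that the bag $V_{t_X}$ of the infinite-degree node is \emph{exactly} $X$ (so the ``displays the critical vertex sets'' bijection is clean), which forces the torso construction to put the leftover non-tight components not into the bag of $t_X$ but into bags of \emph{additional} finite-degree children of $t_X$; this is where ``$(T,\cV)$ is only cofinally componental, not componental'' comes from, and one has to check the interaction with tightness carefully. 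A third point requiring care: one must ensure the two vertices of each critical vertex set can be re-linked along it (this is \cref{lemma:LinkingPathsAlongCritVertexSet}, needed later for the linked property but which also motivates keeping $X$ as a genuine adhesion set rather than dissolving it), and that the finitely many ``leftover'' components absorbed into a torso do not themselves create new critical vertex sets that were not already critical in $G$ --- again exactly what \cref{lem:TechnicalCriticalVertexSetsOfTorsos} is designed to supply.
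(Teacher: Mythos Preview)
Your recursive ``peel off critical vertex sets one at a time'' plan is genuinely different from the paper's approach, and the paper's route is considerably simpler. The paper does not build the decomposition from scratch. Instead, it starts from the normal-spanning-tree tree-decomposition $(T,\cV'_{\mathrm{NT}})$ (which exists precisely because $G$ has finite tree-width), observes that this is already tight and componental with finite --- hence tough --- parts, and then simply \emph{contracts every edge whose adhesion set is not a critical vertex set}. The resulting decomposition $(T',\cV')$ automatically has critical adhesion sets, is still tight and componental, and \cref{lem:TightCompToughTDNearlyDisplaysCrit} together with \cref{lem:DisplayingCritYieldsToughTorsos} shows its torsos remain tough. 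Finally, a purely local post-processing (\cref{constr:TreeDecompDisplayingCrit}: for each $X\in\crit(G)$ insert one new node $t'_X$ next to the $\le_T$-minimal node $t_X$ with $X\subseteq V_{t_X}$ and reroute the edges with adhesion set $X$ through it) achieves ``displays the critical vertex sets and their tight components cofinitely''. There is no transfinite recursion, no limit stage, and no well-foundedness argument at all.

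Your plan, by contrast, has a real gap exactly where you anticipate it. The rank you propose --- ``after passing to a tight component (or its torso) there are strictly fewer critical vertex sets'' --- does not give well-foundedness, because a graph of finite tree-width can have infinitely many critical vertex sets, and passing to a tight component of $G-X$ need not reduce the \emph{ordinal} complexity of the family of remaining critical vertex sets in any usable way. Relatedly, you conflate two different recursions: one on the parts $G\up t$ (which is what your construction actually does) and one ``on torsos'' (which is what your termination argument is about); these are not the same, and it is unclear how to make a single inductive invariant cover both. You also need to argue that every vertex of $G$ eventually lands in some bag --- i.e.\ that along each branch of your recursion the parts $G\strictup e$ eventually exclude any given vertex --- and your sketch (``adhesion sets should shrink'') gives no reason for this; critical vertex sets along a branch can strictly increase in size. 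The paper's contract-from-NST approach sidesteps all of these issues because the underlying tree and the bag assignments are already present in the NST decomposition; only edge-contractions and a bounded local modification are performed.
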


The second step in our two-step approach is formally given by the following theorem:

\begin{mainresult} \label{thm:RaylessThmIntro}
    Every graph of finite tree-width admits a linked, tight, componental, rooted \td\ of finite adhesion whose torsos are rayless.
\end{mainresult}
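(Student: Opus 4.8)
The plan is to reduce to the case of a connected graph and then build the decomposition in two layers, the first being the substantive one. By \cref{thm:FiniteTWyieldsNST} I may assume $G$ is connected (otherwise handle the components separately and recombine them under a common root), and I fix a normal spanning tree $T$ of $G$ rooted at $r$. The \td\ $(T,\cV_\textnormal{NT})$ with bags $V_t:=\lceil t\rceil$ is componental, linked (trivially, since $V_s\subseteq V_t$ whenever $s\leq_T t$), of finite adhesion, and even has finite parts, so all its torsos are rayless; its sole defect is tightness, since for an edge $e=st$ with $s<_T t$ the connected part $G\strictup e=G[\lfloor t\rfloor]$ may satisfy $N_G(\lfloor t\rfloor)\subsetneq V_e=\lceil s\rceil$. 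Restricting the bags in the standard way (following Diestel \cite{diestel1994depth}) restores tightness and keeps componentality and finite bags, hence rayless torsos, but in general destroys linkedness. So the real content of the theorem is to secure linkedness, tightness and componentality simultaneously, and the plan is: first produce a rooted \td\ $(T_0,\cV_0)$ of $G$ of finite adhesion that is linked, tight and componental — now dropping the requirements that its bags be finite and its torsos rayless — and then refine it to make all torsos rayless.

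For the second layer I would refine each node $t$ of $(T_0,\cV_0)$ by a \td\ of its torso $\torsostar(\sigma_t)$ into finite parts — concretely by a ``refinement version'' of the normal-spanning-tree \td\ of $\torsostar(\sigma_t)$, available once one checks that the relevant torsos again have finite tree-width. Here ``refinement version'' means that every adhesion set $V_e$ of $(T_0,\cV_0)$ incident with $t$ occurs as a bag of the refining \td, so that the local refinements recombine into one global \td; this is the only genuine subtlety of the second layer and, as the footnote in \cref{subsec:ATwoStepApproach} notes, does not add much. The resulting \td\ $(T',\cV')$ has rayless torsos, and it stays of finite adhesion and componental by routine bookkeeping; it stays tight and linked because tightness and linkedness of the global \td\ follow from those of $(T_0,\cV_0)$ and of the refinements, using the path- and ray-lifting lemmas of \cref{sec:liftingspathsandraysfromtorso}: a torso edge may be replaced by a path through the part of $G$ it abbreviates, so finitely many disjoint paths in a torso lift to finitely many disjoint paths in $G$, and rays in a torso correspond to genuine ends of $G$.

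The main obstacle is the first layer, i.e.\ upgrading a tight, componental \td\ of finite adhesion to a linked one: this is the difficulty behind \cref{thm_intro_krizthomas}, and the standard finite-case improvement argument does not transfer, because the widths involved are unbounded and so there is no numerical width to run an induction on. My plan is to start from Diestel's tight, componental \td\ and repair linkedness by a transfinite improvement: whenever a comparable pair $s<_T t$ violates the linked condition, Menger's theorem supplies a separation on $sTt$ whose order is below the size of the smallest bag along $sTt$, and one re-splits the offending part along it and adjusts the bags as in the finite case. The crux is to find a well-founded, ordinal-valued potential that strictly decreases with each such step and is compatible with the increasing unions taken at limit stages — and it is exactly those limit stages that force some bags to become infinite, which is why the second layer is needed afterwards. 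For the limits one uses that ``being a \td'', tightness, and componentality are each conjunctions of conditions about individual separations — respectively individual rooted rays — and hence survive the increasing unions. A secondary point is that the second-layer refinement must contract away enough superfluous edges to make $G\strictup e$ connected for \emph{every} edge $e$, not merely cofinally so.
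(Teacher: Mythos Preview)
Your two-layer plan correctly isolates the difficulty in the first layer --- producing a linked, tight, componental rooted \td\ of finite adhesion --- but the first layer is not proved, only outlined, and the outline halts precisely at the hard step. You propose to start from Diestel's tight, componental decomposition and repair linkedness by a transfinite K\v{r}\'{i}\v{z}--Thomas improvement, and you rightly say that ``the crux is to find a well-founded, ordinal-valued potential that strictly decreases with each such step and is compatible with the increasing unions taken at limit stages''. But you do not supply such a potential, nor any argument that one exists: the finite improvement argument rests on a bounded width, which is unavailable here, and it is exactly at the limit stages that the usual invariants collapse. This is the missing idea, not a routine detail. Your second layer inherits the same circularity: for the global refined decomposition to be both tight and linked, the refinements themselves must be tight and linked --- but the unmodified $(T,\cV_{\textnormal{NT}})$ is (trivially) linked and not tight, while Diestel's modification is tight and not linked, which is the very tension you set out to resolve.

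The paper proves the theorem by an entirely different route that avoids improvement arguments. It builds the decomposition directly, level by level: at each leaf with adhesion set $X$ of size $k$, a transfinite greedy procedure (\cref{recursion:buildingpackage}) selects a nested family of \emph{end-linked} regions $C_i$ in the part above, first exhausting those with $|N(C_i)|<k$ (\ref{RegionsCaseA}) and then the rest (\ref{RegionsCaseB}), always picking a ``nicest'' one --- minimal boundary size, then $\subseteq$-maximal. The new bag is the complement $Y=V(H)\setminus\bigcup_i C_i$, and the $\subseteq$-maximal $C_i$ become the new components above. Linkedness then follows not from a potential but from the built-in minimality: \cref{theorem:propertiesofrecursion:buildingpackage}\,\ref{property:linkediniterativeapplication} shows, via the uncrossing \cref{lem:theminiuncrossinglemma,lem:theultimateuncrossing}, that a hypothetical small separator between two adhesion sets along a branch would have given rise to a region the procedure could not have overlooked. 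Tightness and componentality come for free because each $C_i$ is a connected end-linked region, and raylessness of the torsos is \ref{property:rayless}. This direct construction is the actual content; no analogue of the K\v{r}\'{i}\v{z}--Thomas potential is needed or used.
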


Our aim in the remainder of this section is to demonstrate formally how \cref{thm:critVtxIntro} and \cref{thm:RaylessThmIntro} can be combined to yield a proof of \cref{main:LinkedTightCompTreeDecompnew}. 
The subsequent \cref{sec:TreeDecompAlongCrit,sec:DecomposeIntoRaylessParts} are then concerned with proving \cref{thm:critVtxIntro,thm:RaylessThmIntro} respectively.

\subsection{Detailed versions of the main theorems}\label{subsec:detailedversions}
In fact, we will prove a version of \cref{main:LinkedTightCompTreeDecompnew} not as stated in the introduction, but with three additional properties that are important for technical reasons, but which we believe are also of interest in their own right.

\begin{customthm}{Theorem~1'}[Detailed version of \cref{main:LinkedTightCompTreeDecompnew}] \label{thm:MainTechnical}
    Every graph~$G$ of finite tree-width admits a rooted \td~$(T, \cV)$ into finite parts that is linked, tight, and componental.
    In particular, $(T, \cV)$ displays all the ends of $G$ homeomorphically, their combined degrees and their dominating vertices.
    Moreover, we may assume that
    \begin{enumerate}[label=\rm{(L\arabic*)}]
        \item \label{itm:MainTechnical:EndCritLinked} for every~$e \in E(T)$, the adhesion set~$V_e$ is either linked to a critical vertex set of~$G$ that is included in~$G \up e$ or linked to an end of~$G$ that lives in~$G \up e$, and
        \item \label{itm:MainTechnical:IncDisj} for every $e <_T e' \in E(T)$ with $|V_e| \leq |V_{e'}|$, each vertex of $V_e \cap V_{e'}$ either dominates some end of~$G$ that lives in~$G \up e'$, or is contained in a critical vertex set of $G$ that is included in~$G\up e'$.
        \item \label{itm:MainTechnical:DistinctBags} the bags of $(T, \cV)$ are pairwise distinct.
    \end{enumerate}
\end{customthm}

Before we continue, let us quickly comment on these additional properties: 
Property \cref{itm:MainTechnical:EndCritLinked} is a minimality condition: Since every end of $G$ lives in an end of $(T, \cV)$, there will be infinitely many edges of $T$ whose adhesion set is linked to that end. Moreover, since all parts of $(T, \cV)$ are finite, one can easily see that every critical vertex set will appear as some adhesion set of $(T, \cV)$ (cf.~\cref{lem:TightCompToughTDNearlyDisplaysCrit}). So \cref{itm:MainTechnical:EndCritLinked} says that we did not decompose $G$ `more than necessary'. In particular, every bag at a leaf of $(T, \cV)$ will be of the form $X \cup V(C)$ for a critical vertex set $X$ of $G$ and a finite tight component $C$ of $G - X$.

Let us now turn to property \cref{itm:MainTechnical:IncDisj}.
Recall that Halin \cite{halin1975chainlike}*{Theorem~2} showed that every locally finite connected graph has a linked ray-decomposition into finite parts with disjoint adhesion sets. In light of this, \cref{itm:MainTechnical:IncDisj} describes how close we can come to having `disjoint adhesion sets' in the general case; \cite{ExamplesLinkedTDInfGraphs}*{\exampleNoUpwardsDisjointAdhesionSets} shows that even for locally finite graphs, it may be impossible to get a tree-decomposition with disjoint adhesion sets, so the condition `with $|V_e| \leq |V_{e'}|$'  in \cref{itm:MainTechnical:IncDisj} is indeed necessary.

Now as already indicated in the introduction, we first show \cref{maincor:TreeDecompDisplayingInfs} and then derive \cref{main:LinkedTightCompTreeDecompnew} from it. 
In fact, we derive the detailed version \cref{thm:MainTechnical} from the following detailed version of \cref{maincor:TreeDecompDisplayingInfs}.

\begin{customcor}{Theorem~4'}[Detailed version of \cref{maincor:TreeDecompDisplayingInfs}] \label{maincor:TreeDecompDisplayingInfsTechnical}
    Every graph $G$ of finite tree-width admits a fully tight, cofinally componental, linked, rooted \td\ into finite parts which displays the infinities of $G$ and which satisfies \cref{itm:MainTechnical:EndCritLinked} and \cref{itm:MainTechnical:IncDisj} from \cref{thm:MainTechnical}.
    Moreover,
    \begin{enumerate}[label=\rm{(I\arabic*)}]
        \item \label{itm:TreeDecompDisplayingInfsTechnical:CofinComp} if $G\strictup e$ is disconnected for $e = st \in E(T)$ with $s <_T t$, then $V_s \supseteq V_t \in \crit(G)$ and $\deg(t) = \infty$, and 
        \item \label{itm:TreeDecompDisplayingInfsTechnical:DistinctBags}  
        if $V_t = V_e$ for some node $t \in T$ and the unique edge $e = st \in T$ with $s <_T t$, then $\deg(t) = \infty$ and $V_t \in \crit(G)$.
    \end{enumerate}
\end{customcor}

We remark that whenever a \td\ $(T,\cV)$ displays the critical vertex sets and satisfies \cref{itm:TreeDecompDisplayingInfsTechnical:CofinComp}, then it is automatically cofinally componental, because if two successive comparable edges $rs, st \in T$ violate componental, then \cref{itm:TreeDecompDisplayingInfsTechnical:CofinComp} yields that $\deg(t), \deg(s) = \infty$ and $V_s \supseteq V_t$, which implies that $V_s \supsetneq V_t$, since $(T, \cV)$ displays the critical vertex sets and hence $V_s \neq V_t$. 
Moreover, every \td\ as in \cref{maincor:TreeDecompDisplayingInfsTechnical} already `nearly' satisfies \cref{itm:MainTechnical:DistinctBags}, that is, has `almost' distinct bags.

\begin{lemma} \label{lem:I1AndI2YieldDistinctBags}
    Let $(T, \cV)$ be a tight, rooted \td\ of a graph $G$ which displays the critical vertex sets of $G$ and satisfies~\cref{itm:TreeDecompDisplayingInfsTechnical:CofinComp} and~\cref{itm:TreeDecompDisplayingInfsTechnical:DistinctBags}. Then the bags of $(T, \cV)$ are pairwise distinct, unless they are critical vertex sets, which may appear as at most two bags associated with adjacent nodes of $T$. 
\end{lemma}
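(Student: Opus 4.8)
The plan is to fix two distinct nodes $s \neq t$ of $T$ with $V_s = V_t =: X$ and to establish three facts about them: that $X$ is a critical vertex set, that $s$ and $t$ are adjacent in $T$, and that no third node has bag $X$ either.

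The starting point is that, by \cref{prop:TD2}, every bag $V_r$ with $r$ on the path $sTt$ contains $V_s \cap V_t = X$, so $V_e = X$ for every edge $e$ of $sTt$. First I would rule out that $s$ and $t$ are $\leq_T$-incomparable: if they were, then the edge of $T$ just below $s$ and the edge just below $t$ would both lie on $sTt$ and hence have adhesion set $X$, equal to $V_s$ respectively $V_t$; \cref{itm:TreeDecompDisplayingInfsTechnical:DistinctBags} applied to $s$ and to $t$ would then give $\deg(s) = \deg(t) = \infty$ and $X \in \crit(G)$, and since $(T, \cV)$ displays the critical vertex sets of $G$ it would follow that $s = t_X = t$, a contradiction. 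So $s$ and $t$ are comparable; say $s <_T t$. Now the edge just below $t$ lies on $sTt$, so applying \cref{itm:TreeDecompDisplayingInfsTechnical:DistinctBags} to $t$ yields $\deg(t) = \infty$ and $X \in \crit(G)$, and hence $t = t_X$.

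The crux is the adjacency of $s$ and $t$. Suppose they are not adjacent, and write $sTt$ as $s = p_0 <_T p_1 <_T \dots <_T p_m = t$ with $m \geq 2$; put $e := p_0p_1$, so that $V_e = X$. I would split into two cases. If $G \strictup e$ is disconnected, then \cref{itm:TreeDecompDisplayingInfsTechnical:CofinComp} gives $V_{p_1} \subseteq V_{p_0} = X$ — hence $V_{p_1} = X$ by the starting observation — together with $\deg(p_1) = \infty$ and $X \in \crit(G)$, so $p_1 = t_X = t$, contradicting $m \geq 2$. If $G \strictup e$ is connected, I would apply \cref{lem:BagsEqualXAreConnectedInT} with the node $p_0$ and the edge $e_t := p_{m-1}p_m$ just below $t$: here $V_{e_t} = X \subseteq V_{p_0}$, the part $G \strictup e_t$ is non-empty because $(T, \cV)$ is tight, and the edge of $p_0 T e_t$ incident with $p_0$ is precisely $e$, which is connected above; the lemma then yields $V_{p_1} = V_{p_0} = X$, whereupon \cref{itm:TreeDecompDisplayingInfsTechnical:DistinctBags} applied to $p_1$ and $e$ again forces $p_1 = t_X = t$, contradicting $m \geq 2$. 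Either way we obtain a contradiction, so $s$ and $t$ are adjacent.

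It remains only to bound the multiplicity: if three distinct nodes had the common bag $X$ then, by what has been shown, any two of them would be comparable and adjacent in $T$, so the three nodes would form a $\leq_T$-chain $a <_T b <_T c$, and then $a$ and $c$ could not be adjacent — a contradiction. Hence every bag that is not a critical vertex set occurs exactly once, and every critical vertex set occurs as at most two bags, which in that case belong to adjacent nodes. I expect the adjacency step to be the main obstacle: it hinges on resolving the dichotomy "$G \strictup e$ connected or not", where the disconnected case is handled directly by \cref{itm:TreeDecompDisplayingInfsTechnical:CofinComp} but the connected case requires the somewhat delicate combination of \cref{lem:BagsEqualXAreConnectedInT} with tightness to pin down $V_{p_1} = X$.
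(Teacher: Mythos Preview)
Your proof is correct and follows essentially the same route as the paper: reduce to the comparable case via \cref{itm:TreeDecompDisplayingInfsTechnical:DistinctBags}, then handle adjacency by a case split combining \cref{itm:TreeDecompDisplayingInfsTechnical:CofinComp} with \cref{lem:BagsEqualXAreConnectedInT}. The only difference is cosmetic: the paper splits on whether $V_{p_1} = X$ or $V_{p_1} \supsetneq X$ (invoking \cref{itm:TreeDecompDisplayingInfsTechnical:CofinComp} in contrapositive form in the latter case), whereas you split on whether $G\strictup(p_0p_1)$ is disconnected or connected; these two dichotomies are equivalent here.

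One small inaccuracy: your opening claim that $V_e = X$ for \emph{every} edge $e$ of $sTt$ is not true in general (intermediate adhesion sets may strictly contain $X$). However, you only ever invoke this for edges incident with $s$ or with $t$, where it does hold since $V_e \subseteq V_s = X$ (respectively $V_e \subseteq V_t = X$), so the argument goes through unchanged.
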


\begin{proof}
Suppose there are distinct nodes $t, s \in T$ such that $V_t = V_s$. Then $tTs$ contains from at least one of $t,s$, say from $t$, its unique down-edge $e$. Then \cref{prop:TD2} ensures that $V_e = V_t$, so by~\cref{itm:TreeDecompDisplayingInfsTechnical:DistinctBags} we have $V_t \in \crit(G)$ and $\deg(t) = \infty$. In particular, since $(T, \cV)$ displays the critical vertex sets of $G$, we have $\deg(s) \neq \infty$ and hence $s <_T t$, again by \cref{itm:TreeDecompDisplayingInfsTechnical:DistinctBags}. 
Now if $V_{s'} = V_s$ for the unique neighbour of $s$ in $sTt$, then $\deg(s') = \infty$ by \cref{itm:TreeDecompDisplayingInfsTechnical:DistinctBags}, so $s' = t$ because $(T, \cV)$ displays the critical vertex sets of~$G$. Hence, we may assume that $V_{s'} \neq V_s$, so $V_{s'} \supsetneq V_s$ by \cref{prop:TD2}. Then \cref{itm:TreeDecompDisplayingInfsTechnical:CofinComp} implies that $G \strictup f$ is connected. But then \cref{lem:BagsEqualXAreConnectedInT} implies that $V_{s'} = V_t = V_s$, a contradiction.
\end{proof}

\begin{proof}[Proof of \cref{thm:MainTechnical} given \cref{maincor:TreeDecompDisplayingInfsTechnical}]
    Let $(T', \cV')$ be the \td\ obtained from \cref{maincor:TreeDecompDisplayingInfsTechnical}.
    Let $(T, \cV)$ be the \td\ induced by contracting every edge $e$ of $T'$ whose $G \strictup e$ is disconnected.
    It is immediate from the construction that $(T, \cV)$ is componental.
    Since~$(T',\cV')$ is fully tight and satisfies \cref{itm:MainTechnical:EndCritLinked} and \cref{itm:MainTechnical:IncDisj} from \cref{thm:MainTechnical}, so does every \td\ induced by edge-contractions from $(T',\cV')$.
    We note that whenever an edge~$e = st \in E(T')$ with $s <_{T'} t$ has been contracted then no other edge $f$ incident with $t$ has been contracted and for $V_f = V_t = V_e$ for all such edges $f \in T'$ incident with $t$,
    because $(T',\cV')$ displays the critical vertex sets and their tight components cofinitely and it also satisfies \cref{itm:TreeDecompDisplayingInfsTechnical:CofinComp}.
    Hence, such as $(T',\cV')$ does, the induced \td\ $(T, \cV)$ still is linked, displays the ends of $G$ homeomorphically, their combined degrees and their dominating vertices, and also its parts are finite.
    
    Moreover, $(T, \cV)$ satisfies \cref{itm:MainTechnical:DistinctBags}. Indeed, by \cref{lem:I1AndI2YieldDistinctBags}, we only need to check that every critical vertex set appears as at most one bag of $(T, \cV)$. By \cref{lem:I1AndI2YieldDistinctBags}, every $X \in \crit(G)$ can appear as at most two bags of $(T', \cV')$, which then need to be adjacent. 
    So assume $V_s = V_t = X$ with~$s$ being the unique down-neighbour of $t$. Then by \cref{itm:TreeDecompDisplayingInfsTechnical:DistinctBags}, $\deg(t) = \infty$. 
    Since $(T', \cV')$ displays the critical vertex sets and their tight components cofinitely, it follows that $G\strictup(st)$ is disconnected. Thus, we have contracted the edge $st$ in the construction of $(T, \cV)$. Hence, also every critical vertex set of $G$ appears as at most one bag of $(T, \cV)$, so its bags are paiwise distinct.
\end{proof}

In order to prove \cref{maincor:TreeDecompDisplayingInfsTechnical}, we still follow the promised two-step approach, but need the following detailed versions of \cref{thm:critVtxIntro,thm:RaylessThmIntro}.

\begin{customthm}{Theorem~6'}[Detailed version of \cref{thm:critVtxIntro}] \label{thm:critVtxTechnical}
    Let $G$ be a graph of finite tree-width.
    Then $G$ admits a fully tight, cofinally componental, rooted \td~$(T,\cV)$ whose adhesion sets are critical vertex sets, whose torsos are tough and which displays the critical vertex sets and their tight components cofinitely.

    Moreover, it satisfies \cref{itm:TreeDecompDisplayingInfsTechnical:CofinComp} and \cref{itm:TreeDecompDisplayingInfsTechnical:DistinctBags} from \cref{maincor:TreeDecompDisplayingInfsTechnical}.
\end{customthm}

In order to state the detailed version of \cref{thm:RaylessThmIntro}, we need one more definition. 
A separation~$(A,B)$ of a graph $G$ is \defn{left-tight} if some components $C$ of $G[A \setminus B]$ satisfies $N_G(C) = A \cap B$.
Moreover, a separation $(A,B)$ of a graph $G$ is \defn{left-fully-tight} if all components $C$ of $G[A \setminus B]$ satisfy $N_G(C) = A \cap B$.

\begin{customthm}{Theorem~7'}[Detailed version of \cref{thm:RaylessThmIntro}] \label{thm:RaylessThmTechnical}
    Let~$G$ be a graph, and let~$\sigma$ be a star of left-well-linked left-fully-tight finite-order separations of~$G$ such that $\torsostar(\sigma)$ has finite tree-width. 
    Further, let~$X\subseteq \interior(\sigma)$ be a finite set of vertices of~$G$. 
    Then~$G$ admits a linked, $X$-linked, fully tight,  rooted \td\ $(T, \cV)$ of finite adhesion 
    such that
    \begin{enumerate}[label=\rm{(R\arabic*)}]
        \item \label{itm:RaylessThmTechnical:rayless:Copy} its torsos at non-leaves are rayless and its leaf separations are precisely~$\{(B, A) \mid (A, B) \in \sigma\}$,

        \item \label{itm:RaylessThmTechnical:EndAdhesionLinked:Copy} for all edges $e$ of $T$, the adhesion set $V_e$ is either linked to an end living in $G \up e$ or linked to a set $A \cap B \subseteq G \up e$ with $(A,B) \in \sigma$,
        \item \label{itm:RaylessThmTechnical:IncDis:Copy} for every $e <_T e' \in E(T)$ with $|V_e| \leq |V_{e'}|$, either each vertex of $V_e \cap V_{e'}$ dominates some end of~$G$ that lives in~$G \up e'$, or $V_e \cap V_{e'}$ is contained in $A \cap B \subseteq G \up e'$ for some $(A,B) \in \sigma$, 
        \item \label{itm:RaylessThmTechnical:DistinctBags:Copy}
        $V_s \supsetneq V_e \subsetneq V_t$ for all edges $e = st \in T$ with $s <_T t$ and $s \neq r := \rt(T)$. Moreover, if $X \subsetneq\interior(\sigma) $, $G-X$ is connected and $N_G(G-X) = X$, then $X \subsetneq V_r$ and also $V_r \supsetneq V_e \subsetneq V_t$ for all edges $e = rt \in T$.
    \end{enumerate}
\end{customthm}

\noindent Note that \cref{itm:RaylessThmTechnical:DistinctBags:Copy} implies that the bags of $(T, \cV)$ are pairwise distinct.

\subsection{Proof of the main result} \label{subsec:ProofOfMainResult}

 According to our two-step approach, we prove \cref{maincor:TreeDecompDisplayingInfsTechnical} by applying \cref{thm:RaylessThmTechnical} to the torsos of the \td\ given by \cref{thm:critVtxTechnical}. For this, we need to ensure that all torsos again have finite tree-width: 

\begin{lemma}\label{lem:NSTsOfTorsos}
    Let $G$ be a graph of finite tree-width, and let $G'$ be obtained from $G$ by adding an edge between $u, v \in V(G)$ whenever there are infinitely many internally disjoint $u$--$v$ paths in $G$. 
    Then every \td\ of~$G$ of finite adhesion is also a \td\ of~$G'$.

    In particular, if $G$ has finite tree-width, then the torso at a star of separations of~$G$ whose separators are critical vertex sets in $G$ has finite tree-width.
\end{lemma}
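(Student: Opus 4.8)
The statement has two parts. The first is a purely local, finite-adhesion statement about tree-decompositions, and the second is its specialisation to torsos over critical vertex sets. I would prove the first part directly by checking \ref{prop:TD1} and \ref{prop:TD2} for $(T,\cV)$ as a tree-decomposition of $G'$, using that the only new edges $uv$ of $G'$ arise from infinitely many internally disjoint $u$--$v$ paths in $G$. The key point is that such a pair $u,v$ cannot be separated in $G$ by any finite set of vertices avoiding $\{u,v\}$: any finite separator would cut all but finitely many of the infinitely many internally disjoint paths. In particular, for every adhesion set $V_e$ of $(T,\cV)$, which is finite, the vertices $u$ and $v$ cannot lie on opposite strict sides of the separation induced by $e$; hence they lie in a common bag-side, and by \ref{prop:TD2} applied in $G$ they in fact lie in a common bag $V_t$. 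This shows that adding the edge $uv$ does not violate \ref{prop:TD1} (the edge is covered by $G[V_t]$) and does not affect \ref{prop:TD2} (the bags are unchanged), so $(T,\cV)$ is a tree-decomposition of $G'$.

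For the first part I would argue carefully why a finite set $X \subseteq V(G) \setminus \{u,v\}$ cannot separate $u$ from $v$ in $G$ when there are infinitely many internally disjoint $u$--$v$ paths: each such path meets $X$ in at least one internal vertex once it is a genuine $u$--$v$ path not using a new edge, and distinct internally disjoint paths meet $X$ in disjoint sets of internal vertices, so at most $|X|$ of them can meet $X$; the remaining (infinitely many) paths avoid $X$ and connect $u$ to $v$ in $G - X$. Applying this with $X = V_e$ for an oriented edge $e = (s,t)$ of $T$ and recalling $G \strictup e = G \up e - V_e$ and $G \strictdown e = G \down e - V_e$: since there are no $G$-edges between $A \setminus B$ and $B \setminus A$ for the induced separation $(A,B)$, if $u \in A \setminus B$ and $v \in B \setminus A$ then every $u$--$v$ path in $G$ meets $A \cap B = V_e$, contradicting the previous sentence. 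Hence $\{u,v\}$ is not split by any adhesion set, and then the standard fact that a finite-adhesion tree-decomposition is "$2$-connected along bags" — more precisely, that two vertices lying together in $G - V_e$'s side for every $e$ must share a bag — gives $u, v \in V_t$ for some $t$. (Formally: the set of $t$ with $u \in V_t$ and the set with $v \in V_t$ are both non-empty subtrees of $T$; if they were disjoint, the edge $e$ of $T$ on the path between them would have $u,v$ on opposite strict sides, contradiction; so they meet.)

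For the "in particular" part I would simply invoke the first part together with the preceding \cref{lem:NSTsOfTorsos}'s setup: if $\sigma$ is a star of finite-order separations of $G$ whose separators are critical vertex sets, I want $\torsostar(\sigma)$ to have finite tree-width. The torso $\torsostar(\sigma)$ is obtained from $G[\interior(\sigma)]$ by completing each separator $A \cap B$ with $(A,B) \in \sigma$. Since each such separator $A \cap B$ is a critical vertex set of $G$, it has infinitely many tight components of $G - (A \cap B)$, and any two vertices $u, v \in A \cap B$ are joined by infinitely many internally disjoint $u$--$v$ paths in $G$ — pick one such path through each of infinitely many distinct tight components (two vertices of a critical vertex set $X$ both lie in $N_G(C) = X$ for each tight component $C$, so each tight component yields a $u$--$v$ path with all internal vertices in that component, and distinct components are disjoint). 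Hence every torso edge of $\torsostar(\sigma)$ is an edge of $G'$ as in the first part, so $\torsostar(\sigma) \subseteq G'[\interior(\sigma)]$, and by the first part $G'$ has finite tree-width (restrict the tree-decomposition of $G'$ given by the first part to $\interior(\sigma)$, i.e. intersect every bag with $\interior(\sigma)$; this is still a finite-adhesion, finite-parts tree-decomposition). Therefore $\torsostar(\sigma)$, being a subgraph of a graph of finite tree-width, has finite tree-width.

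**Main obstacle.** The only subtle point is the combinatorial claim that two vertices joined by infinitely many internally disjoint paths in $G$ always end up in a common bag of any finite-adhesion tree-decomposition of $G$; once that is established the rest is bookkeeping. I would isolate this as the core lemma and prove it via the subtree-intersection argument sketched above, being slightly careful that "internally disjoint $u$--$v$ paths" in the hypothesis refers to paths in $G$ (so the argument does not circularly use edges of $G'$) — which is exactly how $G'$ is defined.
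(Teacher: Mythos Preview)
Your proposal is correct and follows essentially the same approach as the paper's proof: for the first part you show that any new edge $uv$ of $G'$ has its endpoints in a common bag (since no finite adhesion set can separate $u$ from $v$), and for the ``in particular'' part you observe that critical vertex sets are infinitely connected, so torso edges are edges of $G'$, and then intersect the bags of a finite-parts \td\ with $\interior(\sigma)$. The paper's proof is terser but structurally identical; your subtree-intersection argument for why $u$ and $v$ share a bag is just an explicit spelling-out of what the paper states in one sentence.
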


\begin{proof}
    Assume that $(T, \cV)$ is a \td\ of~$G$ of finite adhesion, and consider any two vertices~$u, v \in V(G)$ with $uv \in E(G') \setminus E(G)$.
    If there exists a bag~$V_t$ containing both~$u$ and~$v$, then the edge~$uv$ in $G'$ cannot violate that~$(T, \cV)$ is a \td\ of $G'$.
    To find such a bag~$V_t$, recall that there are infinitely many internally disjoint $u$--$v$ paths in $G$.
    In particular, no finite set of vertices other than $u$ and $v$ separates $u$ and $v$ in $G$.
    Since $(T, \cV)$ has finite adhesion, $u$ and $v$ must be contained in some bag~$V_t$ of~$(T, \cV)$, as desired.

    For the `in particular'-part assume that $G$ has finite tree-width and that $\sigma$ is a star of separations of $G$ whose separators are critical in $G$. Let $(T, \cV)$ be a \td\ of $G$ into finite parts.
    Then by the first part and because critical vertex sets are infinitely connected, $(T, \cV')$ with $V'_t := V_t \cap \interior(\sigma)$ is a \td\ of the torso at $\sigma$ in $G$, as desired.
\end{proof}

\begin{proof}[Proof of~\cref{maincor:TreeDecompDisplayingInfsTechnical} given \cref{thm:critVtxTechnical} and \cref{thm:RaylessThmTechnical}]
    Let $(T^1, \cV^1)$ be the rooted \td\ from \cref{thm:critVtxTechnical} whose adhesion sets are critical vertex sets.
    In particular, $(T^1, \cV^1)$ displays the critical vertex sets of $G$ and their tight components cofinitely. Moreover, its torsos are tough and it satisfies~\cref{itm:TreeDecompDisplayingInfsTechnical:CofinComp} from \cref{maincor:TreeDecompDisplayingInfsTechnical}.
    Let~$t$ be a node of~$T^1$. 
    We describe how we refine the torso at~$t$ in~$T^1$ using \cref{thm:RaylessThmTechnical}:

    If $t$ is the root of $T^1$, then we set $G^t := G$, $X_t := \emptyset$ and $\sigma'_t = \sigma_t$.
    Else let $s \in T^1$ be the (unique) predecessor of $t$ and let~$G^t$ be the graph obtained from $G\up st$ by adding all edges between vertices of $V^1_{st}$.
    Set $X_t := V^1_{st}$ and $\sigma'_t := \{(A,B \setminus V(G \strictdown st)) \mid (A_{st}, B_{st}) \neq (A,B) \in \sigma_t\}$, where $(A_{st}, B_{st})$ is the separation induced by $(s,t)$.

    First, we assume that $t$ is a node of $T^1$ with $V^1_t \in \crit(G)$; in particular, all infinite-degree nodes whose corresponding bag is finite are such $t$, since $(T^1, \cV^1)$ displays the critical vertex sets.
    Then we set $(T^t, \cV^t)$ to be the \td\ of $G^t$ whose decomposition tree is a star rooted in its centre $t$ and with bag $V^t_t := V^1_t$ while its leaf separations are precisely $((B,A) \mid (A,B) \in \sigma'_t)$. Note that all adhesion sets of this \td\ $(T^t,\cV^t)$ are $V^1_t \in \crit(G)$.
    Thus, $(T^t, \cV^t)$ is a rooted \td\ as in the conclusion of \cref{thm:RaylessThmTechnical}.
    
    Secondly, we now assume that $t$ is a node of $T^1$ with $V^1_t \notin \crit(G)$.
    By construction, the torso of $\sigma'_t$ in $G^t$ is equal to the torso of $\sigma_t$ in $G$; so by \cref{lem:NSTsOfTorsos}, this torso has finite tree-width.
    We claim that $G^t$, $X_t$ and~$\sigma'_t$ are as required to apply~\cref{thm:RaylessThmTechnical}. 
    For this it suffices that all separations in $\sigma'_t$ are left-well-linked.
    Let $(A,B) \in \sigma'_t$. 
    Then $X =: A \cap B$ is some critical vertex set of $G$, as it is an adhesion set of $(T^1, \cV^1)$.
    Since $(T^1, \cV^1)$ displays the critical vertex sets, there is a unique infinite-degree node $t_X \in T^1$ with $V^1_{t_X} = X$.
    If infinitely many tight component of~$G - X$ are contained in $A$, then $(A,B)$ is left-well-linked.
    Thus, it now suffices to show that~$t <_{T^1} t_X$, as $(T^1, \cV^1)$ cofinitely displays also the tight components of the critical vertex sets.
    Because~$V_t \notin \crit(G)$, $t \neq t_X$; hence, we suppose towards a contradiction that~$t >_{T^1} t_X$.
    Now~$G[A \setminus B]$ is in particular non-empty, as $(T^1, \cV^1)$ is fully tight.
    Also $G \strictup f$ is a (tight) component of $G-X$ where $f$ is the unique edge on $t_XT^1t$ incident with $t$, since $(T^1, \cV^1)$ cofinitely displays the tight components of the the critical vertex sets. 
    But then \cref{lem:BagsEqualXAreConnectedInT} yields $V_t = X \in \crit(G)$ which contradicts the assumptions on $t$.
    All in all, we may now apply \cref{thm:RaylessThmTechnical} to obtain the rooted \td~$(T^t, \cV^t)$ of~$G^t$.

    We remark that all these rooted \td s $(T^t, \cV^t)$ in particular contain~$X_t$ in its root part and have precisely~$((B, A) \mid (A, B) \in \sigma'_t)$ as its leaf separations.
    To build the desired \td\ of~$G$, we first stick all these \td s $(T^t, \cV^t)$ together along~$(T^1, \cV^1)$:
    Formally, the tree~$T^2$ arises from the disjoint union of the trees~$T^t$ by identifying a leaf~$u$ of~$T^t$ with the root of~$T^s$ if $G\strictup u$ (with respect to $T^t$) is equal to $G^s - X_s$; 
    we say that the edge of~$T^t$ (and hence of $T^2$) incident with the leaf $u$ \defn{belongs to~$T^1$} and that it \defn{corresponds to~$ts$}.
    All edges of~$T^t$ that do not belong to~$T^1$ are said to \defn{belong to~$T^t$}.
    We remark that every edge belongs to precisely one of the $T^t$ or $T^1$.
    We set the root of~$T^2$ to be~$\rt(T^{\rt(T^1)})$. 
    For each node~$s \in T^2$, we then set~$V^2_s$ to be~$V^t_s$, where~$t$ is the (unique) node of~$T^1$ such that~$s$ is either a non-leaf of~$T^t$ or the unique node of~$T^t$. %$T^t$ is a singleton, if it is a leaf of $T^1$ which is finite.
    We say that $s$ \defn{belongs to $T^t$}.
    Now we claim that~$(T^2, \cV^2)$ has all the desired properties.

    Let us first note that the construction of $T^2$ immediately ensures that
    \begin{itemize}
        \item if $e \in T^2$ belongs to $T^t$ for some $t \in T^1$, then $G \strictup e = G^t \strictup e$ and $V^2_e = V^t_e$, and
        \item if $e \in T^2$ belongs to $T^1$, then $G \strictup e = G \strictup f$ and $V^2_e = V^1_f$ for the edge $f \in T^1$ which $e$ corresponds to.
    \end{itemize}
    Thus, $(T^2, \cV^2)$ is fully tight, since $(T^1, \cV^1)$ and all the $(T^t, \cV^t)$ are fully tight.
    The \td\ $(T^2, \cV^2)$ satisfies \cref{itm:TreeDecompDisplayingInfsTechnical:CofinComp} from \cref{maincor:TreeDecompDisplayingInfsTechnical}, since $(T^1, \cV^1)$ and all the $(T^t, \cV^t)$ satisfy \cref{itm:TreeDecompDisplayingInfsTechnical:CofinComp}.
    It also satisfies \cref{itm:TreeDecompDisplayingInfsTechnical:DistinctBags}. Indeed, by \cref{thm:critVtxTechnical}, $(T^1, \cV^1)$ satisfies~\cref{itm:TreeDecompDisplayingInfsTechnical:DistinctBags}.
    Moreover, by \cref{itm:RaylessThmTechnical:DistinctBags:Copy} from \cref{thm:RaylessThmTechnical}, the $(T^t, \cV^t)$ have the property that $V^t_x \supsetneq V^t_e \subsetneq V^t_y$ for all edges $e = xy \in T^t$ with~$x <_{T^t} y$ and also $V^1_e \subsetneq V^s_{\rt(T^s)}$ where $e = st \in T^1$ with $s <_{T^1} t$. 
    We remark that we used here that every node $t$ and its unique edge $e=st \in T^1$ with $s <_{T^1} t$ to which we applied \cref{thm:RaylessThmTechnical} satisfies $X_t = V^1_{st} \subsetneq V^1_t = \interior(\sigma) = \interior(\sigma_t')$ by \cref{itm:TreeDecompDisplayingInfsTechnical:DistinctBags} from \cref{thm:critVtxTechnical:copy} and also $G^t-X_t$ is connected with $N_G^t(G^t-X) = X$ by \cref{itm:TreeDecompDisplayingInfsTechnical:CofinComp} and since $(T^1, \cV^1)$ is fully tight.
    It follows that $V^2_x \supsetneq V^2_e \subsetneq V^2_y$ for all edges $e = xy$ with $x <_{T^2} y$ except those where $V^2_x = V^1_x = V^1_y = V^2_y \in \crit(G)$. In particular, $(T^2, \cV^2)$ satisfies~\cref{itm:TreeDecompDisplayingInfsTechnical:DistinctBags}.

    Let us now show that all parts of $(T^2, \cV^2)$ are finite. By~\cref{prop:RaylessToughGraphsAreFinite}, it suffices to show that the torso at every $s \in T^2$ is rayless and tough.
    Let $t$ be the node of $T^1$ to whose $T^t$ the node $s$ belongs.
    It is immediate from the construction of $(T^2, \cV^2)$ that the torso $G^2_s$ at $s \in T^2$ in $(T^2, \cV^2)$ is equal to the torso $G^t_s$ at $s$ in $(T^t, \cV^t)$; in particular, these torsos are rayless by \cref{itm:RaylessThmTechnical:rayless:Copy}.
    Suppose for a contradiction that the torso $G^2_s$ at $s \in T^2$ in $(T^2, \cV^2)$ contains a critical vertex set $X$.
    By \cref{lem:TechnicalCriticalVertexSetsOfTorsos}~\cref{itm:CriticalVertexSetsOfTorsos:1}, infinitely many tight components of $G-X$ meet the torso $G^2_s$; in particular, they meet the torso $G^1_t$ at $t \in T^1$ in $(T^1, \cV^1)$.
    Now by \cref{lem:TechnicalCriticalVertexSetsOfTorsos}~\cref{itm:CriticalVertexSetsOfTorsos:2}, cofinitely many of these tight components of $G-X$ restrict to tight components of the torso $G^1_t$ at $t$ in $(T^1, \cV^1)$.
    Thus, $X$ is a critical vertex set of the tough torso $G^1_t$ which is a contradiction.

    Since the adhesion sets of $(T^1, \cV^1)$ are critical vertex sets of $G$, \cref{itm:MainTechnical:EndCritLinked} and \cref{itm:MainTechnical:IncDisj} follow immediately from \cref{itm:RaylessThmTechnical:EndAdhesionLinked:Copy} and \cref{itm:RaylessThmTechnical:IncDis:Copy}, respectively.

    It remains to show that~$(T^2, \cV^2)$ is linked.
    So let~$e \le_{T^2} e'$ be given.
    Suppose first that there is a node~$t$ of~$T^1$ such that all edges in~$eT^2e'$ belong either to~$T^t$ or correspond to edges of~$T^1$ incident with~$t$.
    The \td\ $(T^t, \cV^t)$ obtained from \cref{thm:RaylessThmTechnical} is linked and~$X_t$-linked, where~$X_t = V^1_{st}$ for the predecessor $s$ of $t$ in $T^1$.
    Thus, there exists an edge~$f \in eT^te'$ with~$e \le_{T^t} f \le_{T^t} e'$ and a family~$\cP$ of~$k := |V^t_f|$ disjoint~$V^t_e$--$V^t_{e'}$~paths in the auxiliary graph~$G^t$ such that~$v \in P_v$.
    As $G^t \subseteq G$, these paths are also paths in $G$.
    Since the adhesion sets of $T^t$ and the tree-order in~$T^t$ directly transfer to~$T^2$ by construction, this completes the first case.

    To conclude the proof that $(T^2, \cV^2)$ is linked, let~$k$ be the minimum size of an adhesion set~$V_g$ among all edges~$g \in eT^2e'$.
    Further, let~$f_1, \dots, f_\ell$ be the edges on the path~$eT^2e'$ that belong to~$T^1$ ordered by $\leq_{T^2}$.
    To find~$k$ disjoint~$V^2_e$--$V^2_{e'}$~paths in~$G$, we apply the above argument to each subpath~$f_i T^2 f_{i+1}$ with~$i \in \{1, \dots, n-1\}$.
    By the choice of~$k$, we get a family~$\cP_i$ of~$k$ disjoint~$V^2_{f_i}$--$V^2_{f_{i+1}}$~paths in~$G$ for every $i \in \{1, \dots ,n-1\}$.
    As $(T^2, \cV^2)$ is a \td, we have that for every~$g_1 \le_{T^2} g_2 \le_{T^2} g_3$, $V_{g_2}$ separates~$V^2_{g_1}$ and~$V^2_{g_3}$. Therefore, $P_i \in \cP_i$ and $P_j \in \cP_j$ are internally disjoint for $i \neq j$.
    We remark that~$V^2_{f_{i+1}}$ is an adhesion set of~$(T^1, \cV^1)$ and thus critical in~$G$.
    Hence, \cref{lemma:LinkingPathsAlongCritVertexSet} yields the desired path family.

    In particular, $(T^2, \cV^2)$ displays all the ends of $G$ homeomorphically, their dominating vertices and their combined degrees by \cref{lem:TreeDecompDisplayingEnds,lem:TreeDecompDisplayingDominatingVertices,lem:linkedandliminfareonlydominatingverticesimpliesdisplayenddegrees}.
    It is immediate from the construction that~$(T^2, \cV^2)$ still displays the critical vertices and their tight components cofinitely.
\end{proof}

\section{Tree-decomposition along critical vertex sets} \label{sec:TreeDecompAlongCrit}

In this section we prove 
\cref{thm:critVtxTechnical}, which we restate here for convenience.

\begin{customthm}{\cref{thm:critVtxTechnical}}[Detailed version of \cref{thm:critVtxIntro}] \label{thm:critVtxTechnical:copy}
    Let $G$ be a graph of finite tree-width.
    Then $G$ admits a fully tight, cofinally componental, rooted \td~$(T,\cV)$ whose adhesion sets are critical vertex sets, whose torsos are tough and which displays the critical vertex sets and their tight components cofinitely.

    Moreover,
    \begin{enumerate}[label=\rm{(I\arabic*)}]
        \item \label{itm:critVtxTechnical:copy:CofinComp} if $G\strictup e$ is disconnected for $e = st \in E(T)$ with $s <_T t$, then $V_s \supseteq V_t \in \crit(G)$ and $\deg(t) = \infty$, and 
        \item \label{itm:critVtxTechnical:copy:DistinctBags}  
        if $V_t = V_e$ for some node $t \in T$ and the unique edge $e = st \in T$ with $s <_T t$, then $\deg(t) = \infty$ and $V_t \in \crit(G)$.
    \end{enumerate}
\end{customthm}

\noindent Recall that these \cref{itm:critVtxTechnical:copy:CofinComp} and \cref{itm:critVtxTechnical:copy:DistinctBags} are the same properties as \cref{itm:TreeDecompDisplayingInfsTechnical:CofinComp} and \cref{itm:TreeDecompDisplayingInfsTechnical:DistinctBags} in \cref{maincor:TreeDecompDisplayingInfsTechnical}.

This \td\ is not difficult to construct: Start from the \td~$(T,\cV'_\textnormal{NT})$ described in the introduction. By contracting all edges of the decomposition tree whose corresponding adhesion sets are not critical vertex sets of $G$, one obtains a \td\ of $G$ that satisfies all properties required for the \td\ in \cref{thm:critVtxTechnical:copy} except that it might not display the critical vertex sets. We then describe how one can turn this \td\ into one that additionally displays all critical vertex sets and their tight components cofinitely.

We first collect two lemmas that describe how the critical vertex sets of a graph interact with a tight, componental, rooted \td. 

\begin{lemma}\label{lem:TightCompToughTDNearlyDisplaysCrit}
    Let $(T, \cV)$ be a rooted \td\ of a graph $G$ of finite adhesion. 
    Then every critical vertex set of $G$ is contained in some bag of $(T, \cV)$.
    
    Moreover, 
    if $(T,\cV)$ is tight, componental and its torsos are tough, then for every $X \in \crit(G)$ cofinitely many tight components of $G-X$ are of the form $G \strictup e$ for an edge $e=t_Xt \in T$ with~$t_X <_T t$ where $t_X$ is the (unique) $\leq_T$-minimal node of $T$ whose corresponding bag contains~$X$.
\end{lemma}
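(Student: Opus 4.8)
The plan is to prove the two assertions in turn. For the first assertion, let $X$ be a critical vertex set of $G$; so $G-X$ has infinitely many tight components. The key observation is that a critical vertex set is \emph{infinitely connected} in the sense that no finite set $S$ with $|S|<|X|$ separates any two of its vertices: for any two $x,y\in X$ and any finite $S\subseteq V(G)\setminus\{x,y\}$, since infinitely many tight components of $G-X$ avoid $S$, picking one such component $C$ gives an $x$--$y$ path through $C$ avoiding $S$ (as $N_G(C)=X$). Thus there is no finite set separating $x$ from $y$ other than one containing $x$ or $y$; since $(T,\cV)$ has finite adhesion, every pair $x,y\in X$ must lie in a common bag. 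A straightforward induction on $|X|$ using the Helly property of subtrees of a tree — the bags containing a fixed vertex $v$ form a subtree by \cref{prop:TD2}, and pairwise intersecting subtrees of a tree have a common node — then yields a single bag $V_t\supseteq X$.

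For the `moreover'-part, assume $(T,\cV)$ is tight, componental and has tough torsos, and fix $X\in\crit(G)$. Let $t_X$ be the $\le_T$-minimal node whose bag contains $X$; this is well-defined by the Helly argument just given (the nodes whose bag contains $X$ form a subtree, hence have a unique minimal element). First I would show that cofinitely many tight components $C$ of $G-X$ satisfy $V(C)\subseteq G\strictup e$ for some edge $e=t_Xt\in T$ with $t_X<_Tt$: since each such $C$ is connected and avoids $X\subseteq V_{t_X}$, it lies entirely inside a single side of each separation at $t_X$, hence inside $G\strictup e$ or $G\strictdown e'$ for some edge at $t_X$; but only finitely many can sit `below' $t_X$ or inside the torso at $t_X$, for otherwise $X$ would be a critical vertex set of the tough torso $\torsostar(\sigma_{t_X})$ (using \cref{lem:TechnicalCriticalVertexSetsOfTorsos} to transfer tight components of $G-X$ meeting $\interior(\sigma_{t_X})$ to tight components of the torso), a contradiction. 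Then, for such a $C$ sitting inside $G\strictup e$, I would argue that $C$ is in fact \emph{equal} to $G\strictup e$: since $(T,\cV)$ is componental, $G\strictup e$ is connected; since $C$ is a tight component of $G-X$ with $X\subseteq V_{t_X}$ and $V_e\subseteq V_{t_X}\cup(\text{stuff above})$, minimality of $t_X$ forces $V_e=X$ for all but finitely many such $e$ (any $e$ with $X\subsetneq V_e$ would need a vertex of $V_e\setminus X$ which, being in $G\strictup e$ and hence in some tight component of $G-X$, contradicts disjointness), and then \cref{lem:BagsEqualXAreConnectedInT} applied along $t_X\to t$ together with fullness/tightness and componentality gives $G\strictup e=C$.

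The main obstacle I expect is the bookkeeping in the `cofinitely' claim: one must carefully separate out the finitely many tight components of $G-X$ that fail to be of the form $G\strictup e$ — namely those meeting the bag $V_{t_X}$ away from $X$, those lying below $t_X$, and those whose corresponding upward part is strictly larger than a single tight component — and show each of these families is finite, the last two using toughness of the torso at $t_X$ via \cref{lem:TechnicalCriticalVertexSetsOfTorsos} and the finiteness of the relevant separators. Everything else is a routine combination of \cref{prop:TD2}, the Helly property for subtrees, and \cref{lem:BagsEqualXAreConnectedInT}.
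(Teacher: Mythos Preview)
Your first assertion and the overall architecture of the `moreover'-part are correct and match the paper: reduce to showing that only finitely many tight components of $G-X$ meet $V_{t_X}$ (via \cref{lem:TechnicalCriticalVertexSetsOfTorsos}~\cref{itm:CriticalVertexSetsOfTorsos:2} and toughness of the torso), and that each remaining one equals some $G\strictup e$ for an up-edge $e$ at $t_X$.

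The problem is your step~2. Your argument that $V_e=X$ for all but finitely many $e$ claims that a vertex of $V_e\setminus X$ would lie in $G\strictup e$; this is false, since $V_e$ is precisely the adhesion set and $G\strictup e = G\up e - V_e$ is disjoint from it. Consequently the appeal to \cref{lem:BagsEqualXAreConnectedInT} is unjustified (its hypothesis $V_e=X$ has not been established), and in any case that lemma concludes something about bags along a path, not about $G\strictup e$ equalling a component.

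More importantly, none of this is needed. Once a tight component $C$ of $G-X$ avoids $V_{t_X}$, it lies in some $G\strictup e$ for an up-edge $e$ at $t_X$ (these are precisely the components of $G-V_{t_X}$ above $t_X$, by componentality). But $G\strictup e$ is itself connected and, since $V_e\subseteq V_{t_X}$ and $G\up e\cap V_{t_X}=V_e$ by \cref{prop:TD2}, avoids $V_{t_X}\supseteq X$. Hence $G\strictup e$ lies in a single component of $G-X$, which must be $C$; so $C=G\strictup e$ directly. This is the paper's argument, and it bypasses any analysis of $V_e$ or any invocation of \cref{lem:BagsEqualXAreConnectedInT}. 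Your anticipated ``main obstacle'' of bookkeeping the three families of exceptional components thus collapses to a single family---those meeting $V_{t_X}$---handled in one stroke by \cref{lem:TechnicalCriticalVertexSetsOfTorsos}.
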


\begin{proof}
    Since critical vertex sets are infinitely connected and $(T, \cV)$ has finite adhesion, every critical vertex set is contained in some bag of $(T, \cV)$.
    Thus, the nodes whose corresponding bags contain a fixed critical vertex set $X$ form a subtree of $T$ by \cref{prop:TD2} which thus has a unique $\leq_T$-minimal node $t_X$. Let $e_0 <_T t_X$  be the unique edge of $T$ incident with $t_X$.
    Since $(T, \cV)$ is componental, every tight component of $G-X$ which does not meet $G \down e_0$ is of the form $G \strictup e$ for an edge $e=t_Xt \in T$ with $t_X <_T t$.
    By the choice of $t_X$, every tight component that meets $G \strictdown e_0$ also meets $V_{e_0} \subseteq V_{t_X}$.
    \cref{lem:TechnicalCriticalVertexSetsOfTorsos}~\cref{itm:CriticalVertexSetsOfTorsos:2} yields that only finitely many tight components of $G-X$ meet $V_{t_X}$, as $(T,\cV)$ is tight and its torsos are tough.
    Thus, cofinitely many tight components of~$G-X$ are of the desired form.
\end{proof}

The following lemma is kind of a converse of \cref{lem:TightCompToughTDNearlyDisplaysCrit}:

\begin{lemma} \label{lem:DisplayingCritYieldsToughTorsos}
    Let $(T, \cV)$ be a tight, rooted \td\ of a graph $G$ such that for every $X \in \crit(G)$ there is a node $t_X \in T$ such that cofinitely many tight components of $G-X$ are of the form $G\strictup e$ for an edge $e \in t_Xt \in T$ with $t_X <_T t$. Then all torsos of $(T, \cV)$ are tough.
\end{lemma}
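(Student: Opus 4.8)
The plan is to argue by contradiction: suppose that for some node $s \in T$ the torso $\torsostar(\sigma_s)$ has a critical vertex set $X$. Since $X$ is a finite set of vertices of $\torsostar(\sigma_s)$, we have $X \subseteq V(\torsostar(\sigma_s)) = \interior(\sigma_s) = V_s$. First I would check that $\sigma_s$ satisfies the hypotheses of \cref{lem:TechnicalCriticalVertexSetsOfTorsos}: it is a star of finite-order separations (using that $(T,\cV)$ has finite adhesion, which is the case in all our applications), and for every edge $e = ss' \in E(T)$ with $s <_T s'$ the side $V(G \up e)$ of the separation induced by $(s',s)$ contains, by tightness of $(T, \cV)$, a component $C$ of $G \strictup e$ with $N_G(C) = V_e$, hence a tight component of $G - V_e$; as $s$ has at most one down-edge, this covers all but at most one separation in $\sigma_s$. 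Thus \cref{lem:TechnicalCriticalVertexSetsOfTorsos} applies and yields $X \in \crit(G)$, and by \cref{lem:TechnicalCriticalVertexSetsOfTorsos}~\cref{itm:CriticalVertexSetsOfTorsos:1} there are infinitely many tight components of $G - X$ that meet $\torsostar(\sigma_s)$, i.e.\ that meet $V_s$.

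Next I would invoke the hypothesis on $t_X$: all but finitely many tight components of $G - X$ are of the form $G \strictup e$ with $e = t_X t \in E(T)$ and $t_X <_T t$, where $t$ thus ranges over (cofinitely many of) the upper neighbours of $t_X$. The key point is that such a component $G \strictup (t_X t)$ can meet $V_s$ only if $s \in \lfloor t \rfloor$. To see this, suppose $s \notin \lfloor t \rfloor$ and pick $v \in V_s \cap V(G \strictup (t_X t))$; then $v \in V_x$ for some $x \in \lfloor t \rfloor$, so by \cref{prop:TD2} the subtree $\{y \in V(T) \mid v \in V_y\}$ is connected and contains both $s$ and $x$, hence contains the $s$--$x$ path in $T$. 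As $s$ and $x$ lie in different components of $T - t_X t$, this path runs through the edge $t_X t$, so $v \in V_{t_X} \cap V_t = V_{t_X t}$, contradicting $v \in V(G \strictup (t_X t))$. Since the up-closures $\lfloor t \rfloor$ of distinct upper neighbours $t$ of $t_X$ are pairwise disjoint, $s$ lies in $\lfloor t \rfloor$ for at most one such $t$, so at most one of the components $G \strictup (t_X t)$ meets $V_s$. Adding the finitely many tight components of $G - X$ not of this form, only finitely many tight components of $G - X$ meet $V_s$ --- contradicting the conclusion of the first step. Hence no torso of $(T, \cV)$ has a critical vertex set, i.e.\ all torsos are tough.

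I expect the only genuine difficulty to be the bookkeeping in the key point --- verifying from the \td\ axioms alone that $G \strictup (t_X t)$ avoids $V_s$ whenever $s \notin \lfloor t \rfloor$, via the connectedness of the subtree of bags containing a fixed vertex --- while the remaining steps amount to unwinding definitions and citing \cref{lem:TechnicalCriticalVertexSetsOfTorsos}.
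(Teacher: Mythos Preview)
Your proof is correct and follows essentially the same route as the paper's: both argue by contradiction via \cref{lem:TechnicalCriticalVertexSetsOfTorsos}~\cref{itm:CriticalVertexSetsOfTorsos:1} to get infinitely many tight components of $G-X$ meeting $V_s$, and then use the hypothesis on $t_X$ to show that only finitely many such components can meet $V_s$. The paper phrases the final step by fixing the single edge on $sTt_X$ incident with $t_X$ and noting that $V_s$ lies entirely on one side of the induced separation, whereas you argue directly that $G\strictup(t_Xt)$ avoids $V_s$ unless $s \in \lfloor t \rfloor$; these are the same observation, and your acknowledged implicit use of finite adhesion matches the paper's own tacit assumption when invoking \cref{lem:TechnicalCriticalVertexSetsOfTorsos}.
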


\begin{proof}
    Let $t \in T$ be arbitrary, and suppose for a contradiction that $\torsostar(\sigma_t)$ is not tough, that is, there is some $X \in \crit(\torsostar(\sigma_t))$. Since $(T, \cV)$ is tight by assumption, we may apply \cref{lem:TechnicalCriticalVertexSetsOfTorsos}~\ref{itm:CriticalVertexSetsOfTorsos:1} to the star~$\sigma_t$, which yields that $X$ is also a critical vertex set of $G$; moreover, infinitely many tight components of $G-X$ meet $V_t$. In particular, we have $t \neq t_X$ by assumption on $t_X$. So let $e \in T$ be the edge incident with $t_X$ on the unique $t$--$t_X$ path in $T$, and let $(A,B) \in \sigma_{t_X}$ be the separation induced by the orientation of $e$ towards~$t_X$; in particular, $V_t \subseteq A$. Then by the assumption on $t_X$ at most finitely many tight components of $G-X$ are contained in $A$, or equivalently meet $A$, which is a contradiction as $V_t \subseteq A$.
\end{proof}

We now use the previous two lemmas to show that every graph of finite tree-width admits a \td\ which satisfies all properties required for the \td\ in \cref{thm:critVtxTechnical:copy} except that it might not display the critical vertex sets.

\begin{lemma} \label{lem:CritDecompWithoutDisplayingCrit}
    Every graph of finite tree-width admits a tight, componental, rooted \td~$(T, \cV)$ whose adhesion sets are critical vertex sets and whose torsos are tough. Moreover, 
    \begin{enumerate}
        \item \label{itm:CritDecompWithoutDisplayingCrit:progress} $V_t \setminus V_e$ is non-empty for every node $t \in T$ and the unique edge $e =st \in T$ with $s <_T t$.
    \end{enumerate}
\end{lemma}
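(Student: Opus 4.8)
The plan is to start from the tight, componental \td~$(T,\cV'_\textnormal{NT})$ into finite parts from the introduction and then contract, in its decomposition tree, exactly the edges whose adhesion set is not a critical vertex set of $G$. First I would reduce to the case that $G$ is connected, treating each component separately and gluing the resulting decomposition trees at a common new root, as in the proof of \cref{thm:FiniteTWyieldsNST} (one only has to observe that the new adhesion sets are empty, which is critical precisely when $G$ has infinitely many components, and otherwise one contracts these edges too). So assume $G$ connected, fix a normal spanning tree $T$ of $G$ (which exists by \cref{thm:FiniteTWyieldsNST}), and let $(T,\cV'_\textnormal{NT})$ be Diestel's \td~\cite{diestel1994depth} from the introduction, whose bag at $t$ is $\{t\}$ together with all $v\le_T t$ that send a $G$-edge to some $w\ge_T t$. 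As recalled there, $(T,\cV'_\textnormal{NT})$ is a tight, componental, rooted \td\ of $G$ into finite parts; in particular each of its torsos is a finite graph and hence tough.

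Next I would let $(T',\cV')$ be the \td\ induced by contracting every edge of $T$ whose adhesion set is not a critical vertex set of $G$. The edges of $T'$ correspond to the retained edges of $T$, and contracting edges within one side of the separation induced by a retained edge $e$ changes neither that separation, nor $V_e$, nor $G\strictup e$; hence $(T',\cV')$ is again tight and componental, all its adhesion sets are critical vertex sets (in particular finite), and \cref{itm:CritDecompWithoutDisplayingCrit:progress} holds, since the retained edge of $T$ sitting at the bottom of a branch set $t$ has a lower endpoint $s$ with $s\in V'_s\subseteq V_t$ while $s$ lies in no bag of $(T,\cV'_\textnormal{NT})$ strictly below $s$, so that $s\notin V_e$ and $V_t\setminus V_e\neq\emptyset$. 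All of this is routine.

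The substantial point is that the torsos of $(T',\cV')$ are tough. By \cref{lem:DisplayingCritYieldsToughTorsos} it suffices to show that for every $X\in\crit(G)$ there is a node $t_X\in T'$ such that cofinitely many tight components of $G-X$ have the form $G\strictup e$ for an edge $e=t_Xt$ of $T'$ with $t_X<_{T'}t$. I would deduce this from the analogous statement for $(T,\cV'_\textnormal{NT})$: since that \td\ is tight, componental and has (finite, hence) tough torsos, \cref{lem:TightCompToughTDNearlyDisplaysCrit} yields that cofinitely many tight components of $G-X$ are $G\strictup f$ for edges $f=t^{\ast}g$ of $T$ with $t^{\ast}<_T g$, where $t^{\ast}$ is the $\le_T$-minimal node whose Diestel bag contains $X$. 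For every such $f$ we have $X=N_G(G\strictup f)\subseteq V_f\subseteq V'_{t^{\ast}}$, so only finitely many values of $V_f$ occur among these $f$; the edges $f$ with $V_f=X$ are critical, survive the contraction, and contribute tight components of the required form above the $T'$-node containing $t^{\ast}$, while the tight components contributed by the finitely many possible larger values $V_f\supsetneq X$ are bounded in number by feeding them through \cref{lem:TechnicalCriticalVertexSetsOfTorsos} applied to the finite torso of $(T,\cV'_\textnormal{NT})$ at $t^{\ast}$, together with the fact that all adhesion sets of $(T',\cV')$ are finite, so that only finitely many pairwise disjoint tight components of $G-X$ can meet a given down-adhesion set of $(T',\cV')$. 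I expect this last bookkeeping to be the main obstacle: because contraction may merge infinitely many nodes and thus create infinite bags, one cannot merely invoke finiteness of bags, but must count exactly those tight components of $G-X$ that escape the part above $t_X$, controlling their number via the finiteness of the adhesion sets of $(T',\cV')$ and of the torsos of $(T,\cV'_\textnormal{NT})$.
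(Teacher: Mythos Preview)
Your approach is the same as the paper's: start from $(T,\cV'_{\textnormal{NT}})$, contract every edge whose adhesion set is not critical, and then invoke \cref{lem:DisplayingCritYieldsToughTorsos}. The argument for~\cref{itm:CritDecompWithoutDisplayingCrit:progress} is right in spirit, though your notation conflates the $T'$-node $s$ with a $T$-node; the clean way is to pick the $\le_T$-minimal $T$-node $u$ in the branch set $t$, observe $u\in V'_{\textnormal{NT},u}\subseteq V_t$, and note $u\notin V_e$ since $V_e\subseteq V'_{\textnormal{NT},v}\subseteq\lceil v\rceil$ for the predecessor $v$ of $u$.

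Where you make your life unnecessarily hard is the toughness step. The ``main obstacle'' you anticipate is not there. In a tight, componental \td, for every edge $f$ we have $V_f=N_G(G\strictup f)$: the graph $G\strictup f$ is a single component and tightness forces its neighbourhood to equal the adhesion set. Hence if $G\strictup f$ is a tight component of $G-X$ as supplied by \cref{lem:TightCompToughTDNearlyDisplaysCrit}, then $V_f=N_G(G\strictup f)=X\in\crit(G)$, so $f$ is \emph{never} contracted. Thus all of the cofinitely many edges $f=t^{\ast}g$ produced by \cref{lem:TightCompToughTDNearlyDisplaysCrit} survive to $T'$ as edges incident with the $T'$-node $t'_X$ containing $t^{\ast}$, and \cref{lem:DisplayingCritYieldsToughTorsos} applies immediately. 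There is no need to consider edges with $V_f\supsetneq X$, no appeal to \cref{lem:TechnicalCriticalVertexSetsOfTorsos}, and no bookkeeping about infinite bags.
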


\begin{proof}
    We may assume that $G$ is connected.
    Indeed, if $G$ is not connected, we may apply \cref{lem:CritDecompWithoutDisplayingCrit} to every component $C$ of $G$ to obtain a \td\ $(T^C, \cV^C)$.
    Let $T$ be the disjoint union of all $T^C$ and a new node $r$ to which we join the root of every $T^C$.
    We assign~$V_r := \emptyset$ and $V_t := V^C_t$ for the respective component $C$ of $G$.
    Now $(T, \cV)$ is as desired.
    
    Since the connected graph $G$ has finite tree-width, we may fix a normal spanning tree $T$ of $G$ by \cref{thm:FiniteTWyieldsNST}. Let $(T,\cV)$ ne the tree-decomposition into finite parts introduced unter the name \td~$(T,\cV'_\textnormal{NT})$ in the introduction; i.e.\ the tree-decomposition with decomposition tree $T$ whose bags are given  by $V_t := \{t\} \cup N_G(\lfloor t \rfloor) \subseteq \lceil t \rceil$. 
    By construction, for an edge $e = st \in T$ with $s <_T t$, we have $G \strictup e = \lfloor t \rfloor$ and $V_e = \lceil s \rceil \cap N_G(\lfloor t \rfloor)$; 
    in particular, the rooted \td\ $(T, (V_t)_{t \in T})$ is componental and tight. Moreover, its bags are all finite and hence tough, so by \cref{lem:TightCompToughTDNearlyDisplaysCrit} there exists for every $X \in \crit(G)$ a node $t_X \in T$ such that cofinitely many tight components of $G-X$ are of the form $G\strictup e$ for an edge $e = t_Xt \in T$ with $t_X <_T t$. 

    Let $(T', \cV')$ be the \td\ induced by $(T, \cV)$ given by contracting all edges $e \in T$ with $V_e \notin \crit(G)$. We claim that $(T', \cV')$ is as desired. It is immediate from the construction that $(T', \cV')$ is still tight and componental and that all its adhesion sets are critical vertex sets of~$G$. So we are left to show that all the torsos of $(T', \cV')$ are tough. 
    To this end, for $X \in \crit(G)$, let $t'_X$ be the node of $T'$ whose branch set in $T$ contains $t_X$. Since we only contracted edges of~$T$ whose adhesion set is not a critical vertex set of $G$, we still have that cofinitely many tight components of $G-X$ are of the form $G\strictup e$ for an edge $e = t'_Xs \in T'$ with $t'_X <_{T'} s$. Hence, we may apply \cref{lem:DisplayingCritYieldsToughTorsos}, which concludes the proof.

    To verify that all bags are distinct, let nodes $t' \neq s' \in T'$ be given, and pick nodes $t \in t'$ and~$s \in s'$. In particular, $t \in V_t \subseteq V'_{t'}$ and $s \in V_s \subseteq V'_{s'}$ by the definition of $(T, \cV)$ and $(T', \cV')$. If either $t \notin V'_{s'}$ or $s \notin V'_{t'}$, then $V'_{t'} \neq V'_{s'}$ and we are done; so suppose otherwise. Then, again by construction, there are nodes $y \in t'$ and $x \in s'$ such that $y \geq_{T} s$ and $x \geq_T t$. But since the branch sets $t'$ and $s'$ are connected in $T$ and disjoint, this contradicts that $T$ is a tree. 

    Finally, $(T, \cV)$ satisfies~\cref{itm:CritDecompWithoutDisplayingCrit:progress} by construction, as we have $t \in V_t \setminus V_e$ for every node $t \in T$ and its unique edge $e=st \in T$ with $s <_T t$. As $(T', \cV')$ was induced by edge-contractions on $T$, also~$(T', \cV')$ satisfies \cref{itm:CritDecompWithoutDisplayingCrit:progress}.
\end{proof}

Next, we describe how one can turn the \td\ from \cref{lem:CritDecompWithoutDisplayingCrit} into one that additionally displays all critical vertex sets and their tight components cofinitely.

\begin{construction} \label{constr:TreeDecompDisplayingCrit}
    Let $(T, \cV)$ be a rooted \td\ of a graph $G$ of finite adhesion.
    For every $X \in \crit(G)$, let $t_X$ be the $\leq_T$-minimal node $t$ with $X \subseteq V_t$ (which exists by \cref{lem:TightCompToughTDNearlyDisplaysCrit}).
    We obtain the tree $T'$ from $T$ by simultaneously adding for each $X \in \crit(G)$ the node $t'_X$, the edge~$t_Xt'_X$ and rerouting each edge $t_Xt$ of  $T$ with $V_{t_Xt} = X$ to $t'_Xt$ in $T'$.
    The \td~$(T', \cV')$ is given by $V'_t := V_t$ for all $t \in T$ and $V'_{t'_X} := X$ for every $X \in \crit(G)$.
\end{construction}

To show that the \td\ $(T', \cV')$ from \cref{constr:TreeDecompDisplayingCrit} displays the critical vertex sets of $G$ we need the following auxiliary lemma.

\begin{lemma}\label{lem:CofinManyCompsYieldsDisplayingCrit}
    Let $(T, \cV)$ be a tight, rooted \td\ of a graph $G$ such that for every $X \in \crit(G)$ there is a unique node $t_X \in T$ with $V_{t_X} = X$ and cofinitely many tight components of~$G-X$ are some $G\strictup e$ for an edge $e = t_Xt \in T$ with $t_X <_T t$. 
    Then $(T, \cV)$ displays all critical vertex sets of $G$.
\end{lemma}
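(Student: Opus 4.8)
The plan is to verify the definition of ``displays the critical vertex sets'' head on: I want to show that the map $t\mapsto V_t$, restricted to those nodes $t\in T$ that have infinite degree and finite bag $V_t$, is a bijection onto $\crit(G)$. Surjectivity is easy from the hypothesis: for $X\in\crit(G)$ the node $t_X$ has finite bag $X$, and it has infinite degree, for otherwise only finitely many neighbours $t$ of $t_X$ with $t_X<_Tt$ would exist, hence only finitely many subgraphs $G\strictup{(t_Xt)}$, contradicting that cofinitely many of the infinitely many (pairwise distinct) tight components of $G-X$ are of this form. Granting the converse statement (below), injectivity is immediate: if two infinite-degree nodes $t,t'$ have $V_t=V_{t'}$, then this common bag is critical, so both equal $t_{V_t}$ by the uniqueness clause of the hypothesis.

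The real content is the converse: if $t\in T$ has infinite degree and $V_t$ is finite, then $V_t\in\crit(G)$. I would begin by noting that $t$ has at most one down-neighbour, hence infinitely many up-edges $e=ts$; by tightness each $G\strictup e$ is nonempty and carries a component $C_e$ with $N_G(C_e)=V_e$, and since $G-V_e$ is the edge-free union of $G\strictup e$ and $G\strictdown e$, this $C_e$ is in fact a tight component of $G-V_e$. As $V_e\subseteq V_t$ and $V_t$ is finite, a pigeonhole argument yields a fixed set $X\subseteq V_t$ with $V_e=X$ for infinitely many up-edges $e=ts$; the corresponding $C_e$ lie in the pairwise disjoint subgraphs $G\strictup{(ts)}$ (disjointness being a one-line consequence of \cref{prop:TD2}), so they are infinitely many distinct tight components of $G-X$, whence $X\in\crit(G)$ and $t_X$ exists. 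It then suffices to prove $t_X=t$, for then $V_t=V_{t_X}=X\in\crit(G)$.

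To show $t_X=t$ I would argue by contradiction. By hypothesis, cofinitely many of the $C_e$ above equal $G\strictup{f_e}$ for an up-edge $f_e=t_Xt'_e$ with $t_X<_Tt'_e$; fix such an $e=ts$, so $\emptyset\ne G\strictup{f_e}=C_e\subseteq G\strictup{(ts)}=G\strictup e$. Here I would use the sub-lemma that \emph{whenever $\emptyset\ne G\strictup f\subseteq G\strictup e$ for edges $e,f$ of $T$, the edges $e$ and $f$ are $\leq_T$-comparable} -- if they were not, their $\leq_T$-larger endpoints would be incomparable in $T$, hence would have disjoint up-closures, and then a vertex of $G\strictup f$, which by \cref{prop:TD2} lies only in bags indexed by the up-closure of that endpoint of $f$, could not lie in $G\up e$. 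Since $t_X\ne t$ we get $e\ne f_e$, so either $e<_Tf_e$, which puts $s$ strictly $\leq_T$-between $t$ and $t_X$, or $f_e<_Te$, in which case $G\strictup{f_e}=G\strictup e=G\strictup{(ts)}$; in this second case, because the $G\strictup{(ts)}$ for distinct up-neighbours $s$ of $t$ are disjoint and nonempty, the resulting $t'_e$ are pairwise distinct and lie strictly $\leq_T$-between $t_X$ and $t$ (the possibility $t'_e=t$ is excluded since $t$ has other up-neighbours). One of the two alternatives holds for infinitely many of our (infinitely many) relevant edges $e$, and each alternative places infinitely many distinct nodes on the finite path of $T$ between $t$ and $t_X$ -- a contradiction. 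Hence $t_X=t$, completing the proof.

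I expect the main obstacle to be exactly this last step: translating $\leq_T$-comparability of the edges $e$ and $f_e$ faithfully into the statement that some neighbour of $t$, respectively of $t_X$, lies on the finite path $tTt_X$, and carefully ruling out the degenerate sub-case in which $f_e$ is the down-edge of $t$. Everything else -- disjointness of the $G\strictup{(ts)}$, the identification of $C_e$ as a tight component of $G-V_e$, and the sub-lemma itself -- should follow routinely from \cref{prop:TD1}, \cref{prop:TD2} and the monotonicity of $\up$ and $\strictup$ recorded in the preliminaries.
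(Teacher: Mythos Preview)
Your proof is correct and follows the same overall strategy as the paper: both pigeonhole over the infinitely many up-edges of $t$ to find a common adhesion set $X\subseteq V_t$, deduce $X\in\crit(G)$ from tightness, and then argue that $t_X=t$. You additionally spell out surjectivity and injectivity of the map $t\mapsto V_t$, which the paper leaves implicit (it only writes out the implication ``infinite degree and finite bag $\Rightarrow$ bag critical'').

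The one genuine organisational difference is in how the contradiction for $t_X\neq t$ is reached. The paper first pins down the relative position of $t$ and $t_X$: it observes that only finitely many tight components of $G-X$ can meet $G\strictdown f$ for the down-edge $f$ of $t_X$, which forces $t_X<_T t$, and then notes that above $t$ only finitely many tight components of $G-X$ can sit, contradicting the infinitely many $C_e$. You instead bypass the positional argument by invoking your comparability sub-lemma for $e$ and $f_e$ and splitting into the two cases $e<_Tf_e$ versus $f_e<_Te$; each case crowds infinitely many distinct nodes onto the finite path $tTt_X$. Both routes work. The paper's is a bit shorter because once $t_X<_T t$ is known the counting is immediate; yours has the virtue of being self-contained (the sub-lemma is a clean general fact), at the cost of a two-case analysis whose first case is in fact nearly vacuous (it can hold for at most one $e$, since only one up-neighbour of $t$ lies on $tTt_X$).
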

\begin{proof}
    We need to show for every infinite-degree node $t \in T$ with finite $V_t$ that $V_t \in \crit(G)$. 
    So let $t \in T$ be a node with finite $V_t$ and $V_t \notin \crit(G)$. 
    Since $V_t$ is finite, some subset $X \subseteq V_t$ must be the adhesion set corresponding to infinitely many edges $e$ incident with $t$ in $T$. 
    Since $(T, \cV)$ is tight, $X$ is a critical vertex set of $G$.
    By assumption on $(T, \cV)$ there is a unique node $t_X \in T$ with $V_{t_X} = X$. 
    If $t_X = t$ we are done so suppose otherwise. 
    Since by assumption only finitely many tight components of $G-X$ can meet $G\strictdown f$ for the unique edge $f = st_X$ with $s <_T t_X$, we have that $t_X <_T t$. 
    But then, again by the assumption on $(T, \cV)$, at most finitely many edges $e = ts$ with $t <_T s$ can contain a tight component of $G-X$ contradicting that $(T, \cV)$ is tight.
\end{proof}

We can now show that the \td\ $(T', \cV')$ from \cref{constr:TreeDecompDisplayingCrit} is as desired for \cref{thm:critVtxTechnical:copy} if we start with a \td\ $(T, \cV)$ that is tight and componental.

\begin{lemma}\label{lem:TightCompYieldsDisplayingCrit}
    Let $(T, \cV)$ be a tight, componental, rooted \td\ of a graph $G$ of finite adhesion whose torsos are all tough.  
    Then the \td\ $(T', \cV')$ from \cref{constr:TreeDecompDisplayingCrit} displays all critical vertex sets and their tight components cofinitely. 
    Additionally, 
    \begin{enumerate}
        \item\label{itm:TCTtoFullyTight} $(T', \cV')$ is fully tight,
        \item\label{itm:TCTtoCofinallyComponental} $(T', \cV')$ is cofinally componental; moreover, if $G \strictup e$ is disconnected for $e = s t \in E(T')$ with $s <_{T'} t$ then~$V_{s} \supseteq V_{t} \in \crit(G)$ and $\deg(t) = \infty$,
        \item\label{itm:TCTtoTough} the torsos of $(T', \cV')$ are tough.
    \end{enumerate}
\end{lemma}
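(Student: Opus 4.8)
The plan is to describe the subgraphs $G \strictup e$ for all edges $e$ of $T'$ explicitly, and then to read off each assertion, using \cref{lem:TightCompToughTDNearlyDisplaysCrit} and \cref{lem:DisplayingCritYieldsToughTorsos}. First I would record the structure of $T'$: for $X \in \crit(G)$ the node $t_X$ exists by \cref{lem:TightCompToughTDNearlyDisplaysCrit}, and by that lemma cofinitely many tight components of $G-X$ have the form $G\strictup(t_Xt)$ for children $t$ of $t_X$ in $T$; as $(T,\cV)$ is componental and tight, each such $G\strictup(t_Xt)$ is connected with $N_G(G\strictup(t_Xt)) = V_{t_Xt}$, hence $V_{t_Xt} = X$, so $t_Xt$ is one of the edges rerouted to $t'_X$ in \cref{constr:TreeDecompDisplayingCrit}. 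Thus $t'_X$ has infinite degree in $T'$ and finite bag $X$, and the edges $e = t'_Xt$ with $t'_X <_{T'} t$ are precisely these (infinitely many) rerouted edges.

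Next, recalling that $(T',\cV')$ is a \td\ (the rerouting merely splits $t_X$ into $t_X$ and $t'_X$), I would compute $G\strictup e$ for $e \in E(T')$. If $e$ already lies in $E(T)$, then $V'_e = V_e$ and $G\strictup e$ is the subgraph $G\strictup e$ of $(T,\cV)$, since every new node $t'_Y$ lying above $e$ has $t_Y$ above $e$ as well and bag $Y \subseteq V_{t_Y}$, so contributes nothing; hence this $G\strictup e$ is nonempty, connected and has $N_G(G\strictup e)=V_e$ by the hypotheses on $(T,\cV)$. If $e = t_Xt'_X$, then $V'_e = X$ and $G\strictup e = \bigsqcup\{\,G\strictup(t_Xt) : t \text{ a rerouted child of } t_X\,\}$, a disjoint union of infinitely many tight components of $G-X$; so it is disconnected and every component of it has neighbourhood exactly $X$. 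If $e = t'_Xt$ is a rerouted edge, then $V'_e = X$ and $G\strictup e$ equals $G\strictup(t_Xt)$ in $(T,\cV)$, a single tight component of $G-X$.

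Everything except ``displays all critical vertex sets'' now follows quickly. The \td\ $(T',\cV')$ is fully tight because every component of every $G\strictup e$ has neighbourhood $V'_e$. It displays the tight components of each critical vertex set cofinitely, because the edges above $t'_X$ are the rerouted edges $t'_Xt$, each $G\strictup(t'_Xt)$ is a tight component of $G-X$, and by \cref{lem:TightCompToughTDNearlyDisplaysCrit} cofinitely many tight components of $G-X$ arise this way. The only edges $e$ with $G\strictup e$ disconnected are the new edges $t_Xt'_X$, and for such $e = st$ indeed $V_s = V_{t_X} \supseteq X = V_t = V'_{t'_X} \in \crit(G)$ and $\deg_{T'}(t'_X) = \infty$; moreover $(T',\cV')$ is cofinally componental, since on any $\subseteq$-maximal $\leq_{T'}$-chain a disconnected edge $t_Xt'_X$ is, if the chain continues, immediately followed by a rerouted edge $f$ with $G\strictup f$ connected, while the top edge of a rooted path is never of the form $t_Xt'_X$ as $t'_X$ is not a leaf. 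Finally, all torsos of $(T',\cV')$ are tough by \cref{lem:DisplayingCritYieldsToughTorsos}, whose hypotheses (tightness, and for each $X \in \crit(G)$ cofinitely many tight components of $G-X$ being of the form $G\strictup e$ with $e = t'_Xt$ and $t'_X <_{T'} t$) we have just checked.

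The remaining point, which I expect to be the main obstacle, is that $(T',\cV')$ \emph{displays all critical vertex sets}: the map $t \mapsto V'_t$ restricted to the infinite-degree nodes of $T'$ with finite bag should be a bijection onto $\crit(G)$. The $t'_X$ lie in this domain, have pairwise distinct bags, and hit every critical vertex set, so the issue is to rule out \emph{any other} infinite-degree node of $T'$ with finite bag. One cannot simply invoke \cref{lem:CofinManyCompsYieldsDisplayingCrit}, since its hypothesis that $X$ be the bag of a \emph{unique} node of $T'$ may fail (for instance $t_X$ itself can have bag $X$). Instead I would prove directly that \emph{no node $s$ of $T$ with finite bag has infinite degree in $T'$}. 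If it did, then, $V_s$ being finite (so bounding both the number of distinct adhesion sets at $s$ and the number of new edges $st'_Z$ at $s$), $s$ would have infinitely many children $t$ in $T$ with a common adhesion value $V_{st} = Y \subseteq V_s$ and with $st$ not rerouted; each $G\strictup(st)$ would then be a tight component of $G-Y$ (again by componental and tight), and these would be infinitely many pairwise disjoint such components, forcing $Y \in \crit(G)$; as $st$ is not rerouted this gives $t_Y \neq s$, hence $t_Y <_T s$, so $Y$ lies inside the adhesion set $V_f$ of the down-edge $f$ of $s$; but then each $G\strictup(st) \subseteq G\up s \setminus Y = G\strictup f \cup (V_f \setminus Y)$, and since $G\strictup f$ is connected and disjoint from $Y$ it lies in a single component of $G-Y$, so at most one $G\strictup(st)$ can meet it and all the remaining (infinitely many, pairwise disjoint, nonempty) $G\strictup(st)$ lie inside the finite set $V_f \setminus Y$ --- a contradiction. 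With this claim in hand, the infinite-degree, finite-bag nodes of $T'$ are exactly the $t'_X$, so $(T',\cV')$ displays all critical vertex sets, which together with the properties established above completes the proof.
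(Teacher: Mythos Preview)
Your proof is correct and, on the main point you flag, more careful than the paper's. The paper's argument is terse: it asserts that \cref{itm:TCTtoFullyTight}--\cref{itm:TCTtoTough} hold ``by construction'', obtains via \cref{lem:TightCompToughTDNearlyDisplaysCrit} that cofinitely many tight components of $G-X$ appear as $G\strictup e$ for up-edges $e$ of $t'_X$, and then simply invokes \cref{lem:CofinManyCompsYieldsDisplayingCrit} to conclude that $(T',\cV')$ displays the critical vertex sets. Your observation that the uniqueness hypothesis of \cref{lem:CofinManyCompsYieldsDisplayingCrit} need not hold is well taken: nothing in the hypotheses of the present lemma prevents the original node $t_X\in T$ from having $V_{t_X}=X$ (e.g.\ take $G=K_{1,\infty}$ with the obvious star decomposition), in which case both $t_X$ and $t'_X$ carry bag $X$ in $T'$. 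The paper glosses over this, and your direct argument---that no node $s\in T$ with finite bag has infinite degree in $T'$---is exactly what is needed to close the gap; it is correct and in effect reproves the relevant half of \cref{lem:CofinManyCompsYieldsDisplayingCrit} while supplying injectivity. One small streamlining: once you know $t_Y<_T s$ and hence $Y\subseteq V_f$ for the down-edge $f$ of $s$, each $G\strictup(st)$ already lies inside $G\strictup f$ (any vertex in $G\strictup(st)\cap V_f$ would, by \ref{prop:TD2}, lie in $V_{st}=Y$), so the infinitely many pairwise distinct components $G\strictup(st)$ of $G-Y$ all sit inside the single component of $G-Y$ containing the connected set $G\strictup f$---an immediate contradiction without needing the detour through $V_f\setminus Y$.
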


\begin{proof}
    Let $(T', \cV')$ be the \td\ from \cref{constr:TreeDecompDisplayingCrit}. 
    Then \cref{itm:TCTtoFullyTight}, \cref{itm:TCTtoCofinallyComponental} and \cref{itm:TCTtoTough} hold for $(T', \cV')$.
    \cref{lem:TightCompToughTDNearlyDisplaysCrit} applied to $(T, \cV)$ 
    ensures that for $X \in \crit(G)$ cofinitely many tight components of $G-X$ are of the form $G \strictup e$ for an edge $e=t'_Xt \in T'$ with $t'_X <_{T'} t$. Thus, by \cref{lem:CofinManyCompsYieldsDisplayingCrit}, $(T', \cV')$ displays all critical vertex sets of $G$. 
    Moreover, by construction, $(T', \cV')$ displays the tight components of all critical vertex sets cofinitely.
\end{proof}

\begin{proof}[Proof of \cref{thm:critVtxTechnical:copy}]
    Apply \cref{constr:TreeDecompDisplayingCrit} to the \td\ $(T, \cV)$ from \cref{lem:CritDecompWithoutDisplayingCrit}. By \cref{lem:TightCompYieldsDisplayingCrit} this \td\ $(T', \cV')$ is as desired;
    in particular, it satisfies~\cref{itm:critVtxTechnical:copy:CofinComp}. 
    Moreover, since $(T, \cV)$ satisfies \cref{itm:CritDecompWithoutDisplayingCrit:progress} from \cref{lem:CritDecompWithoutDisplayingCrit}, $(T, \cV)$ still satisfies it at all $t \in T'$ which have not been not added in \cref{constr:TreeDecompDisplayingCrit}, i.e.\ $(T',\cV')$ satisfies \cref{itm:critVtxTechnical:copy:DistinctBags}.
\end{proof}

We remark that a Theorem similar to \cref{thm:critVtxTechnical:copy} was proven by Elm and Kurkofka \cite{infinitetangles}*{Theorem~2}. However, their result cannot be used in place our  \cref{thm:critVtxTechnical:copy}, as their method in general produces only a nested set of separations, and not a tree-decomposition\arXivOrNot{}{ (see the appendix of the arXiv version of this paper for a detailed discussion)}. \arXivOrNot{We refer the reader to the \cref{subsec:Counterexamples} for a detailed discussion of the differences between the two results.}{}

\section{Lifting paths and rays from torso}\label{sec:liftingspathsandraysfromtorso}

From \cref{thm:critVtxTechnical}  we obtain a \td\ $(T, \cV)$ whose torsos are tough.
Following our overall proof strategy for \cref{main:LinkedTightCompTreeDecompnew} we want to further decompose each of its torsos $\torsostar(\sigma_t)$ with a \td\ $(T^t, \cV^t)$ which is linked and rayless.
We have already indicated in the beginning of \cref{subsec:ATwoStepApproach} that by using the infinite connectivity of critical vertex sets the arising \td\ $(T', \cV')$ is linked, if $(T^t, \cV^t)$ is linked.
For this, we need to extend the path families in $\torsostar(\sigma_t)$ which witness the linkedness of $(T^t, \cV^t)$ to path families in $G$ joining up the same sets of vertices.
In this section we show that such an extension of (finite) path families is possible, as long as the separations in the star $\sigma_t$ at $t$ are `left-well-linked' (\cref{lem:DisjointPathsExtendedByWellLinked}).
Moreover, we prove similar results, \cref{prop:OnePathOrRayExtended} and \cref{lem:InfManyDisjPathExtended}, for extending rays and infinite path families  in torsos at stars whose separations are just left-tight.

We recall that for two sets $X,Y$ of vertices of a graph $G$, an \defn{$X$--$Y$ path} meets $X$ precisely in its first vertex and $Y$ precisely in its last vertex.
A path \defn{through} a set $C$ of vertices in~$G$ is a path in $G$ whose internal vertices are contained in $C$.

\begin{proposition} \label{prop:OnePathOrRayExtended}
    Let $\sigma$ be a star of left-tight, finite-order separations of a graph $G$.
    Let~$P$ be a path or ray in $\torsostar(\sigma)$.
    Then there exists a path or ray $P'$ in $G$, respectively, with $P' \cap G[\interior(\sigma)] \subseteq P$ which starts in the same vertex as $P$ and, if $P$ is a path, also ends in the same vertex as $P$, and such that $P'$ meets $V(P)$ infinitely often if $P$ is ray.
\end{proposition}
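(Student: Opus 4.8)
The plan is to transform $P$ into $P'$ by rerouting the torso edges of $P$ through tight components of the relevant separations, but to do so greedily from left to right while \emph{jumping over} long stretches of $P$ whenever a separation is first encountered, so that no component is ever used twice. First I would fix, for every torso edge $e$ on $P$, a separation $\beta(e) = (A,B) \in \sigma$ whose separator $A \cap B$ contains both endvertices of $e$ (such a separation exists by the definition of $\torsostar(\sigma)$), and for every $(A,B) \in \sigma$ I would use left-tightness to fix a component $C_{(A,B)}$ of $G[A \setminus B]$ with $N_G(C_{(A,B)}) = A \cap B$. Two facts will be used repeatedly: since $\interior(\sigma) \subseteq B$, the set $V(C_{(A,B)}) \subseteq A \setminus B$ is disjoint from $\interior(\sigma) \supseteq V(P)$; and if $(A,B) \neq (A',B')$ in $\sigma$, then $A \subseteq B'$ by the star property, so $C_{(A,B)} \subseteq B'$ is disjoint from $C_{(A',B')} \subseteq A' \setminus B'$.

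Writing $P = p_0 p_1 p_2 \dots$, I would then construct $P'$ by a sweep that maintains a current index $k$, starting at $0$, together with a path from $p_0$ to $p_k$ built so far, starting as the trivial path on $p_0$. As long as $p_{k+1}$ exists: if $p_k p_{k+1}$ is an edge of $G$, append it and advance to $k+1$; otherwise $p_k p_{k+1}$ is a torso edge with $\beta(p_k p_{k+1}) = (A,B)$, and I let $m$ be the largest index with $p_m \in A \cap B$. This index exists and is finite since $A \cap B$ is finite (this is the one place the finite order of the separations enters), and $m \geq k+1$ because $p_{k+1} \in A \cap B$. As $C_{(A,B)}$ is connected and $p_k, p_m \in A \cap B = N_G(C_{(A,B)})$, there is a $p_k$--$p_m$ path in $G$ whose interior lies in $C_{(A,B)}$; I append this path and advance to $m$. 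If $P$ is a path the process stops with the constructed path having the same last vertex as $P$; if $P$ is a ray it never terminates and the current index tends to infinity. In both cases $P'$ is the resulting walk.

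It then remains to verify the claimed properties. The current index strictly increases at every step, so the vertices of $P$ that end up on $P'$ occur in strictly increasing index order and are in particular distinct. The decisive point is that each separation $(A,B) \in \sigma$ is used in at most one rerouting step: immediately after such a step the current index equals $m$, the largest index with a vertex in $A \cap B$, so no later torso edge $p_l p_{l+1}$ can have $\beta(p_l p_{l+1}) = (A,B)$, as that would require $p_l \in A \cap B$ with $l > m$. Consequently the interiors of the rerouting paths lie in pairwise distinct components $C_{(A,B)}$, which are pairwise disjoint and disjoint from $V(P)$; hence $P'$ has no repeated vertex and is a genuine path (resp.\ ray). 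By construction $P'$ starts at $p_0$, and ends at the last vertex of $P$ when $P$ is a path. Every vertex of $P'$ lying in $\interior(\sigma)$ is one of the retained $p_i$ and hence lies on $P$, while every edge of $P'$ is either a retained edge $p_k p_{k+1}$ of $P$ or has an endvertex in some $C_{(A,B)}$, which lies outside $\interior(\sigma)$; so $P' \cap G[\interior(\sigma)] \subseteq P$. Finally, if $P$ is a ray then the current index runs through infinitely many values, each contributing a vertex of $P$ to $P'$, so $P'$ meets $V(P)$ infinitely often.

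The main obstacle is precisely the one the jump rule is designed to handle: a single tight component $C_{(A,B)}$ may be the only available detour for several torso edges of $P$, and since it need only satisfy $N_G(C_{(A,B)}) = A \cap B$ it may fail to contain several internally disjoint paths between prescribed pairs in $A \cap B$. Replacing torso edges one at a time would therefore not work in general, whereas routing directly from the first to the \emph{last} vertex of $P$ in $A \cap B$ uses $C_{(A,B)}$ only once, and disjointness across different separations is handed to us for free by the star structure. Beyond this, only routine checks remain, such as the degenerate cases where $P$ is a single vertex or begins or ends with a torso edge, which are immediate from the construction.
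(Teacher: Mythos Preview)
Your proof is correct and takes a genuinely different route from the paper's. The paper replaces \emph{every} torso edge $e=uv$ of $P$ simultaneously by a $u$--$v$ path $P_e$ through a tight component, forms the auxiliary graph $H := (P\cap G)\cup\bigcup_e P_e$, observes that $H$ is connected and locally finite (each vertex of $P$ keeps degree $\le 2$, and each vertex outside $P$ lies in only finitely many $P_e$ since the strict left sides are pairwise disjoint and each separator is finite), and then extracts $P'$ from $H$ by a K\"onig-type argument; the ``meets $V(P)$ infinitely often'' clause follows because the components of $H-V(P)$ are finite. Your approach instead builds $P'$ explicitly by a left-to-right sweep, and the key device---jumping from $p_k$ straight to the \emph{last} vertex of $P$ in the current separator---guarantees that each separation (hence each fixed tight component) is used at most once, so disjointness is immediate without any auxiliary graph or compactness. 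Your argument is more constructive and sidesteps the local-finiteness/K\"onig step entirely; the paper's argument is more uniform in that it does not need to look ahead along $P$. One cosmetic point: in your justification that a separation is never reused, the contradiction at $l=m$ comes from $p_{l+1}\in A\cap B$ rather than $p_l\in A\cap B$; you clearly have the right idea, but you may want to phrase that sentence accordingly.
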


\begin{proof}
    Fix for every torso edge $e = uv$ of $P$ a separation $(A_e, B_e) \in \sigma$ such that both $u, v \in A_e \cap B_e$. Since each $(A_e, B_e)$ is left-tight, we may further fix for every torso edge $e = uv$ of $P$ a $u$--$v$ path~$P_e$ through $A_e\setminus B_e$. 
    Note that since all $(A,B) \in \sigma$ have finite order, we have $(A_e, B_e) = (A_f, B_f)$ for at most finitely many torso edges $f$ of $P$.
    As the strict left sides $A \setminus B$ of the separations~$(A,B)$ in the star $\sigma$ are pairwise disjoint, every $P_e$ meets at most finitely many $P_f$ for the torso edges~$f$ of $P$.
    
    Then the graph $H$ obtained from $P \cap G$ by adding all the $P_e$ for torso edges of $P$ is a connected subgraph of $G$ which contains the startvertex and, if $P$ is a path, the endvertex of $P$. 
    Moreover,~$H$ is locally finite: by construction, all vertices in $P \cap G$ have degree $1$ or $2$ in $H$ since $P$ is a path or ray, and all vertices in $H - V(P)$ are contained in at most finitely many $P_e$ by the argument above, and hence also have finite degree in $H$. 
    Now if $P$ is a path, then $H \subseteq G$ contains a path~$P'$ whose endvertices are the same as $P$, and if $P$ is a ray, then $H \subseteq G$ contains a ray $P'$ that starts in the same vertex as $P$ (cf.\ \cite{bibel}*{Proposition~8.2.1}). In particular, if $P$ is a ray, then~$P'$ meets~$V(P)$ infinitely often since each component of $H - V(P)$ is finite.
    
    Since $H \cap G[\interior(\sigma)] = P \cap G[\interior(\sigma)]$ by construction, and because $P' \subseteq H$, it follows that $P' \cap G[\interior(\sigma)] \subseteq P$, so $P'$ is as desired.
\end{proof}

We remark that later in this paper we sometimes want to apply \cref{prop:OnePathOrRayExtended} to a ray $P$ in the torso at a star $\sigma$ of finite-order separations of a graph $G$ in which all but one separation $(A,B)$ are left-tight but we are not interested in keeping the startvertex of $P$. Hence, we may apply \cref{prop:OnePathOrRayExtended} instead to the tail of $P$ which avoids the finite set $A \cap B$ and the star $\{(C \cap B, D \cap B) \mid (C,D) \in \sigma \setminus \{(A,B) \}\}$ induced by $\sigma \setminus \{(A,B) \}$ on $G[B]$.
This still yields a ray $P'$ in $G$ with $P' \cap G[\interior(\sigma)] \subseteq P$ which meets $V(P)$ infinitely often.

\begin{lemma} \label{lem:InfManyDisjPathExtended}
    Let $\sigma$ be a star of left-tight, finite-order separations of a graph $G$. Let $X, Y \subseteq \interior(\sigma)$ such that there are infinitely many disjoint $X$--$Y$ paths in $\torsostar(\sigma)$. Then there are infinitely many $X$--$Y$ paths in $G$.
\end{lemma}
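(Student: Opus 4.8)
The plan is to extend each of the given pairwise disjoint torso paths individually to a path of $G$ via \cref{prop:OnePathOrRayExtended}, and then to verify that the resulting $G$-paths are again $X$--$Y$ paths and that there are infinitely many of them. So fix a family $\{Q_i \mid i \in \N\}$ of pairwise disjoint $X$--$Y$ paths in $\torsostar(\sigma)$. Since each $Q_i$ is in particular a path in $\torsostar(\sigma)$ and $\sigma$ is a star of left-tight, finite-order separations, \cref{prop:OnePathOrRayExtended} applies and yields a path $Q_i'$ in $G$ with $Q_i' \cap G[\interior(\sigma)] \subseteq Q_i$ that shares its first and its last vertex with $Q_i$.

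The key observation is then that each $Q_i'$ is in fact an $X$--$Y$ path of $G$. Indeed, its first vertex is the first vertex of $Q_i$, which lies in $X$, and its last vertex is the last vertex of $Q_i$, which lies in $Y$. Moreover, as $X, Y \subseteq \interior(\sigma)$, every vertex of $Q_i'$ that lies in $X \cup Y$ lies in $G[\interior(\sigma)]$ and hence, by the containment $Q_i' \cap G[\interior(\sigma)] \subseteq Q_i$, already on $Q_i$; since $Q_i$ is an $X$--$Y$ path, it meets $X$ only in its first and $Y$ only in its last vertex, and these coincide with the first resp.\ last vertex of $Q_i'$, so the same holds for $Q_i'$. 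Finally, the first vertices of the $Q_i'$ are exactly the first vertices of the $Q_i$, which are pairwise distinct because the $Q_i$ are pairwise disjoint; hence the $Q_i'$ are pairwise distinct, and $\{Q_i' \mid i \in \N\}$ is the desired infinite family of $X$--$Y$ paths in $G$.

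There is no real obstacle here; the one point requiring (minor) care is the verification that $Q_i'$ does not meet $X$ or $Y$ in an interior vertex, which is precisely where the hypothesis $X, Y \subseteq \interior(\sigma)$ enters through the containment supplied by \cref{prop:OnePathOrRayExtended}. If one additionally wanted the $Q_i'$ to be pairwise \emph{disjoint}, one would have to pass to an infinite subfamily first: the auxiliary graph on the index set $\N$ in which $i \sim j$ whenever $Q_i$ and $Q_j$ use torso edges coming from a common separation of $\sigma$ is locally finite — each separation of $\sigma$ has a finite separator, so is used by only finitely many of the pairwise disjoint $Q_i$ — hence it has an infinite independent set, and along that set the extensions built in the proof of \cref{prop:OnePathOrRayExtended} run through pairwise disjoint strict left sides $A \setminus B$ of separations in $\sigma$ and through pairwise disjoint portions of $\interior(\sigma)$. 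This refinement is not needed for the statement as given.
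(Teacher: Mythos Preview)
Your argument is correct for the lemma as stated: applying \cref{prop:OnePathOrRayExtended} to each $Q_i$ separately gives $X$--$Y$ paths in $G$ with pairwise distinct startvertices, and the check that $Q_i'$ meets $X \cup Y$ only at its endpoints (via $Q_i' \cap G[\interior(\sigma)] \subseteq Q_i$ and $X,Y \subseteq \interior(\sigma)$) is exactly right. The paper, however, proves more than the statement asserts --- its recursive construction yields an infinite family of \emph{pairwise disjoint} $X$--$Y$ paths in $G$, and this stronger conclusion is what is actually invoked later (in the proof of \cref{thm:RaylessThmTechnical}, where one needs infinitely many disjoint paths between two lifted rays to conclude they lie in the same end). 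The paper does this greedily: having built $n$ disjoint paths in $G$, it selects a $P \in \cP$ avoiding both those paths and every separator of $\sigma$ they meet, then lifts $P$ via \cref{prop:OnePathOrRayExtended}; the two avoidance conditions force the lift to be disjoint from its predecessors. Your final paragraph anticipates this strengthening, and your independent-set idea also works, with one caveat: for your auxiliary graph to be locally finite you must first \emph{fix} one separation for each torso edge (as in the proof of \cref{prop:OnePathOrRayExtended}) and declare $i \sim j$ when the fixed separations for $Q_i$ and for $Q_j$ overlap --- a single torso edge may have both endpoints in infinitely many separators, so without this choice the justification ``each separation is used by only finitely many $Q_i$'' does not bound the degree of $i$.
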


\begin{proof}
    Let $\cP$ be an infinite family of disjoint $X$--$Y$ paths in $\torsostar(\sigma)$. We define an infinite family~$\cP'$ of disjoint $X$--$Y$ paths in $G$ recursively. 
    For this, set $P'_0 := \emptyset$, let $n \in \N$, and assume that we have already constructed a family $\cP'_n$ of $n$ pairwise disjoint $X$--$Y$ paths in $G$.

    Then $V(\cP'_n)$ is finite, and hence meets the separators of at most finitely many separations in~$\sigma$. Since all separations in $\sigma$ have finite order, and because $\cP$ is an infinite family of disjoint paths, there exists a path $P \in \cP$ that avoids both the finite set $V(\cP'_n)$ and the finitely many finite separators of separations in $\sigma$ that meet $V(\cP'_n)$.
    By \cref{prop:OnePathOrRayExtended}, there exists a path~$P'$ in $G$ with the same endvertices as $P$ and with $P' \cap G[\interior(\sigma)] \subseteq P$. In particular, $P'$ is an $X$--$Y$ path in $G$. Moreover, $P'$ is disjoint from the paths in $\cP'_n$ by the choice of $P$ and because $P' \cap G[\interior(\sigma)] \subseteq P$.
    Hence, we may define $\cP'_{n+1} := \cP'_n \cup \{P'\}$.
    
    Then $\cP' := \bigcup_{n \in \N} \cP'_n$ is as desired.
\end{proof}

Let us call a finite-order separation~$(A,B)$ of a graph~$G$ \defn{left-well-linked} if, for every two disjoint sets~$X, Y \subseteq A \cap B$, there is a family of~$\min\{|X|, |Y|\}$ disjoint $X$--$Y$~paths in $G$ through $A \setminus B$.

\begin{lemma} \label{lem:DisjointPathsExtendedByWellLinked}
    Let~$\sigma$ be a star of left-well-linked, finite-order separations of a graph~$G$. Then the following assertions hold:
    \begin{enumerate}[label=\rm{(\roman*)}]
        \item \label{itm:DisjRaysExtended} For every countable family $\cP$ of disjoint rays in $\torsostar(\sigma)$ there exists a family $\cP'$ of disjoint rays in $G$ with the same set of startvertices, and such that each ray in $\cP'$ meets~$V(\cP)$ infinitely often.
        \item \label{itm:DisjPathsExtended} For every $k \in \N$ and every family $\cP$ of $k$ disjoint paths in $\torsostar(\sigma)$ there exists a family $\cP'$ of $k$ disjoint paths in $G$ with the same set of startvertices and the same set of endvertices.
    \end{enumerate}
\end{lemma}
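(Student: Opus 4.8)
The plan is to prove both assertions by the same surgery: for each separation of $\sigma$ \emph{separately}, replace all torso edges used by $\cP$ by pairwise disjoint paths of $G$ running through the corresponding strict left side $A\setminus B$, and then reassemble the resulting pieces. Since left-well-linked is stronger than left-tight, \cref{prop:OnePathOrRayExtended} already lets us realise a single path or ray; the new point is the coordination needed to keep the realisations disjoint and, for \cref{itm:DisjPathsExtended}, to recover the \emph{same} sets of start- and endvertices rather than merely enough disjoint paths.

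First I would fix, for every torso edge $uv$ that occurs in $\cP$, one separation $(A,B)\in\sigma$ with $u,v\in A\cap B$ and say that $uv$ is \emph{assigned} to $(A,B)$. For a fixed $(A,B)$, the maximal subpaths of members of $\cP$ consisting entirely of torso edges assigned to $(A,B)$ — the \emph{runs} of $(A,B)$ — are pairwise vertex-disjoint and all contained in the finite set $A\cap B$ (two runs on the same member cannot even be adjacent, by maximality), so there are only finitely many of them, say with endpoint pairs $\{x^{(1)},y^{(1)}\},\dots,\{x^{(p)},y^{(p)}\}$ consisting of $2p$ distinct vertices of $A\cap B$. Applying left-well-linkedness of $(A,B)$ to $X:=\{x^{(t)}\}$ and $Y:=\{y^{(t)}\}$ yields $p$ pairwise disjoint $X$--$Y$ paths through $A\setminus B$, the \emph{realisations} for $(A,B)$. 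Now let $H$ be the union of all realisations, over all separations occurring in $\cP$ (finitely many in case \cref{itm:DisjPathsExtended}, but harmlessly possibly infinitely many in case \cref{itm:DisjRaysExtended}, as each is treated independently), together with all maximal torso-edge-free subpaths of members of $\cP$ (the \emph{$G$-segments}); it is a subgraph of $G$. Here the star property does the bookkeeping: for distinct $(A,B),(A',B')\in\sigma$ one has $A\cap B\subseteq\interior(\sigma)\subseteq B$ and $(A\setminus B)\cap(A'\setminus B')=\emptyset=(A\setminus B)\cap\interior(\sigma)$, so realisations for distinct separations are disjoint except possibly at common endpoints (which are run-endpoints, hence vertices of $\cP$), and each $G$-segment — living in $\interior(\sigma)$ — meets the interior of no realisation. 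Consequently every vertex of $H$ has degree at most $2$, the vertices of degree $1$ are exactly the endpoints of the members of $\cP$, the run-interiors become isolated, and all remaining vertices have degree $2$.

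Hence $H$ is a disjoint union of paths, rays, two-way infinite paths, cycles and isolated vertices. To extract $\cP'$ and, in case \cref{itm:DisjPathsExtended}, to repair the a priori scrambled pairing that left-well-linkedness produces, I would orient $H$: orient each member of $\cP$ away from its startvertex, orient every $G$-segment accordingly, and orient every realisation for $(A,B)$ from its endpoint in $X$ — the endpoint of a run reached first along its member of $\cP$, an \emph{entry} — to its endpoint in $Y$. One checks that this orientation is consistent at every vertex and that in $H$ every vertex has equal in- and out-degree, except the startvertices of $\cP$ (out-degree $1$, in-degree $0$) and, in case \cref{itm:DisjPathsExtended}, the endvertices of $\cP$ (in-degree $1$, out-degree $0$). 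Starting at a startvertex of $\cP$ and repeatedly following the unique out-edge therefore traces a directed ray in case \cref{itm:DisjRaysExtended} (it cannot terminate, as only sources and the isolated run-interiors have out-degree $<1$ and the latter are unreachable), and a directed path ending at some endvertex of $\cP$ in case \cref{itm:DisjPathsExtended}; distinct startvertices yield vertex-disjoint such paths or rays, since a shared vertex would have in-degree $\ge 2$. Writing $S$ and $E$ for the sets of start- and endvertices of $\cP$, this gives in case \cref{itm:DisjPathsExtended} exactly $k$ disjoint $S$--$E$ paths in $G$ whose $k$ distinct startvertices and $k$ distinct endvertices must then be all of $S$ and all of $E$, and in case \cref{itm:DisjRaysExtended} a family of disjoint rays in $G$ with the same set of startvertices as $\cP$. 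Finally, such a ray meets $V(\cP)$ infinitely often: it traverses either infinitely many of the (finite) realisations — hence passes through infinitely many distinct run-endpoints, all in $V(\cP)$ — or else has a tail contained in a single $G$-segment, which is then an infinite tail of a member of $\cP$.

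The main obstacle is precisely that ``left-well-linked'' provides only a Menger-type family of $X$--$Y$ paths, not a prescribed linkage between the $x^{(t)}$ and their run-partners $y^{(t)}$: the realisations may connect the run-endpoints in the wrong pattern, so the naive reassembly need not yield paths from start- to endvertices. The orientation argument above is exactly what neutralises this. The remaining work — verifying the consistency of the orientation, the disjointness bookkeeping and the degree count — is routine once the star-property inclusions are recorded, with trivial members of $\cP$ (if any) simply carried over unchanged and set aside at the outset.
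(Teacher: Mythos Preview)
Your approach is genuinely different from the paper's and, modulo one small technicality below, correct. The paper proceeds by a recursion over an enumeration of the torso edges: at each nontrivial step it considers one separator $A_i\cap B_i$, locates for every current path or ray $P$ only the \emph{first} entry $x_P$ and the \emph{last} exit $y_P$ into that separator, applies left-well-linkedness to $\{x_P\}$ and $\{y_P\}$, and re-splices via $P^*:=Px_P\,Q\,y_{P'}P'$; for~\cref{itm:DisjRaysExtended} it then passes to the $\liminf$ of the resulting families and argues that the maximal $G$-initial segment of each $P^x_i$ grows without bound. You instead perform all surgeries at once---one application of left-well-linkedness per separation, to \emph{all} run-endpoints in that separator---assemble everything into a single graph $H$, and extract $\cP'$ by an Eulerian-type orientation argument. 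The paper's recursion is more hands-on but needs the limit step; your global construction trades this for the in/out-degree bookkeeping, and neutralises the scrambled Menger matching just as cleanly.

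The technicality: by the paper's convention a path ``through $A\setminus B$'' is one whose \emph{internal} vertices lie in $A\setminus B$, so a realisation may well be a single edge $xy$ with both $x,y\in A\cap B$. If that edge coincides with a one-edge $G$-segment of $\cP$---which happens precisely when $y$ is the exit of one $(A,B)$-run on some $P$, $x$ the entry of the next, and $yx\in E(G)$ is the edge of $P$ between them---then in the simple graph $H$ the two contributions at $\{x,y\}$ collapse to a single edge, your two orientation rules conflict on it, and the claim ``the vertices of degree~$1$ are exactly the endpoints of the members of $\cP$'' fails. The fix is painless: view $H$ as a multigraph with one edge-copy per provenance. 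Each offending pair then forms an isolated directed $2$-cycle with no incoming edge from outside, hence is never reached from any startvertex; your walks therefore still project to simple paths or rays in $G$, and the rest of the argument goes through verbatim.
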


\begin{proof}
    We prove \cref{itm:DisjRaysExtended} and \cref{itm:DisjPathsExtended} simultaneously; so let $\cP$ be as in \cref{itm:DisjRaysExtended} or as in \cref{itm:DisjPathsExtended}.
    We define the family~$\cP'$ recursively, going through the countably many torso edges on~$\cP$.
    For this, fix an enumeration~$e_1, e_2, \dots$ of the torso edges in~$\cP$ such that if~$e_i \in P \in \cP$, then all torso edges occurring on~$P$ before~$e_i$ are enumerated as~$e_j$ for some~$j < i$. Further, for every $e_i$, we fix some $(A_i, B_i) \in \sigma$ such that both endvertices of $e_i$ are contained in $A_i \cap B_i$ where we choose $(A_i, B_i) = (A_j, B_j)$ for some $j < i$ if possible. 
    
    We start the recursion with~$\cP_0 := \cP$.
    At step~$i \in \N$, we assume that we have already constructed a family $\cP_{i-1}$ of disjoint paths/rays in $\torsostar(\sigma) \cup \bigcup_{j<i} G[A_j]$ with the same startvertices, and, if~$\cP$ is as in \cref{itm:DisjPathsExtended}, with the same endvertices, as~$\cP$, and without all torso edges whose endvertices are both contained in $A_j \cap B_j$ for some $j<i$.
    We now consider the torso edge~$e_i$.
    If~$e_i$ is not contained in any path/ray in~$\cP_{i-1}$, then we set $\cP_i := \cP_{i-1}$.
    Otherwise, for each~$P \in \cP_{i-1}$, let~$x_P$ be the first vertex on~$P$ such that the subsequent edge on~$P$ is a torso edge with both endvertices in~$A_i \cap B_i$, and let~$y_P$ be the respective last vertex on~$P$ such that its previous edge on~$P$ is a torso edge with both endvertices in~$A_i \cap B_i$.
    Now set~$X_i := \{x_P \mid P \in \cP_i\}$ and~$Y_i := \{y_P \mid P \in \cP_i\}$.
    Since the paths/rays in~$\cP_{i-1}$ are disjoint, $X_i$ and~$Y_i$ are disjoint subsets of~$A_i \cap B_i$ with~$k_i := |X_i| = |Y_i|$.
    We can thus use that~$(A_i, B_{i})$ is left-well-linked to find a family~$\cQ$ of~$k_{i}$ disjoint $X_{i}$--$Y_{i}$~paths in~$G[(A_{i}\setminus B_{i}) \cup X_{i} \cup Y_{i}]$; for~$Q \in \cQ$, write~$x_Q$ for its first and~$y_Q$ for its last vertex.
    
    For each~$P \in \cP_i$, we now define~$P^* := P x_P x_Q Q y_Q y_{P'} P'$ where~$Q$ is the unique path in~$\cQ$ with~$x_Q = x_P$ and~$P'$ is the unique path/ray in~$\cP'$ with~$y_{P'} = y_Q$.
    By construction, the set $\cP_{i} := \{P^* \mid P \in \cP_{i-1}\}$ is a family of disjoint paths/rays in~$\torsostar(\sigma) \cup \bigcup_{j \leq i} G[A_j]$ with the same startvertices, and, if $\cP$ is as in \cref{itm:DisjPathsExtended}, with the same endvertices, as~$\cP_{i-1}$ and thus as~$\cP$.
    Moreover,~$\cP_i$ not only avoids all torso edges $e_j$ with $j <i$ but also all torso edges whose endvertices are both contained in $A_j \cap B_j$ for some $j <i$.
    This completes step~$i$.

    To define~$\cP'$ from the~$\cP_i$, let~$X$ be the set of startvertices of paths/rays in~$\cP$ and let~$Y$ be the set of endvertices of paths in~$\cP$.
    By construction, for each vertex~$x \in X$ there is  a (unique) path/ray in~$\cP_i$ starting in~$x$ and we denote it by~$P^x_i$.
    We let~$P^x$ be the limit $\liminf_{i \in \N} P^x_i$ of the~$P^x_i$, and define~$\cP' := \{ P^x \mid x \in X\}$.
    By construction, all the~$P^x$ are disjoint.
    Note that if $\cP$ is as in \cref{itm:DisjPathsExtended}, then $\cP' = \cP_n$ for some $n \in \N$, since the construction yields $P_i^x = P_j^x$ for all $i, j \geq |E(\cP)|$; thus $\cP'$ is as desired for \cref{itm:DisjPathsExtended}.
    We now prove that if~$\cP$ is as in \cref{itm:DisjRaysExtended}, then all $P^x$ are indeed rays which meet $V(\cP)$ infinitely often.

    If an initial segment of some~$P^x_i$ is contained in~$G$, then it contains no torso edge and it hence remains untouched by the above construction in all steps~$j \ge i$; in other words, this initial segment of~$P^x_i$ in~$G$ is also an initial segment of all~$P^x_j$ with~$j \ge i$.
    Moreover, if~$P^x_i$ still contains some torso edge and we let~$e_j$ be the first such one occurring along~$P^x_i$, then the construction at step~$j$ implies that the maximal initial segment of~$P^x_{j}$ in~$G$ is strictly longer than the one in~$P^x_i$.
    Moreover, it contains a vertex in~$V(\cP)$, the respective~$y_{(P^x_j)'}$ in the above construction step, that has not been contained in the maximal initial segment of~$P^x_i$ in~$G$.

    Now if~$P^x_i$ contains no torso edge at some step~$i$, then it is a ray in~$G$ starting in~$x$, and thus some tail of $P^x_i$ equals the tail of some ray in~$\cP$ by construction; in particular, $P^x_i$ meets $V(\cP)$ infinitely often.
    Otherwise, the length of the initial segment of~$P^x_i$ in~$G$ strictly increases infinitely often, and hence the limit~$P^x$ of the~$P^x_i$ is a ray in~$G$ starting in~$x$ that meets infinitely many vertices of~$V(\cP)$, which is both witnessed by the respective $y_{(P^x_j)'}$ described above.
    Thus, $\cP'$ is as desired if $\cP$ is as in \cref{itm:DisjRaysExtended}. This concludes the proof.
\end{proof}

\section{Linked tree-decompositions into rayless parts} \label{sec:DecomposeIntoRaylessParts}

In this section we prove 
\cref{thm:RaylessThmTechnical}, which we restate here for convenience.

\begin{customthm}{\cref{thm:RaylessThmTechnical}}[Detailed version of~\cref{thm:RaylessThmIntro}] \label{thm:RaylessThmTechnical:copy}
    Let~$G$ be a graph, and let~$\sigma$ be a star of left-well-linked, left-fully-tight, finite-order separations of~$G$ such that $\torsostar(\sigma)$ has finite tree-width. 
    Further, let~$X\subseteq \interior(\sigma)$ be a prescribed finite set of vertices of~$G$. 
    
    Then~$G$ admits a linked $X$-linked, fully tight, rooted \td\ $(T, \cV)$ of finite adhesion 
    such that
    \begin{enumerate}[label=\rm{(R\arabic*)}]
        \item \label{itm:RaylessThmTechnical:rayless} its torsos at non-leaves are rayless and its leaf separations are precisely~$\{(B, A) \mid (A, B) \in \sigma\}$, 
        \item \label{itm:RaylessThmTechnical:EndAdhesionLinked} for all edges $e$ of $T$, the adhesion set $V_e$ is either linked to an end living in $G \up e$ or linked to a set $A \cap B \subseteq G \up e$ with $(A,B) \in \sigma$,
        \item \label{itm:RaylessThmTechnical:IncDis} for every $e <_T e' \in E(T)$ with $|V_e| \leq |V_{e'}|$, each vertex of $V_e \cap V_{e'}$ either dominates some end of~$G$ that lives in~$G \up e'$ or is contained in $A \cap B \subseteq V(G \up e')$ for some $(A,B) \in \sigma$, 
        \item \label{itm:RaylessThmTechnical:DistinctBags}
        for all edges $e = st \in T$ with $s <_T t$ and $s \neq r := \rt(T)$ we have $V_s \supsetneq V_e \subsetneq V_t$. Moreover, if $X \subsetneq\interior(\sigma)$, $G-X$ is connected and $N_G(G-X) = X$, then $X \subsetneq V_r$ and also $V_r \supsetneq V_e \subsetneq V_t$ for all edges $e = rt \in T$, and
        \item \label{itm:RaylessThmTechnical:componental} if $G \strictup e$ is disconnected for some edge $e \in T$, then $e$ is incident with a leaf of $T$.
    \end{enumerate}
\end{customthm}

\noindent We remark that in \cref{thm:RaylessThmTechnical:copy} we explicitly allow the case $\sigma = \emptyset$, where we go with the convention that the interior of the empty star is $V(G)$.

First we reduce \cref{thm:RaylessThmTechnical:copy} to the following statement which differs slightly from \cref{thm:RaylessThmTechnical:copy}.
A separation $(A,B)$ of a graph $G$ is \defn{left-connected} if $G[A \setminus B]$ is connected, and it is \defn{left-end-linked} if  $A \cap B$ is linked to some end $\eps$ of~$G$ with $C(A\cap B, \eps) \subset A$.

\begin{theorem}\label{thm:RaylessThmTechnicalRegions}
    Let~$G$ be a graph of finite tree-width, let~$\sigma$ be a star of left-end-linked left-connected finite-order separations of~$G$, and let~$X\subseteq \interior(\sigma)$ be a finite set of vertices of~$G$. 
    Assume further that for every~$(A, B) \in \sigma$, the graph~$G[A \cap B]$ is complete and~$A \cap B \subseteq \Dom(G[A\setminus B])$.

    Then~$G$ admits a linked, $X$-linked, tight, componental, rooted \td\ $(T, \cV)$ of finite adhesion such that
    \begin{enumerate}[label=\rm{(R\arabic*')}]
        \item \label{itm:RaylessThmTechnicalRegions:rayless} its torsos at non-leaves are rayless, its leaf separations are precisely~$\{(B, A) \mid (A, B) \in \sigma\}$ and no other separation induced by an edge of $T$ is $(B,A)$ for $(A,B) \in \sigma$,
        \item \label{itm:RaylessThmTechnicalRegions:EndLinked} $(T, \cV)$ is end-linked,
        \item \label{itm:RaylessThmTechnicalRegions:IncDis} for every $e <_T e' \in E(T)$ with $|V_e| \leq |V_{e'}|$ each vertex of $V_e \cap V_{e'}$ dominates some end of~$G$ that lives in~$G \up e'$, and
        \item \label{itm:RaylessThmTechnicalRegions:DistinctBags}
        $(T, \cV)$ satisfies \cref{itm:RaylessThmTechnical:DistinctBags} from \cref{thm:RaylessThmTechnical}.
    \end{enumerate}
\end{theorem}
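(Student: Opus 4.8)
The plan is to build the \td\ $(T,\cV)$ by a transfinite recursion on a suitable well-founded "elimination order" of the graph, so that the torsos at non-leaves become rayless essentially by construction. First I would reduce to the case that $G$ is connected: the general case follows by handling each component separately (joining everything to a new root with empty bag), noting that $\sigma$ and $X$ live inside one component since $\interior(\sigma)$ is connected once the few preparatory normalisations below are made. Since $\torsostar(\sigma)$ has finite tree-width, by \cref{thm:FiniteTWyieldsNST} it admits a normal spanning tree $S$; the key point is that $S$ induces on $\torsostar(\sigma)$ a rooted tree structure in which every "slice" $\lceil t\rceil_S$ is finite, every end of the torso gives rise to a normal ray, and every vertex dominating an end lies on that ray. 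The recursion then processes $G$ along $S$: at each node $t$ of $S$ we want to peel off a finite bag and recurse into the components that arise, and we want to route the recursion so that the $(B,A)$ with $(A,B)\in\sigma$ occur exactly as the leaf separations (each strict left side $A\setminus B$ being "hidden" behind the complete separator $A\cap B$, which, by hypothesis, is dominated by the ends living in $A\setminus B$ and hence is linked into those ends).

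Concretely, the main recursive step is: given a connected graph $H$ (a "region" that appears during the recursion), a finite set $Z\subseteq V(H)$ that must go into the root bag and be linked to the root, and the restricted sub-star $\sigma_H\subseteq\sigma$ of separations "pointing into" $H$, I would choose the root bag $V_r$ to be the union of $Z$, a bounded slice of the normal spanning tree of the torso $\torsostar(\sigma_H)$, and whatever finite sets are needed to (i) make $V_r$ meet every separator $A\cap B$ of $\sigma_H$ either not at all or swallow it entirely, (ii) ensure tightness and componental at $r$, (iii) realise the $Z$-linking via the left-well-linkedness of the ambient separations (here one uses \cref{lem:DisjointPathsExtendedByWellLinked}\ref{itm:DisjPathsExtended} to lift finite path families through the torso back into $G$), and (iv) guarantee the strict-inclusion condition \cref{itm:RaylessThmTechnical:DistinctBags}. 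After removing $V_r$, the graph $H-V_r$ falls into components; the components that equal some $A\setminus B$ with $(A,B)\in\sigma_H$ become leaves (with leaf separation $(B,A)$), and to each other component $C$ we recurse with $Z_C:=N_H(C)$ and the induced sub-star. Because the normal-tree slices strictly increase, every rooted ray of the resulting $T$ is "used up", so the torsos at non-leaves are rayless. The linkedness of the whole \td\ is then assembled exactly as in the proof of \cref{maincor:TreeDecompDisplayingInfsTechnical}: for a pair $e\le_T e'$ one splits the path $eTe'$ at the finitely many edges whose adhesion sets are separators of $\sigma$ (these are linked across, essentially by construction of the leaves) and uses \cref{lem:DisjointPathsExtendedByWellLinked} together with the local $Z$-linking on the remaining subpaths; the end-linkedness \cref{itm:RaylessThmTechnicalRegions:EndLinked} comes from choosing, at each edge, an end of the torso (hence, via \cref{prop:OnePathOrRayExtended}, an end of $G$) living above it and linked to the adhesion set, using that normal rays attain the end degree; and \cref{itm:RaylessThmTechnicalRegions:IncDis} follows because a vertex lying in two comparable adhesion sets with the larger above must, by normality, sit on the normal ray of an end living above, hence dominate it.

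I would organise the write-up as: (1) reductions (connected $G$, normalising $\sigma$ so separators are "clean"); (2) the normal-spanning-tree setup for torsos and the resulting slicing; (3) a careful statement of the recursive "region" lemma with all the bookkeeping data $(H,Z,\sigma_H)$ and a verification that each single step preserves the hypotheses while producing a root bag with the four local properties; (4) the limit: the recursion terminates along every branch because the torso slices are strictly nested and exhaust the torso, so torsos at non-leaves are rayless and leaf separations are exactly the prescribed $(B,A)$; (5) globalising tightness, componental, end-linkedness, \cref{itm:RaylessThmTechnicalRegions:IncDis} and \cref{itm:RaylessThmTechnicalRegions:DistinctBags} from their local versions; (6) the linkedness argument as above. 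I expect the main obstacle to be step (3): making a single choice of root bag that simultaneously achieves tightness, componental, the $Z$-linking, the strict inclusions of \cref{itm:RaylessThmTechnical:DistinctBags}, and cleanliness with respect to $\sigma$, while still being finite and while leaving the induced sub-stars in the child regions again left-end-linked and left-connected — in particular, proving that the completeness and domination hypotheses on the $A\cap B$ are inherited correctly (so that leaves really are leaves and their separators really are linked to the ends living behind them) is the delicate part, and it is where the hypotheses "$G[A\cap B]$ complete" and "$A\cap B\subseteq\Dom(G[A\setminus B])$" in the statement are doing the real work.
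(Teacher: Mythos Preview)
Your high-level architecture (build the tree level by level, refining leaves into stars) matches the paper's, but the mechanism you propose for choosing the new bag at each step is the wrong one, and this is a genuine gap rather than just a detail of step~(3).

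You propose to take as root bag a finite normal-tree slice of the torso ``plus whatever finite sets are needed'' for tightness, componental, $Z$-linking, and strict inclusions. But none of these local corrections will give you end-linkedness \ref{itm:RaylessThmTechnicalRegions:EndLinked} or the domination property \ref{itm:RaylessThmTechnicalRegions:IncDis}, and your stated justifications for both are incorrect: normal rays do \emph{not} in general attain the end degree, and a vertex lying in two comparable adhesion sets need not lie on any normal ray above. Normal-tree slices are exactly what produce the \td\ $(T,\cV'_{\mathrm{NT}})$ discussed in the introduction, which the paper explicitly notes is tight and componental but \emph{not} linked. The paper's actual mechanism is completely different: it never chooses the bag directly. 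Instead, \cref{recursion:buildingpackage} chooses the \emph{complementary} regions $C_i$ one at a time, each required to be $\eps$-linked for some end $\eps$ not yet covered, and crucially it chooses them in two phases: first all regions with $|N(C_i)| < |X|$ (Case~A), then regions with $|N(C_i)| \ge |X|$ subject to the extra constraint $X \cap N(C_i) \subseteq \Dom(C_i)$ (Case~B). The bag $Y$ is then $V(H) \setminus \bigcup_i C_i$. This two-phase ``nicest region'' selection is precisely what yields linkedness (via \cref{theorem:propertiesofrecursion:buildingpackage}\ref{property:linkediniterativeapplication}), end-linkedness (the components of $H-Y$ \emph{are} end-linked regions by \ref{property:componentsareC_i}), and the domination condition \ref{itm:RaylessThmTechnicalRegions:IncDis} (via \ref{property:increasinglydisjoint}, which comes straight from the Case~B constraint). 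The normal spanning tree of $G$ is used only to pick a single ``progress vertex'' $x$ at each step, ensuring the limit is a \td; it plays no role in shaping the bags.

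The uncrossing machinery in \cref{subsec:regions} (\cref{lem:theminiuncrossinglemma,lem:theultimateuncrossing}) is also essential and has no counterpart in your plan: it is what guarantees that at every step an end-linked region nested with all previous choices actually exists, so that the recursion cuts off every end (\ref{property:rayless}).
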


\begin{proof}[Proof of \cref{thm:RaylessThmTechnical:copy} given \cref{thm:RaylessThmTechnicalRegions}]
    Let $H$ be the graph obtained from $\torsostar(\sigma)$ by adding, for every separation $(A,B) \in \sigma$ a disjoint ray $R_{A,B}$ as well as an edge from every vertex $v \in A \cap B$ to every vertex of $R_{A,B}$. We aim to apply \cref{thm:RaylessThmTechnicalRegions} to $H$ with $X$ and the star 
    $$\sigma' := \{\big(V(R_{A,B}) \cup (A \cap B), V(H-R_{A,B}) \big) \mid (A,B) \in \sigma\};$$ but to be able to do so we first have to show that $H$ has finite tree-width. 

    By assumption in \cref{thm:RaylessThmTechnical:copy}, $\torsostar(\sigma)$ admits a \td\ $(T^\sigma, \cV^\sigma)$ into finite parts. Any subgraph of $H$ of the form $H[(A \cap B) \cup V(R_{A,B})]$ clearly also admits such a \td\ $(T^{A,B}, \cV^{A,B})$.
    Since $H[A \cap B]$ is complete for all $(A,B) \in \sigma$, it is contained in some part of $(T^\sigma, \cV^\sigma)$ and also it is contained in some part of $(T^{A,B}, \cV^{A,B})$. 
    We then obtain the desired decomposition tree from the disjoint union of the decomposition trees by adding for each $(A,B) \in \sigma$ an edge between the nodes corresponding to the respective parts containing $H[A \cap B]$.
    Keeping the parts yields the desired \td\ of $H$ into finite parts.

    So by construction and the previous argument, $H$, $X$ and $\sigma'$ are as required for \cref{thm:RaylessThmTechnicalRegions}, which then yields a rooted \td\ $(T, \cV')$ of $H$. In particular, this $(T, \cV')$ has precisely $((B,A) \mid (A,B) \in \sigma')$ as its leaf separations. Thus, we obtain a \td\ $(T, \cV)$ of $G$ by letting $V_t := V'_t$ for all non-leaves of $T$ and $V_t := B$ for all leafs of $T$ whose bag is of the form $(A \cap B) \cup V(R_{A,B})$. In particular, its leaf separations are precisely $\{(B,A) \mid (A,B) \in \sigma\}$.

    We claim that the \td\ $(T, \cV)$ of $G$ is as desired.
    We remark that the adhesion sets corresponding to an edge of the decomposition tree are unchanged.
    So, $(T, \cV)$ still has finite adhesion, as $(T, \cV')$ has finite adhesion.
    Also, $(T, \cV)$ is linked and $X$-linked: as the $H[A \cap B]$ are complete, this follows from the ($X$-)linkedness of $(T, \cV')$ and \cref{lem:DisjointPathsExtendedByWellLinked}~\ref{itm:DisjPathsExtended}.
    Further, $(T, \cV)$ is fully tight, since $(T, \cV')$ is fully tight and all separations in $\sigma$ are left-fully-tight.
    Also $(T, \cV)$ satisfies \cref{itm:RaylessThmTechnical:componental}, since $(T, \cV')$ is componental and the separations in $\sigma$ are fully tight and no separation of $H$ induced by an edge of $T$ which is not incident with a leaf is $(B,A)$ for some $(A,B) \in \sigma'$ by \cref{itm:RaylessThmTechnicalRegions:rayless}. 
    By construction of $(T, \cV)$, property \cref{itm:RaylessThmTechnicalRegions:rayless} of $(T, \cV')$ immediately implies that $(T, \cV)$ satisfies \cref{itm:RaylessThmTechnical:rayless}.
    Additionally, for all edges~$e$ of~$T$, the adhesion set $V'_e$ is linked to an end of $H$ by \cref{itm:RaylessThmTechnicalRegions:EndLinked}. If that end corresponds to some ray $R_{A,B}$, then $V_e = V'_e \subseteq \interior(\sigma)$ is linked to $A \cap B \subseteq V(G \up e)$ since $A \cap B$ separates $\interior(\sigma)$ from~$R_{A,B}$ and because of \cref{lem:DisjointPathsExtendedByWellLinked}~\ref{itm:DisjPathsExtended} as the $H[A \cap B]$ are complete. 
    Otherwise, \cref{lem:DisjointPathsExtendedByWellLinked}~\cref{itm:DisjRaysExtended} and~\cref{lem:InfManyDisjPathExtended} ensure that~$V_e$ is linked to an end of $G$ that lives in $G \up e$.  Indeed, let $\cR$ be a family of $|V_e|$ equivalent, disjoint rays in $\torsostar(\sigma)$ that start in $V_e$ and that witness that $V_e$ is end-linked. Then \cref{lem:DisjointPathsExtendedByWellLinked}~\ref{itm:DisjRaysExtended} yields a family $\cR'$ of $|V_e|$ disjoint rays in $G\up e$ that start in $V_e$ and that each meet~$V(\cR)$ infinitely often. In particular, since $\cR$ is finite, for every $R' \in \cR'$ there is $R \in \cR$ such that~$R'$ meets $V(R)$ infinitely often. To see that the rays in $\cR'$ are equivalent, let $R'_0, R'_1 \in \cR'$ be given. Since $R_0$ and $R_1$ are equivalent in $\torsostar(\sigma)$, the infinite sets $V(R'_0) \cap V(R_0)$ and $V(R'_1) \cap V(R_1)$ cannot be separated by finitely many vertices. Hence, we may greedily pick infinitely many disjoint $V(R'_0) \cap V(R_0)$--$V(R'_1) \cap V(R_1)$ paths in $\torsostar(\sigma)$. Since these paths are in fact $R'_0$--$R'_1$ paths in $\torsostar(\sigma)$, \cref{lem:InfManyDisjPathExtended} yields that there are infinitely many disjoint $R'_0$--$R'_1$ paths in $G$, which concludes the proof that $V_e$ is end-linked in $G$, and that $(T, \cV)$ satisfies \cref{itm:RaylessThmTechnical:EndAdhesionLinked}.
    
    Also $(T, \cV)$ satisfies \cref{itm:RaylessThmTechnical:IncDis} because $(T, \cV')$ satisfies \cref{itm:RaylessThmTechnicalRegions:IncDis}.
    Finally, $(T, \cV)$ satisfies \cref{itm:RaylessThmTechnical:DistinctBags} since~$(T, \cV')$ satisfies \cref{itm:RaylessThmTechnicalRegions:DistinctBags} too.
\end{proof}

Let us briefly sketch the proof of \cref{thm:RaylessThmTechnicalRegions}.
We will construct the \td~inductively. We start with the trivial \td~whose decomposition tree is a single vertex whose bag is the whole vertex set of $G$. 
In the induction step, we assume that we have already constructed a linked, $X$-linked, tight, componental, rooted \td~$(T^n, \cV^n)$ of $G$ of finite adhesion which satisfies \cref{itm:RaylessThmTechnicalRegions:EndLinked}, \cref{itm:RaylessThmTechnicalRegions:IncDis} but it may not satisfy \cref{itm:RaylessThmTechnicalRegions:rayless}.
Instead its torsos at non-leaves are rayless and $A$ is contained in a bag at a leaf of $T^n$ for every $(A,B) \in \sigma$. 
Then, for every leaf $\ell$ of $T^n$ whose (unique) incident edge $e$ does not induce a separation in $\sigma$, we define a set $Y \subseteq V(G \up e)$ such that the adhesion set $V^n_e$ is contained in $Y$.
We then replace the bag $V^n_\ell$ with $Y$ and add for each component $C$ of $(G \up e) - Y$ a new leaf to $\ell$ and associate the bag $V(C) \cup N(C)$ to it. 
By carefully choosing these sets $Y$ for each such leaf we will ensure that the arising \td~$(T^{n+1}, \cV^{n+1})$ again satisfies all the properties as assumed for $(T^n, \cV^n)$. 
We then show that the pair $(T, \cV)$ which arises as the limit for $n \rightarrow \infty$ is indeed a \td~of~$G$ and that it satisfies all the desired properties.

This section is organised as follows. First, we describe in \cref{subsec:BuildingParts} the \cref{recursion:buildingpackage} that will construct the bags of the desired \td\ for \cref{thm:RaylessThmTechnicalRegions}, that is, the sets $Y$ mentioned above.  This algorithm is simple to state, but we require some tools to properly analyse it.
In \cref{subsec:regions} we build a set of tools centred around `regions', which we then use in \cref{subsec:AnalysisOfRecursion} to prove some properties of \cref{recursion:buildingpackage} that will later on ensure that the resulting \td~is as desired. 
Finally, in \cref{subsec:ProofOfRaylessThm}, we follow the above described approach to construct a \td\ by inductively applying \cref{recursion:buildingpackage} and then prove with the help of the main result from \cref{subsec:AnalysisOfRecursion} that this \td\ is as desired.

\subsection{Building the bags of the \td} \label{subsec:BuildingParts}
In this section we describe a transfinite recursion, \cref{recursion:buildingpackage}, that will construct the bags of the \td~for \cref{thm:RaylessThmTechnicalRegions}. For this, we need the following definitions.

Let $G$ be a graph.
A \defn{region} $C$ of $G$ is a connected subgraph of $G$. 
By $\Bar{C}$ we denote the \defn{closure} $G[V(C) \cup N_G(C)]$ of $C$.
A \defn{$k$-region} for $k \in \N$ is a region $C$ whose neighbourhood $N(C)$ has size~$k$.
A \defn{$\lk$-} or \defn{$\lek$-region} for $k \in \N$ is a $k'$-region $C$ for some $k' < k$ or $k' \leq k$, respectively.
Similarly, an \defn{$\lA$-region} is a $k$-region for some $k \in \N$.
Two regions $C$ and $D$ of $G$ \defn{touch}, if they have a vertex in common or $G$ contains an edge between them.
Two regions $C$ and $D$ of a graph~$G$ are \defn{nested} if they do not touch, or $C \subseteq D$, or $D \subseteq C$.
Note that the set of all regions~$G \strictup e$ given by a rooted componental \td\ of a graph~$G$ is nested.

A region $C$ is \defn{$\eps$-linked} for an end $\eps$ of $G$ if $\eps$ lives in $C$ and the neighbourhood of~$C$ is linked to~$\eps$.
A region~$C$ is \defn{end-linked} if it is $\eps$-linked for some end $\eps$ of $G$.
We emphasise that a region $C$ is $\eps$-linked if $N(C)$ is linked to the end $\eps$ (and not $V(C)$). 
To distinguish both cases, we say a region is $\eps$-linked and a set of vertices is linked to $\eps$.

\begin{algo}\label{recursion:buildingpackage}
    (Construction of a bag)
    
    \textbf{Input:} a connected graph $H$; a finite set~$X$ of $k \in \N$ vertices of $H$; a set $\cD$ of pairwise non-touching end-linked $\lA$-regions $D$ that are disjoint from~$X$ and satisfy $X \cap N_H(D) \subseteq \Dom(D)$; a vertex $x \in V(G)\setminus X$ that lies in no $D \in \cD$. 

    \textbf{Output:} a transfinite sequence $C_0, C_1, \dots, C_i, \dots$, indexed by some ordinal $< |H|^+$, of distinct end-linked $\lA$-regions which are disjoint from $X$ and pairwise nested and which are also nested with all $D \in \cD$; a set $Y := V(H) \setminus (\bigcup_{i} C_i)$
    
    \textbf{Recursion:} Iterate the following step:
    \begin{enumerate}[labelindent=3.9em, label=\textbf{Case \Alph*:}, ref=Case~\Alph*, leftmargin=!] 
        \item \label{RegionsCaseA} If there is a $\lk$-region $C_i$ of $H$ that is disjoint from $X$, is $\eps$-linked for some end $\eps$ which lives in no~$C_j$ for $j<i$, nested with the $C_j$ for $j < i$ and with all $D \in \cD$, then choose a \defn{nicest} such region $C_i$.
        Here, nicest\footnote{Note that there might be several nicest regions.} means that
        \begin{enumerate}[labelindent=1.5em, label=(N\arabic*), leftmargin=!]
            \item \label{nicest:ell_iminimum} $C_i$ is such an $\ell_i$-region where $\ell_i \in \N$ is minimum among such regions, and
            \item \label{nicest:inclusion-wisemaximal} $C_i$ is an inclusion-wise maximal such region subject to \cref{nicest:ell_iminimum}.
        \end{enumerate}
    
    \item \label{RegionsCaseB} If there is no region as in \ref{RegionsCaseA}, but there is a $\lA$-region $C_i$ that is disjoint from $X':=X \cup \{x\}$, is $\eps$-linked for some end $\eps$ which lives in no $C_j$ for $j < i$, nested with the $C_j$ for $j < i$ and with all $D \in \cD$ such that $X' \cap N_H(C_i) \subseteq \Dom(C_i)$, then choose a nicest such region $C_i$.

    \item \label{RegionsCaseC} If there is no region as in \ref{RegionsCaseA} or \ref{RegionsCaseB}, then terminate the recursion.
    \end{enumerate}
\end{algo}

Recall that we will construct the \td~for \cref{thm:RaylessThmTechnicalRegions} recursively, where in each step we replace the leaves of the previous \td~with stars whose torsos at the centre vertices are rayless. So we can think of the graph~$H$ in \cref{recursion:buildingpackage} as the part $G \up \ell$ above a leaf $\ell$ of the decomposition tree $T^n$ constructed so far and the set~$X$ as the adhesion set $V_e$ that corresponds to the unique edge $e$ incident with that leaf in $T^n$. The set~$Y$ which \cref{recursion:buildingpackage} outputs will then be the bag at the centre of the newly added star, and the parts at the new leaves will be the components of $H - Y$ (which will be the $\supseteq$-maximal elements of the~$C_i$, see \cref{theorem:propertiesofrecursion:buildingpackage} below) together with their boundaries. 

Let us briefly give some intuition on why the set $Y$ is a good candidate for the new bag.
\cref{thm:RaylessThmTechnicalRegions} requires the torsos of the \td\ to be rayless, so $Y$ should not contain any end of $H$: for this, \cref{recursion:buildingpackage} iteratively cuts off all ends of $H$ by choosing $\lA$-regions~$C_i$ around them.
Additionally, we have to make sure that in the limit of our construction we do  end up with a \td\, which in particular must  satisfy \cref{prop:TD1}.
The specified vertex~$x$ will ensure that we make the appropriate progress when defining the \td s.
In fact, in the final construction of the \td~we will carefully specify the vertex~$x$ in order to ensure that in the end every vertex will lie in some bag of the pair $(T, \cV)$ arising as the limit for $n \rightarrow \infty$.
But this is not the only point where we need to be careful. Since the \td\ should be linked, we cannot just choose the regions $C_i$ arbitrarily; instead, we need to choose those first whose neighbourhood has size less than $|X|$. This is encoded in \cref{RegionsCaseA}, while \cref{RegionsCaseB} then will cut off all the remaining ends of $G$.
We will justify these intuitions in \cref{theorem:propertiesofrecursion:buildingpackage}.

We begin by argueing that \cref{recursion:buildingpackage} is well-defined: Whenever there exists a region in \cref{RegionsCaseA} or \cref{RegionsCaseB}, then there obviously exists such a region as in \cref{RegionsCaseA} or \cref{RegionsCaseB} satisfying \cref{nicest:ell_iminimum}, and then Zorn's Lemma and the following lemma (specifically: item \ref{prop:limitoflekregionsislek}) ensure that there is also such a region satisfying \cref{nicest:inclusion-wisemaximal}.

\begin{lemma}
\label{lem:limitofregionissuchregion}
    Let $H$ be a graph. Let $\cC$ be a chain of regions of $H$.
    Then $C = \bigcup \cC$ is a region, and $\cC' := \{V(H-C) \cap N_H(C') \mid C' \in \cC\}$ is a chain with $N_H(C) = \bigcup \cC'$.

    Moreover, let $X \subseteq V(H)$.
    Then the following statements hold:
    \begin{enumerate}
        \item \label{prop:disjointfromX} If all $C' \in \cC$ are disjoint from $X$, then $C$ is disjoint from $X$.
        \item \label{prop:limitoflekregionsislek} If, for some $k \in \N$, every $C' \in \cC$ is a $\lek$-region, then $C$ is a $\lek$-region. 
        \item \label{prop:boundaryandXshareonlydominating} If, for every $C' \in \cC$, $X \cap N_H(C') \subseteq \Dom (C')$, then $X \cap N_H(C) \subseteq \Dom (C)$.
        \item \label{prop:regionendlinked} If $N_H(C)$ is finite and every $C' \in \cC$ is end-linked, then $C$ is end-linked.
        \item \label{prop:limitisstronglynested} If every $C' \in \cC$ is nested with a region $D$, then also $C$ is nested with every $D$. 
    \end{enumerate}
\end{lemma}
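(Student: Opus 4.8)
The plan is to establish the two unnumbered assertions first, from which items~\cref{prop:disjointfromX}--\cref{prop:limitisstronglynested} follow by routine arguments. That $C := \bigcup\cC$ is a region is clear: any two vertices of $C$ lie in a common member of the chain $\cC$, which is connected, so $C$ is connected. That $\cC'$ is a chain follows because $C_1 \subseteq C_2$ in $\cC$ forces $V(H-C)\cap N_H(C_1) \subseteq V(H-C)\cap N_H(C_2)$ (a vertex outside $C$ with a neighbour in $C_1$ has a neighbour in $C_2$). And $N_H(C) = \bigcup\cC'$ is a short vertex chase: a neighbour of some $C' \subseteq C$ lying outside $C$ is a neighbour of $C$; conversely a vertex $v \in N_H(C)$ has a neighbour $u \in V(C)$, and choosing $C' \in \cC$ with $u \in V(C')$ exhibits $v$ in $V(H-C)\cap N_H(C')$.

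Item~\cref{prop:disjointfromX} is immediate from $V(C) = \bigcup_{C'\in\cC} V(C')$. For~\cref{prop:limitoflekregionsislek}, every member of the chain $\cC'$ is contained in some $N_H(C')$ and so has at most $k$ elements; if $N_H(C) = \bigcup\cC'$ had $k+1$ vertices, these would lie in finitely many members of $\cC'$, whose $\subseteq$-largest would then have more than $k$ elements, a contradiction. The same ``a finite subchain has a largest member'' observation shows that when $N_H(C)$ is finite there is a \emph{single} $C' \in \cC$ with $N_H(C) \subseteq N_H(C')$; I will use this for~\cref{prop:regionendlinked}.

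Items~\cref{prop:boundaryandXshareonlydominating} and~\cref{prop:regionendlinked} both rest on monotonicity: if $C' \subseteq C$ then any end living in $C'$ (its rays have tails in $C' \subseteq C$) also lives in $C$, and if in addition $N_H(C) \subseteq N_H(C')$ and $N_H(C')$ is linked to an end $\eps$, then so is $N_H(C)$ — just restrict the linking family of disjoint $N_H(C')$--$\eps$ paths and rays to those starting in $N_H(C)$, which are still pairwise disjoint and still $N_H(C)$--$\eps$ paths and rays since $N_H(C) \subseteq N_H(C')$. Then~\cref{prop:boundaryandXshareonlydominating} follows: for $v \in X \cap N_H(C)$ pick, via $N_H(C) = \bigcup\cC'$, a $C'$ with $v \in X \cap N_H(C') \subseteq \Dom(C')$, so $v$ dominates an end living in $C' \subseteq C$. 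And~\cref{prop:regionendlinked} follows by taking the single $C' \in \cC$ with $N_H(C) \subseteq N_H(C')$ furnished by finiteness of $N_H(C)$: since $C'$ is $\eps$-linked for some end $\eps$, monotonicity makes $C$ $\eps$-linked, hence end-linked.

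For~\cref{prop:limitisstronglynested}, fix a region $D$. If $D \subseteq C'$ for some $C' \in \cC$, then $D \subseteq C$ and we are done; otherwise every $C' \in \cC$ either does not touch $D$ or satisfies $C' \subseteq D$. A nonempty member contained in $D$ and a nonempty member not touching $D$ cannot coexist: being in a chain they would be comparable, and either containment makes the non-touching member share a vertex with $D$, a contradiction. Hence either all nonempty members lie in $D$, giving $C \subseteq D$, or all nonempty members are disjoint from and non-adjacent to $D$, so no common vertex and no $G$-edge can arise between $V(C) = \bigcup V(C')$ and $V(D)$ and $C$ does not touch $D$; in both cases $C$ is nested with $D$. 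I do not expect a serious obstacle anywhere; the only points needing care are keeping the definition of ``touch'' (a common vertex or a $G$-edge, not an edge of the subgraph $C$) straight in~\cref{prop:limitisstronglynested}, and noticing that~\cref{prop:regionendlinked} genuinely uses finiteness to collapse $\cC'$ to a single member, whereas no such collapse is needed in~\cref{prop:boundaryandXshareonlydominating}.
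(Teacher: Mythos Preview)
Your proof is correct and follows essentially the same approach as the paper's: establish connectedness of $C$, the chain structure of $\cC'$, and the identity $N_H(C)=\bigcup\cC'$, then derive each item by routine monotonicity arguments. If anything, you spell out items~\cref{prop:boundaryandXshareonlydominating} and~\cref{prop:regionendlinked} more explicitly than the paper does (the paper simply asserts that the chain property ``ensures that \cref{prop:limitoflekregionsislek} and \cref{prop:boundaryandXshareonlydominating} hold'' and that ``the end-linkedness of $C'$ yields the end-linkedness of $C$'').
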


\begin{proof}
    Since $\cC$ is a chain and all the $C' \in \cC$ are connected, $C$ is connected.
    By definition of neighbourhood, $N_H(C) = V(H - C) \cap \bigcup_{C' \in \cC} N_H(C')$ and, since $\cC$ is a chain, the set $\cC' = \{V(H-C) \cap N_H(C') \mid C' \in \cC\}$ forms a chain.
    This ensures that \cref{prop:limitoflekregionsislek} and \cref{prop:boundaryandXshareonlydominating} hold. Next, \cref{prop:disjointfromX} follows immediately from the definition of $C$ as union of the $C' \in \cC$.
    
    \cref{prop:regionendlinked}:
    If the neighbourhood of $C$ is finite, the fact that $\cC'$ is a chain with $\bigcup \cC' = N_H(C)$ yields that there is a~$C' \in \cC$ such that $N_H(C') \supseteq N_H(C)$.
    Now, since $C' \subseteq C$, the end-linkedness of $C'$ yields the end-linkedness of~$C$.

    \cref{prop:limitisstronglynested}:
    Since every $C' \in \cC$ is nested with $D$, the regions $C'$ and $D$ either do not touch or one is contained in the other.
    If $D$ is contained in some $C' \in \cC$, then $D$ is also contained in $C \supseteq C'$, as desired.
    So we may assume that $D$ is contained in no $C' \in \cC$.
    Note that whenever $C'_0 \in \cC$ is contained in $D$ or does not touch~$D$, every $C' \in \cC$ with $C' \subseteq C'_0$ is contained in $D$ or does not touch $D$, respectively.
    Thus, either all~$C' \in \cC$ are contained in $D$ or they all do not touch $D$.
    This yields that their union $C$ either is contained in~$D$ or does not touch $D$, respectively.
\end{proof}

\subsection{Regions} \label{subsec:regions}

In this section we collect some statements about regions which we then use in \cref{subsec:AnalysisOfRecursion} to analyse the regions $C_i$ and the set $Y$ which \cref{recursion:buildingpackage} outputs. 
Recall that we want \cref{recursion:buildingpackage} to cut off all ends of $H$ in that every end of $H$ lives in some $C_i$. In order to prove that \cref{recursion:buildingpackage} actually achieves this (at least in the case where every end of $H$ has countable combined degree), we will work in this section towards \cref{lem:theminiuncrossinglemma,lem:theultimateuncrossing}, which will later on ensure that if at step $i$ of the recursion in \cref{recursion:buildingpackage} there is still an end of $H$ that does not live in any $C_j$ for $j < i$, then there is a region that is a `candidate for $C_i$' in \cref{RegionsCaseA} or \cref{RegionsCaseB}. Then \cref{recursion:buildingpackage} will not terminate as long as there is still some such uncovered end of $G$ that lives in no $C_j$.

We say that a region $C$ is a \defn{candidate for $C_i$} if $C_i$ was chosen in \cref{RegionsCaseA} or \cref{RegionsCaseB} and $C$ satisfied all properties of the regions in \cref{RegionsCaseA} or \cref{RegionsCaseB} at step $i$, respectively, except that $C$ may have not been a nicest such region.
We start by showing for that every end of $H$ there is a region $C_i$ as in in \cref{RegionsCaseA} (\cref{lem:existenceofregion}~\cref{existenceoflessthank}) or \cref{RegionsCaseB} (\cref{lem:existenceofregion}~\cref{existenceofstrictlyincreasing}) -- except that it might not be nested with $\cD$ and with the previously chosen $C_j$ for $j < i$.

\begin{lemma}
\label{lem:existenceofregion}
    Let~$H$ be a graph, let~$X \subseteq V(H)$ be finite, and let $\epsilon$ be an end of~$H$ with countable combined degree.
    Then the following statements hold:
    \begin{enumerate}
        \item \label{existenceoflessthank} For every $\size$-minimal $X$--$\eps$ separator $S$ the component of $H-S$ in which $\eps$ lives is $\eps$-linked and disjoint from $X$.
        \item \label{existenceofstrictlyincreasing} There is an $\eps$-linked $\lA$-region $C$ disjoint from $X$ which satisfies $X \cap N_H(C) \subseteq \Dom(\eps)$.
    \end{enumerate}
\end{lemma}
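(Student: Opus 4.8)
The plan is to derive \cref{existenceoflessthank} by combining two elementary facts about components and neighbourhoods with the infinite Menger theorem for ends, and then to deduce \cref{existenceofstrictlyincreasing} from \cref{existenceoflessthank} by a short ``pushing away'' argument.

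For \cref{existenceoflessthank}, given a $\size$-minimal $X$--$\eps$ separator $S$ I would set $C := C_H(S,\eps)$. That $C$ is disjoint from $X$ and that $\eps$ lives in $C$ are immediate from the definitions, so the only content is that $N_H(C)$ is linked to $\eps$. First I would check that $N_H(C) = S$: the inclusion $N_H(C) \subseteq S$ holds as $C$ is a component of $H - S$, and if some $s \in S \setminus N_H(C)$ existed then $C$ would remain a component of $H - (S \setminus \{s\})$, so $C_H(S \setminus \{s\},\eps) = C$ is still disjoint from $X$ and $S \setminus \{s\}$ is a strictly smaller $X$--$\eps$ separator, contradicting $\size$-minimality. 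Next I would show that no vertex set $T$ with $|T| < |S|$ separates $S$ from $\eps$: were $C_H(T,\eps)$ disjoint from $S$, then, as $\eps$ lives in both $C_H(T,\eps)$ and $C$ these connected subgraphs would meet, and being connected and disjoint from $N_H(C) = S$ the subgraph $C_H(T,\eps)$ would be contained in $C$, hence disjoint from $X$, so $T$ would be an $X$--$\eps$ separator smaller than $S$ --- a contradiction. Since $S$ thus cannot be separated from $\eps$ by fewer than $|S|$ vertices, the Menger-type theorem for ends (linking a finite vertex set to an end by as many pairwise disjoint $X$--$\eps$ paths and rays as the minimum order of an $X$--$\eps$ separator) provides $|S|$ pairwise disjoint $S$--$\eps$ paths and rays; since there are $|S|$ of them and they are pairwise disjoint, each begins at a distinct vertex of $S$, and so they witness that $S = N_H(C)$ is linked to $\eps$. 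Hence $C$ is $\eps$-linked, as required.

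For \cref{existenceofstrictlyincreasing}, I would put $W := X \setminus \Dom(\eps)$, a finite set. For each $w \in W$, since $w$ does not dominate $\eps$ there is a finite $Y_w \subseteq V(H) \setminus \{w\}$ with $w \notin C_H(Y_w,\eps)$; as $w \notin Y_w$, the vertex $w$ then lies in a component of $H - Y_w$ distinct from $C_H(Y_w,\eps)$ and hence has no neighbour in $C_H(Y_w,\eps)$. Setting $Z := X \cup \bigcup_{w \in W} Y_w$ (a finite set) and $C := C_H(Z,\eps)$, we get $C \subseteq C_H(Y_w,\eps)$ for every $w \in W$, so no vertex of $W$ has a neighbour in $C$, while $C$ is disjoint from $Z \supseteq X$; consequently $N_H(C) \cap X \subseteq X \setminus W \subseteq \Dom(\eps)$. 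Now I would apply \cref{existenceoflessthank} with the finite set $X' := X \cup N_H(C)$ in place of $X$, obtaining a $\size$-minimal $X'$--$\eps$ separator $S$ for which $C' := C_H(S,\eps)$ is an $\eps$-linked $\lA$-region disjoint from $X' \supseteq X$. Since $\eps$ lives in both $C'$ and $C$ whereas $C'$ is disjoint from $N_H(C) \subseteq X'$, the connectivity argument from the previous paragraph forces $C' \subseteq C$, and therefore $N_H(C') \subseteq V(C) \cup N_H(C)$; hence $N_H(C') \cap X \subseteq (V(C) \cap X) \cup (N_H(C) \cap X) = N_H(C) \cap X \subseteq \Dom(\eps)$, so $C'$ is as desired.

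The main obstacle is the Menger-type theorem for ends invoked at the end of the argument for \cref{existenceoflessthank} --- that a finite set $X$ can be linked to an end $\eps$ by as many pairwise disjoint $X$--$\eps$ paths and rays as the least order of a finite $X$--$\eps$ separator. This is exactly where the hypothesis that $\eps$ has countable combined degree is needed: it makes the standard limiting argument work (building a defining sequence of separators for $\eps$ and applying finite Menger along it, while handling the finitely many relevant dominating vertices using Halin's theorem that $\deg(\eps)$ is attained). Everything else in the proof is routine bookkeeping about components, neighbourhoods and nestedness of the regions $C_H(\cdot,\eps)$.
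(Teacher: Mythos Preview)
Your proposal is correct and takes essentially the same approach as the paper: for \cref{existenceoflessthank} the paper writes out the defining-sequence-plus-finite-Menger argument inline (citing \cite{enddefiningsequences}*{Lemma~5.1} for the defining sequence and then interleaving $\size$-minimal separators) where you invoke the same machinery as a Menger-for-ends black box, and for \cref{existenceofstrictlyincreasing} both proofs first obtain an auxiliary region whose boundary meets $X$ only in $\Dom(\eps)$ and then apply \cref{existenceoflessthank} inside it. Your hand-built construction of that auxiliary region (pushing past each non-dominating $w\in X$) is slightly more self-contained than the paper's second appeal to \cite{enddefiningsequences}*{Lemma~5.1}, but not materially different.
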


\begin{proof}
    \cref{existenceoflessthank}:
    Since $S$ separates $X$ and $\eps$, the component $C(S, \eps)$ is disjoint from $X$.
    It remains to show that~$C$ is $\eps$-linked. 
    Because $\eps$ has countable combined degree, \cite{enddefiningsequences}*{Lemma~5.1} yields an \defn{$\eps$-defining} sequence $(S_n)_{n \in \N}$, that is a sequence of finite sets $S_n \subseteq V(G)$ such that $C(S_n, \eps) \supseteq C(S_{n+1}, \eps)$, $S_n \cap S_{n+1} \subseteq \Dom(\eps)$ and $\bigcap_{n \in \N} C(S_n, \eps) = \emptyset$.
    It suffices to find an $\eps$-defining sequence $(S'_n)_{n \in \N}$ with $S'_0 = S$ such that there are~$|S'_n|$ pairwise disjoint $S'_n$--$S'_{n+1}$ paths for every $n \in \N$.
    
    So set $S'_0:= S$.
    Assume that we have constructed $S'_0, \dots, S'_n$ for some $n \in \N$.
    Since the $C(S_n, \eps)$ are $\subseteq$-decreasing and their intersection is empty, we may choose $N \in \N$ sufficiently large such that $S'_n \cap C(S_N, \eps) = \emptyset$.
    Then we choose $S'_{n+1}$ as a $\size$-minimal $S_{N+1}$--$\eps$ separator;
    in particular, $C(S'_n, \eps) \supseteq C(S_N, \eps) \supseteq C(S_{N+1}, \eps) \supseteq C(S'_{n+1}, \eps)$, and thus $S'_n \cap S'_{n+1} \subseteq S_N \cap S_{N+1} \subseteq \Dom(\eps)$.
    Note that every $S'_n$--$S'_{n+1}$ separator would have been a suitable choice for $S'_n$ and thus has size at least $|S'_n|$.
    By Menger's Theorem (see for example \cite{bibel}*{Proposition~8.4.1}), the desired paths exist.
    
    \cref{existenceofstrictlyincreasing}:
    \cite{enddefiningsequences}*{Lemma~5.1} yields a region $C'$ disjoint from $X$ in which~$\eps$ lives and whose finite neighbourhood~$N(C')$ shares with $X$ only vertices in $\Dom(\eps)$.
    Now applying \cref{existenceoflessthank} to a $\size$-minimal $N(C')$--$\eps$ separator yields the desired $\eps$-linked region $C:=C(S,\eps) \subseteq C'$.
\end{proof}

In order to get a candidate for $C_i$ we will use the region from \cref{lem:existenceofregion} to obtain one which is additionally nested with~$\cD$ and with the $C_j$ for $j < i$.
For this, we first need one auxiliary lemma.

A region~$C$ of a graph $G$ is \defn{well linked} if, for every two disjoint finite $X,Y \subseteq N_G(C)$, there is a family of $\min\{|X|,|Y|\}$ pairwise disjoint $X$--$Y$ paths in $G$ through $C$ (i.e.~all internal vertices contained in $C$).

\begin{lemma}\label{lem:endlinkedimplieswelllinked}
    Let $H$ be a graph and let $C$ be an end-linked region of $H$. 
    Then $C$ is well linked. 
\end{lemma}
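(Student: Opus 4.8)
The plan is to derive well-linkedness of $C$ from its $\eps$-linkedness by routing paths towards, and then around, the end $\eps$ inside $C$. Fix an end $\eps$ of $H$ for which $C$ is $\eps$-linked, and fix a family $\mathcal R = \{R_v \mid v \in N_H(C)\}$ of pairwise disjoint $N_H(C)$--$\eps$ paths and rays with $v \in R_v$ witnessing that $N_H(C)$ is linked to $\eps$. Let disjoint finite $X, Y \subseteq N_H(C)$ be given; put $k := \min\{|X|,|Y|\}$ and assume $|X| = k$ without loss of generality. We must exhibit $k$ pairwise disjoint $X$--$Y$ paths through $C$.

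First I would show that the linking family may be taken \emph{through $C$}. For $v \in N_H(C)$, let $b_v$ be the last vertex of $R_v$ lying in $N_H(C)$; this is well defined since $\eps$ lives in $C$, so only an initial segment of $R_v$ can meet $V(H) \setminus V(C)$. As there is no $H$-edge between $V(C)$ and $V(H) \setminus (V(C) \cup N_H(C))$, the part $P_v$ of $R_v$ from $b_v$ onwards has all of its vertices except $b_v$ inside $V(C)$, and if $P_v$ is a path then it ends in $\Dom(\eps) \cap V(C)$. Since $v \mapsto b_v$ is an injection of $N_H(C)$ into itself, reindexing gives a family of pairwise disjoint $N_H(C)$--$\eps$ paths and rays through $C$; when $N_H(C)$ is finite, as in all of our applications, this injection is a bijection, so the new family is again indexed by $N_H(C)$. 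Restricting it to $X$ and to $Y$ yields mutually disjoint families $\{P_x \mid x \in X\}$ and $\{P_y \mid y \in Y\}$ of $k$, respectively at least $k$, such paths and rays; in particular $\Delta(\eps) \ge |X| + |Y| \ge 2k$.

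It then remains to join $k$ of the $P_y$ to the $P_x$ while staying inside $C$. I would do this recursively: having built pairwise disjoint $X$--$Y$ paths $Q_1, \dots, Q_{i-1}$ through $C$, pick a finite set $S \subseteq V(C)$ containing $Q_1 \cup \dots \cup Q_{i-1}$ and the finitely many dominating vertices ending the $P_v$, so that (using an $\eps$-defining sequence, which exists since every end of a finite-tree-width graph has countable combined degree) $C_H(S,\eps)$ is a connected subgraph of $C$ that is disjoint from $Q_1, \dots, Q_{i-1}$ and reached by both $P_{x_i}$ and $P_{y_i}$; a $P_{x_i}$--$P_{y_i}$ path inside $C_H(S,\eps)$ then closes up to the desired $Q_i$. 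The main obstacle is making the $Q_i$ pairwise disjoint: the connecting path chosen for $Q_i$ has to avoid the other $P_{x_j}, P_{y_j}$, which meet $C_H(S,\eps)$ in infinite tails, so a naive greedy choice can fail. To get around this I would exploit that the $P_v$ supply $|X| + |Y| \ge 2k$ mutually disjoint approaches to $\eps$ inside $C$, whence $\eps$ has combined degree at least $2k$ as an end of $C$; by Halin's structure theorem for ends, $\eps$ then contains a subgraph formed by enough disjoint rays together with infinitely many disjoint paths each crossing all of them (with dominating vertices supplying infinite fans in place of rails), and routing each $P_{x_i}$ and $P_{y_i}$ onto a distinct rail of this structure allows all $k$ of the $Q_i$ to be chosen simultaneously and disjointly through $C$. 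Finally one records the bookkeeping: the $Q_i$ start in $X$, end in $Y$, are pairwise disjoint and have all internal vertices in $V(C)$, so they witness that $C$ is well linked.
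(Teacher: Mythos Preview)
Your construction has a genuine gap at precisely the point you flag yourself. After restricting the linking family to $\{P_x : x \in X\}$ and $\{P_y : y \in Y\}$ you need $k$ pairwise disjoint $X$--$Y$ paths through $C$, and you propose to build them greedily, patching the disjointness problem by invoking ``Halin's structure theorem for ends''. But there is no such theorem giving, for an end of finite combined degree $\ge 2k$, a subgraph consisting of $2k$ disjoint rays together with infinitely many disjoint transversals crossing all of them; Halin's grid theorem only produces such a structure for ends of \emph{infinite} degree. For ends of finite degree the rays may be arranged so that no single path meets all of them, and even after permitting rerouting and relabelling, turning ``$2k$ disjoint approaches to $\eps$'' into ``$k$ disjoint $X$--$Y$ paths'' is exactly the content of the lemma you are trying to prove. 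The handling of dominating vertices (``infinite fans in place of rails'') is likewise only gestured at. You also import a hypothesis not present in the statement when you write ``every end of a finite-tree-width graph has countable combined degree'': the lemma does not assume finite tree-width, and in any case $\eps$-defining sequences are irrelevant here since $C_H(S,\eps)$ is defined for any finite $S$.

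The paper's proof sidesteps all of this with Menger's theorem. Instead of constructing the $k$ paths, one supposes they do not exist, obtains an $X$--$Y$ separator $S$ of size $< k$ in $H[V(C)\cup X\cup Y]$, notes that by disjointness of the $R_v$ some $R_x$ with $x\in X$ and some $R_y$ with $y\in Y$ avoid $S$, and then uses a \emph{single} $R_x$--$R_y$ path in $C$ avoiding the finite set $S$ (which exists because both $R_x$ and $R_y$ reach the end $\eps$ living in $C$) to contradict that $S$ separates. The entire difficulty of coordinating $k$ paths simultaneously disappears: only one connecting path is ever needed. This is the idea you are missing.
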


\begin{proof}
    Let~$X,Y \subseteq N(C)$ be disjoint and finite, and suppose for a contradiction that there is no family of $\min\{|X|,|Y|\}$ pairwise disjoint $X$--$Y$ paths through $C$.
    By Menger's theorem (see for example \cite{bibel}*{Proposition~8.4.1}), there is an $X$--$Y$ separator~$S$ of size less than $\min\{|X|,|Y|\}$ in $H[V(C) \cup X \cup Y]$.
    Since~$C$ is end-linked, there is some end $\eps$ of $H$ which lives in~$C$ and a family $\{R_v \mid v \in N(C)\}$ of pairwise disjoint $N(C)$--$\eps$ paths and rays with $v \in R_v$.
    Since $|S| < \min\{|X|,|Y|\}$ and the~$R_v$ are pairwise disjoint, there is $x \in X$ and $y \in Y$ such that~$R_x$ and~$R_y$ both avoid~$S$.
    Since $R_x$ and $R_y$ are $N(C)$--$\eps$ paths or rays, there is some $R_x$--$R_y$ path~$P$ in~$C$ which avoids the finite set~$S$.
    Hence, $R_x + P + R_y$ is a connected subgraph of $H[V(C) \cup X \cup Y]$ which meets $X$ and $Y$ but avoids~$S$.
    This contradicts the fact that $S$ is an $X$--$Y$ separator in $H[V(C) \cup X \cup Y]$.
\end{proof}

Let $\eps$ be an end of $H$ that does not live in any $C_j$ for $j < i$ or in any $D \in \cD$ and which can be separated from $X$ by fewer than $|X|$ vertices. Then our next lemma yields an $\eps$-linked region which is a candidate for the region $C_i$ in \cref{RegionsCaseA}.

\begin{lemma}
\label{lem:theminiuncrossinglemma}
    Let $H$ be a graph and let $\cE$ be a set of pairwise non-touching well-linked $\lA$-regions.

    If $C$ is a $k$-region with $k \in \N$ which is $\eps$-linked for some end $\eps$ that only lives in $D \in \cE$ which are contained in~$C$, then there exists an $\eps$-linked $\lek$-region $C'$ that is nested with all $D \in \cE$.
    
    Moreover, the set $\cE'$ of regions $D \in \cE$ which are not nested with~$C$ is finite, the region $C'$ is not only nested with all $D \in \cE$ but contains $D$ or does not touch $D$, and $C'$ can be chosen such that $C' \subseteq C \cup \bigcup_{D \in \cE'} D$ and $N_H(C') \subseteq N_H(C) \cup \bigcup_{D \in \cE'} N_H(D)$. 
\end{lemma}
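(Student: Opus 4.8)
The plan is: (1) bound $|\cE'|$; (2) remove the crossings in $\cE'$ one at a time, at each step replacing the current region by one of two ``corners'', the choice being dictated by the well-linkedness of the region we uncross against; (3) check at the end that nestedness with the untouched regions of $\cE$ survives, and read off the inclusion bounds. Throughout I use the elementary observation that every region $R$ of $H$ is exactly one component of $H-N_H(R)$, so $R=C_H(N_H(R),\eta)$ whenever an end $\eta$ lives in $R$. For Step~(1): if $D\in\cE$ is not nested with $C$, then $D$ touches $C$ but neither contains $C$ nor is contained in $C$, and tracing a common vertex or a connecting edge shows $V(D)\cap N_H(C)\ne\emptyset$; since the regions in $\cE$ are pairwise disjoint and $|N_H(C)|=k$, this gives $|\cE'|\le k$.

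\textbf{One uncrossing step.} Enumerate $\cE'=\{D_1,\dots,D_p\}$ and set $C_0:=C$. Inductively, suppose $C_{j-1}$ is an $\eps$-linked region with $|N_H(C_{j-1})|\le k$ in which $\eps$ lives, such that for each $i<j$ and each $D\in\cE\setminus\cE'$ the region $C_{j-1}$ either contains $D$ or does not touch $D$, and $V(C_{j-1})\subseteq V(C)\cup\bigcup_{i<j}V(D_i)$ and $N_H(C_{j-1})\subseteq N_H(C)\cup\bigcup_{i<j}N_H(D_i)$. If $C_{j-1}$ already contains or misses $D_j$, put $C_j:=C_{j-1}$. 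Otherwise $C_{j-1}$ and $D_j$ cross; note $C_{j-1}\subsetneq D_j$ is impossible, since then $\eps$ would live in $D_j\in\cE$ and so $D_j\subseteq C$ by hypothesis, contradicting $D_j\in\cE'$. Put $a:=|N_H(C_{j-1})\cap V(D_j)|$ and split $N_H(D_j)$ into $P_0:=N_H(D_j)\cap V(C_{j-1})$, $P_1:=N_H(D_j)\cap N_H(C_{j-1})$, and $Q:=N_H(D_j)\setminus(V(C_{j-1})\cup N_H(C_{j-1}))$. Let $C^{\vee}:=H[V(C_{j-1})\cup V(D_j)]$, and let $C^{\wedge}$ be the component of $H[V(C_{j-1})\setminus V(\overline{D_j})]$ in which $\eps$ lives. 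Both are regions in which $\eps$ lives; $C^{\vee}$ contains $D_j$, $C^{\wedge}$ does not touch $D_j$, both have neighbourhood contained in $N_H(C_{j-1})\cup N_H(D_j)$, and a direct count gives $|N_H(C^{\vee})|=|N_H(C_{j-1})|-a+|Q|$ and $|N_H(C^{\wedge})|\le|N_H(C_{j-1})|-a+|P_0|$. Since $D_j$ is well linked there are $\min\{|P_0|,|Q|\}$ disjoint $P_0$--$Q$ paths through $D_j$; each runs from $V(C_{j-1})$ to a vertex outside $V(C_{j-1})\cup N_H(C_{j-1})$ with interior in $D_j$, hence passes through a vertex of $N_H(C_{j-1})\cap V(D_j)$, so $\min\{|P_0|,|Q|\}\le a$. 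Therefore $\min\{|N_H(C^{\vee})|,|N_H(C^{\wedge})|\}\le|N_H(C_{j-1})|\le k$; take $C_j$ to be whichever corner attains this minimum. Then $|N_H(C_j)|\le k$, $\eps$ lives in $C_j$, $C_j$ contains or misses $D_j$, $V(C_j)\subseteq V(C_{j-1})\cup V(D_j)$, and $N_H(C_j)\subseteq N_H(C_{j-1})\cup N_H(D_j)$. Nestedness with $D_1,\dots,D_{j-1}$ and with $\cE\setminus\cE'$ is preserved: each such region $D$ is disjoint from $V(\overline{D_j})$ (it does not touch $D_j$, as $\cE$ is pairwise non-touching and $D$ was contained in $C_{j-1}$ or disjoint from it), so its position relative to $C_j$ coincides with its position relative to $C_{j-1}$, except that a region contained in $C_{j-1}$ may be cut off into another component of $H-N_H(C_j)$ --- which still leaves it nested, namely ``does not touch $C_j$''.

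\textbf{The main obstacle.} What remains, and what I expect to be the genuinely delicate point, is that $C_j$ is again $\eps$-linked; everything else above is bookkeeping with region closures and disjointness. Since $C_{j-1}$ is $\eps$-linked, $N_H(C_{j-1})$ is a size-minimal $N_H(C_{j-1})$--$\eps$ separator. The plan is to show that $N_H(C_j)$ is likewise a size-minimal $N_H(C_j)$--$\eps$ separator: from a hypothetical smaller one, reroute around $\overline{D_j}$ the few connections that would run through it --- here the well-linkedness of $D_j$ (which by \cref{lem:endlinkedimplieswelllinked} is in fact available for every end-linked region) supplies the required detours --- to obtain an $N_H(C_{j-1})$--$\eps$ separator of size $<|N_H(C_{j-1})|$, a contradiction. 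An infinite Menger-type argument for ends, along the lines of the proof of \cref{lem:existenceofregion}\cref{existenceoflessthank}, then produces $|N_H(C_j)|$ disjoint $N_H(C_j)$--$\eps$ paths and rays, i.e.\ $N_H(C_j)$ is linked to $\eps$, completing the induction.

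\textbf{Conclusion.} Set $C':=C_p$. By construction $C'$ is an $\eps$-linked $\lek$-region in which $\eps$ lives, with $V(C')\subseteq V(C)\cup\bigcup_{D\in\cE'}V(D)$ and $N_H(C')\subseteq N_H(C)\cup\bigcup_{D\in\cE'}N_H(D)$. For every $D\in\cE'$ the corresponding step of Step~2 arranged that $C'$ contains or does not touch $D$; for $D\in\cE\setminus\cE'$ the same holds by the nestedness-preservation of Step~2 together with the hypothesis on $\eps$ (which rules out $D\supsetneq C$, as then $\eps$ would live in $D$ and force $D\subseteq C$, hence $D=C$, which would already make $C'$ contain $D$). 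In particular the set $\cE'$ of regions not nested with $C$ is finite, $C'$ is nested with all of $\cE$, and indeed contains or does not touch each of them, with the stated inclusions --- which is precisely the assertion.
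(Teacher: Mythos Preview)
Your overall structure --- the bound $|\cE'|\le k$, the submodularity inequality $\min\{|P_0|,|Q|\}\le a$ via well-linkedness of $D_j$, and the preservation of nestedness through each uncrossing step --- is correct. The gap is precisely where you flag it: the $\eps$-linkedness of $C_j$ is only a plan. Your proposed route (show $N_H(C_j)$ is a size-minimal $N_H(C_j)$--$\eps$ separator by producing, from a hypothetical smaller one, a smaller $N_H(C_{j-1})$--$\eps$ separator) works cleanly for the corner $C^\wedge\subseteq C_{j-1}$, since then $C_H(S,\eps)\subseteq C^\wedge\subseteq C_{j-1}$ and $S$ already separates $N_H(C_{j-1})$ from $\eps$. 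But it does \emph{not} work for $C^\vee=C_{j-1}\cup D_j$: the $a$ vertices of $N_H(C_{j-1})\cap V(D_j)$ lie inside $C^\vee$, so an $N_H(C^\vee)$--$\eps$ separator $S$ need not separate them from $\eps$, and padding $S$ by these $a$ vertices gives only $|S|+a<|N_H(C^\vee)|+a=|N_H(C_{j-1})|+|Q|$, not $<|N_H(C_{j-1})|$. For $C^\vee$ one instead needs a direct linkage construction: take the $N_H(C_{j-1})$--$\eps$ family with interiors in $C_{j-1}$; use well-linkedness of $D_j$ to obtain $|Q|$ disjoint $Q$--$P_0$ paths through $D_j$; truncate each at its first vertex in $N_H(C_{j-1})$ (necessarily in $V(D_j)\cap N_H(C_{j-1})$, since the path runs from outside $\bar C_{j-1}$ to inside $C_{j-1}$ with interior in $D_j$); and concatenate with the matching member of the original family.

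The paper avoids the iteration altogether. It partitions $\cE'$ once, into $\cE'_<:=\{D\in\cE':|V(D)\cap N_H(C)|<|V(C)\cap N_H(D)|\}$ and $\cE^*:=\cE'\setminus\cE'_<$, expands $C$ by all of $\cE'_<$ simultaneously to obtain $C^*$, and then sets $C':=C_H(B,\eps)$ for a single set $B$ that excises all of $\cE^*$ at once. The $\eps$-linkedness of $C^*$ is shown by the direct construction above carried out for all $D\in\cE'_<$ in parallel (they are pairwise non-touching, so the through-$D$ segments do not interfere); the $\eps$-linkedness of $C'$ then follows in one line from $|B|\le|N_H(C^*)|$ together with the fact that the $N_H(C^*)$--$\eps$ family must pass through $B$. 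Your iterative approach can be completed along these same lines, but as written the key step is a sketch rather than a proof.
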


\begin{proof}
    We first show that $\cE'$ is finite.
    For this, it suffices to show that each of the pairwise disjoint regions in $\cE'$ meets the finite set $N(C)$.
    So let $D \in \cE'$ be given. Because $D$ and $C$ are not nested, they touch, that is, the closure $\bar{C}$ meets $D$. Moreover, $D$ is not contained in $C$, so $D - C$ is non-empty. 
    Thus, since $D$ is connected, there is a $(D - C)$--$\Bar{C}$ path in $D$.
    Its endvertex in $\Bar{C}$ is in~$N(C)$, as $D - C$ and $C$ are disjoint.
    Thus, $D \in \cD$ meets $N(C)$.

    Let $\cE'_{<}$ consist of all $D \in \cD'$ with $|V(D) \cap N(C)| < |V(C) \cap N(D)|$.
    Now $C^* := G[V(C) \cup \bigcup_{D \in \cE'_{<}} V(D)]$ has neighbourhood $(N(C) \setminus \bigcup_{D \in \cE'_{<}} V(D)) \cup \bigcup_{D \in \cE'_{<}} (N(D) \setminus V(\Bar{C}))$, since the $D \in \cE$ are pairwise non-touching. 
    We claim that $C^*$ is a $\eps$-linked $\lek$-region.

    The subgraph $C^*$ is connected and thus a region since every $D \in \cE' \supseteq \cE'_{<}$ touches $C$.
    For $D \in \cE'_{<}$, the set~$V(D) \cap N(C)$, which separates $N(D) \setminus V(\Bar{C})$ and~$V(C) \cap N(D)$ in~$\Bar{D}$, must have at least size~$|N(D) \setminus V(\Bar{C})|$, since $D$ is well linked by assumption and $|V(D) \cap N(C)| < |V(C) \cap N(D)|$ by definition of $\cE'_{<}$. 
    Hence, $|N(D) \setminus V(\Bar{C})| \leq |V(D) \cap N(C)| < |V(C) \cap N(D)|$ for  every $D \in \cE'_{<}$ yields that the size of the neigbourhood of $C^*$ is at most $k$ and a family $\cP_D$ of $|N(D) \setminus V(\Bar{C})|$ pairwise disjoint $(N(D) \setminus V(\Bar{C}))$--$(V(C) \cap N(D))$ paths through $D$.
    We claim that $C^*$ is $\eps$-linked.
    Indeed, since the $D \in \cE$ are pairwise non-touching, all these paths in $\cP_D$ with $D \in \cE$ and the trivial paths in $N(C) \setminus (\bigcup_{D \in \cE'_{<}} V(D))$ are pairwise disjoint. 
    Hence, we obtain the desired $N(C^*)$--$\eps$ paths and rays by extending the collection of all these paths and the trivial paths in $N(C) \setminus (\bigcup_{D \in \cE'_{<}} V(D))$ via the original $N(C)$--$\eps$ paths or rays witnessing the $\eps$-linkedness of~$C$.

    By definition of $C^*$ and since the $D  \in \cE$ are pairwise non-touching, we have
    $C^* \cap \Bar{D} = C \cap \Bar{D}$, $V(C^*) \cap N(D) = V(C) \cap N(D)$ and $N(C^*) \cap V(D) = N(C) \cap V(D)$ for every $D \in \cE \setminus \cE'_{<}$.
    Hence, the regions $D \in \cE$ which are not nested with $C^*$ are precisely those in $\cE^* := \cE' \setminus \cE'_{<}$ and every $D \in \cE^*$ satisfies 
    $|V(D) \cap N(C^*)| = |V(D) \cap N(C)| \geq |V(C) \cap N(D)| = |V(C^*) \cap N(D)|$.
    Hence, the size of 
    $$B := \Big(N(C^*) \setminus (\bigcup_{D \in \cE^*} V(D))\Big) \cup \bigcup_{D \in \cE^*} \big(V(C^*) \cap N(D)\big),$$
    which includes the neighbourhood of $C^* - (\bigcup_{D \in \cE^*}) \Bar{D}$, is at most $|N(C^*)| \leq k$.
    Since $\eps$ lives only in $D \in \cE$ which are contained in $C \subseteq C^*$ and thus such $D \notin \cE'\supseteq \cE^*$, the end $\eps$ lives in~$C^* - (\bigcup_{D \in \cE^*} \Bar{D})$.
    So the $\eps$-linkedness of $C^*$ and $|B| \leq |N(C^*)|$ yields that $B$ is linked to~$\eps$.
    All in all, the component $C' := C_H(B, \eps)$ is the desired $\eps$-linked $\lek$-region with $C' \subseteq (C \cup \bigcup_{D \in \cE'_{<}} D) - (\bigcup_{D \in \cE^*} \Bar{D}) \subseteq C \cup \bigcup_{D \in \cE'} D$ and $N(C') = B \subseteq N(C) \cup \bigcup_{D \in \cE'} N(D)$.
    Note that by definition $C'$ does not touch $D \in \cE^*$.
    Since every other $D \in \cE \setminus \cE^*$ either does not touch $C^*$ or is contained in $C^*$ and also does not touch any $D' \in \cE^*$, the region $D$ does not touch $C'$ or is contained in~$C'$.
\end{proof}

The next lemma yields for every end $\eps$ that does not already live in some $C_j$ for $j < i$ or in some $D \in \cD$ an $\eps$-linked region which is a candidate for the region $C_i$ in \cref{RegionsCaseB}.

\begin{lemma}
\label{lem:theultimateuncrossing}
    Let $H$ be a graph, let $Z \subseteq V(H)$ be finite, and let $\cE$ be a set of pairwise non-touching well-linked $\lA$-regions of $H$.
    
    If $\eps$ is an end of $H$ of countable combined degree that lives in no $D \in \cE$, then there exists an $\eps$-linked $\lA$-region~$C$ that is disjoint from $Z$, satisfies $Z \cap N_H(C) \subseteq \Dom(\eps)$ and is nested with all $D \in \cE$. Moreover, every region in $\cE$ that touches $C$ is contained in $C$. 
\end{lemma}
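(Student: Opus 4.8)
Looking at this statement, I need to prove Lemma~\ref{lem:theultimateuncrossing}, which extends Lemma~\ref{lem:theminiuncrossinglemma} from $\lk$-regions to arbitrary $\lA$-regions, while also handling the set $Z$ and the domination condition. Let me think about how the proof would go.

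The plan is to combine Lemma~\ref{lem:existenceofregion}~\cref{existenceofstrictlyincreasing} with an uncrossing argument in the spirit of Lemma~\ref{lem:theminiuncrossinglemma}. First I would apply Lemma~\ref{lem:existenceofregion}~\cref{existenceofstrictlyincreasing} to the end $\eps$ and the finite set $Z$ to obtain an $\eps$-linked $\lA$-region $C_0$ that is disjoint from $Z$ and satisfies $Z \cap N_H(C_0) \subseteq \Dom(\eps)$. This region need not be nested with the members of $\cE$, so the remaining work is to repair nestedness while preserving all the good properties: $\eps$-linkedness, disjointness from $Z$, and the domination condition on $N_H(C) \cap Z$.

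The key step is the uncrossing. As in the proof of Lemma~\ref{lem:theminiuncrossinglemma}, I would first argue that the set $\cE'$ of regions in $\cE$ not nested with $C_0$ is finite, since each such region must meet the finite set $N_H(C_0)$ (using that the regions in $\cE$ are pairwise non-touching and connected: a $(D - C_0)$--$\overline{C_0}$ path inside $D$ ends in $N_H(C_0)$). Then I would set $C := C_H(B,\eps)$ where $B$ is built by absorbing into $C_0$ those $D \in \cE'$ for which $|V(D) \cap N_H(C_0)| < |V(C_0) \cap N_H(D)|$ and taking the appropriate neighbourhood --- exactly the construction $C^*$ from Lemma~\ref{lem:theminiuncrossinglemma}. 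The well-linkedness of the $D \in \cE$ (which here we assume as a hypothesis, rather than deriving from end-linkedness via Lemma~\ref{lem:endlinkedimplieswelllinked}) ensures that the resulting region has a neighbourhood of size at most $|N_H(C_0)|$ and remains $\eps$-linked. The upshot $C \subseteq C_0 \cup \bigcup_{D \in \cE'} D$ and $N_H(C) \subseteq N_H(C_0) \cup \bigcup_{D \in \cE'} N_H(D)$ gives nestedness with every $D \in \cE$, and moreover every $D \in \cE$ touching $C$ is contained in $C$, just as in Lemma~\ref{lem:theminiuncrossinglemma}.

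The main obstacle --- and the reason this is not an immediate corollary of Lemma~\ref{lem:theminiuncrossinglemma} --- is verifying that $C$ still is disjoint from $Z$ and that $Z \cap N_H(C) \subseteq \Dom(\eps)$. For disjointness: $C \subseteq C_0 \cup \bigcup_{D \in \cE'} D$, and each $D \in \cE'$ must be disjoint from $Z$. Here I need the hypothesis that $\eps$ lives in no $D \in \cE$: if some $D \in \cE'$ met $Z$, I would want a contradiction, but actually the cleaner route is to note that the regions in $\cE$ are required (in the eventual application, and one should check the hypotheses suffice) to be disjoint from $Z$ --- but since the lemma as stated does not assume this, I must instead argue that any $D \in \cE'$ meeting $Z$ can be discarded: replace $\cE'$ by those $D$ that also avoid $Z$. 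Wait --- more carefully: a $D \in \cE'$ that meets $Z$ is problematic only if it gets absorbed, i.e.\ lies in $\cE'_<$. To handle this I would refine the construction so that $D$'s meeting $Z$ are never absorbed (treated as if in $\cE' \setminus \cE'_<$); one checks this does not increase $|N_H(C^*)|$ because such $D$ already has $V(D) \cap N_H(C_0)$ of size $\geq |V(C_0) \cap N_H(D)| \geq |N_H(D) \setminus V(\overline{C_0})|$ whenever it fails to be absorbed, and it forces $C$ not to touch $D$, hence $C$ avoids $V(D) \supseteq V(D)\cap Z$. For the domination condition: any vertex of $Z$ in $N_H(C)$ lies in $N_H(C_0) \cup \bigcup_{D \in \cE'} N_H(D)$; if it lies in $N_H(C_0)$ it is in $\Dom(\eps)$ by choice of $C_0$; the delicate part is a vertex $z \in Z \cap N_H(D)$ for some $D \in \cE'$ that survives into $N_H(C)$ --- this forces $z \in V(C_0) \cap N_H(D)$ (the only part of $N_H(D)$ that can remain in $N_H(C^*)$ when $D$ is not absorbed), hence $z \in V(C_0)$, contradicting that $C_0$ is disjoint from $Z$; so in fact no new vertices of $Z$ enter $N_H(C)$ at all, and $Z \cap N_H(C) \subseteq Z \cap N_H(C_0) \subseteq \Dom(\eps)$, as required.
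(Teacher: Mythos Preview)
Your overall plan --- start from the region $C_0$ given by Lemma~\ref{lem:existenceofregion}\,\cref{existenceofstrictlyincreasing} and then uncross via Lemma~\ref{lem:theminiuncrossinglemma} --- is the right shape, but the proposed fix for the $Z$-conditions does not work as written. If some $D \in \cE'_{<}$ (so $|V(D) \cap N(C_0)| < |V(C_0) \cap N(D)|$) meets $Z$ and you force it into the set of regions to be cut away rather than absorbed, then in the second step of the construction in Lemma~\ref{lem:theminiuncrossinglemma} the contribution of this $D$ to $B$ is $V(C^*) \cap N(D)$, which has size at least $|V(C_0) \cap N(D)| > |V(D) \cap N(C_0)| = |V(D) \cap N(C^*)|$. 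So the key inequality $|B| \le |N(C^*)|$ --- which is precisely what lets the $\eps$-linkedness of $C^*$ transfer to $C' = C_H(B,\eps)$ --- can fail. Your justification (``such $D$ already has $|V(D) \cap N_H(C_0)| \ge |V(C_0) \cap N_H(D)|$ whenever it fails to be absorbed'') quotes the inequality defining $\cE' \setminus \cE'_{<}$, which is exactly the opposite case. Similarly, your argument for the domination condition only treats regions $D$ that are \emph{not} absorbed; for an absorbed $D \in \cE'_{<}$ the set $N(D) \setminus V(\bar{C}_0)$ enters $N(C^*)$ and may well contain vertices of $Z \setminus \Dom(\eps)$.

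The paper sidesteps all of this by applying Lemma~\ref{lem:existenceofregion}\,\cref{existenceofstrictlyincreasing} a \emph{second} time, now with the enlarged finite set $Z' := Z \cup N(C_0) \cup \bigcup_{D \in \cE'} N(D)$, to obtain an $\eps$-linked region $C_1 \subseteq C_0 \setminus \bigcup_{D \in \cE'} \bar{D}$. Every $D \in \cE$ that is not nested with $C_1$ is then nested with $C_0$ (since $C_1$ avoids $\bar{D}$ for all $D \in \cE'$) and touches $C_1 \subseteq C_0$; as $\eps$ lives in $C_0$ but in no $D \in \cE$, this forces $D \subseteq C_0$. Hence every region that Lemma~\ref{lem:theminiuncrossinglemma} (applied to $C_1$) might absorb or cut away already lies inside $C_0$, so is disjoint from $Z$ and satisfies $N(D) \cap Z \subseteq N(C_0) \cap Z \subseteq \Dom(\eps)$. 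The $Z$-conditions then follow directly from the `moreover' part of Lemma~\ref{lem:theminiuncrossinglemma}, with no need to modify the uncrossing itself.
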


\begin{proof}
    By \cref{lem:existenceofregion}~\cref{existenceofstrictlyincreasing}, there exists an $\eps$-linked $\lA$-region $C_0$ of $H$ that is disjoint from~$Z$ and satisfies $Z \cap N(C_0) \subseteq \Dom(\eps)$.
    Let $\cE'$ be the set of all regions in $\cE$ that are not nested with~$C_0$. The `moreover'-part of \cref{lem:theminiuncrossinglemma} ensures that $\cE'$ is finite.
    Thus, \cref{lem:existenceofregion}~\cref{existenceofstrictlyincreasing} applied to the finite set $Z' := Z \cup N(C_0) \cup \bigcup_{D \in \cE'} N(D)$ and the end $\eps$ yields an $\eps$-linked region $C_1$ that is disjoint from $Z'$ and satisfies $Z \cap N(C_1) \subseteq \Dom(\eps)$. 
    Since $\eps$ lives in no $D \in \cE \supseteq \cE'$, the fact that~$C_1$ is disjoint from $Z'$ yields that $C_1 \subseteq C_0 - (\bigcup_{D \in \cE'} \Bar{D}$).
    
    Note that, since $C_1$ is nested with all regions in~$D$ that were not nested with $C_0$, every $D \in \cE$ which touches $C_1$ is contained in $C_0$. 
    In particular, $Z \cap N(D) \subseteq Z \cap N(C_0) \subseteq \Dom(\eps)$ for all such $D \in \cD$ as $C_0$ avoids $Z$.
    Now \cref{lem:theminiuncrossinglemma} yields a $\eps$-linked $\lA$-region $C_2$ that is disjoint from $Z$, is nested with $\cE$ and satisfies $Z \cap N(C_2) \subseteq Z \cap N(C_0) \subseteq \Dom(\eps)$. 
    Thus $C := C_2$ is the desired region.
    
    For the `moreover'-part we note that $\eps$ lives in $C$ but in no $D \in \cE$, and thus every $D \in \cE$ that touches $C$ and that is nested with $C$ is contained in $C$. 
    Hence, the `moreover'-part follows since~$C$ is nested with all $D \in \cE$.
\end{proof}

\subsection{Analysis of \texorpdfstring{\cref{recursion:buildingpackage}}{Algorithm 7.2}} \label{subsec:AnalysisOfRecursion}

In this section we analyse \cref{recursion:buildingpackage} and provide in \cref{theorem:propertiesofrecursion:buildingpackage} the properties of the regions $C_i$ and the set $Y$ obtained from \cref{recursion:buildingpackage} which we need for the proof of \cref{thm:RaylessThmTechnicalRegions}.
First let us take note of some basic properties of \cref{recursion:buildingpackage} that follow easily from its definition:

\begin{observation} \label{observation:basicpropertiesofrecursion}
    In the setting of \cref{recursion:buildingpackage}, we have that
    \begin{enumerate}[label=\rm{(O\arabic*)}]
        \item \label{property:BoundaryOfRegionsInCaseAandB}
        every $C_i$ with $|N_H(C_i)| < k$ was chosen in \cref{RegionsCaseA} and every $C_i$ with $|N_H(C_i)| \geq k$ was chosen in \cref{RegionsCaseB},
        \item \label{property:CaseBifell_iatleastk} if $\ell_i \geq k$, then $X \cap N_H(C_i) \subseteq \Dom(C_i)$,
        \item \label{property:ell_iareincreasing} the $\ell_i$ are increasing,
        \item \label{property:C_iareneverdecreasing} a $C_i$ is never contained in $C_j$ with $j < i$,
        \item \label{property:iftheC_iareincreasingthenell_iarestrictlyincreasing} if $C_j \subseteq C_i$ with $j < i$, then $\ell_j < \ell_i$, 
        \item \label{property:terminates} \cref{recursion:buildingpackage} terminates at some ordinal $< |H|^+$.
    \end{enumerate}
\end{observation}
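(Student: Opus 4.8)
The plan is to check all six items directly from the definition of \cref{recursion:buildingpackage}, in an order that lets later items invoke earlier ones; everything except \cref{property:terminates} is a short unwinding of \cref{RegionsCaseA} and \cref{RegionsCaseB}, while \cref{property:terminates} needs one genuine idea.

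First I would record an auxiliary fact $(\ast)$: the conditions defining ``a region as in \cref{RegionsCaseA}'' (resp.\ as in \cref{RegionsCaseB}) at step $i$ --- being a $\lk$- (resp.\ $\lA$-)region disjoint from $X$ (resp.\ from $X' := X \cup \{x\}$), being $\eps$-linked for an end living in no $C_m$ with $m<i$, being nested with all $C_m$, $m<i$, and all $D \in \cD$, and (for \cref{RegionsCaseB}) satisfying $X' \cap N_H(C_i) \subseteq \Dom(C_i)$ --- are all preserved when the step index is decreased, since the only index-dependent requirements (``lives in no $C_m$, $m<i$'' and ``nested with all $C_m$, $m<i$'') merely weaken. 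Hence, if $C_i$ is chosen in \cref{RegionsCaseA} (resp.\ \cref{RegionsCaseB}), then for every $j<i$ it is a candidate for \cref{RegionsCaseA} (resp.\ \cref{RegionsCaseB}) at step $j$. From this, \cref{property:BoundaryOfRegionsInCaseAandB} is immediate: a region chosen in \cref{RegionsCaseA} is by definition a $\lk$-region; and if the chosen $C_i$ satisfies $|N_H(C_i)| < k$, then $C_i$ is itself a candidate for \cref{RegionsCaseA} at step $i$ (its disjointness from $X$ holds whether it is disjoint from $X$ or from $X' \supseteq X$), so step $i$ falls under \cref{RegionsCaseA}. Then \cref{property:CaseBifell_iatleastk} follows, since $\ell_i \ge k$ forces, by \cref{property:BoundaryOfRegionsInCaseAandB}, that $C_i$ was chosen in \cref{RegionsCaseB}, where $X' \cap N_H(C_i) \subseteq \Dom(C_i)$ and $X \subseteq X'$. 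And \cref{property:C_iareneverdecreasing} is immediate: if $C_i \subseteq C_j$ with $j<i$, then the end for which $C_i$ is $\eps$-linked lives in $C_i$ and hence in $C_j$, contradicting that it lives in no $C_m$, $m<i$.

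For \cref{property:ell_iareincreasing} I would fix $j<i$: if $C_i$ was chosen in \cref{RegionsCaseA}, then by $(\ast)$ step $j$ also fell under \cref{RegionsCaseA} and $C_i$ is a candidate there, so $\ell_j \le \ell_i$ by the minimality \cref{nicest:ell_iminimum}; if $C_i$ was chosen in \cref{RegionsCaseB}, then either step $j$ fell under \cref{RegionsCaseB} --- again $\ell_j \le \ell_i$ by $(\ast)$ and \cref{nicest:ell_iminimum} --- or it fell under \cref{RegionsCaseA}, in which case $\ell_j < k \le \ell_i$ using \cref{property:BoundaryOfRegionsInCaseAandB}. For \cref{property:iftheC_iareincreasingthenell_iarestrictlyincreasing} I would combine this with \cref{nicest:inclusion-wisemaximal}: given $C_j \subsetneq C_i$ with $j<i$ and, for contradiction, $\ell_j = \ell_i$, \cref{property:BoundaryOfRegionsInCaseAandB} puts $C_i$ and $C_j$ in the same case, so by $(\ast)$ the region $C_i$ is a candidate for that case at step $j$ realizing the minimal neighbourhood size $\ell_j$; since $C_j \subsetneq C_i$, this contradicts the inclusion-wise maximality \cref{nicest:inclusion-wisemaximal} of $C_j$.

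Finally, for \cref{property:terminates} I would bound the number of (pairwise distinct) regions $C_i$ by grouping them by neighbourhood size: set $\cC_\ell := \{C_i : \ell_i = \ell\}$ for $\ell \in \N$. Any two distinct members of $\cC_\ell$ are non-touching: they are nested, the containment $C_i \subseteq C_j$ with $j<i$ is excluded by \cref{property:C_iareneverdecreasing}, and $C_j \subsetneq C_i$ with $j<i$ would give $\ell_j < \ell_i$ by \cref{property:iftheC_iareincreasingthenell_iarestrictlyincreasing}, contradicting $\ell_j = \ell_i$. So each $\cC_\ell$ is a family of pairwise disjoint non-empty subgraphs of $H$, whence $|\cC_\ell| \le |H|$; summing over the at most $\aleph_0$ values of $\ell$ that occur bounds the number of $C_i$ by $\aleph_0 \cdot |H|$, which equals $|H|$ when $H$ is infinite and is finite when $H$ is finite (then only the finitely many $\ell \le |H|$ occur). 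As the $C_i$ are distinct, the sequence is thus indexed by an ordinal of cardinality $\le |H|$, i.e.\ by an ordinal $< |H|^+$. I expect \cref{property:terminates} to be the main obstacle: one has to notice that laminarity together with \cref{property:C_iareneverdecreasing}--\cref{property:iftheC_iareincreasingthenell_iarestrictlyincreasing} collapses the a priori ``as many regions as there are ends of $H$'' bound --- which can exceed $|H|$, e.g.\ for the binary tree --- down to $|H|$.
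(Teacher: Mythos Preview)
Your arguments for \cref{property:BoundaryOfRegionsInCaseAandB}--\cref{property:iftheC_iareincreasingthenell_iarestrictlyincreasing} are essentially the paper's, just with the persistence-under-decreasing-index observation $(\ast)$ made explicit; nothing to add there.

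For \cref{property:terminates} you take a genuinely different route. The paper shows directly that each $C_i$ contains a vertex lying in no $C_j$ with $j<i$: among the $C_j \subseteq C_i$ with $j<i$ there is (by \cref{property:iftheC_iareincreasingthenell_iarestrictlyincreasing}, which makes such chains finite) a $\subseteq$-maximal one $C_{j^*}$, and any vertex $v \in V(C_i) \cap N_H(C_{j^*})$ --- one exists, else connectedness would force $C_i = C_{j^*}$ --- avoids every earlier $C_j$ by nestedness and maximality. This yields the bound $|V(H)|$ on the length of the sequence in one stroke. Your stratification by $\ell$ is also correct and pleasantly conceptual: \cref{property:C_iareneverdecreasing} and \cref{property:iftheC_iareincreasingthenell_iarestrictlyincreasing} force each level $\cC_\ell$ to be an antichain of pairwise non-touching (hence disjoint, non-empty) regions, so $|\cC_\ell| \le |V(H)|$, and summing over $\ell \in \N$ gives $\aleph_0 \cdot |H| = |H|$ for infinite $H$. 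One small gap: for finite $H$ your count only gives $\le |H|^2$, so the sentence ``the sequence is thus indexed by an ordinal of cardinality $\le |H|$'' is not justified there. This is harmless --- a finite $H$ has no rays, hence no ends, hence no end-linked regions, so the recursion terminates immediately at ordinal $0$ --- but you should say so rather than let the cardinal arithmetic carry a claim it does not support.
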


\begin{proof}
    \cref{property:BoundaryOfRegionsInCaseAandB}:
    It is immediate from \cref{recursion:buildingpackage} that every $\gek$-region was chosen in \cref{RegionsCaseB}.
    Every $\lek$-region as in \cref{RegionsCaseB} is already a region as in \cref{RegionsCaseA}; thus, every $\lk$-region $C_i$ was chosen in \cref{RegionsCaseA}.
    
    \cref{property:CaseBifell_iatleastk}:
    This is immediate from \cref{recursion:buildingpackage} and \cref{property:BoundaryOfRegionsInCaseAandB}.

    \cref{property:ell_iareincreasing}:
    By \cref{property:BoundaryOfRegionsInCaseAandB}, every $C_i$ with $|N(C_i)| < k$ was chosen in \cref{RegionsCaseA}. 
    Moreover, every $C_i$ that was chosen in \cref{RegionsCaseA} was already a candidate for $C_j$ in earlier steps $j < i$ where $C_j$ was chosen due to \cref{RegionsCaseA}, but was not the chosen nicest such region (and similar for \cref{RegionsCaseB}). 
    Thus \cref{nicest:ell_iminimum} ensures that the $\ell_i$ are increasing.
    
    \cref{property:C_iareneverdecreasing}: 
    A $C_i$ will never be a subgraph of any $C_j$ with $j < i$ since the end to which $N(C_i)$ is linked lives in $C_i$ but not in $C_j$ by the choice of $C_i$ in \cref{recursion:buildingpackage}.

    \cref{property:iftheC_iareincreasingthenell_iarestrictlyincreasing}:
    By \cref{property:ell_iareincreasing} we have $\ell_j \leq \ell_i$.
    So suppose for a contradiction that $\ell_{j} = \ell_{i}$.
    By \cref{recursion:buildingpackage},~$C_i$ was already a candidate for $C_{j}$.
    Since $\ell_i = \ell_j$, the region $C_{i}$ satisfied \cref{nicest:ell_iminimum} in step $j$.
    But then $C_j = C_{i}$ by \cref{nicest:inclusion-wisemaximal} of $C_{j}$, which contradicts that the end to which $N(C_i)$ is linked does not live in~$C_j$ by \cref{recursion:buildingpackage}.

    \cref{property:terminates}: 
    It suffices to show that $C_i$ contains some vertex which is in no other $C_j$ for $j < i$.
    If all~$C_j$ for $j < i$ are disjoint from $C_i$, then we are done.
    So we may assume that there is some $C_{j} \subseteq C_i$ with ${j} < i$.
    Consider such a $\subseteq$-maximal $C_{j^*}$; 
    it exists, since chains of such regions $C_j \subseteq C_i$ are finite by \cref{property:iftheC_iareincreasingthenell_iarestrictlyincreasing}.
    If $N(C_{j^*}) \subseteq N(C_i)$, then the connectedness of $C_i$ and $C_{j^*}$ yields $C_i = C_{j^*}$ which contradicts their distinctness.
    Thus, there exists a vertex $v \in V(C_i) \cap N(C_{j^*})$.
    Since the $C_j$ for $j < i$ are nested and by the $\subseteq$-maximality of $C_{j^*}$, the region $C_{j^*}$ does not touch $C_j$ or contains~$C_j$ for all $j \neq j^*$ with $C_j \subseteq C_i$ and $j < i$; in particular, in both cases $V(C_j) \cap N(C_{j^*}) = \emptyset$.
    Thus, $v \in C_i \setminus \bigcup_{j < i} C_j$ as desired.
\end{proof}

Using the following properties of \cref{recursion:buildingpackage}, we show in \cref{subsec:ProofOfRaylessThm} that the iterative application of \cref{recursion:buildingpackage} yields the desired \td\ for \cref{thm:RaylessThmTechnicalRegions}:

\begin{theorem}\label{theorem:propertiesofrecursion:buildingpackage}
    In the setting of \cref{recursion:buildingpackage} the following statements hold:
    \begin{enumerate}[label=\rm{(A\arabic*)}]
        \item\label{property:progress} Either $x \in Y$ or $|X| > |N_H(C)|$ for the component $C$ of $H - Y$ containing $x$. 
        \item \label{property:DareC_i} Every $D \in \cD$ is contained in a component of $H-Y$.
       \end{enumerate}
      Moreover, if every end of $H$ has countable combined degree, then also  the following hold:
      \begin{enumerate}[label=\rm{(A\arabic*)},resume]
        \item \label{property:rayless} Every end of $H$ lives in some $C_i$.
        \item \label{property:componentsareC_i}Every component of $H-Y$ is a $C_i$.
        \item \label{property:increasinglydisjoint} If $|N_H(C)| \geq k$ for a component $C$ of $H -Y$, then $X \cap N_H(C) \subseteq \Dom(C)$.
        \item \label{property:linkediniterativeapplication}
        Set $H^0 := H$ and $Y^0 := Y$. Let $C'_1 \supseteq C'_2 \supseteq \cdots \supseteq C'_n$ be a sequence of regions of $H$ such that, for every $m \in \{0, \dots, n-1\}$, the region $C'_{m+1}$ is a component of $H^m - Y^m$ where, for $m \in \{1, \dots, n-1\}$,~$Y^m$ is given by \cref{recursion:buildingpackage} applied to $H^m := \bar{C}'_m$, the finite set $X^m := N_H(C'_m)$, the set $\cD^m := \{D \in \cD \mid D \subseteq C'_m\}$ and an arbitrary vertex $x_m \in C'_m - (\bigcup \cD)$. 
        Then there are $\min\{| N_H(C'_m)| \mid m \in \{1, \dots,n\} \}$ pairwise disjoint $X$--$N_H(C'_n)$ paths in~$H$.
        \item \label{property:DistinctBags} If $H-X$ is connected and $N_H(H-X) = X$, then $N_H(C) \not\subseteq X$ for all components $C$ of $H-Y$; in particular, $X \subsetneq Y$.
    \end{enumerate}
\end{theorem}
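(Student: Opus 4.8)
The plan is to first pin down the shape of $H-Y$ and then harvest the seven items, with \cref{property:rayless} and \cref{property:linkediniterativeapplication} carrying the real weight. The key structural fact is that the components of $H-Y=H[\bigcup_i C_i]$ are precisely the $\subseteq$-maximal members of $\{C_i\}$: by \cref{property:iftheC_iareincreasingthenell_iarestrictlyincreasing} the $\subseteq$-chains among the $C_i$ are finite, so maximal members exist, and since the $C_i$ are pairwise nested any two distinct maximal ones are non-touching; a short argument then shows that every component is such a maximal $C_i$, which is \cref{property:componentsareC_i}. From here \cref{property:progress} is immediate: if $x\notin Y$ then $x$ lies in some $C_i$ and hence in the component $C$ of $H-Y$ through $x$, which is some $C_j\ni x$; as every region chosen in \cref{RegionsCaseB} avoids $x$, $C_j$ was chosen in \cref{RegionsCaseA}, hence is a $\lk$-region and $|N_H(C)|<k=|X|$. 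For \cref{property:increasinglydisjoint}, $C=C_i$ has $|N_H(C_i)|=\ell_i\ge k$, so this is just \cref{property:CaseBifell_iatleastk}. For \cref{property:DistinctBags}: each $C_i$ is disjoint from $X$, so $X\subseteq Y$; if some component $C=C_i$ had $N_H(C)\subseteq X$, then $H-X$ being connected and meeting $C$ disjointly from $X$ forces $C=H-X$ and $N_H(C)=X$, so $|N_H(C_i)|=k$ (ruling out \cref{RegionsCaseA}) while $C_i=H-X\ni x$ (ruling out \cref{RegionsCaseB}) --- a contradiction; the same argument shows $X\ne Y$, so $X\subsetneq Y$. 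For \cref{property:DareC_i}, fix $D\in\cD$: each $C_i$ is nested with $D$, so either some $C_i\supseteq D$ (and then $D$ lies in that component) or every $C_i$ is contained in or disjoint from $D$, in which case a nestedness argument gives $V(D)\subseteq\bigcup_i C_i$ --- a vertex of $D$ missed by all $C_i$ would leave the recursion with a legitimate further choice, contradicting that it halted via \cref{RegionsCaseC}.

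For \cref{property:rayless} assume an end $\eps$ of $H$ (of countable combined degree, by hypothesis) lives in no $C_i$, and derive a contradiction with \cref{RegionsCaseC} by exhibiting a legitimate region. Let $\cE$ be the set of $\subseteq$-maximal members of $\{C_i\}\cup\cD$; these are pairwise non-touching (distinct maximal members of a pairwise-nested family) and well linked, since end-linked regions are well linked by \cref{lem:endlinkedimplieswelllinked}. If $\eps$ lives in no member of $\cE$ --- which covers the case that $\eps$ lives in no $D\in\cD$, because an $\eps$ living in a non-maximal $D\in\cD$ would satisfy $D\subsetneq C_i$ and hence live in $C_i$ --- then apply \cref{lem:theultimateuncrossing} with $Z:=X\cup\{x\}$ and $\cE$ to obtain an $\eps$-linked $\lA$-region $C$ disjoint from $X\cup\{x\}$ with $(X\cup\{x\})\cap N_H(C)\subseteq\Dom(\eps)\subseteq\Dom(C)$ and such that each member of $\cE$ is contained in or disjoint from $C$; this last property propagates to: $C$ is contained in or disjoint from every $C_i$ and every $D\in\cD$, hence nested with all of them. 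As $\eps$ lives in $C$ but in no $C_j$, the region $C$ then meets all requirements of \cref{RegionsCaseA} (if $|N_H(C)|<k$) or of \cref{RegionsCaseB} (if $|N_H(C)|\ge k$) at the halting step, a contradiction. If instead $\eps$ lives in some $D\in\cD$ (necessarily a member of $\cE$), run the same argument inside $\bar{D}$ with the forbidden set enlarged to include $N_H(D)$ and against the $\subseteq$-maximal members of $\{C_i\mid C_i\subseteq D\}$: this produces $C\subseteq D$ avoiding $N_H(D)$, so $N_H(C)\subseteq V(D)$ is disjoint from $X\cup\{x\}$, and as before $C$ is a legitimate further choice, a contradiction.

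For \cref{property:linkediniterativeapplication} I would induct on $n$, the core being the single-step statement: if $C'$ is a component of $H'-Y'$ where $Y'$ is \cref{recursion:buildingpackage} applied to $\bar{C}''$ with boundary $X'':=N_H(C'')$ (so $C'\subseteq C''$), then $\bar{C}''$ --- hence $H$ --- contains $\min\{|X''|,|N_H(C')|\}$ pairwise disjoint $X''$--$N_H(C')$ paths. Granting this, one concatenates the resulting families along the decreasing sequence of separators $X,\,N_H(C'_1),\,\dots,\,N_H(C'_n)$ --- using that each $N_H(C'_m)$ bounds the well-linked region $C'_m$ (\cref{lem:endlinkedimplieswelllinked}) to match up endpoints, a nested analogue of the gluing in \cref{lemma:LinkingPathsAlongCritVertexSet} --- to get the claim. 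For the single-step statement: $C'=C_i$ for some step of the recursion on $\bar{C}''$ by \cref{property:componentsareC_i}, and $C_i$ is $\eps$-linked for an end $\eps$ living in $C_i$ but in no earlier region, so $N_H(C_i)$ separates $X''$ from $\eps$. I claim every $X''$--$\eps$ separator has size at least $\min\{|X''|,|N_H(C_i)|\}$: a smaller one would, via \cref{lem:existenceofregion}~\cref{existenceoflessthank}, give an $\eps$-linked region that, after uncrossing against the earlier regions by \cref{lem:theminiuncrossinglemma} (run inside the relevant $\bar{D}$ if $\eps$ lives in some element of $\cD$), becomes a valid candidate --- cheaper than $C_i$, contradicting \cref{nicest:ell_iminimum} if $C_i$ came from \cref{RegionsCaseA}, or a \cref{RegionsCaseA}-candidate available when \cref{RegionsCaseB} was entered, which is impossible. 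Hence Menger's theorem yields $\min\{|X''|,|N_H(C_i)|\}$ disjoint $X''$--$\eps$ paths and rays, which cross $N_H(C_i)$ and truncate to the desired paths.

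The main obstacle is the interplay, shared by \cref{property:rayless} and \cref{property:linkediniterativeapplication}, between the uncrossing lemmas \cref{lem:theminiuncrossinglemma,lem:theultimateuncrossing} and the ever-growing family $\{C_i\}$: the $C_i$ are only pairwise nested, not pairwise non-touching, so one must feed the uncrossing lemmas the $\subseteq$-maximal members of $\{C_i\}\cup\cD$ and then argue that ``nested with the maximal members (and contained in or disjoint from each of them)'' upgrades to ``nested with all of $\{C_i\}\cup\cD$'', and one must separately handle the case where $\eps$ already lives in some $D\in\cD$ by relocating the whole argument inside $\bar{D}$ and banning $N_H(D)$. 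For \cref{property:linkediniterativeapplication} there is the additional point of recognising that the ``nicest'' choice of $C_i$ is exactly a $\size$-minimal-separator condition, which is what unlocks Menger's theorem. The remaining bookkeeping --- the concatenation of path families in \cref{property:linkediniterativeapplication}, and items \cref{property:progress}, \cref{property:componentsareC_i}, \cref{property:increasinglydisjoint}, \cref{property:DistinctBags}, \cref{property:DareC_i} --- is routine once the structural description of $H-Y$ is in place.
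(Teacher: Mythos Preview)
Your central structural claim---that by \cref{property:iftheC_iareincreasingthenell_iarestrictlyincreasing} the $\subseteq$-chains among the $C_i$ are finite---is false. That observation only says $\ell_j<\ell_i$ whenever $C_j\subsetneq C_i$ with $j<i$, so along a $\subseteq$-chain the $\ell$-values strictly increase; but the $\ell_i$ are not a priori bounded (regions chosen in \cref{RegionsCaseB} may have arbitrarily large neighbourhoods), so chains can have order type~$\omega$. This breaks your proof of \cref{property:componentsareC_i}, and with it your route to \cref{property:progress}, \cref{property:increasinglydisjoint} and \cref{property:DistinctBags}. More seriously it breaks your proof of \cref{property:rayless}: a one-shot application of \cref{lem:theultimateuncrossing} with $\cE$ the $\subseteq$-maximal members of $\{C_i\}\cup\cD$ yields a region $C$ nested with those maximal members, but any $C_j$ sitting in an infinite ascending chain has no maximal member above it, so $C$ need not be nested with that $C_j$ and hence is not a legal candidate at the halting step. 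The paper avoids this by uncrossing only against bounded-$\ell$ subfamilies: it first applies \cref{lem:theultimateuncrossing} against the maximal elements of $\cD\cup\cC^{<k}_{\max}$ (where chains \emph{are} finite, being bounded by $k$) to obtain $C$ with $\ell:=|N(C)|$, then re-uncrosses via \cref{lem:theminiuncrossinglemma} against $\cD\cup\cC^{<\ell+1}_{\max}$; the resulting $C'$ is nested with every $C_j$ having $\ell_j\le\ell$, and the contradiction is found not at termination but at the \emph{first} step $i$ with $\ell_i>\ell$, where all earlier $C_j$ have $\ell_j\le\ell$. Only after \cref{property:rayless} is established does the paper prove \cref{property:componentsareC_i}: an infinite $\subseteq$-chain of $C_i$'s inside a component would, via a locally finite contraction minor, yield a ray whose end lives in no $C_i$, contradicting \cref{property:rayless}.

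Two further points. Your argument for \cref{property:DareC_i} (``a vertex of $D$ missed by all $C_i$ would leave a legitimate further choice'') does not work: $D$ is a valid \cref{RegionsCaseB} candidate only if the end $\eps$ it is linked to lives in no $C_j$, which you have not established. The paper instead shows that this $\eps$ \emph{must} live in some $C_i$ (else $D$ remains a candidate at every step, contradicting termination), whence $D$ and $C_i$ touch, and \cref{nicest:ell_iminimum} together with \cref{nicest:inclusion-wisemaximal} then force $D\subseteq C_i$. For \cref{property:linkediniterativeapplication}, your inductive concatenation along the well-linked boundaries $N_H(C'_m)$ is more delicate than indicated (the endpoints of consecutive path families need not coincide, and re-routing through $C'_m$ may collide with later segments); the paper runs a single global Menger argument instead: a small $X$--$N_H(C'_n)$ separator $S$ gives an $\eps$-linked $(\le|S|)$-region containing $C'_n$, which one pushes into the deepest $C'_N$ it still fits and then uncrosses against the $(\le|S|)$-regions of the recursion at level $N$ to produce a candidate cheaper than $C'_{N+1}$, contradicting \cref{nicest:ell_iminimum} there.
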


\begin{proof}
    \cref{property:progress}: 
    Assume that $x \notin Y$.
    Let $\cC_x$ be the collection of all those $C_i$ that contain $x$. Then~$\cC_x$ is non-empty since by the definition of $Y$ at least one $C_i$ contains $x$. We claim that $C := \bigcup \cC_x$ is a component of $H-Y$ with $|N(C)| < k = |X|$. 
    Indeed, it is immediate from \cref{recursion:buildingpackage} that every $C_i \in \cC_x$ is a region as in \ref{RegionsCaseA}, since the regions that where chosen in \ref{RegionsCaseB} avoid $x$; in particular, every $C_i \in \cC_x$ is a $\lk$-region.
    Since all the $C_i \in \cC_x$ meet in $x$ and are nested, $\cC_x$ is a chain (with respect to inclusion).
    Thus, \cref{lem:limitofregionissuchregion} ensures that $C$ is a region of $H$ with $|N(C)| < k = |X|$.
    
    To finish the proof of \cref{property:progress}, it thus remains to show that $C$ is a component of $H-Y$. For this, it suffices to prove that $N(C) \subseteq Y$ since $C$ is a region and hence connected.
    As $C = \bigcup \cC_x$, we have $N(C) \subseteq \bigcup \{N(C_i) \mid C_i \in \cC_x\}$.
    Since all the $C_i$ are nested, every $C_j$ which meets the neighbourhood of some $C_i \in \cC_x$ already contains $C_i$; in particular, such $C_j$ are also in $\cC_x$ as $x \in C_i \subseteq C_j$.
    Thus $N(C)$ meets no~$C_i$, and so the definition of $Y$ yields that $N(C) \subseteq Y$.

    \cref{property:DareC_i}:
    By the assumptions on $\cD$ and the choice of the $C_i$, in every step $i$ of \cref{recursion:buildingpackage} every $D \in \cD$ is a candidate for the region $C_i$ in \cref{RegionsCaseB} as long as $D$ is $\eps$-linked for some end $\eps$ of $H$ which lives in no $C_j$ for $j < i$. Again by the assumptions on $\cD$, the region $D \in \cD$ is $\eps$-linked for some end $\eps$ of $H$.
    So, since \cref{recursion:buildingpackage} terminates because of \cref{RegionsCaseC} by \cref{observation:basicpropertiesofrecursion} \cref{property:terminates}, the end $\eps$ lives is some $C_i$. As $\eps$ lives in both~$C_i$ and~$D$, they touch. Thus, $C_i \subseteq D$ or $D \subseteq C_i$ since they are nested by \cref{recursion:buildingpackage}.
    Hence, we finish the proof of \cref{property:DareC_i} by showing that if $C_i \subseteq D$, then $C_i = D$.
    For this, it suffices to show that in step~$i$ of \cref{recursion:buildingpackage}, the region $D$ was a candidate for $C_i$ and satisfies \cref{nicest:ell_iminimum}, because then $C_i \subseteq D$ together with \cref{nicest:inclusion-wisemaximal} yields $C_i = D$. 

    It is immediate from \cref{recursion:buildingpackage} and the assumptions on $\cD$ that in step $i$, the region $D \in \cD$ is nested with all $C_j$ for $j <i$, disjoint from $X$ and that $X \cap N(D) \subseteq \Dom(D)$.
    Since $D$ is $\eps$-linked and $\eps$ lives in $C_i \subseteq D$, there are $|N(D)|$ disjoint $N(D)$--$N(C_i)$ paths in $\bar{D} - C_i$. 
    These paths can be extended to disjoint $N(D)$--$\eps'$ paths and rays where $\eps'$ is the end so that $C_i$ is $\eps'$-linked. 
    In particular, $D$ is end-linked to the end $\eps'$ which lives in no $C_j$ for $j < i$, and $\ell_i = |N(C_i)| \geq |N(D)|$. Thus, $D$ was a candidate for $C_i$ and satisfies \cref{nicest:ell_iminimum}, as desired.

    \cref{property:rayless}:
    Suppose towards a contradiction that there is an end $\eps$ of $H$ that lives in no~$C_i$.
    For every $\ell \in \N$, let $\cC_{\max}^{<\ell}$ be the set of all $C_i$ with $\ell_i < \ell$ that are $\subseteq$-maximal among all $\lell$-regions~$C_j$. 
    By \cref{observation:basicpropertiesofrecursion} \cref{property:iftheC_iareincreasingthenell_iarestrictlyincreasing} every $C_i$ with $\ell_i < \ell$ is included in some $C_j \in \cC^{<\ell}_{\max}$.  
    Moreover, since all $C_i$ are nested, the regions in $\cC^{< \ell}_{\max}$ are pairwise non-touching. 
    Hence, by \cref{lem:theultimateuncrossing} applied to $Z := X \cup \{x\}$ and the set $\cE$ of all $\subseteq$-maximal elements in $\cD \cup \cC^{<k}_{\max}$, there exists a $\eps$-linked $\lA$-region $C$ that is disjoint from $X \cup \{x\}$, satisfies $(X \cup \{x\}) \cap N(C) \subseteq \Dom(\eps)$ and is nested with all $\subseteq$-maximal regions in $\cD \cup \cC^{<k}_{\max}$. Set $\ell := |N(C)|$. 
    Then \cref{lem:theminiuncrossinglemma} applied to $C$ and the set $\cE$ of all $\subseteq$-maximal regions in $\cD \cup \cC^{< \ell + 1}_{\max}$ yields a $\eps$-linked $\leell$-region~$C'$ which is nested with all regions in $\cD \cup \cC^{< \ell + 1}_{\max}$ and such that $(X \cup \{x\}) \cap N(C') \subseteq \Dom(C')$, where we have used that $(X \cup \{x\}) \cap N(D) \subseteq \Dom(D)$ for all $D \in \cC^{<\ell+1}_{\max} \setminus \cC^{<k}_{\max}$ since any such region $D$ was chosen in \cref{RegionsCaseB}. 
    Moreover, by \cref{lem:theminiuncrossinglemma}, $C'$ is not strictly contained in any region in $\cD \cup \cC^{< \ell + 1}_{\max}$. In particular, $C'$ is nested with all $D \in \cD$ and all $C_i$ with $\ell_i \leq \ell$.
    But this contradicts \cref{recursion:buildingpackage}: If there is no $C_i$ with $\ell_i > \ell$, then $C'$ contradicts that \cref{recursion:buildingpackage} terminated because of \cref{RegionsCaseC} by \cref{observation:basicpropertiesofrecursion} \cref{property:terminates}. Otherwise, if there exists some $C_i$ with $\ell_i > \ell$, then~$C'$ witnesses that the first such $C_i$ did not satisfy \cref{nicest:ell_iminimum}.

    \cref{property:componentsareC_i}:
    Let $C$ be a component of $H-Y$. 
    By Zorn's Lemma there exists a $\subseteq$-maximal index set $I$ such that $C_j \subseteq C_i \subseteq C$ for all $j \leq i \in I$. Then $\bigcup_{i \in I} C_i = C$. Indeed, if $C' := \bigcup_{i \in I} C_i \subsetneq C$, then $N(C') \cap C \neq \emptyset$ since $C$ is connected. So by the definition of $Y$, there is some $C_j$ such that $N(C') \cap C_j \neq \emptyset$. In particular, $C_j \subseteq C$ since $C_j$ is connected. But since $N(C') \subseteq \bigcup_{i \in I} N(C_i)$, the $(C_i \mid i \in I)$ are $\subseteq$-increasing and all the $C_i$ are nested, we have $C_i \subseteq C_j$ for all $i \in I$, which contradicts that $I$ is $\subseteq$-maximal.
    By \cref{observation:basicpropertiesofrecursion} \cref{property:ell_iareincreasing}, the sequence $(\ell_i)_{i \in I}$ is strictly increasing. In particular, $I$ is either finite or of the same order type as~$\N$. In the former case we are done as $C = C_i$ for $i = \max(I)$. So assuming the latter, we now aim towards a contradiction. Enumerate $I = \{i_n : n \in \N\}$ so that $i_n < i_m$ for all $n < m \in \N$.
    
    Consider the auxiliary graph $A$ that arises from $C$ by contracting $C_{i_1}$ and, for every $n \in \N$, all components of $C_{i_{n+1}} - C_{i_n}$.
    The graph $A$ is obviously infinite and connected.
    Since the $N(C_{i_n})$ are finite and the $C_{i_n}$ are connected, there are at most finitely many components of $C_{i_{n+1}} - C_{i_n}$ for every $n \in \N$.
    Together with the fact that the $N(C_{i_n})$ are finite this yields that $A$ is also locally finite.
    Hence, the minor $A$ of $C$ contains a ray by \cite{bibel}*{Proposition~8.2.1}.
    We lift this ray to a ray~$R$ in $C$ by choosing suitable paths in each branch set connecting the endvertices of the incident edges.
    Then by construction, the end of $H$ that contains $R$ lives in no $C_i$, contradicting~\cref{property:rayless}. 

    \cref{property:increasinglydisjoint}: This follows directly from \cref{property:componentsareC_i} and \cref{property:CaseBifell_iatleastk}. 

    \cref{property:linkediniterativeapplication}:
    Set $k' := \min\{ | N(C_m') | \mid m \in \{1, \dots,n \} \}$. 
    By Menger's Theorem (see for example \cite{bibel}*{Proposition~8.4.1}), we either find the desired $k'$ pairwise disjoint $X$--$N(C_n')$ paths or there is an $X$--$N(C_n')$ separator $S$ with $|S| < k'$.
    We may assume the latter and let $S$ be a $\size$-minimal such separator. Further, let $\eps$ be an end such that $C'_n$ is $\eps$-linked, which exists by \cref{recursion:buildingpackage} and \cref{property:componentsareC_i}.
    Since~$C_n'$ is $\eps$-linked, the separator $S$ is also a $\size$-minimal $X$--$\eps$ separator.
    Then the component $\tilde{C}$ of~$H-S$ in which~$\eps$ lives is $\eps$-linked and disjoint from $X$ by \cref{lem:existenceofregion}~\cref{existenceoflessthank}. Moreover, we have $C'_n \subseteq \tilde{C}$ since $S$ avoids $C'_n$ by its choice as a minimal $X$--$N(C'_n)$ separator.
    
    Now let $C$ be a $\eps$-linked $(\leq\!|S|)$-region that is disjoint from $X$, contains $C'_n$ and is contained in as many of the $C'_i$ as possible.
    Note that $\tilde{C}$ is a candidate for $C$.
    Let $N \geq 0$ be the largest index~$i$ such that $C \subseteq C'_N$, and let $C^N_i$ be the regions obtained from the application of \cref{recursion:buildingpackage} to~$H^N$ and $N(C'_N)$ (or $H$ and $X$ if $N = 0$). 
    Note that $N < n$ since otherwise we have $C = C'_n$ and hence $|N(C'_n)| = |N(C)| < k'$, a contradiction.
    Let $\cC^{\leq |S|}_{\max}$ be the $\subseteq$-maximal elements of the~$C^N_i$ with $\ell^N_i \leq |S|$. 
    Note that by \cref{observation:basicpropertiesofrecursion}~\cref{property:iftheC_iareincreasingthenell_iarestrictlyincreasing} every~$C^N_i$ with $\ell^N_i < |S|$ is contained in some $C^N_j \in \cC^{\leq |S|}_{\max}$.
    Since all regions in $\cC^{\leq |S|}_{\max} \cup \cD$ are end-linked, they are well linked by \cref{lem:endlinkedimplieswelllinked}.
    Moreover, if $\eps$ lives in $D \in \cD$, then $D \subseteq C'_n \subseteq C$ by \cref{property:DareC_i}.
    To be able to apply \cref{lem:theminiuncrossinglemma} to $C$ and the set $\cE$ of $\subseteq$-maximal regions in $\cC^{\leq |S|}_{\max} \cup \cD$ it suffices that $\eps$ lives in no $C_j^N$ with $\ell_j^N \leq |S|$.

    So suppose for a contradiction that $\eps$ lives in some $C^N_j$ with $\ell^N_j \leq |S|$. We claim that then $C'':= C^N_j \cup C'_n$ is an $(\leq |S|)$-region. Indeed, $C''$ is a region because $C^N_j$ and $C'_n$ are both connected and intersect as $\eps$ lives in both of them. Moreover, $N(C'') = (N(C^N_j) \cup N(C'_n)) \setminus V(C'') = (N( C^N_j) \setminus V(C'_n)) \dot\cup (N(C'_n) \setminus V(\bar{C}^N_j))$ has size at most $|N(C^N_j)| = \ell^N_j \leq |S|$:
    if not, one easily checks by doubling counting that $Z := ((N(C^N_j) \cup N(C'_n)) \cap V(C'')) \cup (N(C^N_j) \cap N(C'_n)) = (N(C'_n) \cap V(C^N_j)) \dot\cup (N(C^N_j) \cap V(C'_n)) \dot\cup (N(C^N_j) \cap N(C'_n))$ has size less than $|N(C'_n)|$. 
    But this contradicts that $C'_n$ is $\eps$-linked as~$\eps$ lives in a component of $C_j^N \cap C'_n$, whose neighbourhood then is contained in $Z$, and so all $N(C'_n)$--$\eps$ rays and paths have to meet $Z$. Hence, $C''$ is a $(\leq |S|)$-region with $C'_n \subseteq C''$. 
    Moreover, $C^N_j$ and $C'_{N+1}$ both contain $C'_n$ by assumption and thus touch. So since $C^N_j$ and $C'_{N+1}$ are nested by \cref{recursion:buildingpackage} (because $C'_{N+1}$ is some $C^N_i$ by \cref{property:componentsareC_i}), \cref{observation:basicpropertiesofrecursion}~\cref{property:C_iareneverdecreasing} implies that $C^N_j \subseteq C'_{N+1}$, and therefore $C'' \subseteq C'_{N+1}$. But this contradicts our choice of~$C$. 
    Thus, $\eps$ lives in no region in $\cC^{\leq |S|}_{\max} \cup \cD$.
    
    So we may apply \cref{lem:theminiuncrossinglemma} to $C$ and the set $\cE$ of $\subseteq$-maximal separations in $\cC^{\leq |S|}_{\max} \cup \cD$ to obtain an $\eps$-linked $(\leq\!|S|)$-region $C'$ which is disjoint from $X$ and which, for every region in $\cC^{\leq |S|}_{\max} \cup \cD$, either contains that region or does not touch it.
    Thus, $C'$ is nested with all $C^N_i$ with $\ell^N_i \leq |S|$ and with all $D \in \cD$.
    This concludes the proof since
    the existence of $C'$ then contradicts that $C^N_I$ satisfies \cref{nicest:ell_iminimum} where $I = \min\{i \mid \ell^n_i > |S| \}$. Here we used that $C'_{N+1}$ is some $C^N_i$ and that $|N(C'_{N+1})| \geq k' > |S|$, so the minimum is not taken over the empty set.

    \cref{property:DistinctBags}: Let $C$ be a component of $H-Y$. By \cref{property:componentsareC_i}, we have $C = C_i$ for some suitable $i$. Suppose first that $C_i$ that was chosen in \cref{RegionsCaseA}. Then $|N_H(C)| < |X|$; since $H-X$ is connected and $N_H(H-X) = X$, $N_H(C)$ contains a vertex in $V(H)\setminus X$, implying $N_H(C) \not\subseteq X$.
    Now suppose that $C_i$ that was chosen in \cref{RegionsCaseB}. Then $N_H(C)$ separates $x$ and $C$. Since $H-X$ is connected and $x \notin X$, we have $N_H(C) \not\subseteq X$. The in-particular part is now also clear.
\end{proof}

\subsection{Proof of \texorpdfstring{\cref{thm:RaylessThmTechnicalRegions}}{Theorem 7.1}} \label{subsec:ProofOfRaylessThm}

Using \cref{theorem:propertiesofrecursion:buildingpackage} we now show that the \td\ obtained from iteratively applying \cref{recursion:buildingpackage} is as desired for \cref{thm:RaylessThmTechnicalRegions}.

\begin{proof}[Proof of \cref{thm:RaylessThmTechnicalRegions}]
    First, we define the desired \td\ $(T, \cV)$ for the case $X = \interior(\sigma)$.
    The decomposition tree $T$ is a star whose edges are in a bijective correspondence to $\sigma$.
    We assign to the centre the bag $X$, and to each leaf $\ell$ the bag $A$ where $(A,B) \in \sigma$ corresponds to the edge incident with $\ell$.
    It is immediate to see that this \td\ is as desired.
    Let us assume from now on that $X \subsetneq \interior(\sigma)$.

    Suppose first  that $G$ is connected.
    By \cref{thm:FiniteTWyieldsNST}, we may fix a normal spanning tree~$T_{NST}$ of~$G$ whose root is in~$X$.
    We denote by $\cD$ the set of subgraphs $G[A \setminus B]$ of $G$ with $(A,B) \in \sigma$.
    The assumptions on $\sigma$ ensure that~$\cD$ satisfies the assumptions in \cref{recursion:buildingpackage}. 
    Since $G$ admits a normal spanning tree, all its ends have countable combined degree. 
    Hence, $G$ satisfies the assumptions of \cref{theorem:propertiesofrecursion:buildingpackage}.
    We now define the desired \td~recursively as follows.

    Let $(T^0, \cV^0)$ be the trivial \td\ of $G$ where $T^0$ is the tree on a single vertex~$r$ which is also its root and $V^0_r := V(G)$.
    Now let $n \geq 0$ and suppose that we have already constructed linked, $X$-linked, tight, componental, rooted \td s $(T^m, \cV^m)$ of finite adhesion such that for all $m \leq n$ 
    \begin{enumerate}
        \item \label{item:proofofraylees:limit}  $T^m \subseteq T^n$ and $V_t^m = V_t^n$ for all $t \in T^m$ that are not at height $m$, the decomposition tree $T^m$ has height $m$, all their leaves are on height $m$ except possibly those whose corresponding leaf separation is a $(B,A)$ with $(A,B) \in \sigma$,
        \item \label{item:proofofrayless:raylessendlinkedandIncDis} the torsos of $(T^m, \cV^m)$ at non-leaves are rayless, and
        $(T^m, \cV^m)$ satisfies \cref{itm:RaylessThmTechnicalRegions:EndLinked} and \cref{itm:RaylessThmTechnicalRegions:IncDis}.
    \end{enumerate}
    
    Note that $(T^0, \cV^0)$ satisfies all these properties immediately, where we remark that we treat the unique node of $T^0$ as a leaf.
    Let $L$ be the set of leaves of $T^n$ whose corresponding leaf separation is not some $(B,A)$ with $(A,B) \in \sigma$. 
    Note that, if $n = 0$, then $L = \{r\}$.
    For every leaf $\ell \in L$, we construct a \td\ $(T^\ell, \cV^\ell)$ of $G[V_\ell^{n}]$ as follows.
    Set $X' := X$ if $n = 0$ and $\ell$ is the unique node of $T^0$, and $X' := V_f^n$ otherwise where $f$ is the unique edge of $T^n$ that is incident with~$\ell$. Further, let $x$ be a $(\leq_{T_{NST}})$-minimal vertex in $V_\ell^n \setminus (X' \cup \bigcup \cD)$. Note that, if $n= 0$, $x$ exists (but might not be unique) as $X \subsetneq \interior(\sigma)$.
    If $n >0$, the vertex $x$ exists and is unique, since~$T_{NST}$ is normal and because $(T^n, \cV^n)$ is componental by construction and so $G[V_\ell^n] - (X' \cup \bigcup \cD)$ is connected by the assumption on $\sigma$.
    
    Now apply \cref{recursion:buildingpackage} to $H := G[V_\ell^n]$ with the finite set $X'$, the vertex $x$ and $\{ D \in \cD \mid D \subseteq H \}$ to obtain $Y \subseteq V(H)$.
    Then let $T^\ell$ be the star with centre $\ell$ and whose set of leaves is the set $\cC$ of all components of $H-Y$. Further, set $V^\ell_\ell := Y$ and $V^\ell_C := \Bar{C}$ for every~$C \in \cC$.
    Then \cref{theorem:propertiesofrecursion:buildingpackage}~\cref{property:componentsareC_i} yields that for every edge $e = \ell C$ of $T^\ell$, the graph $H \strictup e$ is a $C_i$ in \cref{recursion:buildingpackage} applied to $H$.
    Thus, $(T^\ell, \cV^\ell)$ is componental and end-linked; thus, $(T^\ell, \cV^\ell)$ satisfies \cref{itm:RaylessThmTechnicalRegions:EndLinked}.
    Moreover, the torso at $\ell$ is rayless: since $(T^\ell, \cV^\ell)$ is tight, we can obtain from any ray in the torso at $\ell$ a ray in $G$ that meets $V^\ell_\ell$ infinitely often by (the comment after) \cref{prop:OnePathOrRayExtended}. 
    But this contradicts \cref{theorem:propertiesofrecursion:buildingpackage}~\cref{property:rayless}. 
    Furthermore, it satisfies the following (which will ensure \cref{itm:RaylessThmTechnicalRegions:IncDis} of $(T^{n+1},\cV^{n+1})$) by \cref{theorem:propertiesofrecursion:buildingpackage}~\cref{property:increasinglydisjoint}: if $|V^\ell_e| \geq |V^n_f|$ for an edge $e \in T^\ell$, then $V^\ell_e \cap V^n_f \subseteq \Dom(H \up e)$.
    
    By construction, the $T^\ell$ are pairwise disjoint and only share their centre $\ell$ with $T^n$.
    We set $T^{n+1} := T^n \cup \bigcup_{\ell \in L} T^{n+1}_\ell$ with $\rt(T^{n+1}) := \rt(T^n) (= r)$, and bags $V^{n+1}_t := V^n_t$ for every node $t \in T^n - \ell$ and $V^{n+1}_t := V^\ell_t$ for every node $t \in T^{n+1}\setminus T^n$ where $\ell$ is the unique leaf of $T^n$ such that $t \in T^\ell$.
    In particular, the decomposition tree $T^{n+1}$ has height $n+1$, all its leaves are on height $n+1$, except possibly those whose corresponding leaf separation is $(B,A)$ for some $(A,B) \in \sigma$, and $(T^{n+1}, \cV^{n+1})$ satisfies all properties that we demanded from the $(T^m, \cV^m)$:
    All properties but the ($X$-)linkedness of $(T^{n+1}, \cV^{n+1})$ follows immediately from the construction and the discussion of the properties of the $(T^\ell, \cV^\ell)$.
    Its ($X$-)linkedness is ensured by \cref{theorem:propertiesofrecursion:buildingpackage}~\cref{property:linkediniterativeapplication}.
    
    Let $(T,\cV)$ be the limit of $(T^n, \cV^n)$ for $n \to \infty$, that is $T = \bigcup_{n \in \N} T^n$, $\rt(T) := r = \rt(T^n)$ for all $n \in \N$ and $V_t := V^N_t = V^n_t$ for all $n \geq N$ where $N$ is the minimal number such that $t \in T^N$ and either $t$ is not a leaf of $T^N$ or its leaf separation is a $(B,A)$ with $(A,B) \in \sigma$. 
    
    We first show that $(T, \cV)$ is a \td~of $G$. For this, by \cref{item:proofofraylees:limit}, the definition of $(T, \cV)$ and since all $(T^n, \cV^n)$ are \td s of $G$, it suffices to show that every vertex of $G$ is contained in some bag $V_t$. In other words, we need to show that each $v \in V(G)$ satisfies~\labelText{($\ast$)}{tag:ast}: 
    there are $n \in N$ and $t \in T^n$ such that $v \in V^n_t$ and $t$ is either a non-leaf of $T^n$ are its leaf separation is a $(B,A)$ with $(A,B) \in \sigma$.
    Let us first assume that $v \notin D$ for all $D \in \cD$.
    Then \nameref{tag:ast} is ensured by considering \cref{theorem:propertiesofrecursion:buildingpackage}~\cref{property:progress} along $\rt(T_{NST})T_{NST}v = v_0 v_1 \dots v_m$:
    Suppose for a contradiction that \nameref{tag:ast} does not hold for $v$. 
    Then let $i$ be the minimal index such that $v_i$ does not satisfy \nameref{tag:ast}; in particular, $v_i \in G \strictup e^n$ for a unique edge $e^n$ incident with a leaf of $T^n$ for every~$n \in \N$.
    We have $i \geq 1$, since $\rt(T_{NST}) \in X \subseteq V^1_r$ by assumption on $T_{NST}$ and construction of $V_r^1$.
    Thus,~$v_{i-1}$ satisfies \nameref{tag:ast};
    let $N$ be a sufficiently large integer given by \nameref{tag:ast} of~$v_{i-1}$.
    Then $v_i = \min_{\leq_S} (V(G \strictup e^n) \setminus (\bigcup V(\cD)))$ for every $n \geq N$.
    Hence, \cref{theorem:propertiesofrecursion:buildingpackage}~\cref{property:progress} ensures that the finite sets $V^n_{e^n}$ strictly decrease in their size for $n \geq N$, which is a contradiction.

    Second, assume that $v \in D$ for some $D \in \cD$. 
    By \cref{theorem:propertiesofrecursion:buildingpackage}~\cref{property:DareC_i}, we find a path $P$ ending in a leaf $\ell$ of $T$ or a ray $P$ in $T$ starting in the root $r$ of $T$ such that $D \subseteq G \strictup e$ for every $e \in P$. 
    If~$P$ is a path, then we find $v \in V_\ell$ as desired. 
    In particular, by construction of $(T, \cV)$ and since the regions in $\cD$ are nested, the leaf separation at $\ell$ is $(V(G) \setminus V(D) , V(D) \cup N(D))$.
    
    But $P$ cannot be a ray. Indeed, applying \nameref{tag:ast} to the finite set $N(D)$ yields $N \in \N$ such that all $u \in N(D)$ are contained in bags $V^N_t$ at nodes $t \in T^N$ which are either not leaves or their corresponding leaf separations are some $(B,A)$ with $(A,B) \in \sigma$. 
    Then $D \subseteq G \strictup e_N$ for the $N$-th edge of $P$ by the definition of $P$, and $N(D) \subseteq G \down e_N$ by the choice of $N$. 
    Since $(T^N, \cV^N)$ is componental, this implies that $V(D) \cup N(D) = V^N_{\bar{D}}$ is a bag at a leaf of $(T^N, \cV^N)$, and thus, by the construction of $(T, \cV)$ also appears as a bag at a leaf of $(T, \cV)$.
    In particular, the argument above implies that all $(B,A)$ with $(A,B) \in \sigma$ are leaf separations of $(T, \cV)$. Since $(T, \cV)$ has no other leaf separations by construction and \cref{item:proofofraylees:limit}, it follows that the leaf separations of $(T, \cV)$ are precisely $\{(B,A) \mid (A,B) \in \sigma\}$.

    Now it is immediate from the construction that $(T,\cV)$ is linked, $X$-linked, tight, componental, has finite adhesion and satisfies \cref{itm:RaylessThmTechnicalRegions:rayless}, \cref{itm:RaylessThmTechnicalRegions:EndLinked} and \cref{itm:RaylessThmTechnicalRegions:IncDis} because the $(T^n, \cV^n)$ are linked, $X$-linked, tight, componental, have finite adhesion and satisfy \cref{item:proofofrayless:raylessendlinkedandIncDis}.
    
    We are thus left to show the `moreover'-part. For this, recall that $(T, \cV)$ is tight and componental, so every $G[V^n_\ell]$ considered in the construction of $(T, \cV)$, except possibly $G[V^0_r]$, satisfies the premise of \cref{property:DistinctBags}. It follows for all edges $e = ts \in T$ with $t \leq_T s$ and $t \neq r$ that $V_t \supsetneq V_e \subsetneq V_s$. 
    In particular, if $G-X$ is connected and $N_G(G-X) = X$, then also $G[V^0_r]$ satisfies the premise of \cref{property:DistinctBags}, so $V_r \supsetneq V_e \subsetneq t$ for all edges $e = rt \in T$ and $X \subsetneq V_r$. 

    The proof of the case that $G$ is disconnected is analogous by choosing for each component $C$ of~$G$ a normal spanning tree $T^{C}_{NST}$ and considering the partial order $\leq_{NST}$ as the disjoint union of their tree orders.\footnote{The only adaption one has to do is to prove \nameref{tag:ast} for the roots of every $T^C_{NST}$.}
\end{proof}

\section{Lean tree-decompositions} \label{sec:LeanTreeDecs}

In this section we prove \cref{main:LeanTD}, the strengthening of~\cref{main:LinkedTightCompTreeDecompnew} for graphs without half-grid minor in which we replace the linkedness of the \td\ with leanness.
We restate it here in its more detailed version:
 
\begin{customthm}{Theorem~3'}[Detailed version of \cref{main:LeanTD}] \label{thm:LeanTDTechnical}
Every graph~$G$ without half-grid minor admits a lean, cofinally componental, rooted \td\ into finite parts which displays the infinities.

Moreover, if the tree-width of $G$ is finitely bounded, then the \td\ can be chosen to have width $\tw(G)$.
\end{customthm}

The proof of~\cref{thm:LeanTDTechnical} is structured as follows.
We start our construction with the \td~$(T, \cV)$ of the graph~$G$ from~\cref{maincor:TreeDecompDisplayingInfsTechnical}, and we then aim to refine its (finite) parts $G[V_t]$ via lean \td s $(T^t, \cV^t)$ given by the finite version of~\cref{thm:LeanTDTechnical}, that is Thomas's \cref{thm_intro_krizthomas2}.
If $V_t$ is a critical vertex set, then we may choose $(T^t, \cV^t)$ as the trivial \td\ into one bag.
In order to combine such refinements the \td s~$(T^t, \cV^t)$ along~$(T, \cV)$, we apply the finite result not to the parts themselves, but to the torsos.
Torsos, however, need not have the same tree-width as~$G$.
So our second ingredient to the proof of~\cref{thm:LeanTDTechnical} provides a sufficient condition on the separations induced by the edges incident with a node $t \in T$ which allows us to transfer the tree-width bound from~$G$ to the torso of $(T, \cV)$ at $t$ via \cite{SATangleTreeDualityInfGraphs}*{Corollary 6.3} (see \cref{lem:TorsosHaveSameTreeWidth} below).
Moreover, it also yields the well-linkedness of the separations on their left side (see \cref{lem:RobustImpliesWellLinked} below). This property is crucial to the proof of \cref{thm:LeanTDTechnical} as it ensures that the \td\ $(T', \cV')$ arising from~$(T, \cV)$ and the $(T^t, \cV^t)$ by refinement is lean:
First, if all separations induced by edges at some node~$t$ are left-well-linked, then we can transfer families of disjoint paths from the torso at~$t$ to $G$ via \cref{lem:DisjointPathsExtendedByWellLinked}~\cref{itm:DisjPathsExtended}, which yields the paths families required for lean between bags that belong to the same $(T^t, \cV^t)$.
Second, whenever the separation induced by an edge $e = t_0t_1$ of $T$ is well linked on both sides, this ensures that the leanness of the two $(T^{t_i}, \cV^{t_i})$ combines to the leanness of the \td\ resulting from gluing them together along $V_{e}$.
This will ensure that we obtain disjoint paths families, as required for lean, also between bags of $(T', \cV')$ that belong to distinct \td s $(T^t, \cV^t)$ and $(T^s, \cV^s)$.
Our third, and last, ingredient to the proof of \cref{thm:LeanTDTechnical}, then, is a pre-processing step: We first contract all edges which neither satisfy the well-linked condition on both sides nor are incident with a node whose bag is a critical vertex set.
This will ensure that the sufficient condition mentioned above is met by all separations induced by edges of $T$ that are incident with a node of $T$ whose torso we need to refine (recall that we do not need to refine those torsos whose bags are critical vertex sets).
Combining these three ingredients then yields a \td\ of~$G$, which we prove to be as desired.
\medskip

Following \cite{SATangleTreeDualityInfGraphs}, we call a finite-order separation $(A, B)$ of a graph $G$ \defn{left-$\ell$-robust} for~$\ell \in \N$ if there exist a set $U \subseteq A$ of size $\ell$ and a family~$\{P_x \mid x \in A \cap B\}$ of pairwise disjoint paths in $G[A]$ such that~$P_x$ ends in~$x$ and for each $x \in A \cap B$ there are~$\ell$ many $U$--$P_x$ paths in~$G[(A \setminus B) \cup \{x\}]$ that do not meet outside~$P_x$.
Analogously, $(A,B)$ is \defn{right-$\ell$-robust} for $\ell \in \N$ if $(B,A)$ is left-$\ell$-robust.
We call $(A,B)$ \defn{$\ell$-robust} if it is both left- and right-$\ell$-robust.

\begin{lemma}{\cite{SATangleTreeDualityInfGraphs}*{Corollary~6.3}} \label{lem:TorsosHaveSameTreeWidth}
    Let $G$ be a graph of tree-width at most $w \in \N$, and let~$\sigma$ be a finite star of separations of~$G$ of order at most~$w+1$ whose interior is finite.
    Suppose that all separations in~$\sigma$ are~left-$\ell$-robust for~$\ell = (w+1)^2(w+2)+w+1$.
    Then $\torsostar(\sigma)$ has tree-width at most $w$.
\end{lemma}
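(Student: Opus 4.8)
The plan is to fix an arbitrary \td\ $(S,\cW)$ of $G$ of width at most $w$ and to reduce the statement to the claim that, for every $(A,B)\in\sigma$, some bag of $(S,\cW)$ contains $A\cap B$. This reduction is easy. First, $\torsostar(\sigma)$ is a finite graph, since $\interior(\sigma)$ is finite and $\sigma$ is a finite star (and if $\sigma=\emptyset$ then $\torsostar(\sigma)=G$, so there is nothing to prove). Second, for every $(A,B)\in\sigma$ we have $A\cap B\subseteq\interior(\sigma)$, because $A\cap B\subseteq B$ and, for every other $(C,D)\in\sigma$, the star property gives $A\subseteq D$ and hence $A\cap B\subseteq D$. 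Now suppose that for each $(A,B)\in\sigma$ there is a node $t_{A,B}\in S$ with $A\cap B\subseteq W_{t_{A,B}}$, and set $W'_s:=W_s\cap\interior(\sigma)$ for $s\in S$. Since $G[\interior(\sigma)]$ is an induced subgraph of $G$, the pair $(S,\cW')$ is a \td\ of $G[\interior(\sigma)]$ of width at most $w$; and for each $(A,B)\in\sigma$ the clique that the torso adds on $A\cap B$ lies inside the single bag $W'_{t_{A,B}}=W_{t_{A,B}}\cap\interior(\sigma)\supseteq A\cap B$. As every remaining edge of $\torsostar(\sigma)$ is a genuine edge of $G[\interior(\sigma)]$ and is therefore already covered by $(S,\cW')$, it follows that $(S,\cW')$ is a \td\ of $\torsostar(\sigma)$ of width at most $w$, as desired.

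It remains to prove the displayed claim, which is where left-$\ell$-robustness and the precise size of $\ell$ are used. Fix $(A,B)\in\sigma$, write $W:=A\cap B$, and let $U$ (of size $\ell$), the pairwise disjoint paths $P_x$ ($x\in W$) with $P_x$ ending in $x$, together with the bundles of $\ell$ paths from $U$ to $P_x$ inside $G[(A\setminus B)\cup\{x\}]$ that meet pairwise only on $P_x$, be the data witnessing that $(A,B)$ is left-$\ell$-robust. The key step will be to show that for every edge $e$ of $S$ with adhesion set $W_e$, the set $W\setminus W_e$ lies on a single side of the separation induced by $e$. Granting this, two vertices $x,y\in W$ can never be separated by an adhesion set of $(S,\cW)$ avoiding both of them, so the subtrees $S^z:=\{\,s\in S: z\in W_s\,\}$ of $S$ for $z\in W$ pairwise intersect; since there are at most $|W|\le w+1$ of them, the Helly property for subtrees of a tree provides a node $t$ with $W\subseteq W_t$, which is what we want.

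To prove the key step, suppose towards a contradiction that $x,y\in W$ lie on opposite sides $Z_x,Z_y$ of $e$ and that both avoid $W_e$; recall $|W_e|\le w+1$. If $P_x$ avoids $W_e$ then, being connected and containing $x\in Z_x\setminus Z_y$, it lies in $Z_x$; its bundle paths are pairwise disjoint off $P_x$, so at most $|W_e|\le w+1$ of them meet $W_e$, and the remaining at least $\ell-(w+1)$ of them lie in $Z_x$ and exhibit that many distinct vertices of $U$ on $x$'s side. Arguing symmetrically for $y$, and using $|U|=\ell$ together with $|U\cap W_e|\le w+1$, we would obtain $2(\ell-(w+1))\le\ell+(w+1)$, which is false for our value of $\ell$ (the case $w=0$ being trivial, since then $|W|\le 1$ and $\torsostar(\sigma)$ is edgeless). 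The genuinely difficult case --- and the one I expect to be the main obstacle --- is when some $P_x$ runs through $W_e$: a single adhesion vertex on $P_x$ can then cut $x$ off from all $\ell$ of its bundle paths at once, so the simple count collapses and one must instead iterate the argument along the at most $w+1$ vertices of $W_e$ that lie on the various $P_x$, re-establishing robustness after each such crossing at the cost of a controlled loss. It is exactly this bookkeeping that forces $\ell=(w+1)^2(w+2)+w+1$ to be a polynomial of this shape; in \cite{SATangleTreeDualityInfGraphs} the whole argument is packaged through the tangle--tree duality theory developed there, and the present statement is deduced as a corollary.
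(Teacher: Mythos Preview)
The paper does not prove this lemma at all: it is quoted verbatim as \cite{SATangleTreeDualityInfGraphs}*{Corollary~6.3}, with no argument supplied. So there is no ``paper's own proof'' to compare your attempt against.

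Your reduction is sound. Restricting an arbitrary width-$w$ \td\ $(S,\cW)$ of $G$ to $\interior(\sigma)$ yields a \td\ of $G[\interior(\sigma)]$ of the same width, and if each separator $A\cap B$ lies in a single bag then the torso edges are covered too; the Helly property for subtrees correctly reduces this to your ``key step'' that no adhesion set $W_e$ separates two vertices of $W=A\cap B$.

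The gap is exactly where you say it is, and it is a real gap, not just missing bookkeeping. Your counting argument works only when \emph{both} $P_x$ and $P_y$ avoid $W_e$. Since the paths $P_z$ for $z\in W$ are pairwise disjoint and $|W|\le w+1=|W_e|$ (at worst), it is entirely possible that \emph{every} $P_z$ meets $W_e$; there is no way to sidestep the hard case by choosing a better pair. Once some $P_x$ passes through a vertex of $W_e$, a single adhesion vertex on $P_x$ can sever $x$ from all $\ell$ of its bundle paths simultaneously, so the count $\ell-(w+1)$ collapses to zero and nothing you have written recovers it. You gesture at an iteration that would explain the cubic shape of $\ell$, but you do not carry it out, and you end by deferring to the cited source. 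That makes the proposal a plan rather than a proof: the only nontrivial content of the lemma is precisely the case you leave open.
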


\begin{lemma} \label{lem:RobustImpliesWellLinked}
    If a separation of a graph~$G$ has order~$k$ and is~left-$(2k+1)$-robust, then it is left-well-linked.    
\end{lemma}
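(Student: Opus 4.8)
The plan is a proof by contradiction via Menger's theorem, with robustness supplying exactly the connectivity that a small separator cannot destroy. So let $(A,B)$ have order $k$ and be left-$(2k+1)$-robust, witnessed by a set $U\subseteq A$ with $|U|=\ell:=2k+1$ and a family of pairwise disjoint paths $(P_x)_{x\in A\cap B}$ in $G[A]$ with $P_x$ ending in $x$, where for each $x$ there are $\ell$ many $U$--$P_x$ paths in $G[(A\setminus B)\cup\{x\}]$ that pairwise meet only on $P_x$. The first thing I would record is that in fact $V(P_x)\cap(A\cap B)=\{x\}$, so $P_x\subseteq G[(A\setminus B)\cup\{x\}]$: a vertex $z\in V(P_x)\cap(A\cap B)$ with $z\neq x$ would also lie on $P_z$, contradicting disjointness of the family. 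Similarly, the $\ell$ fan paths attached to a given $P_x$ have pairwise distinct endvertices in $U$ — two of them sharing such an endvertex would meet off $P_x$ there — hence these fan paths reach \emph{every} vertex of $U$.

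Now suppose, for a contradiction, that $(A,B)$ is not left-well-linked, and fix disjoint $X,Y\subseteq A\cap B$ witnessing this. After replacing the larger of $X,Y$ by a subset of size $m:=\min\{|X|,|Y|\}$ (a path between the smaller sets is still one between the original sets), I may assume $|X|=|Y|=m$; note $m\ge 1$ (for $m=0$ there is nothing to prove) and $2m\le|A\cap B|=k$. Put $H:=G[(A\setminus B)\cup X\cup Y]$; then a family of $m$ disjoint $X$--$Y$ paths through $A\setminus B$ is the same as $m$ disjoint $X$--$Y$ paths in $H$ (shorten any path using a vertex of $X\cup Y$ internally). Since no such family exists, Menger's theorem provides a set $S\subseteq V(H)$ with $|S|\le m-1$ meeting every $X$--$Y$ path of $H$.

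The crux is to exhibit $x_1\in X\setminus S$ and $y_1\in Y\setminus S$ lying in a common component of $H-S$, contradicting the choice of $S$. Since the $m$ paths $\{P_x:x\in X\}$ are pairwise disjoint and $|S|\le m-1$, one of them, say $P_{x_1}$, is disjoint from $S$; in particular $x_1\notin S$. The $\ell$ fan paths attached to $P_{x_1}$ lie in $G[(A\setminus B)\cup\{x_1\}]\subseteq H$, reach all of $U$, and pairwise meet only on $P_{x_1}$; since $S$ is disjoint from $P_{x_1}$, each vertex of $S$ lies on at most one of them, so at least $\ell-|S|$ of these fan paths survive in $H-S$. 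Appending to each survivor the sub-path of $P_{x_1}$ (necessarily $S$-free) from its endvertex on $P_{x_1}$ to $x_1$ shows that $x_1$ reaches a set $U_{x_1}\subseteq U$ with $|U_{x_1}|\ge\ell-|S|$ inside $H-S$. The same argument on the $Y$-side yields $y_1\in Y\setminus S$ and $U_{y_1}\subseteq U$ with $|U_{y_1}|\ge\ell-|S|$ reachable from $y_1$ in $H-S$. Then $|U_{x_1}|+|U_{y_1}|\ge 2(\ell-|S|)\ge 2\ell-2(m-1)>\ell=|U|$, using $\ell=2k+1$ and $2m\le k$, so $U_{x_1}\cap U_{y_1}\neq\emptyset$; any vertex of this intersection joins $x_1$ and $y_1$ in $H-S$, the desired contradiction.

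The step I expect to require the most care is the last paragraph: one must keep exact track of which fan paths a vertex of $S$ can kill, and it is here that the earlier observation $P_x\subseteq G[(A\setminus B)\cup\{x\}]$ does the real work — it is what lets us both choose $x_1$ with $P_{x_1}$ entirely off $S$ (a counting argument over the pairwise-disjoint spines) and then guarantee that each vertex of $S$ meets at most one of the $\ell$ fan paths at $P_{x_1}$. The numerology is deliberately generous: the argument only needs $\ell>2m-2$, which $\ell=2k+1$ far exceeds since $2m\le k$. The remaining reductions — to $|X|=|Y|=m$, and between paths ``through $A\setminus B$'' and paths in $H$ — are routine.
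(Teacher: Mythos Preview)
Your proof is correct and follows essentially the same approach as the paper: assume a failure, apply Menger in $H=G[(A\setminus B)\cup X\cup Y]$ to get a small separator $S$, use disjointness of the spines $P_x$ to find $P_{x_1},P_{y_1}$ avoiding $S$, and then use the fan paths together with a pigeonhole count on $U$ to connect $x_1$ to $y_1$ in $H-S$. You are more explicit about two points the paper leaves implicit --- that $V(P_x)\cap(A\cap B)=\{x\}$ (so the spines and fans all live in $H$) and that the $\ell$ fan paths at a given spine hit all of $U$ --- and your numerology uses the sharper bound $|S|\le m-1$ with $2m\le k$ rather than the paper's cruder $|S|\le k$, but the argument is otherwise the same.
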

\begin{proof}
    Consider a~left-$(2k+1)$-robust separation~$(A, B)$ of~$G$, and let~$X, Y \subseteq A \cap B$ be disjoint.
    Suppose for a contradiction that there is no family of~$\min\{|X|, |Y|\}$ disjoint $X$--$Y$~paths through~$A \setminus B$ in $G$.
    By~Menger's theorem (see for example \cite{bibel}*{Proposition~8.4.1}), there then is an $X$--$Y$~separator $S$ of size less than~$\min\{|X|, |Y|\}$ in~$G[(A\setminus B) \cup X \cup Y]$.
    
    Now fix a set~$U$ and a path family~$\{P_x \mid x \in A \cap B\}$ which witness that~$(A, B)$ is~left-$(2k+1)$-robust, where~$k$ is the order of~$(A, B)$. 
    Since~$|S| < \min\{|X|, |Y|\}$ and the~$P_x$ are pairwise disjoint, there are~$x \in X$ and~$y \in Y$ such that~$P_x$ and~$P_y$ avoid~$S$.
    For~$z \in \{x,y\}$, at least~$k+1$ of the~$2k+1$ $P_z$--$U$~paths in~$G[(A \setminus B) \cup \{z\}]$ given by the left-$(2k+1)$-robustness avoid~$S$.
    So since $U$ has size~$2k+1$, there are such a~$P_x$--$U$~path $Q_x$ and such a~$P_y$--$U$~path $Q_y$ that both avoid~$S$ and end in the same vertex in~$U$.
    Hence, $P_x + Q_x + Q_y + P_y$ is a connected subgraph of~$G[(A\setminus B) \cup X \cup Y]$ which meets $X$ and~$Y$ but avoids~$S$. 
    This contradicts that $S$ is an $X$--$Y$~separator in~$G[(A\setminus B) \cup X \cup Y]$, which completes the proof.
\end{proof}

Let us now turn to the third ingredient for our proof of \cref{thm:LeanTDTechnical}: the pre-processing step in which we contract certain edges of the \td\ from \cref{maincor:TreeDecompDisplayingInfs}.
This ensures that the assumptions of both~\cref{lem:RobustImpliesWellLinked,lem:TorsosHaveSameTreeWidth} are met at all edges incident with nodes whose (finite) torso we later aim to refine using~Thomas's \cref{thm_intro_krizthomas2}. 

To simplify the wording, let us thus make the following definitions.
Given some $m \in \N_0$, we call a separation of~$G$ of order~$k$ \defn{left-$m$-good} if it is~$\ell$-left-robust for~$\ell := \max\{m, 2k+1\}$.
Analogously, $(A,B)$ is \defn{right-$m$-good} for $m \in \N$ if $(B,A)$ is left-$m$-good.
We call~$(A, B)$ \defn{$m$-good} if it is left- and right-$m$-good.
An left-$m$-good separation of~$G$ is \defn{left-good} if $m = (w+1)^2(w+2)+w+1$ for $G$ with $w = \tw(G) \in \N$ or $m = 0$ for graphs $G$ whose tree-width is not finitely bounded.
Analogously, we define \defn{right-good}.
We call~$(A, B)$ \defn{good} if it is both left- and right-good.
The bounds in the definition of (left-)good are exactly the ones sufficient to apply~\cref{lem:TorsosHaveSameTreeWidth,lem:RobustImpliesWellLinked} in the respective contexts.

Setting out from \cref{maincor:TreeDecompDisplayingInfsTechnical}, our pre-processing step yields the following result: 
\begin{lemma} \label{lem:MakeTDrobust}
    Let $G$ be a graph without half-grid minor, let $m \in \N$ and let $(T, \cV)$ be a fully tight, rooted \td\ into finite parts which displays the infinities and satisfies \cref{itm:TreeDecompDisplayingInfsTechnical:CofinComp} from \cref{maincor:TreeDecompDisplayingInfsTechnical}.
    Then the \td\ $(T', \cV')$ of $G$ induced by contracting every edge $st$ of~$T$ with $\deg(s), \deg(t) < \infty$ whose induced separation is not $m$-good has finite parts, is cofinally componental, and displays the infinities. 
    Moreover,
    \begin{enumerate}
        \item \label{itm:MakeTDrobust:goodatnoncritical}
        if $\deg(t) < \infty$ for $t \in T'$, then all separations in $\sigma'_t$ at $t$ are left-$m$-good. 
    \end{enumerate}
\end{lemma}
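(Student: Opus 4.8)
The plan is to prove \cref{lem:MakeTDrobust} by carefully tracking which properties of a \td\ are preserved under the specific edge-contractions described, and then arguing the `moreover'-part from the contraction criterion together with \cref{lem:TechnicalCriticalVertexSetsOfTorsos}.

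First I would observe that the contracted edges $st$ all satisfy $\deg(s),\deg(t)<\infty$, so in particular no infinite-degree node of $T$ is an endvertex of a contracted edge; hence the infinite-degree nodes of $T$ (and their bags) are exactly the infinite-degree nodes of $T'$ (and their bags). Since $(T,\cV)$ displays the critical vertex sets, so does $(T',\cV')$: the bijection from infinite-degree, finite-bag nodes to $\crit(G)$ is untouched, because branch sets consisting of contracted edges are finite and sit between finite-degree nodes, so the contracted branch set containing a node $t$ has the same bag-structure around it. Likewise, $(T',\cV')$ displays the tight components of every critical vertex set cofinitely, because for a critical vertex set $X$ the node $t_X$ has infinite degree and thus is a singleton branch set, and none of the edges at $t_X$ get contracted (each such edge $t_Xt$ with $\deg(t_Xt)$-related separation has $t_X$ of infinite degree). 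The ends are displayed homeomorphically, with combined degrees and dominating vertices, because these are invariants of the induced \td\ obtained by any edge-contraction that does not collapse a whole rooted ray, and each branch set is finite (by \cref{itm:TreeDecompDisplayingInfsTechnical:CofinComp}, if $G\strictup e$ is disconnected then $\deg(t)=\infty$, so all edges inside a finite-degree region are between finite-degree nodes and the branch sets have finitely many edges; more carefully, an infinite branch set would force infinitely many contracted edges $st$ along a ray, all with $\deg(s),\deg(t)<\infty$, but that is fine — the relevant point is that contracting a set of edges none of whose endpoints are collinear with a full rooted ray preserves $\liminf_{e\in R}V_e$ and the map $\varphi$, which holds here since the contracted edges form finite branch sets by an argument I would spell out using that bags strictly grow along edges away from critical-vertex-set nodes by \cref{itm:TreeDecompDisplayingInfsTechnical:DistinctBags}, bounding branch-set size by the bag size). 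That $(T',\cV')$ is fully tight is immediate since every separation induced by a non-contracted edge of $T'$ equals one induced by an edge of $T$, and $G\strictup e$ and $V_e$ are unchanged. Cofinal componentality of $(T',\cV')$ follows from the remark after \cref{maincor:TreeDecompDisplayingInfsTechnical}: $(T',\cV')$ displays the critical vertex sets and, since $(T,\cV)$ satisfies \cref{itm:TreeDecompDisplayingInfsTechnical:CofinComp} and contraction of edges between finite-degree nodes cannot create a disconnected $G\strictup e$ that violates it (a disconnected $G\strictup e$ in $T'$ lifts to a disconnected $G\strictup e$ in $T$, forcing $\deg(t)=\infty$ and $V_s\supseteq V_t\in\crit(G)$), the \td\ $(T',\cV')$ also satisfies \cref{itm:TreeDecompDisplayingInfsTechnical:CofinComp}, hence is cofinally componental.

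Next I would show all parts of $(T',\cV')$ are finite. Each bag $V'_t$ of $T'$ is a finite union of bags of $T$: indeed, the branch set of $T$ contracted to $t$ is finite. To see this, note that inside such a branch set every edge $st$ has $\deg(s),\deg(t)<\infty$, and by \cref{itm:TreeDecompDisplayingInfsTechnical:DistinctBags} together with \cref{lem:BagsEqualXAreConnectedInT} the bags along a chain of such edges are pairwise distinct (a repeated bag would force an infinite-degree node or a critical vertex set), and by \cref{itm:TreeDecompDisplayingInfsTechnical:CofinComp} $G\strictup e$ is connected for these edges, so \cref{prop:TD2} forces $V_e\subsetneq V_t$ strictly, hence the bag sizes strictly increase away from the $\leq_T$-minimal node of the branch set; since all bags of $T$ are finite, a branch set of finite-degree nodes with pairwise distinct, strictly growing bags must be finite. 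Therefore $V'_t$ is a finite union of finite sets, hence finite.

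Finally, the `moreover'-part \cref{itm:MakeTDrobust:goodatnoncritical}. Let $t\in T'$ with $\deg(t)<\infty$, and let $(A,B)\in\sigma'_t$ be induced by an edge $e'=st$ of $T'$. This $e'$ is a non-contracted edge of $T$, say $e'=uv$ with $v$ in the branch set of $t$ and $u$ in the branch set of $s$. Since $e'$ was not contracted, either $\deg(u)=\infty$, or $\deg(v)=\infty$, or the separation induced by $e'$ in $T$ is $m$-good — in particular left-$m$-good. In the last case we are done, since the induced separation in $T'$ is the same. In the first two cases I would argue that the separation is nonetheless left-$m$-good: if $\deg(v)=\infty$ (the node inside $t$'s branch set has infinite degree), then $v=t$ is a singleton branch set, and $(T,\cV)$ displays the critical vertex sets so $V_t=V_e\in\crit(G)$ — but wait, this contradicts $\deg(t)<\infty$ in $T'$, so this case does not occur. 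If $\deg(u)=\infty$, then $u$ is a singleton branch set, $s$ has infinite degree, and $V_{e'}=V_s=V_u\in\crit(G)$ is a critical vertex set; then $A$, the left side, is some $G\down e'$, and I claim the separation $(A,B)$ with $A\cap B$ a critical vertex set is automatically left-$m$-good: a critical vertex set $X=A\cap B$ is infinitely connected, so for every $x\in X$ there are infinitely many internally disjoint $X\setminus\{x\}$-avoiding-free paths, which lets us build the witnessing set $U\subseteq A$ of size $\ell$ and the path family — here I would use that in the graph $G\down e'$, since $(T,\cV)$ is fully tight and displays the critical vertex sets, there are infinitely many tight components of $G-X$ on the $A$-side giving arbitrarily many disjoint internally-$X$-avoiding $x$--$x'$ connections for all $x,x'\in X$, from which left-$\ell$-robustness follows by a greedy argument (cf.\ \cref{lemma:LinkingPathsAlongCritVertexSet}). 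I expect the main obstacle to be the bookkeeping in this last paragraph: pinning down exactly why an edge with a critical-vertex-set adhesion set on its critical-vertex-set-node side is left-$m$-good for arbitrary $m$, which ultimately rests on the infinite connectivity of critical vertex sets together with tightness; I would isolate this as a small sublemma. The preservation-of-displayed-infinities part is routine but needs the finiteness-of-branch-sets argument above to be stated cleanly.
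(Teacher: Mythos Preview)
Your argument has a genuine gap in the ``finite parts'' step, and this gap propagates to the ``displays the ends'' step as well.

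You claim that each branch set is finite because ``bag sizes strictly increase away from the $\leq_T$-minimal node of the branch set''. But this is false: from $V_e \subsetneq V_t$ for each edge $e=st$ with $s<_T t$ (which you correctly derive from \cref{itm:TreeDecompDisplayingInfsTechnical:DistinctBags}), you only get that $V_t \not\subseteq V_s$, i.e.\ each bag contains a vertex not in its predecessor's bag. This does \emph{not} give $|V_t| > |V_s|$, and it does not bound the length of a chain of pairwise distinct finite bags. Nothing in your argument rules out that the set of contracted edges contains a full tail of some rooted ray $R_\eps$ in $T$; if that happened, the corresponding branch set would be infinite, its bag in $T'$ would be infinite, and the end $\eps$ would no longer give rise to any ray in $T'$ --- breaking both ``finite parts'' and ``displays the ends''. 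Your parenthetical (``an infinite branch set would force infinitely many contracted edges along a ray \dots\ but that is fine'') is precisely where the argument needs substance and instead hand-waves.

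The paper's proof handles this via \cref{lem:ContractingEdgesEllRobust}: for every end $\eps$ (which has finite combined degree because $G$ has no half-grid minor), cofinitely many edges $e$ on $R_\eps$ with $|V_e|=\Delta(\eps)$ induce $m$-good separations and hence are \emph{not} contracted. This is exactly what guarantees that no branch set swallows a ray, and it is the only place the half-grid hypothesis enters. With this in hand, \cref{lem:ContractingEdgesFiniteParts} gives finite parts and ``displays the infinities'' in one stroke (via tough, rayless torsos and \cref{prop:RaylessToughGraphsAreFinite}), rather than by bounding branch-set sizes directly. Your proposal never invokes \cref{lem:ContractingEdgesEllRobust} or uses the end-degree bound, so the core mechanism is missing.

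A smaller issue in your treatment of \cref{itm:MakeTDrobust:goodatnoncritical}: you write $V_{e'}=V_s=V_u\in\crit(G)$, but in general only $V_{e'}\subseteq V_u$. The paper's argument uses $X:=V_s\in\crit(G)$ and $A\cap B=V_{e'}\subseteq X$, then observes that infinitely many tight components of $G-X$ lie in $G[A]$ (since $s$ has infinite degree and all edges at $s$ other than $e'$ point into $A$); taking $U$ to be $\ell$ vertices in distinct such components and the $P_x$ to be trivial paths on $A\cap B$ witnesses left-$\ell$-robustness directly. Your sketch is in the right spirit but the identification $V_{e'}=V_u$ is wrong and the justification for ``infinitely many tight components on the $A$-side'' is missing.
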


We first show three auxiliary lemmas.
 \begin{lemma} \label{claim:EndsYieldRobustness}
        Let $\eps$ be a finitely dominated end of a graph $G$, and $\ell \in \N$ arbitrary. For every collection~$R_1, \dots, R_d$ of disjoint rays in~$\eps$ that avoid~$\Dom(\eps)$, there exists a set~$U \subseteq V(R_1)$ of size~$\ell$ such that
        \begin{itemize}
            \item for every $i \in \{2, \ldots d\}$, there are $\ell$ pairwise disjoint $U$--$R_i$ paths~$P^i_1, \dots, P^i_\ell$ in $G$ avoiding~$\Dom(\eps)$, and
            \item for every $v \in \Dom(\eps)$, there are $\ell$ independent $U$--$v$ paths~$P^v_1, \dots, P^v_\ell$ in $G$ avoiding~$\Dom(\eps) \setminus \{v\}$ that only meet in $v$.
        \end{itemize}
    \end{lemma}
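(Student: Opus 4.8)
The plan is to reduce the whole statement to producing, for each ``target'' — a ray $R_i$ with $i\ge 2$, or a dominating vertex $v\in\Dom(\eps)$ — an \emph{infinite} supply of disjoint connections from $R_1$ to that target, to record where along $R_1$ these connections attach, and then to choose $U$ so greedily that it ``leaves a gap for everyone''. First I would record two persistence facts, each holding because $\Dom(\eps)$ is finite (this is the only use of `finitely dominated'): the rays $R_1,\dots,R_d$ remain pairwise equivalent in $G-\Dom(\eps)$ (they lie in it, and for every finite $X$ the tails of $R_1$ and $R_i$ lie in one component of $G-(\Dom(\eps)\cup X)$, being equivalent in $G$); and for every $v\in\Dom(\eps)$ and every finite $X\not\ni v$, the graph $G-(\Dom(\eps)\setminus\{v\})-X$ has $v$ in the same component as a tail of $R_1$. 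Consequently, for each $i\ge 2$ there are infinitely many pairwise disjoint $R_1$--$R_i$ paths in $G-\Dom(\eps)$; truncating each one to an $R_1$--$R_i$ path (its segment between the last visit to $R_1$ and the first subsequent visit to $R_i$) yields an infinite set $L_i\subseteq V(R_1)$ of pairwise distinct landing vertices. Likewise, a straightforward recursion using the second fact builds, for each $v\in\Dom(\eps)$, an infinite $v$--$R_1$ fan in $G-(\Dom(\eps)\setminus\{v\})$ whose paths pairwise meet only in $v$ and meet $R_1$ only in their (distinct) endvertices; write $L_v\subseteq V(R_1)$ for its infinite set of landing vertices.

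Next I would construct $U=\{u_1<u_2<\dots<u_\ell\}$ along $R_1$ recursively: take $u_1$ to be any vertex of $R_1$, and having chosen $u_k$, pick $u_{k+1}$ far enough along $R_1$ that the open $R_1$-segment strictly between $u_k$ and $u_{k+1}$ meets every one of the finitely many infinite sets $L_i$ $(i\ge 2)$ and $L_v$ $(v\in\Dom(\eps))$. Then for each target $T$ I would choose landing vertices $b^T_1<\dots<b^T_\ell$ with $b^T_k$ lying strictly between $u_k$ and $u_{k+1}$ for $k<\ell$ and $b^T_\ell$ anywhere on $R_1$ beyond $u_\ell$ (possible since $L_T$ is infinite), so that $u_1<b^T_1<u_2<b^T_2<\dots<u_\ell<b^T_\ell$. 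Finally I would let $P^T_k$ run from $u_k$ along $R_1$ to $b^T_k$ and then along the connection of our infinite family for $T$ that lands at $b^T_k$. The interleaving makes the used $R_1$-segments $[u_k,b^T_k]$ pairwise disjoint and ensures no such segment meets a connection other than its own (each connection meets $R_1$ only at its landing vertex), so the $P^T_k$ are pairwise disjoint when $T=R_i$ and pairwise meet only in $v$ when $T=v$; moreover each $P^T_k$ meets $U$ only in $u_k$ (since $b^T_k\notin U$) and avoids $\Dom(\eps)$, respectively $\Dom(\eps)\setminus\{v\}$, because $R_1$ does and the connections were chosen in the appropriate subgraph. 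This is exactly the desired $U$ together with the desired path families.

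The only delicate points are bookkeeping: getting the truncations right so that the landing sets $L_i,L_v$ are genuinely infinite and their connections meet $R_1$ only at endvertices, and verifying that the interleaving of $U$ with each $L_T$ really does force the routed family to be disjoint (respectively, a fan at $v$). The conceptual worry one might have — that finitely many targets cannot all be served by one $U$ — evaporates once one notices that the statement imposes no disjointness \emph{between} the families $P^T_\bullet$ for different $T$; so the single ``gap for everyone'' choice of $U$ above suffices, and no simultaneous rerouting of several families is needed.
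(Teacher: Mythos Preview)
Your proof is correct and follows essentially the same approach as the paper: both arguments find, for each ray $R_i$ and each dominating vertex $v$, an infinite family of disjoint (respectively, only-in-$v$-meeting) connections to $R_1$, then choose $U$ greedily along $R_1$ so that between consecutive points of $U$ there is a landing vertex for every target, and route each $P^T_k$ as an $R_1$-segment followed by one such connection. Your version is slightly more explicit about truncating the connections so that they meet $R_1$ only in their landing vertex, and about handling the final index $\ell$, but these are cosmetic differences.
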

    \begin{proof}
        Since $R_1, \ldots, R_d$ belong to the same end and~$\Dom(\eps)$ is finite, there are, for every~$i \in \{2, \dots, d\}$, infinitely many pairwise disjoint $R_1$--$R_i$ paths~$Q^{i}_1, Q^i_2, \dots$ avoiding~$\Dom(\eps)$, and we write $q^{i}_k$ for the endvertex of~$Q^{i}_k$ in~$R_1$.
        Similarly, for every~$v \in \Dom(\eps)$ there are infinitely many $v$--$R_1$~paths $Q^v_1, Q^v_2, \dots$ in $G$ avoiding~$\Dom(\eps) \setminus \{v\}$ that only meet in~$v$, and we write $q^v_k$ for the endvertex of~$Q^v_k$ in~$R_1$.

        We now find the elements~$u_1, \dots, u_\ell$ of~$U$ one by one: 
        Let $u_1$ be the first vertex of $R_1$.
        Given~$u_1, \dots, u_{j-1}$ for some~$2 \leq j \le \ell$, we then choose~$u_j$ as the first vertex on~$R_1$ such that $u_{j-1}R_1u_j$ contains as inner vertices some~$q_i^k$ for every~$i \in \{2, \ldots, d\}$ and some~$q^k_v$ for every~$v \in \Dom(\eps)$.
        Then~$U := \{u_1, \ldots, u_\ell\}$ is as desired, as witnessed by the~$P^i_j := u_j R_1 q^i_k  Q^i_k$ and $P^v_j := u_j R_1  q^v_k Q^v_k$ for the respective~$k$ as given by the choice of~$u_j$.
    \end{proof}

\begin{lemma} \label{lem:ContractingEdgesEllRobust}
    Let $m \in \N$, let $(T, \cV)$ be a rooted \td\ of finite adhesion of a graph~$G$ without half-grid minor, and let $\eps$ be an end of~$G$. Assume that $\eps$ gives rise to a ray $R = r_0r_1\dots$ in~$T$ such that $\liminf_{e \in R} |V_e| = \Delta(\eps)$. Then cofinitely many edges $e$ of $R$ with $|V_e| = \Delta(\eps)$ induce $m$-good separations. 
\end{lemma}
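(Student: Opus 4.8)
The plan is to show that along the ray $R$, beyond finitely many exceptions, every edge whose adhesion set attains the minimum size $\Delta(\eps)$ induces a separation that is robust on both sides. The key is that such an adhesion set $V_e$ is essentially an ``end-separator realising the combined degree'' of $\eps$, and the combined degree being attained means we have $\Delta(\eps)$ disjoint witnesses --- rays in $\eps$ together with the dominating vertices --- crossing $V_e$ transversally, which is precisely the data needed to build the robustness paths on the side of $e$ containing $\eps$, via \cref{claim:EndsYieldRobustness}. On the other side (the side not containing $\eps$), robustness of a lower order bound should come essentially for free, or from a separate easy argument, since the witnessing paths only need to link a set of size $\ell$ to the separator within the left part and the separator has bounded size.

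**Key steps.** First I would set $d := \Delta(\eps)$ and $\ell := \max\{m, 2d+1\}$, and pass to a tail of $R$: since $\liminf_{e \in R}|V_e| = d$ and $\Dom(\eps)$ is finite (as $G$ has no half-grid minor, hence by \cref{cor:HalfGridMinor}-type reasoning every end has finite combined degree --- more directly, $G$ has finite tree-width so all ends have countable combined degree, and here $d$ is finite), we may assume $\Dom(\eps) \subseteq V_e$ for every $e \in R$ and that every $V_e$ has size at least $d$. Call an edge $e$ of $R$ \emph{minimal} if $|V_e| = d$; there are infinitely many minimal edges. Fix $f_0 \in R$ with $|V_{f_0}| = d$ at a sufficiently late position; the claim will be that every minimal edge $e$ with $f_0 \le_T e$ induces an $m$-good separation. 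Write $(A_e, B_e) = (G \down e, G \up e)$. Since $\eps$ gives rise to $R$ and $|V_e| = d = \Delta(\eps)$ with $\Dom(\eps) \subseteq V_e$, there is a family of $d - |\Dom(\eps)|$ disjoint rays $R_1, \dots, R_{d - |\Dom(\eps)|}$ in $\eps$, avoiding $\Dom(\eps)$, all having tails in $G \strictup e$; by adjusting to tails and since $V_e$ separates, after relabelling we may take these rays to start on $V_e$ and lie (except their first vertex) in $B_e \setminus A_e$, with the remaining $|\Dom(\eps)|$ elements of $V_e$ being exactly $\Dom(\eps)$. Now apply \cref{claim:EndsYieldRobustness} inside $G[B_e]$ to these rays and the dominating vertices to obtain, for any prescribed $\ell$, a set $U \subseteq V(R_1) \subseteq B_e \setminus A_e$ of size $\ell$ together with the required $\ell$ disjoint $U$--$R_i$ paths and $\ell$ independent $U$--$v$ paths for $v \in \Dom(\eps)$, all inside $B_e$ and avoiding $\Dom(\eps)$ appropriately. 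Choosing the paths $P_x$ (for $x \in V_e$) to be: the initial stub of $R_i$ for the ray-vertices, and the trivial path for $x \in \Dom(\eps)$, this exactly witnesses that $(B_e, A_e)$ is left-$\ell$-robust, i.e. $(A_e, B_e)$ is right-$\ell$-robust, hence right-$m$-good.

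**The other side.** For left-$m$-good, I need a set $U \subseteq A_e$ of size $\ell$ and paths linking $U$ to $V_e$ within $G[A_e]$ with the robustness property on the $A_e \setminus B_e$ side. Here I would instead use a second minimal edge: pick a minimal edge $e' <_T e$ with $f_0 \le_T e'$, so $|V_{e'}| = |V_e| = d$. Since the \td\ has finite adhesion and both adhesion sets have the same size $d$, and since $\eps$ gives rise to $R$, the set $V_{e'}$ is linked-like to $V_e$: one can find $d$ disjoint $V_{e'}$--$V_e$ paths between them (either using an interposed minimal edge and a Menger argument on the finite set, noting any separator of size $<d$ between them would be a smaller $X$--$\eps$ separator contradicting $\Delta(\eps) = d$, or by invoking linkedness if available). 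These $d$ paths, together with extending past $V_{e'}$ by the symmetric construction of \cref{claim:EndsYieldRobustness} applied on the far side or simply by taking $U := V_{e'}$ itself (of size $d \ge$ the needed count, padding up to $\ell$ only if $m > d$ --- and if $m$ forces $\ell > d$, one needs the rays again, run on the $A$-side, which requires $\eps$ to have witnesses there too; but $\eps$ does \emph{not} live below $e$, so this fails). The resolution: for left-robustness one only needs $\ell = \max\{m, 2k+1\}$ with $k = |V_e| = d$ the \emph{order}, and the set $U$ and paths live in $G[A_e]$; since $A_e$ contains all the structure below, and in particular since $\liminf$ being $d$ along $R$ does not a priori bound things below $e$, I would instead argue that for cofinitely many minimal $e$, the part $G \strictdown e$ together with the previous minimal adhesion set provides enough robustness --- concretely, because $G\up e'$ for the previous minimal $e'$ contains $e$ and a slightly larger finite separator can absorb the padding. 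The cleanest route, which I expect to be the intended one, is: \emph{left}-robustness of $(A_e, B_e)$ follows from \emph{right}-robustness of $(A_{e'}, B_{e'})$ for the previous minimal edge $e'$ by ``pushing forward'' along the $d$ disjoint $V_{e'}$--$V_e$ paths; pairing the $U$-and-path data at $e'$ (which sits on the $B_{e'} \supseteq$-side, hence on the $A_e$-side after we cross $e'$... wait --- $V_{e'}$ is below $V_e$, so $G[B_{e'}]$ contains $G[A_e] \setminus (\text{stuff below } e')$). Thus the robustness witnesses for the right side at $e'$, extended through the disjoint linkage paths from $V_{e'}$ to $V_e$, yield left-robustness witnesses at $e$, completing the argument. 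Only finitely many minimal edges $e$ lack a minimal predecessor $\ge_T f_0$, so cofinitely many minimal edges induce $m$-good separations.

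**Main obstacle.** The genuinely delicate point is the left-robustness half: unlike the right side, where the end $\eps$ supplies an unbounded supply of disjoint transversal paths for free, the lower side $A_e$ has no such ``infinity'' to exploit, so one must transport robustness data from an earlier minimal adhesion set along an explicit disjoint linkage, and verify that the disjointness, the ``not meeting outside $P_x$'' condition, and the containment in $G[A_e \setminus B_e]$ all survive the transport --- in particular that the linkage paths can be taken inside $A_e \setminus B_{e'}$ so they do not clash with the robustness fans at $e'$. Managing this bookkeeping, and handling the case $m > 2d+1$ where the padding of $U$ up to size $m$ cannot be absorbed by the $d$ ray-witnesses alone, is where the real work lies; I expect one resolves it by choosing $e'$ far enough back that $|V_{e''}|$ for an even earlier non-minimal-but-large edge, or a finite union of bags, supplies the extra $m - (2d+1)$ vertices needed for $U$ while staying inside $A_e$.
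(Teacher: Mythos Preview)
Your right-robustness argument is correct and matches the paper: apply \cref{claim:EndsYieldRobustness} inside $G[B_e]$ to the tails of the $\deg(\eps)$ disjoint $\eps$-rays.

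Your left-robustness argument, however, has a genuine gap, and the ``transport from a previous minimal edge'' idea is unnecessary. You correctly notice that $\eps$ does not live in $A_e$, then conclude that the ray-based witnesses cannot be used there and attempt instead to push robustness data forward along a $V_{e'}$--$V_e$ linkage. This bookkeeping is never completed, and your worry about ``padding $U$ up to size $m$'' (because you contemplate taking $U := V_{e'}$ of size $d$) shows you have lost track of what $U$ is allowed to be.

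The paper's approach avoids all of this. The key observation you are missing is that although $\eps$ lives above $e$, the rays $R_1,\dots,R_d$ in $\eps$ have \emph{initial segments} below $e$. Concretely: fix the rays $R_1,\dots,R_d$ once and for all in $G$ (not truncated to any side), and apply \cref{claim:EndsYieldRobustness} \emph{once} in $G$ to obtain a fixed set $U \subseteq V(R_1)$ of size $\ell$ together with the fixed path families $P^i_j$ and $P^v_j$. Let $Z$ be the finite union of $U$ and all these paths. Since $\bigcap_i G\strictup e_i = \emptyset$ and $\liminf_i V_{e_i} = \Dom(\eps)$, there is $N_1$ such that $Z \subseteq (A_i \setminus B_i) \cup \Dom(\eps)$ for all $i \ge N_1$. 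For any minimal edge $e_i$ with $i \ge N_1$, the set $U$ together with the paths $P_x := R_j \cap G[A_i]$ (for the non-dominating vertices of $V_{e_i}$) and the trivial paths on $\Dom(\eps)$ witness left-$\ell$-robustness of $(A_i,B_i)$ directly, since all of $Z$ now sits in $G[(A_i\setminus B_i)\cup\{x\}]$ as required. No transport, no earlier edge $e'$, no linkage argument is needed. Since $U \subseteq V(R_1)$ and $R_1$ is infinite, taking $|U| = \ell = \max\{m,2d+1\}$ is no obstacle whatsoever.
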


\begin{proof}
    Since $G$ has no half-grid minor, the combined degree $\Delta(\eps)$ of $\eps$ is finite. 
    Thus, as $(T, \cV)$ has finite adhesion and $\eps$ gives rise to $R$, we have $\liminf_{e \in R} V_e \supseteq \Dom(\eps)$. In fact, $\liminf_{e \in R} V_e = \Dom(\eps)$ since $\liminf_{e \in R} |V_e| = \Delta(\eps)$, so the $V_e$ eventually have to meet each ray in a family of $\deg(\eps)$ disjoint $\eps$-rays avoiding $\Dom(\eps)$ precisely once.
    Hence, for some $N_0 \in \N$ all indices $i \geq N_0$ satisfy $\Dom(\eps) \subseteq V_{e_i}$ where $e_i := \{r_i, r_{i+1}\}$. Let us denote the set of all indices $i \geq N_0$ with $|V_{e_i}| = \Delta_G(\eps) =: k$ by~$I$. Note that $I$ is infinite since $\liminf_{e \in R} |V_{e}| = \Delta(\eps)$. 

    Write~$(A_i, B_i)$ for the separation induced by the edge~$\ve_i = (r_i, r_{i+1})$ for all~$i \in \N$.
    We now show that cofinitely many of the separations~$(A_i, B_i)$ with~$i \in I$ are $m$-good, which clearly implies the assertion.
    So let~$\ell := \max\{2k+1, (w+1)^2(w+2)+w+1\}$ if~$w := \tw(G) \in \N$, and let~$\ell := 2k+1$ if the tree-width of $G$ is not finitely bounded.

    Fix a collection~$R_1, \dots, R_d$ of~$d := \deg(\eps)$ disjoint rays in~$\eps$ that avoid~$\Dom(\eps)$, and consider a set~$U \subseteq V(R_1)$ and corresponding paths~$P^i_1, \dots, P^i_\ell$ and~$P^v_1, \dots, P^v_\ell$ as given by~\cref{claim:EndsYieldRobustness}.
    Let~$Z$ be the union of~$U$ and all the~$V(P^i_j)$ and~$V(P^v_j)$. Then $Z$ is finite. 
    So since~$\bigcap_{i \in \N} (B_i \setminus A_i) = \bigcap_{i \in \N} G \strictup e_i = \emptyset$ and~$\bigcap_{i \in \N} (A_i \cap B_i) = \liminf_{i \in \N} V_{e_i} = \Dom(\eps)$, there exists~$N_1 \in \N$ such that, for all~$i \ge N_1$, we have~$Z \subseteq (A_i \setminus B_i) \cup \Dom(\eps)$.
    Consider~$i \in I$ with~$i \ge N_1$. We claim that $(A_i, B_i)$ is left-$\ell$-robust. Indeed, we have~$|V_{e_i}| = |A_i \cap B_i| = \Delta_G(\eps)$ and~$\Dom(\eps) \subseteq A_i \cap B_i$.
    So the set $U$ and the initial segments $R_j \cap G[A_i]$ together with the trivial paths in $\Dom(\eps)$ are as required in the definition of left-$\ell$-robust, as witnessed by the paths $P^i_\ell, \dots, P^i_\ell$ and $P^v_1, \dots, P^v_\ell$ (and because $Z \subseteq (A_i\setminus B_i) \cup \Dom(\eps)$).
    Hence, $(A_i,B_i)$ is left-$\ell$-robust.
    To show that~$(A_i, B_i)$ is also~right-$\ell$-robust for~$i \in I$, we apply~\cref{claim:EndsYieldRobustness} in~$G[B_i]$ to the rays~$R_j \cap G[B_i]$ and take the~$P_x$ as suitable finite initial segments of those rays and the trivial paths on $\Dom(\eps)$.
    Altogether, we obtain that all~$(A_i, B_i)$ with~$i \in I$ and~$i \ge N_1$ are~$\ell$-robust and hence $m$-good.
\end{proof}

\begin{lemma} \label{lem:ContractingEdgesFiniteParts}
    Let $(T, \cV)$ be a tight, rooted \td\ of a graph $G$ into finite parts which displays the infinities of $G$. Assume that $F$ is some set of edges of $T$ which are not incident with any infinite-degree node of $T$ and such that, for every end $\eps$ of $G$, the set $F$ avoids cofinitely many edges $e$ of the arising ray $R_\eps$ in $T$ with $|V_e| = \Delta(\eps)$. Then the \td\ $(T', \cV')$ obtained from $(T, \cV)$ by contracting all edges in $F$ has finite parts and displays the infinities.
\end{lemma}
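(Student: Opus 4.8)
The key point is that contracting a set $F$ of edges that avoids all infinite-degree nodes cannot merge an infinite-degree node into any branch set and cannot create an infinite bag unless infinitely many edges of $F$ meet a common node; I must rule this out and show the contracted bags stay finite. First I would recall that contracting $F$ induces the \td\ $(T',\cV')$ whose bags are $V'_t = \bigcup_{s \in t} V_s$, where each $t \in T'$ is a branch set, i.e.\ a maximal subtree of $T$ all of whose edges lie in $F$. Since the endpoints of edges in $F$ all have finite degree, every such branch set is a \emph{locally finite} subtree of $T$. The nodes of $T'$ that still have infinite degree are exactly the original infinite-degree nodes of $T$ (which are singleton branch sets), and conversely; so the bags at infinite-degree nodes of $T'$ are unchanged, hence finite iff they were finite in $(T,\cV)$. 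The remaining task is to show each branch set $t$ is finite, for then $V'_t$ is a finite union of finite bags.

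So suppose some branch set $t$, viewed as a subtree $S \subseteq T$ with $E(S) \subseteq F$, is infinite. Being locally finite and infinite, $S$ contains a ray $R' = s_0 s_1 \dots$ by \cite{bibel}*{Proposition~8.2.1}. Since $(T,\cV)$ has finite adhesion (the parts are finite) and is componental -- more precisely, it homeomorphically displays all ends, so every rooted ray of $T$ arises from a unique end of $G$ by \cref{lem:TreeDecompDisplayingEnds} -- the tail of $R'$ lies on a rooted ray $R_\eps$ of $T$ for some end $\eps$ of $G$, and all but finitely many edges of $R_\eps$ lie on $R'$, hence in $F$. But $(T,\cV)$ displays the combined degrees, so $\liminf_{e \in R_\eps} |V_e| = \Delta(\eps)$, which means infinitely many edges $e$ of $R_\eps$ satisfy $|V_e| = \Delta(\eps)$; infinitely many of these lie on $R'$ and hence in $F$. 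This contradicts the hypothesis that $F$ avoids cofinitely many edges $e$ of $R_\eps$ with $|V_e| = \Delta(\eps)$. Therefore every branch set is finite, and all parts of $(T',\cV')$ are finite.

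It remains to check that $(T',\cV')$ still displays the infinities, i.e.\ displays the ends homeomorphically, their combined degrees, their dominating vertices, the critical vertex sets and their tight components cofinitely. The adhesion sets of $(T',\cV')$ are a subset of those of $(T,\cV)$ (precisely, the $V_e$ for $e \in E(T) \setminus F$), and $G \strictup e$ is unchanged for each retained edge $e$. Hence the bijection $\varphi \colon \Omega(G) \to \Omega(T)$ and its homeomorphism property pass to $T'$: a rooted ray of $T'$ corresponds to a rooted ray of $T$ (interleaving branch sets), and since $F$ avoids cofinitely many of the edges $e$ of $R_\eps$ with $|V_e| = \Delta(\eps)$, the $\liminf$ of $|V_e|$ along the retained edges is still $\Delta(\eps)$, giving the combined degrees; the $\liminf$ of $V_e$ is unchanged for the same reason, as it equals $\Dom(\eps)$ already along any cofinal subsequence of adhesion sets along $R_\eps$, so the dominating vertices are displayed. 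Finally, contracting edges $F$ that avoid all infinite-degree nodes neither removes an infinite-degree node nor changes its bag or its incident $G \strictup e$'s; hence $(T',\cV')$ still displays the critical vertex sets and their tight components cofinitely. Likewise cofinal componentality is inherited since cofinally many $G \strictup e$ along each rooted ray of $T$ are connected and only finitely many of those edges can be absorbed into a single branch set along that ray (by the finiteness of branch sets just proved), so cofinally many survive in $T'$.

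\medskip

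The main obstacle is the finiteness of the branch sets: everything else is bookkeeping once one knows the contracted tree has finite bags and retains the same adhesion sets and upper parts on surviving edges. The crux is the interplay between ``$F$ avoids infinite-degree nodes'' (giving local finiteness of branch sets, hence a ray inside any infinite branch set) and ``$F$ avoids cofinitely many $\Delta(\eps)$-sized edges of $R_\eps$'' (contradicting such a ray). One subtlety to handle carefully is that a branch set could be an infinite \emph{rayless} locally finite tree only if it has a vertex of infinite degree, which is excluded; I would phrase this via \cite{bibel}*{Proposition~8.2.1} stating that a locally finite connected infinite graph contains a ray.
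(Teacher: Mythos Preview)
Your proof is correct and takes a genuinely different route from the paper's. The paper first checks that $(T',\cV')$ displays the infinities and then deduces that its parts are finite by showing each torso is tough (via \cref{lem:DisplayingCritYieldsToughTorsos}) and rayless (via \cref{prop:OnePathOrRayExtended}), whence finite by \cref{prop:RaylessToughGraphsAreFinite}. You instead argue directly that every branch set is a finite subtree of $T$: since the edges of $F$ avoid infinite-degree nodes, each branch set is locally finite; an infinite one would contain a ray whose tail coincides with a tail of some $R_\eps$, forcing infinitely many edges $e$ of $R_\eps$ with $|V_e|=\Delta(\eps)$ into $F$, contradicting the hypothesis. Your approach is more elementary, avoiding the torso machinery entirely; the paper's, in turn, makes the toughness of the new torsos explicit, which is closer to the form used later. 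Two small points: your parenthetical that $(T,\cV)$ ``is componental'' is not assumed, but you immediately replace it by what you actually use (homeomorphic display of ends); and your claim that $\liminf_e V_e$ is unchanged along any cofinal subsequence of $R_\eps$ deserves one sentence of justification via \ref{prop:TD2}: if $v$ lies in cofinitely many of the retained $V_e$, then by connectedness of $\{t:v\in V_t\}$ in $T$ it lies in cofinitely many of \emph{all} $V_e$ along $R_\eps$, hence in $\Dom(\eps)$.
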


\begin{proof}
    We first note that $(T', \cV')$ displays the infinities:
    Since $(T, \cV)$ displays the ends homeomorphically, their combined degrees and their dominating vertices, so does $(T', \cV')$, since by the assumptions on $F$ every end $\eps$ of $G$ still gives rise to a ray in $T'$, and this ray still contains infinitely many edges $e$ with $|V_e| = \Delta(\eps)$.
    By assumption, $F$ contains no edges incident with nodes of infinite degree; thus, $(T', \cV')$ displays the critical vertex sets and their tight components cofinitely, as $(T, \cV)$ does so.

    To prove that all bags of $(T',\cV')$ are finite, it suffices to show by \cref{prop:RaylessToughGraphsAreFinite} that its torsos are tough and rayless.
    We first show that the torsos are tough.
    By construction of $(T', \cV')$, we did not contract edges of $T$ which are incident with some node $t \in T$  whose corresponding bag~$V_t$ is critical.
    In particular, for every $X \in \crit(G)$, the unique node $t_X \in T$ with $V_t = X$ is also in~$T'$ with $V'_{t_X} = X$, and cofinitely many tight components of $G-X$ are some $G \strictup e$ for an edge $e = t_Xt \in T'$ with $t_X <_{T'} t$, since $(T, \cV)$ displays all critical vertex sets and their tight components cofinitely.
    Thus, \cref{lem:DisplayingCritYieldsToughTorsos} ensures that all torsos of the tight rooted \td\ $(T', \cV')$ are tough.

    It remains to show that the torsos are rayless.
    Note that the torso of every node $t \in T'$ with $V'_t \in \crit(G)$ is rayless, as it is finite.
    Thus, it remains to consider $t \in T'$ with $V'_t \notin \crit(G)$.
    So suppose that there is such a node $t$ whose torso contains a ray $R'$.
    Since $(T', \cV')$ is tight, we may apply (the comment after) \cref{prop:OnePathOrRayExtended} to $R'$ and the star $\sigma'_t$ at $t$ to obtain a ray of $G$ that meets the part~$V'_t$ infinitely often, which contradicts the fact that $(T', \cV')$ displays the ends of~$G$.
\end{proof}

\begin{proof}[Proof of \cref{lem:MakeTDrobust}]
    Let $F$ be the set of edges in $T$ that we contracted. By \cref{lem:ContractingEdgesEllRobust}, the set $F$ satisfies the premise of \cref{lem:ContractingEdgesFiniteParts}, and hence $(T', \cV')$ has finite parts and displays the infinities.

    Further, $(T', \cV')$ is cofinally componental:
    Let $R'$ be a rooted ray in $T'$.
    It suffices to show that $G \strictup e$ is disconnected for at most finitely many consecutive edges $e$ of $T'$.
    Let $e =rs, f = st \in R'$ with $r <_{T'} s <_{T'} t$ be two successive edges such that $G \strictup e$ and $G\strictup f$ are disconnected.
    Since $(T', \cV')$ is obtained from $(T, \cV)$ by edge-contractions, $e$ and $f$ are also edges of $T$.
    It follows from \cref{itm:TreeDecompDisplayingInfsTechnical:CofinComp} from \cref{maincor:TreeDecompDisplayingInfsTechnical} of $(T, \cV)$ together with the construction of $(T', \cV')$ from $(T, \cV)$ that $s$ and $t$ were already nodes of $T$ and $V_t, V_s \in \crit(G)$ as well as $\deg(t), \deg(s) = \infty$ and $V_s \supseteq V_t$. Since $(T, \cV)$ displays the critical vertex sets of $G$, this implies that $V_s \neq V_t$, and thus $V_s \supsetneq V_t$.
    Hence, this can only happen finitely many times consecutively, as critical vertex sets are finite.

    It remains to show \cref{itm:MakeTDrobust:goodatnoncritical}. For this, let $t \in T'$ be a node with finite degree, and denote with $T_t$ the subtree $T_t$ of $T$ whose contraction yields~$t$.
    Suppose for a contradiction that some separation $(A,B) \in \sigma'_t$ is not left-$m$-good.
    We remark that as $(T', \cV')$ is obtained from $(T, \cV)$ by edge-contractions, the separations induced by the edges of $T$ with precisely one endvertex in~$T_t$ are the same as the separations induced by the edges incident with~$t$ in~$T'$, i.e.\, the ones in $\sigma'_t$; so let $e = t's$ be such an edge with $t' \in T_t$ which induces $(A,B)$.
    The construction of $T'$ yields that either $(A,B)$ is $m$-good or one of $t'$, $s$ has infinite degree in~$T$.
    If~$T_t$ is a singleton, then $\deg(t') < \infty$ by assumption on $t$.
    If $T_t$ contains at least one edge, then all nodes of $T_t$, in particular~$t'$, have finite degree, as the edges in $T_t$ have been contracted.
    In both cases, $\deg(s) = \infty$ and thus $V_{s} =: X \in \crit(G)$.
    Since $(T, \cV)$ displays the critical vertex sets and their tight components cofinitely and because $\deg(s) = \infty$, cofinitely, and thus infinitely, many of the tight components of $G-X$ are contained in $G[A]$. Since $X = V_s$ and thus $X \supseteq A \cap B$, this shows that $(A,B)$ is left-$m$-good
    as witnessed by the trivial paths in $A \cap B$ and a set $U$ consisting of $m$ vertices that lie in pairwise distinct tight components of $G-X$ contained in $G[A]$.
\end{proof}

With the three ingredients at hand, we are ready to prove the main result of this section. 

\begin{proof}[Proof of~\cref{thm:LeanTDTechnical}]
    Let $G$ be a graph without half-grid minor; in particular, $G$ has no $K^{\aleph_0}$ minor. 
    So $G$ has finite tree-width, as it has a normal spanning tree by \cite{halin78}.
    Let~$(T, \cV)$ be the (rooted) \td~of $G$ from \cref{maincor:TreeDecompDisplayingInfsTechnical}, and let~$(T', \cV')$ be the rooted \td\ obtained from~$(T, \cV)$ by applying~\cref{lem:MakeTDrobust} with $m = (w+1)^2(w+2)+w+1$ if $w := \tw(G) \in \N$ for finitely bounded tree-width $G$ or with $m = 0$ for other $G$.
    Then \cref{lem:MakeTDrobust} yields that all the bags of~$(T', \cV')$ are finite and that,
    moreover, for every node~$t \in T'$ either~$V'_{t} \in \crit(G)$ or each separation in the star~$\sigma'_{t}$ at $t$ is left-good.
    If~$V'_t$ is critical in~$G$, then~$|V'_t| \leq \tw(G)+1$ since critical vertex sets are infinitely connected and thus every~\td\ of~$G$, in particular those witnessing~$\tw(G)$, contains $V'_t$ in one of its bags.
    If~$V'_t$ is not critical in~$G$, then all separations in~$\sigma'_{t}$ are left-good, and we can apply~\cref{lem:TorsosHaveSameTreeWidth} to find that the torso of~$(T', \cV')$ at~$t$ has again at most the tree-width of $G$.
    Altogether, every torso of~$(T', \cV')$ is finite and has tree-width at most~$\tw(G)$.

    We may thus apply 
    Thomas's result \cite{LeanTreeDecompThomas}*{Theorem 5} (cf. \cref{thm_intro_krizthomas} for finite graphs) to the torsos of~$(T', \cV')$ and obtain, for every node~$t \in T'$, an (unrooted) lean \td~$(T^t, \cV^t)$ of~$\torsostar(\sigma_t)$ where~$T^t$ is a finite tree\footnote{We remark that this is not explicitly stated in~\cite{LeanTreeDecompThomas}*{Theorem~5} but follows directly from its proof.} and whose parts have size at most the tree-width of $G$; in particular, all bags in~$\cV^t$ are finite, as the torso at $t$ is finite.
    Furthermore, we may assume that for~$t \in T'$ with~$V'_t \in \crit(G)$, the \td~$(T^t, \cV^t)$ is the trivial~\td\ consisting of a single node-tree, since the torso of~$(T', \cV')$ at~$t$ is complete.
    
    Now for every edge~$e = st \in T'$, the adhesion set~$V'_e$ induces a complete subgraph in both~$\torsostar(\sigma'_s)$ and~$\torsostar(\sigma'_t)$.
    Thus, we may fix~$u \in T^s$ and~$w \in T^t$ with~$V'_e \subseteq V^s_u, V^t_w$ for every edge $e = st \in T'$.
    We now build a tree~$\tilde{T}$ from the disjoint union of the~$T^t$ by joining the corresponding~$u$ and~$w$ for every~$e \in E(T')$; we say that these new edges~$uv$ of~$\tilde{T}$ \defn{belong to}~$T'$ and \defn{correspond to} the respective $st \in T'$.
    Keeping the respective parts from the~$(T^t, \cV^t)$, we obtain a \td~$(\tilde{T}, \tilde{\cV})$ of~$G$ which has finite parts of size at most the tree-width of $G$.
    We remark that the separation induced by an edge $uv \in \tilde{T}$ which belongs to $T'$ is the same as the separation induced by the corresponding edge in $T'$.

    We claim that $(\tilde{T}, \tilde{\cV})$ is as desired. For this, let us first note that $(\tilde{T}, \tilde{\cV})$ is cofinally componental since $(T', \cV')$ is cofinally componental by \cref{lem:MakeTDrobust} and because the $T^t$ are finite.
    Moreover, since $(T', \cV')$ displays the infinities by~\cref{lem:MakeTDrobust}, it follows that $(\tilde{T}, \tilde{\cV})$ also does so, as the $T^t$ are finite and consist of a single node, if $V'_t \in \crit(G)$ and $\deg_{T'}(t) = \infty$.
    
    It remains to show that~$(\tilde{T}, \tilde{\cV})$ is lean.
    For this, fix nodes $t_1, t_2 \in \tilde{T}$ and sets $Z_1 \subseteq \tilde{V}_{t_1}$ and $Z_2 \subseteq \tilde{V}_{t_2}$ with $|Z_1| = |Z_2| =: k$.
    We prove the claim by induction on the number of edges on~$t_1 \tilde{T} t_2$ that belong to~$T'$.

    If there is no such edge, then there exists a node~$t \in T'$ with~$t_1, t_2 \in T^t$. 
    Since~$(T^t, \cV^t)$ is a lean \td\ of~$\torsostar(\sigma'_t)$, either there exists an edge~$e \in t_1 T^t t_2$ with~$|V^t_e| < k$ or there are~$k$ disjoint~$Z_1$--$Z_2$ paths in~$\torsostar(\sigma'_t)$.
    In the first case, the construction of~$(\tilde{T}, \tilde{V})$ yields~$t_1 \tilde{T} t_2 = t_1 T^t t_2$ and~$\tilde{V}_e = V^t_e$, so~$e$ is as desired.
    In the second case, we distinguish between the case whether $V'_t$ is critical or not.
    If $V'_t$ is critical, then $t_1 = t_2$ and $\tilde{V}_{t_1} = V'_t$ as $T^t$ has a single node by construction. We then use the infinitely many tight components of $G-V'_t$ to find the desired disjoint $Z_1$--$Z_2$ paths.
    And if $V'_t$ is not critical,
    then all separations in~$\sigma'_t$ are left-good by \cref{itm:MakeTDrobust:goodatnoncritical} from \cref{lem:MakeTDrobust} and because $(T', \cV')$ displays the critical vertex sets of $G$.
    Hence, they are left-well-linked by~\cref{lem:RobustImpliesWellLinked}. This allows us to apply~\cref{lem:DisjointPathsExtendedByWellLinked}~\ref{itm:DisjPathsExtended} to lift the~$k$ pairwise disjoint~$Z_1$--$Z_2$ paths in~$\torsostar(\sigma'_t)$ to~$G$. 

    Now suppose that there is an edge~$f = s_1 s_2$ on~$t_1 \tilde{T} t_2$ which belongs to~$T'$.
    We may assume by renaming that $s_1$ appears before $s_2$ on $t_1 \tilde{T} t_2$.
    If~$|\tilde{V}_f| < k$, then~$f$ is the desired edge $e$; so suppose otherwise.
    For~$i \in \{1, 2\}$, the induction hypothesis then yields either an edge~$e \in t_i \tilde{T} s_i$ with~$|\tilde{V}_e| < k$ or a family~$\cP_i$ of~$k$ pairwise disjoint~$Z_i$--$\tilde{V}_f$ paths in~$G$.
    The first case already yields the desired edge~$e$; so we may assume that the second case holds for both~$i \in \{1, 2\}$.

    If~$\tilde{V}_{s_1} \in \crit(G)$, \cref{lemma:LinkingPathsAlongCritVertexSet} yields the desired family of $k$ disjoint $Z_1$--$Z_2$ paths, as $\tilde{V}_f \subseteq \tilde{V}_{s_1}$.
    The same argument applies if $\tilde{V}_{s_2} \in \crit(G)$.

    Suppose now that~$\tilde{V}_{s_1}, \tilde{V}_{s_2} \notin \crit(G)$. 
    Since there are no~$k$ disjoint~$Z_1$--$Z_2$ paths in~$G$, Menger's theorem (see for example \cite{bibel}*{Proposition~8.4.1}) yields a separation~$(C, D)$ of~$G$ of order less than~$k$ with~$Z_1 \subseteq C$ and~$Z_2 \subseteq D$.
    We now use~$(C, D)$ to find a $Z_1$--$\tilde{V}_f$ or a~$Z_2$--$\tilde{V}_f$-separator of size less than~$k$, which gives the desired contradiction.

    Let~$(A, B)$ be the separation of~$G$ induced by~$\vf = (s_1, s_2)$, and let~$X := (A \cap B) \cap (C \setminus D)$ and~$Y := (A \cap B) \cap (D \setminus C)$.
    By symmetry on the assumptions up to this point, we may assume~$|Y| \le |X|$.
    Since~$\tilde{V}_{s_1} \notin \crit(G)$, $(A, B)$ is left-good by the construction of~$(\tilde{T}, \tilde{\cV})$ and \cref{itm:MakeTDrobust:goodatnoncritical} from \cref{lem:MakeTDrobust}.
    Hence, $(A,B)$ is left-well-linked by~\cref{lem:RobustImpliesWellLinked}.
    Thus, there exist~$|Y|$ pairwise disjoint~$X$--$Y$~paths through $A \setminus B$ in $G$.
    All these paths meet~$(C \cap D) \cap (A \setminus B)$, so~$|(C \cap D) \cap (A \setminus B)| \ge |Y|$.
    But then
    \begin{align*}
        |(A \cap C) \cap (B \cup D)|
            &= |(A \cap B) \cap (C \setminus D)| + |(A \cap B) \cap (C \cap D)| + |(C \cap D) \cap (A \setminus B)| \\
            &\ge |X| + |(A \cap B) \cap (C \cap D)| + |Y| = |A \cap B|,
    \end{align*}
    which in turn yields by double counting that
    \begin{equation*}
        |(A \cup C) \cap (B \cap D)| = |A \cap B| + |C \cap D| - |(A \cap C) \cap (B \cup D)| \le |C \cap D| < k.            
    \end{equation*}
    But~$(A \cup C) \cap (B \cap D)$ is a~$Z_2$--$\tilde{V}_f$-separator (or a $Z_1$--$\tilde{V}_f$ separator in the symmetric case), a contradiction.
\end{proof}

\section{Tree-decompositions distinguishing infinite tangles} \label{subsec:ToT}

An \defn{infinite tangle} of a graph $G$ is a set $\tau$ of (oriented) finite-order separations of $G$ such that
\begin{itemize}
    \item $\tau$ contains precisely one orientation of every finite-order separation of $G$, and
    \item there are no three (not necessarily distinct) separations $(A_1, B_1), (A_2, B_2), (A_3, B_3) \in \tau$ with $G[A_1] \cup G[A_2] \cup G[A_3] = G$.
\end{itemize}

A separation of $G$ \defn{distinguishes} two infinite tangles if they orient it differently. It distinguishes them \defn{efficiently} if they are not distinguished by any separation of smaller order. A \td\ $(T, \cV)$ of $G$ \defn{distinguishes} some set of infinite tangles of $G$ \defn{efficiently} if every two tangles in this set are distinguished efficiently by a separation that is induced by an edge of $(T, \cV)$.

By \cite{EndsAndTangles}*{Theorem 3}, for every infinite tangle $\tau$ of a graph $G$ there is either an end $\eps$ of $G$ such that a finite-order separation $(A,B)$ lies in $\tau$ if and only if $B$ contains a tail of every, or equivalently some, ray in $\eps$, or there is a critical vertex set $X$ of $G$ such that $(V(C) \cup X, V(G-C)) \in \tau$ for all $C \in \cC(G-X)$. If the first case holds for $\tau$, then we say that $\tau$ is \defn{induced} by $\eps$.

Following \cite{infinitetangles}, we say that two infinite tangles $\tau, \tau'$ of $G$ are \defn{combinatorially distinguishable} if at least one of them is induced by an end or there exists a finite set $X \subseteq V(G)$ such that $(V(C) \cup X, V(G-C)) \in \tau$ for all $C \in \cC_X$ and such that $(V(G-C), V(C) \cup X) \in \tau'$ for a component $C \in \cC_X$.

With the above characterization of infinite tangles the following observation is immediate:
\begin{observation} \label{obs:LeanDecompDistinguishesInfTangles}
    Every \td\ of a graph $G$ that displays its infinities distinguishes all combinatorially distinguishable infinite tangles of $G$. \qed
\end{observation}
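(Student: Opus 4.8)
The plan is to deduce the statement from the characterisation of infinite tangles recalled above \cite{EndsAndTangles}*{Theorem~3}, which attaches to each infinite tangle a \emph{witness}: either an end~$\eps$ of~$G$ (the tangle then being induced by~$\eps$) or a critical vertex set~$X$ with $(V(C)\cup X, V(G-C))\in\tau$ for every $C\in\cC_X$. Fix a \td~$(T,\cV)$ of~$G$ into finite parts that displays the infinities, and let $\tau\neq\tau'$ be combinatorially distinguishable infinite tangles. Using the displaying properties, I would attach to each infinite tangle a \emph{location} in~$T$: if the witness is an end~$\eps$, the rooted ray $R_\eps=\varphi(\eps)$ it gives rise to; if the witness is a critical vertex set~$X$, the (unique, since $(T,\cV)$ displays the critical vertex sets) node~$t_X$. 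The aim is then to produce an edge~$e$ of~$T$ whose induced separation $(G\down e,G\up e)$ is oriented oppositely by $\tau$ and~$\tau'$.

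The key claim will be that an infinite tangle~$\tau$ orients the edge separations of~$(T,\cV)$ \emph{towards its location}, i.e.\ towards whichever side of~$e$ contains that location (a tail of~$R_\eps$, respectively the node~$t_X$). When the witness is an end~$\eps$ this is immediate: if $e\in R_\eps$ then every ray in~$\eps$ has a tail in $G\strictup e\subseteq G\up e$, and if $e\notin R_\eps$ then $R_\eps$ lies in the component of~$T-e$ containing the root, so a tail of~$\eps$ lies in~$G\down e$. When the witness is a critical vertex set~$X$ I would use that $(T,\cV)$ displays the tight components of~$X$ cofinitely: all but finitely many tight components of~$G-X$ equal $G\strictup f$ for edges~$f$ at~$t_X$ pointing away from the root, and any edge~$e$ of~$T$ keeps all but at most one of those subtrees on the side of~$e$ containing~$t_X$; hence that side carries cofinitely many tight components of~$G-X$ while the other carries only finitely many, and one argues from this that $\tau$ must orient $(G\down e,G\up e)$ towards~$t_X$.

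Granting the key claim, the proof finishes by a short case analysis that exhibits an edge~$e$ of~$T$ whose two sides separate the locations of~$\tau$ and~$\tau'$: for two distinct rooted rays $R\neq R'$, the edge of~$R$ at their last common node; for a rooted ray~$R$ and a node~$t$, the edge of~$R$ at the node where~$R$ leaves the path from the root to~$t$ (or, if $t\in R$, any edge of~$R$ above~$t$); for two distinct nodes $t\neq t'$, any edge of the path $tTt'$. By the key claim such an~$e$ is oriented oppositely by $\tau$ and~$\tau'$, as required. The only remaining point is that combinatorially distinguishable tangles really receive \emph{distinct} locations: if at least one witness is an end this is clear since $\varphi$ is a bijection and a ray is not a node, and otherwise one invokes the second alternative in the definition of combinatorial distinguishability---a finite set~$Y$ with $(V(C)\cup Y,V(G-C))\in\tau$ for all $C\in\cC_Y$ but $(V(G-C_0),V(C_0)\cup Y)\in\tau'$ for some $C_0\in\cC_Y$---to see that $\tau$ and~$\tau'$ cannot share a critical-vertex-set witness, so the displayed bijection onto $\crit(G)$ places them at distinct nodes.

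I expect the main obstacle to be exactly the critical-vertex-set half of the key claim, together with the distinct-locations check in the all-critical case: both hinge on understanding precisely how a tangle with a critical-vertex-set witness orients the edge separations of a \td\ displaying the infinities, which is \emph{not} forced by the defining generators alone and must be extracted either from a uniqueness statement for such tangles or from a direct combinatorial argument using that $(T,\cV)$ displays the critical vertex sets and their tight components cofinitely. The end-involving cases and the tree bookkeeping are routine.
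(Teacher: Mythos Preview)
The paper gives no proof at all: the observation is marked with a bare \qed\ and the preceding sentence says it is ``immediate'' from the cited characterisation of infinite tangles. So there is no argument in the paper to compare yours against; your plan is already far more explicit than anything the authors wrote down.

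Your overall strategy---assign to each infinite tangle a location in~$T$ (a rooted ray for end tangles, the node~$t_X$ for tangles with critical witness~$X$), show each tangle orients every edge separation towards its location, and then pick an edge of~$T$ separating the two locations---is correct and is presumably what the authors have in mind. The end case of your key claim is indeed routine. You are also right that the critical-witness half is where the work lies: the bare witness condition $(V(C)\cup X, V(G-C))\in\tau$ for all $C\in\cC_X$ only directly tells you how~$\tau$ orients the up-edges at~$t_X$ (namely towards~$t_X$, since those edge separations are exactly of this form by the displaying property), not how it orients an arbitrary edge of~$T$. One clean way to close the gap is to use that any tangle induces a \emph{consistent} orientation of the edge separations of a \td, hence points to a unique end or node of~$T$; since~$\tau$ orients all (infinitely many) up-edges at~$t_X$ towards~$t_X$, its target cannot lie strictly above~$t_X$, and a short covering argument using the tangle axiom and the finiteness of~$V_{t_X}$ rules out any finite-degree node or end below~$t_X$ as target. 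This pins the location to~$t_X$ (or possibly another infinite-degree node, but then the same reasoning applied to \emph{that} node's critical vertex set shows the two tangles still receive distinct locations whenever they are combinatorially distinguishable). Alternatively one may import the full ultrafilter description of such tangles from \cite{EndsAndTangles}; either way your diagnosis of the obstacle is accurate.
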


We emphasise though that such a \td\ need not distinguish those infinite tangles \emph{efficiently}. In fact, we remark that the graph in \cite{ExamplesLinkedTDInfGraphs}*{\constructionExample} has finite tree-width and thus by \cref{maincor:TreeDecompDisplayingInfs} a \td\ that displays its infinities, but no \td\ of that graph efficiently distinguishes all its infinite tangles that are induced by ends \cite{ExamplesLinkedTDInfGraphs}*{\exampleNoTDEffDistAllEnds}.

However, Elm and Kurkofka \cite{infinitetangles}*{Theorem~1} showed that every graph $G$ has a nested set of separations that efficiently distinguishes all the combinatorially distinguishable infinite tangles of~$G$. 
They also showed that this is best possible in the following sense: every graph $G$ that has at least two combinatorially indistinguishable infinite tangles has no nested set of separations that efficiently distinguishes all infinite tangles of $G$~\cite{infinitetangles}*{Corollary~3.4}.
In the special case where~$G$ has no half-grid minor we obtain the following strengthening of their result:

\begin{customcor}{Corollary 5'}[Tangle version of \cref{main:ToTIntroVersion}]\label{thm:ToTtechnical}
    Every graph $G$ without half-grid minor has a \td\ that efficiently distinguishes all the combinatorially distinguishable infinite tangles of $G$.
\end{customcor}

\begin{proof} 
    We show that the \td\ $(T, \cV)$ from \cref{thm:LeanTDTechnical} is as desired. For this, let any pair $\tau_1, \tau_2$ of combinatorially distinguishable infinite tangle be given. Then by \cref{obs:LeanDecompDistinguishesInfTangles}, there is an edge $e \in E(T)$ such that the separation $\{A_e, B_e\}$ induced by $e$ distinguishes $\tau_1$ and~$\tau_2$ (though not necessarily efficiently). For $i = 1,2$, if there is a critical vertex set $X_i$ of $G$ such that $(V(C) \cup X_i, V(G-C)) \in \tau_i$ for all $C \in \cC(G-X_i)$, then there is unique infinite-degree node $t_i \in T$ with $V_{t_i} = X_i$, and we then let $f_i$ be the unique edge of the $t_iTe$ path incident with $t_i$. Otherwise, $\tau_i$ is induced by an end $\eps_i$ of $G$, and we then let $f_i$ be any edge on the unique ray $R$ of $T$ starting in $e$ and arising from $\eps_i$ such that $|V_f| \geq |V_{f_i}|$ for all edges $f >_T f_i$ on $R$. 

    Now using the fact that $(T, \cV)$ is lean, we find an edge $e'$ and a family $\{P_x \mid x \in V_{e'}\}$ of pairwise disjoint $V_{f_1}$--$V_{f_2}$ paths in $G$. Since for $i = 1,2$ either $V_{f_i}$ is linked to $\eps_i$ or $V_{f_i} = X_i$, these paths~$P_x$ can be extended to paths or rays witnessing that $\tau_1$ and $\tau_2$ cannot be distinguished by a separation of order less than $V_{e'}$, which shows that $(T, \cV)$ distinguishes $\tau_1$ and $\tau_2$ efficiently.
\end{proof}

\begin{proof}[Proof of \cref{main:ToTIntroVersion}]
    Apply \cref{thm:ToTtechnical}.
\end{proof}

\bibliographystyle{amsplain}
\arXivOrNot{\bibliography{collectivearXiv.bib}}{\bibliography{collective.bib}}

\arXivOrNot{
\appendix

\section{A closer look at \texorpdfstring{\cref{thm:critVtxTechnical}}{Theorem 6'}} \label{subsec:Counterexamples}

A theorem which is similar to \cref{thm:critVtxTechnical:copy} was proven by Elm and Kurkofka \cite{infinitetangles}*{Theorem~2}:
\begin{theorem}\label{thm:AKJan}
    Every connected graph $G$ has a nested\footnote{A set $N$ of separations is \defn{nested} if for every two separations $\{A, B\}, \{C,D\}$ one of $A \subseteq C$ \& $B \supseteq D$, $A \subseteq D$ \& $B \supseteq C$, $B \subseteq C$ \& $A \supseteq D$ and $B \subseteq D$ \& $A \supseteq C$ holds.} set of separations whose separators are precisely the critical vertex sets of $G$ and all whose torsos\footnote{We refer the reader to \cite{infinitetangles}*{Section~2.6} for a definition of \defn{torsos} in this context. We remark that if the nested set is the set of induced separations of some \td, then these torso are precisely the torsos at nodes of the decomposition tree.} are tough.
\end{theorem}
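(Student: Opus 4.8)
The plan is as follows. \cref{thm:AKJan} is \cite{infinitetangles}*{Theorem~2}, so for arbitrary connected graphs there is nothing genuinely new to prove; the point I would make in this appendix is that for graphs of finite tree-width it is an immediate consequence of \cref{thm:critVtxTechnical:copy}, which in addition produces an actual tree-decomposition together with the extra properties \cref{itm:critVtxTechnical:copy:CofinComp} and \cref{itm:critVtxTechnical:copy:DistinctBags} that Elm and Kurkofka's separation system need not have. So the substance of the plan is this reduction, plus a clean statement of what is lost when the bounded-width hypothesis is dropped.

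First I would assume that $G$ has finite tree-width (for a general connected $G$ I simply invoke \cite{infinitetangles}*{Theorem~2}) and apply \cref{thm:critVtxTechnical:copy} to obtain a fully tight rooted \td~$(T,\cV)$ whose adhesion sets are critical vertex sets, whose torsos are tough, and which displays the critical vertex sets and their tight components cofinitely. Let $N$ be the set of separations of $G$ induced by the edges of $T$. Since $T$ is a tree, $N$ is nested, and by the remark in the statement of \cref{thm:AKJan} its torsos coincide with the torsos of $(T,\cV)$ at the nodes of $T$, which are tough by \cref{thm:critVtxTechnical:copy}. It then remains to check that the separators of the separations in $N$ are \emph{exactly} the critical vertex sets of $G$. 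Every such separator is some adhesion set $V_e$, hence critical by \cref{thm:critVtxTechnical:copy}. Conversely, given a critical vertex set $X$, the fact that $(T,\cV)$ displays the critical vertex sets yields a node $t_X$ with $V_{t_X}=X$, and the fact that it displays their tight components cofinitely yields an edge $e=t_Xt$ with $t_X<_T t$ for which $C:=G\strictup e$ is a tight component of $G-X$; then $X=N_G(C)\subseteq V_e\subseteq V_{t_X}=X$ forces $V_e=X$, so $X$ is the separator of the separation in $N$ induced by $e$. Hence $N$ is as required, and this step is entirely routine once \cref{thm:critVtxTechnical:copy} is available.

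The only genuine difficulty is the general case, and it is exactly the reason the two-step approach of the present paper is restricted to finite tree-width: without that hypothesis one cannot hope for a tree-decomposition into finite parts, so the nested set of separations must be built directly — by transfinite recursion, repeatedly adjoining a separation whose separator is a not-yet-displayed critical vertex set and which is tight on the appropriate side and nested with everything chosen so far. The main obstacle there is the uncrossing step: when the tight components of two critical vertex sets cross, one must replace one of the two separations by a nested tight substitute without destroying toughness of the resulting torsos, in the spirit of \cref{lem:theminiuncrossinglemma} and \cref{lem:theultimateuncrossing}. Carrying this out is precisely the content of \cite{infinitetangles}*{Theorem~2}, which I would not reprove here; instead I would close the appendix by contrasting the two results — \cref{thm:AKJan} applies to all connected graphs but delivers only separations, whereas \cref{thm:critVtxTechnical:copy} needs finite tree-width but delivers an actual tree-decomposition, the form required in order to subsequently refine each torso in \cref{sec:DecomposeIntoRaylessParts}.
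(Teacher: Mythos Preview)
Your proposal is correct and matches the paper's treatment: \cref{thm:AKJan} is stated as Elm and Kurkofka's result and not re-proved here, and the paper merely remarks (exactly as you do) that for graphs of finite tree-width it follows from \cref{thm:critVtxTechnical:copy} since the separations induced by a tree-decomposition are nested. Your verification that the separators of the induced nested set are \emph{precisely} the critical vertex sets is a detail the paper leaves implicit, but it is the obvious check and your argument for it is sound.
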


In the remainder of this section we compare \cref{thm:critVtxTechnical:copy} and \cref{thm:AKJan}. 
For this, let us first note that the set of separations induced by a \td\ is always nested, so for graphs of finite tree-width, our \cref{thm:critVtxTechnical:copy} immediately implies \cref{thm:AKJan} from Elm and Kurkofka.
However, not every nested set of separations is induced by a \td. 
In fact, in \cite{infinitetangles}*{Example 5.12} Elm and Kurkofka describe a graph which does not admit a \td\ whose induced nested of separation is of their form\footnote{Moreover, one can show that their example does not even admit a \td\ whose adhesion sets are the critical vertex sets of $G$ and whose torsos are tough.}. We remark that this graph does not have finite tree-width.

But also for graphs of finite tree-width it is not only not obvious that their result does not yield a \td\ but also not true: We present in the following \cref{example:AKandJandonotinduceTD} a graph $G$ of finite tree-width such that no \td\ of $G$ induces their constructed nested set of separations.
For this let us recall the nested set that Elm and Kurkofka constructed in more detail. Their nested set \cite{infinitetangles}*{Proof of Theorem~2, i.e. Theorem~5.10 \& Theorem~5.11} is of the form 
\begin{equation} \label{eq:AKandJansNestedSet}
\{\{V(G) \setminus \bigcup \cK(X), X \cup \bigcup \cK(X) \} \mid X \in \crit(G) \} \cup \{\{V(G) \setminus C, X \cup C \} \mid X \in \crit(G), C \in \cK(X) \}
\end{equation}
for some choice of $\cK(X) \subseteq \breve{\cC}(G-X)$ containing all but at most one element of $\breve{\cC}(G-X)$.

The nested set induced by the \td\ $(T', \cV')$ from \cref{thm:critVtxTechnical:copy} is also of the form as in \cref{eq:AKandJansNestedSet} but for some choice of $\cK(X) \subseteq \cC(G-X)$ consisting of cofinitely many (instead of one) elements of $\breve{\cC}(G-X)$. 
The following example shows that there are graphs of finite tree-width that have no \td\ inducing a nested set as in \cref{eq:AKandJansNestedSet} for some choice of $\cK(X)$ containing all but at most one element of $\cC(G-X)$. In particular, we cannot strengthen the definition of `displaying the tight components of all critical vertex sets cofinitely' by dropping the `cofinitely'. 

\begin{example}\label{example:AKandJandonotinduceTD}
    There is a graph $G$ of finite tree-width such that no \td\ of $G$ induces a set of separations of the form as in \eqref{eq:AKandJansNestedSet} for a choice of $\cK(X) \subseteq \breve{\cC}(G-X)$ containing all but at most one element of $\breve{\cC}(G-X)$.
\end{example}

\begin{proof}
    Let $R = dr_0r_1, \dots$ be a ray, and let $H$ be the graph obtained from $R$ by first adding the edges $dr_i$ for all $i \in 2\N$ and the edges $r_ir_{i+2}$ and $r_ir_{i+3}$ for all $i \in 2\N+1$, and then adding vertices~$u_{ij}$ for $i \in 2\N \setminus \{0\}$ and $j \in \N$ and joining them to $R$ with edges $r_iu_{ij}$. To obtain $G$, we further add the edges
    \[
    \{u_{ij}r_{i-1}, u_{ij}r_k \mid i \in 2\N\setminus \{0\}, j \in \N, k \in 2\N \cap \{0, 1, \dots, i-2\}\}
    \]
    (see \cref{fig:ExampleForCritDecomp1}).
    Note that $G$ has a normal spanning tree as depicted in blue in \cref{fig:ExampleForCritDecomp1}.

    \begin{figure}[ht]
        \centering
        \includegraphics[width=1\columnwidth]{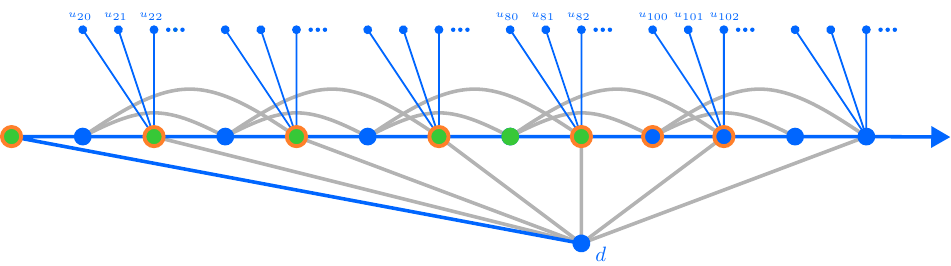}
        \caption{Depicted is the graph $H$. The green (orange) vertices are precisely the neighbours in $G$ of $u_{8j}$ ($u_{10j}$) for all $j \in \N$. The blue edges induce a normal spanning tree of $G$ with root $d$.}
        \label{fig:ExampleForCritDecomp1}
    \end{figure}
    
    We claim that $G$ is as desired, i.e.\ that no \td\ of $G$ induces a set of separations of the form as in \cref{eq:AKandJansNestedSet} for a choice of $\cK(X) \subseteq \breve{\cC}(G-X)$ containing all but at most one element of~$\breve{\cC}(G-X)$. For this, let us first observe that all critical vertex sets of $G$ are of the form 
    \[
    X_i := \{r_j \mid j \in 2\N, j \leq i\} \cup \{r_{i-1}\},
    \]
    and the components of $G - X_i$ either consist of a single vertex $u_{ij}$ or are of the form 
    \begin{align*}
        C_i &:= G[\{r_j \mid j \in 2\N+1, j \leq i-2\} \cup \{u_{jk} \mid j < i, k \in \N\}\}]  \text{ or}\\
        C'_i &:= G[\{r_j \mid j \geq i\} \cup \{u_{jk} \mid j > i, k \in \N\} \cup \{d\}]. 
    \end{align*}
    In particular, for all $X_i$ all components of $G-X_i$ are tight (cf.\ \cref{fig:ExampleForCritDecomp2}). 
    \begin{figure}[ht]
        \centering
        \includegraphics[width=1\columnwidth]{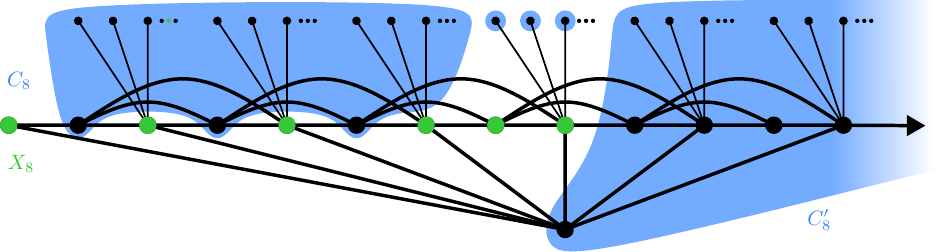}
        \caption{Indicated in green is the critical vertex set $X_{8}$ of $G$. The components of $G-X_{8}$ are indicated in blue.}
        \label{fig:ExampleForCritDecomp2}
    \end{figure}
    Hence, the unique nested set that is of the form as in \cref{eq:AKandJansNestedSet} for a choice of $\cK(X_i) \subseteq \breve{\cC}(G-X_i)$ containing all but at most one element of $\breve{\cC}(G-X_i)$ is the set
    \[
    N := \{\{V(C) \cup X_i, V(G-C)\} \mid X_i \in \crit(G), C \in \cC(G-X_i)\}.
    \]
    But $N$ is not induced by any \td\ of $G$. Indeed, suppose for a contradiction that $N$ is induced by a \td\ $(T, \cV)$ of $G$, and consider the separations $\{V(G - C'_i), V(C'_i) \cup X_i\}$. 
    Since $C'_i \supseteq C'_j$ for all $i \leq j$, it is easy to see that these separations have to be induced by edges $e_i$ of $T$ that all lie on a common ray in $T$. In particular, there is some $j \in 2\N$ such that the $e_i$ all lie on a common rooted ray in $T$ and that $G\strictup e_i = V(C'_i)$. But since $d \in C'_i$ for all $i \in 2\N\setminus \{0\}$, one can conclude that $d$ does not lie in any bag of $(T, \cV)$, which contradicts property \cref{prop:TD1} of \td s. 
\end{proof}
}
{}

\end{document}